\theoremstyle{plain}
\newtheorem{thm}{Theorem}[section]
\newtheorem{prop}[thm]{Proposition}
\newtheorem{cor}[thm]{Corollary}
\newtheorem{lem}[thm]{Lemma}
\newtheorem{algo}{Algorithm}
\theoremstyle{definition}
\newtheorem{defn}[thm]{Definition}
\newtheorem{expl}[thm]{Example}
\newtheorem{rem}[thm]{Remark}
\newcommand{\step}[1]{\par\medskip\par\noindent\textit{#1}}
\def \B{\mathscr{B}}
\def \E{\mathscr{E}}
\def \EE{\mathscr{E}^2}
\def \h{\sqrt{h}}
\def \H{\mathscr{H}}
\def \J{\mathscr{J}}
\def \L{\mathscr{L}}
\def \N{\mathbb{N}}
\def \R{\mathbb{R}}
\def \Z{\mathbb{Z}}
\newcommand {\sign} {\mathop \mathrm{sign}} 
\def \ae{\text{a.e.\ }}
\def \torus{{[0,\Lambda)^d}}
\newcommand {\lip} {\mathop \mathrm{Lip}}
\def \sphere{{{S}^{d-1}}}
\def \numphases{P}
\newcommand {\supp} {\mathop \textup{supp}}
\def \chara{\mathbf{1}}
\def \gap{\varepsilon^2} %{\mathscr{E}}
\def \ggap{\varepsilon^2} %{\mathscr{E}}
\def \kmax{K}
\def \lmax{L}
\def \nmax{N}
\def \kmean{\frac1\kmax \sum_{k=1}^\kmax}
\def \nmean{\frac1\nmax \sum_{n=1}^\nmax}
\title{Convergence of the thresholding scheme for \\multi-phase mean-curvature flow}
\author{Tim Laux\footnote{Max-Planck-Institut f\"ur Mathematik in den Naturwissenschaften, Inselstra{\ss}e 22, 04103 Leipzig, Germany. Please use {tim.laux@mis.mpg.de} for correspondence.  } 
\and Felix Otto\footnotemark[1]}
\date{\today}
\begin{document}

  \maketitle
  \begin{abstract}
We consider the thresholding scheme, a time discretization for mean curvature flow
introduced by Merriman, Bence and Osher. 
We prove a convergence result in the multi-phase case.
The result establishes convergence towards a weak formulation of mean curvature
flow in the BV-framework of sets of finite perimeter.
The proof is based on the interpretation of the thresholding scheme
as a minimizing movements scheme by Esedo\u{g}lu et.\ al..
This interpretation means that the thresholding scheme preserves
the structure of (multi-phase) mean curvature flow as a gradient flow 
w.\ r.\ t.\ the total interfacial energy. 
More precisely, the thresholding scheme is a minimizing movements scheme for an
energy functional that $\Gamma$-converges to the total interfacial energy.
In this sense, our proof is similar to the convergence results of Almgren, Taylor and Wang and
Luckhaus and Sturzenhecker, which establish convergence of a more academic
minimizing movements scheme. Like the one of Luckhaus and Sturzenhecker, 
ours is a {\it conditional} convergence
result, which means that we have to assume that the
time-integrated energy of the approximation converges to
the time-integrated energy of the limit. This is a natural assumption, which however
is not ensured by the compactness coming from the basic estimates.

\medskip

\noindent \textbf{Keywords:} Mean curvature flow, Thresholding, MBO scheme, Minimizing movements

\medskip

\noindent \textbf{Mathematical Subject Classification:} 35A15, 65M12
\end{abstract}

\section{Introduction}
\subsection{Context}
The thresholding scheme, a time discretization for mean curvature flow
introduced by Merriman, Bence and Osher \cite{MBO92}, has because of its conceptual 
and practical simplicity become a very popular scheme, see Algorithm \ref{MBO_arbitrary_surface_tensions}
for its definition in a more general context.
It has a natural extension from the two-phase case to the multi-phase case with 
triple junctions in local equilibrium, well-known in case for equal surface tensions
since some time \cite{MBO94}.
Multi-phase mean-curvature flow models the slow relaxation of grain boundaries
in polycrystals (called grain growth), where each grain corresponds to a phase.
Elsey, Esedo\u{g}lu and Smereka have shown that (a modification of) the thresholding scheme 
is practical in handling a large number of grains over time intervals sufficiently large
to extract significant statistics of the coarsening (also called aging)
of the grain configuration \cite{ elsey2009diffusion, elsey2011large, elsey2011large2}.
In grain growth, the surface tension (and the mobility)
of a grain boundary is both dependent on the misorientation between the crystal lattice
of the two adjacent grains and on the orientation of its normal. In other words,
the surface tension $\sigma_{ij}$ of an interface is indexed by the pair $(i,j)$ of phases it separates,
and is anisotropic. Esedo\u{g}lu and the second
author have shown in \cite{EseOtt14} the thresholding scheme can be extended to handle the first
extension in a very general way, including in particular the most popular Ansatz for
a misorientation-dependent grain boundary energy \cite{read1950dislocation}.
How to handle general anisotropies in the framework of the thresholding scheme, even in case of two phases, 
seems not yet to be completely settled, see however \cite{bonnetier2012consistency} and 
\cite{ishii1999threshold}.
--- Hence in this work, we will focus on the isotropic case, ignore mobilities, 
but make the attempt to be as general as \cite{EseOtt14} when it comes to the
dependence of $\sigma_{ij}$ on the pair $(i,j)$. 

\medskip

In the two-phase case, the convergence of the thresholding scheme is well-understood:
Two-phase mean curvature flow satisfies a geometric {\it comparison principle}, and it
is easy to see that the thresholding scheme preserves this structure. Partial differential
equations and geometric motions that allow for a comparison principle can typically be even
characterized by comparison with very simple solutions, which opens the way for a definition of a very robust notion
of weak solutions, namely what bears the somewhat misleading name of viscosity solutions. If one allows for 
what the experts know as fattening,
two-phase mean-curvature flow is well-posed in this framework \cite{evans1991motion}.
Barles and Georgelin \cite{barles1995simple} and Evans \cite{evans1993convergence} proved
independently that the thresholding scheme converges to mean-curvature flow in this sense.
--- Hence the main novelty of this work is a (conditional) convergence result in
the {\it multi-phase} case; where clearly a geometric comparison principle is absent. However,
the result has also some interest in the two-phase case, since it establishes convergence
even in situations where the viscosity solution features fattening. Together with Drew Swartz 
\cite{LauSwa15},
the first author uses similar arguments to treat another version of mean curvature flow
that does not even allow for a comparison principle in the two-phase case, namely volume-preserving mean-curvature
flow. They prove (conditioned) convergence of a scheme introduced 
by Ruuth and Wetton in \cite{RutWet03}.
We also draw the reader's attention to the recent work of Mugnai, Seis and Spadaro \cite{mugnai2015global},
where they prove a (conditional) convergence result as in \cite{LucStu95} 
of a modification of the scheme in \cite{ATW93, LucStu95} to volume-preserving
mean curvature flow.
Note that due to the only conditional convergence, our result does not provide a long-time existence
result for (weak solutions of) multi-phase mean curvature flow.
Short-time existence results of smooth solutions go back to the work of Bronsard and Reitich 
\cite{bronsard1993three}. 
Mantegazza et.\ al. \cite{mantegazza2003motion} and Schn\"urer et.\ al. \cite{schnurer2011evolution}
were able to construct long-time solutions close to a self-similar singularity.
\medskip

For the present work, the structural substitute for the comparison principle is
the {\it gradient flow} structure. Folklore says that mean curvature flow, also in its
multi-phase version, is the gradient flow of the total interfacial energy. It is
by now well-appreciated that the gradient flow structure also requires fixing a
Riemannian structure, that is, an inner product on the tangent space, which here
is given by the space of normal velocities. Mean curvature flow is then the gradient flow with
respect to the $L^2$-inner product, in case of grain growth weighted by grain-boundary-dependent
and anisotropic mobilities. Loosely speaking, Brakke's existence proof
in the framework of varifolds \cite{brakke1978motion} is based on this structure in the sense
that the solution monitors weighted versions of the interfacial energy. 
Recently, Kim and Tonegawa \cite{kim2015mean} improved this work by deriving the continuity of the volumes of the grains
in the case of grain growth with equal surface tensions which ensures that the solution is non-trivial.
Also Ilmanen's convergence proof of the Allen-Cahn equation, a diffuse interface approximation
of computational relevance in the world of phase-field models, to mean curvature flow makes use of the gradient flow structure 
\cite{ilmanen1993convergence}. It was only discovered recently that the thresholding algorithm
preserves also this gradient flow structure \cite{EseOtt14}, which in that paper
was taken as a guiding principle to extend the scheme to surface tensions $\sigma_{ij}$ and mobilities
that depend on the phase pair $(i,j)$. --- In this paper, we take the gradient flow structure, which we make
more precise in the following paragraphs, as a guiding principle for the convergence proof. 

\medskip

On the abstract level, every gradient flow has a natural
discretization in time, which comes in form of a sequence of variational problems:
The configuration $\Sigma^n$ at time step $n$ is obtained by minimizing 
$\frac{1}{2}{\rm dist}^2(\Sigma,\Sigma^{n-1})
+h E(\Sigma)$, where $\Sigma^{n-1}$ is the configuration at the preceding time step, $h$
is the time-step size and ${\rm dist}$ denotes the induced distance on the configuration
space endowed with the Riemannian structure. In the Euclidean case, the Euler-Lagrange equation
(i.\ e.\ the first variation) of this variational problem yields the implicit (or backwards)
Euler scheme. This variational scheme has been named ``minimizing movements'' by 
De Giorgi \cite{de1993new}, and has recently gained popularity because it allows
to interpret diffusion equations as gradient flows of an entropy functional w.\ r.\ t.\ the
Wasserstein metric (\cite{jordan1998variational}, see \cite{ambrosio2006gradient} for the abstract framework)
-- an otherwise unrelated problem. However, the formal Riemannian structure 
in case of mean curvature flow is completely degenerate: ${\rm dist}^2(\Sigma,\tilde\Sigma)$ 
as defined as the infimal energy of curves in configuration space that connect 
$\Sigma$ to $\tilde\Sigma$ vanishes identically, cf.\ \cite{michor2005vanishing}.

\medskip

Hence when formulating a minimizing movements scheme in case of mean curvature flow, 
one has to come up with a proxy for ${\rm dist}^2(\Sigma,\tilde\Sigma)$. This has been 
independently achieved by Almgren, Taylor and Wang \cite{ATW93} on the one side and Luckhaus and
Sturzenhecker \cite{LucStu95} on the other side of the Atlantic.
$\Sigma=\partial\Omega$ and $\tilde\Sigma=\partial\tilde\Omega$,
$2\int_{\Omega\triangle\tilde\Omega}d(x,\Sigma)dx$ is one possible substitute for ${\rm dist}^2(\Sigma,\tilde\Sigma)$
in the minimizing movements scheme, where $d(x,\Sigma)$ denotes the unsigned distance of the point $x$ to
the surface $\Sigma$ --- it is easy to see that to leading order in the proximity of
$\tilde\Omega$ to $\Omega$, 
this expression behaves as the metric tensor $\int_{\Sigma}V^2dx$, where $V$ is the normal velocity 
leading from $\Omega$ to
$\tilde\Omega$ in one unit time. Their work makes this point by proving that this minimizing movements scheme
converges to mean curvature flow. 
To be more precise, they consider a time-discrete solution $\{\Omega^n\}_n$ of the minimizing
movement scheme, interpolated as a piecewise constant function $\Omega^h$ in time and assume that
for a subsequence $h\downarrow0$, the time-dependent sets $\Omega^h$ 
converge to $\Omega$ in a stronger sense than the given compactness provides.
Almgren, Taylor and Wang assume that $\Sigma^h(t)$ converges to $\Sigma(t)$ in the Hausdorff distance
and show that $\Sigma$ solves the mean curvature flow equation in the above mentioned viscosity sense.
The argument was later substantially simplified by Chambolle and Novaga in \cite{chambolle2006convergence}.
Luckhaus and Sturzenhecker start from a weaker convergence assumption than the one in \cite{ATW93}:
They assume that for the finite time horizon $T$ under consideration,
$\int_0^T|\Sigma^h(t)|dt$ converges
to $\int_0^T|\Sigma(t)|dt$. Then they show that $\Omega$ evolves according to a weak formulation
of mean curvature flow, using a distributional formulation of mean curvature that is available
for sets of finite perimeter, see Definition \ref{def_motion_by_mean_curvature} for the 
multi-phase case of this formulation. 
Incidentally, weak-strong uniqueness of this formulation seems not to be understood -- even in the two-phase case.
Those are both only {\it conditional} convergence results: 
While the natural estimates
coming from the minimizing movements scheme, namely the uniform boundedness of $\sup_n|\Sigma^n|$
and $\sum_{n}2\int_{\Omega^n\triangle\Omega^{n+1}}d(x,\Sigma^n)dx$, are enough to ensure
$\int_0^T|\Omega^h(t)\triangle\Omega(t)|dt \to 0$ and $\int_0^T|\Sigma(t)|dt\le\liminf\int_0^T|\Sigma^h(t)|dt$,
they are not sufficient to yield $\limsup\int_0^T|\Sigma^h(t)|dt\le\int_0^T|\Sigma(t)|dt$ or even
the convergence of $\Sigma^h(t)$ to $\Sigma(t)$ in the Hausdorff distance.
--- Our result will be a conditional convergence result very much in the same sense as the one in 
\cite{LucStu95}
but it turns out that our convergence result for the thresholding scheme requires no regularity theory for
(almost) minimal surfaces, in contrast to the one of \cite{LucStu95} and is therefore not restricted to low spatial dimensions $d\leq 7$.
Although the time discretization scheme in \cite{ATW93, LucStu95} seems rather academic from a computational point of view,
it has been adapted for numerical simulations by Chambolle in \cite{chambolle2004algorithm}.
Nevertheless, even in that variant, in each step one has to compute a (signed) distance function and solve a convex optimization problem.

\medskip

Following \cite{EseOtt14}, we now explain in which sense the thresholding scheme may be considered
as a minimizing movements scheme for mean curvature flow.
Each step in Algorithm \ref{MBO_arbitrary_surface_tensions} is equivalent to minimizing a functional
of the form $E_h(\chi) - E_h(\chi-\chi^{n-1})$, where the functional $E_h$, defined below in
(\ref{def_Eh}), is an approximation to the total interfacial energy.
It is a little more subtle to see that the second term, $-E_h(\chi^n-\chi^{n-1})$, is comparable
to the metric tensor $\int_{\Sigma}V^2dx$.
The $\Gamma$-convergence of functionals of the type (\ref{def_Eh}) to the area functional has
a long history: For the two-phase case, cf.\ Alberti and Bellettini \cite{alberti1998non}
and Miranda et.\ al.\ \cite{miranda2007short}. The multi-phase case, also for arbitrary
surface tensions was investigated by Esedo\u{g}lu and the second author in \cite{EseOtt14}.
Incidentally, it is easy to see that $\Gamma$-convergence of the energy functionals is
not sufficient for the convergence of the corresponding gradient flows; Sandier and Serfaty
\cite{sandier2004gamma} have identified sufficient conditions on both the functional and the metric tensor
for this to be true.

\medskip

Identically, the approach of Saye and Sethian \cite{saye2011voronoi} for multi-phase evolutions can
also be seen as coming from the gradient flow structure when applied to mean-curvature flow with $\numphases$ phases.
More precisely, it can be understood as a time splitting of an $L^2$-gradient flow with an additional
phase whose volume is strongly penalized:
The first step is $(\numphases+1)$-phase gradient flow w.\ r.\ t.\ the total interfacial energy and the second step is
$(\numphases+1)$-gradient flow w.\ r.\ t.\ the volume penalization (so geometrical optics leading to the 
Voronoi construction).
\subsection{Informal summary of the proof}

We now give a summary of the main steps and ideas of the convergence proof.
In Section \ref{sec:comp}, we draw consequences from the basic estimate 
(\ref{energy_dissipation_estimate}) in a minimizing movements scheme,
like compactness, Proposition \ref{comp_prop}, coming from a uniform (integrated) modulus of continuity 
in space, Lemma \ref{comp_lem_z_shift}, and in time, Lemma \ref{comp_lem_tau_shift}.
We also draw the first consequence from the strengthened convergence (\ref{conv_ass})
in the case of equal surface tensions in Proposition \ref{lem_dtX<<DX}.
We strongly advise the reader to familiarize him- or herself with the argument for the
modulus of continuity in time, Lemma \ref{comp_lem_tau_shift},
since it is there that the mesoscopic time scale $\sqrt{h}$ appears
for the first time in a simple context before being used in Section \ref{sec:velocity}
in a more complex context.
In the same vein, the fudge factor $\alpha$ in the mesoscopic time scale $\alpha\sqrt{h}$, which
will be crucial in Section \ref{sec:velocity},
will first be introduced and used in the simple context when estimating
the normal velocity $V$ of the limit in Proposition \ref{lem_dtX<<DX}. 

\medskip

Starting from Section \ref{sec:curvature},
we also use the Euler-Lagrange equation (\ref{ELG}) of the minimizing
movement scheme. By Euler-Lagrange equation we understand the first variation w.\ r.\ t.\ the independent
variables, as generated by a test vector field $\xi$.
In Section \ref{sec:curvature}, we pass to the limit in the energetic part of the first variation,
recovering the mean curvature $H$ via the term 
$\int_{\Sigma}H\,\xi\cdot\nu=\int_{\Sigma}\nabla\cdot\xi - \nu\cdot\nabla\xi\,\nu$.
This amounts to show that under our assumption of strengthened convergence (\ref{conv_ass}), the
$\Gamma$-convergence of the {\it functionals} can be upgraded to a distributional convergence of
their {\it first variations}, cf.\ Proposition \ref{surf_prop_integrated_in_time}.
It is a classical result credited to Reshetnyak \cite{reshetnyak1968weak} that under the
strengthened convergence of sets of finite perimeter, the measure-theoretic normals 
and thus the distributional expression for mean curvature also converge. 
The fact that this convergence of the first variation
may also hold when combined with a diffuse interface approximation is known for instance in case of the
Ginzburg-Landau approximation of the area functional (also known by the names of Modica and Mortola,
who established this $\Gamma$-convergence \cite{modica1987gradient,modica1977esempio}), 
see \cite{luckhaus1989gibbs}.
In our case the convergence of the first variations relies on a localization
of the ingredients for the $\Gamma$-convergence worked out in \cite{EseOtt14}, 
like the consistency, i.\ e. pointwise convergence of these functionals.

\medskip

Section \ref{sec:velocity} constitutes the central and, as we believe, 
most innovative piece of the paper; we pass to the limit
in the dissipation/metric part of the first variation, recovering the normal velocity $V$ via
the term $\int_{\Sigma}V\,\xi\cdot\nu$.
In fact, we think of the test-field $\xi$
as localizing this expression in time and space, and recover the desired limiting expression
only up to an error that measures how well the limiting configuration can be approximated by
a configuration with only two phases and a flat interface in the space-time patch
under consideration; this is measured both in terms of
area (leading to a multi-phase excess in the language of the regularity theory of minimal surfaces)
and volume, see Proposition \ref{prop_velocity_good_balls}.
The main difficulty of recovering the metric term $\int_{\Sigma}V\,\xi\cdot\nu$
in comparison to recovering the distributional form $\int_{\Sigma}\nabla\cdot\xi - \nu\cdot\nabla\xi\,\nu$ 
of the energetic term is that one has to recover both the normal velocity $V$, which is 
distributionally characterized by $\partial_t\chi-V|\nabla\chi|=0$ on the level of
the characteristic function $\chi$, and the (spatial) normal $\nu$.
In short: one has to pass to the limit in a {\it product}.
More precisely,
the main difficulty is that there is no good bound on the discrete normal velocity $V$ at hand on 
the level of the 
{\it microscopic} time scale $h$; only on the level of the above-mentioned mesoscopic time scale $\sqrt{h}$, 
such an estimate is available. This comes from the fact that the basic estimate yields control
of the time derivative of the characteristic function $\chi$ only when mollified on the spatial
scale $\sqrt{h}$
in $u=G_h*\chi$. The main technical ingredient to overcome this lack of control in 
Proposition \ref{prop_velocity_good_balls} is presented in Lemma \ref{la_periodic} in the two-phase case
and in Lemma \ref{la_1d_multiphase} in the general setting:
If one of the two (spatial) functions $u,\tilde u$ is not too far from being strictly
monotone in a given direction (a consequence of the control of the tilt excess, see 
Lemma \ref{lem_local_estimates}), 
then the spatial $L^1$-difference between the level sets $\chi=\{u>\frac{1}{2}\}$ and 
$\tilde\chi=\{\tilde u>\frac{1}{2}\}$ is controlled by the squared $L^2$-difference between
$u$ and $\tilde u$.

\medskip

In Section \ref{sec:conv}, we combine the results of the previous two sections yielding the weak formulation
of $V=H$ on some space-time patch up to an error expressed in terms of the above mentioned (multi-phase) 
tilt excess of the limit on that patch.
Complete localization in time and partition of unity in space allows us to assemble this to obtain 
$V=H$ globally, up to an error expressed by the time integral of the sum of the tilt excess over the 
spatial patches of finite overlap.
De Giorgi's structure theorem for sets of finite perimeter (cf.\ Theorem 4.4 in \cite{giusti1984minimal}),
adapted to a multi-phase situation but just used for a fixed time slice, implies that the error expression 
can be made arbitrarily small by sending the length scale of the spatial patches to zero.

 \tableofcontents

\subsection{Notation}
We denote by
\begin{align*}
	G_h(z) := \frac{1}{(2\pi h)^{d/2}} \exp\left(-\frac{|z|^2}{2h}\right)
\end{align*}
the Gaussian kernel of variance $h$.
Note that $G_{2t}(z)$ is the fundamental solution to the heat equation and thus
\begin{align*}
 \partial_h G -\tfrac12 \Delta G & =0 
 \quad \text{in } (0,\infty)\times \R^d,\\
 G &  = \delta_0  \quad \text{for } h=0.
\end{align*}
We recall some basic properties, such as the normalization, non-negativity, boundedness and
the factorization property:
\begin{align*}
\int_{\R^d} G_h\,dz = 1 ,\quad 0 \leq G_h \leq C h^{-d/2},\quad \nabla G_h(z)= - \frac zh  G_h(z)  ,
\quad G(z) = G^1(z_1)\, G^{d-1}(z'),
\end{align*}
where $G^1$ denotes the 1-dimensional and $G^{d-1}$ the $(d-1)$-dimensional Gaussian kernel;
let us also mention the semi-group property
\begin{align*}
 G_{s+t} = G_s \ast G_t.
\end{align*}
% the monotonicity of the $L^2$-norm
% \begin{align}\label{fourier_monotonicity_gaussian}
%  \left\| G_t \ast  u \right\|_{L^2} \leq \left\| G_{s} \ast u \right\|_{L^2}\quad \text{for all }t\geq s
% \end{align}
% and the following estimate for derivatives:
% \begin{align}\label{fourier_nabla_gaussian_estimate}
%  \left\| \nabla^\alpha G_h \ast u \right\|_{L^2} \leq c(\alpha)
%  \frac1{h^{\alpha/2}}  \left\| G_{h} \ast u \right\|_{L^2}
% \end{align}
Throughout the paper, we will work on the flat torus $\torus$.
The thresholding scheme for multiple phases, introduced in \cite{EseOtt14},
for arbitrary surface tensions $\sigma_{ij}$ and mobilities $\mu_{ij}=1/\sigma_{ij}$ is the following.
\begin{algo}\label{MBO_arbitrary_surface_tensions}
Given the partition $\Omega^{n-1}_1,\dots,\Omega^{n-1}_\numphases$ of $\torus$ at time $t=(n-1)h$, 
obtain the new partition $\Omega^n_1,\dots,\Omega^n_\numphases$ at time $t=nh$ by:
\begin{enumerate}
 \item Convolution step:
\begin{align}\label{convolve}
 \phi_i := G_h\ast \left( \sum_{j=1}^\numphases \sigma_{ij}\chara_{\Omega^{n-1}_j}\right).
\end{align}
\item Thresholding step:
\begin{align}\label{threshold}
 \Omega^n_i := \left\{ x\in \torus \colon \phi_i(x) < \phi_j(x) \text{ for all } 
j\neq i \right\}.
\end{align}
\end{enumerate}
\end{algo}
We will denote the characteristic functions of the phases $\Omega^n_i$ at the $n^\text{th}$ time step
by $\chi_i^n$ and interpolate these functions piecewise constantly in time, i.\ e.\
\begin{align*}
 \chi_i^h(t) := \chi_i^n = \chara_{\Omega^n_i}\quad \text{for } t\in [nh,(n+1)h).
\end{align*}
As in \cite{EseOtt14}, we define the \emph{approximate energies}
\begin{align}\label{def_Eh}
E_h(\chi) := \frac1{\sqrt{h}} \sum_{i,j} \sigma_{ij} \int \chi_i 
\, G_h\ast \chi_j \,dx
\end{align}
for \emph{admissible} measurable functions:
\begin{align}\label{admissible}
 \chi=(\chi_1,\dots,\chi_\numphases)\colon \torus \to \{0,1\}^\numphases
\quad \text{s.\ t.} \quad \sum_{i=1}^{\numphases} \chi_i =1 \quad \ae
\end{align}
Here and in the sequel $\int dx$ stands short for $\int_{\torus}dx$, whereas
$\int dz$ stands short for $\int_{\R^d} dz$.
The minimal assumption on the matrix of surface tensions $\{\sigma_{ij}\}$, next to the obvious
\begin{align*}
 \sigma_{ij}=\sigma_{ji}\geq \sigma_{\min}>0 \quad \text{if } i\neq j,\quad\sigma_{ii}=0,
\end{align*}
is the following triangle inequality
\begin{align*}%\label{surf_triang_surf_tens}
 \sigma_{ij}\leq \sigma_{ik} + \sigma_{kj}.
\end{align*}
It is known that (e.\ g. \cite{EseOtt14}), under the conditions above, these energies $\Gamma$-converge 
w.\ r.\ t.\ the $L^1$-topology
to the \emph{optimal partition energy} given by
\begin{align*}
E(\chi):= c_0 \sum_{i,j} \sigma_{ij} \frac12 \Big(\int |\nabla\chi_i| +\int |\nabla\chi_j|
 - \int  |\nabla(\chi_i+\chi_j)|\Big)
\end{align*}
for \emph{admissible} $\chi$:
\begin{align*}
 \chi=(\chi_1,\dots,\chi_\numphases)\colon \torus \to \{0,1\}^\numphases\in BV
\quad \text{s.\ t.} \quad \sum_{i=1}^{\numphases} \chi_i =1 \quad \ae
\end{align*}
The constant $c_0$ is given by 
\begin{align*}
 c_0 := \omega_{d-1} \int_0^\infty G(r) r^d dr = \frac 1{\sqrt{2\pi}}.
\end{align*}
For our purpose we ask the matrix of surface tensions $\sigma $ to satisfy a
\emph{strict triangle inequality}:
\begin{align*}
 \sigma_{ij} < \sigma_{ik} + \sigma_{kj} \quad \text{for pairwise different }i,\,j,\,k.
\end{align*}
We recall the minimizing movements interpretation from \cite{EseOtt14} which is easy to check. 
The combination of convolution and thesholding step 
in Algorithm \ref{MBO_arbitrary_surface_tensions} is equivalent to solving the following
minimization problem
\begin{align}\label{MMinterpretation}
 \chi^n = \arg\min_\chi \left\{E_h(\chi) - E_h(\chi-\chi^{n-1})\right\},
\end{align}
where $\chi$ runs over (\ref{admissible}).
The proof will mostly be based on the interpretation (\ref{MMinterpretation}) and only once uses
the original form (\ref{convolve}) and (\ref{threshold}) in Lemma \ref{la_periodic} and Lemma \ref{lem_local_estimates}, respectively.
Following \cite{EseOtt14}, we will additionally assume that 
$\sigma$ is \emph{conditionally negative-definite}, i.\ e.\
\begin{align*}
 \sigma \leq - \underline\sigma \quad \text{on } (1,\dots,1)^\perp,
\end{align*}
where $\underline \sigma >0$ is a constant. 
That means, that $\sigma$ is negative as a bilinear form on $(1,\dots,1)^\perp$.
This ensures that $-E_h(\chi-\chi^{n-1})$ in (\ref{MMinterpretation}) is non-negative and penalizes
the distance to the previous step.\\
In the following we write $A \lesssim B$ to express that $A\leq C B$ for
a (possibly large) generic constant $C<\infty$ that only depends on the dimension
$d$, the total number of phases $\numphases$ and on the matrix of surface tensions $\sigma$
through $\sigma_{\min}=\min_{i\neq j} \sigma_{ij}$, $\sigma_{\max} = \max \sigma_{ij}$, $\underline \sigma$ and 
$\min \{ \sigma_{ik} + \sigma_{kj} - \sigma_{ij}\colon i,\, j,\, k\, \text{pairwise different}\}$.
Furthermore, we say a statement holds for $A \ll B$ if the statement holds for $A \leq \frac1C B$ for some generic constant $C<\infty$ as above.
% In other words
% \begin{align}
%  \langle \omega, \sigma \omega\rangle \leq - \underline \sigma |\omega|^2\quad \text{for all } 
% \omega \in \R^\numphases\text{ s.th. } \sum_{i=1}^\numphases \omega_i = 0. 
% \end{align}
%
\subsection{Main result}
The definition of our weak notion of mean-curvature flow is a distributional formulation which is suited to
the framework of functions of bounded variation.
%-----------------------------------------------
%
%		DEFINTION MCF
%
%-----------------------------------------------
\begin{defn}[Motion by mean curvature]\label{def_motion_by_mean_curvature}
Fix some finite time horizon $T<\infty$, a matrix of surface tensions $\sigma$ as above and initial data 
$\chi^0\colon \torus \to \{0,1\}^\numphases$ with $E_0 := E(\chi^0) <\infty$.
We say that the network
\begin{align*}
\chi = \left( \chi_1,\dots,\chi_\numphases\right): (0,T)\times \torus 
\to \{0,1\}^\numphases
\end{align*}
with $\sum_i \chi_i = 1$ a.\ e. and
\begin{align*}
 \sup_t E(\chi(t)) < \infty
\end{align*}
\emph{moves by mean curvature} if there exist functions
$V_i\colon (0,T)\times\torus\to \R$ with 
\begin{align*}
\int_0^T \int V_i^2 \left|\nabla\chi_i\right|dt < \infty 
\end{align*}
which satisfy
\begin{align}\label{H=v}
\sum_{i,j} \sigma_{ij} \int_0^T \int \left(\nabla\cdot\xi - \nu_i \cdot \nabla \xi \,\nu_i
- 2\,\xi \cdot \nu_i \,V_i \right) 
\frac12\left(\left|\nabla\chi_i\right| + \left|\nabla\chi_j\right| - \left|\nabla(\chi_i+\chi_j)\right|\right)  dt
= 0
\end{align}
for all $\xi\in C^\infty_0((0,T)\times \torus, \R^d)$ and which are normal velocities in the sense that
for all $\zeta\in C^\infty([0,T]\times \torus)$ with $\zeta(T)=0$ and 
all $i\in \left\{1,\dots,\numphases\right\}$
\begin{align}\label{v=dtX}
\int_0^T \int \partial_t \zeta \, \chi_i\, dx\,dt + \int \zeta(0) \chi_i^0 \,dx
= - \int_0^T \int \zeta\, V_i\left| \nabla \chi_i\right|dt.
\end{align}
\end{defn}
% \noindent
Note that (\ref{v=dtX}) also encodes the initial conditions as well as (\ref{H=v}) encodes the Herring angle
condition.
Indeed, for a smooth evolution, since for any interface $\Sigma$ we have
\begin{equation*}
 \int_\Sigma \left( \nabla \cdot \xi - \nu \cdot \nabla \xi \, \nu\right) = \int_\Gamma b\cdot \xi + \int_\Sigma H \nu \cdot \xi,
\end{equation*}
where $\Gamma = \partial \Sigma$, $b$ denotes the conormal and $H$ the mean curvature of $\Sigma$,
we do not only obtain the equation
\begin{align*}
H_{ij} = 2 V_{ij} \quad \text{on } \Sigma_{ij} =\partial \Omega_i \cap \partial \Omega_j
\end{align*}
along the smooth parts of the interfaces
but also the Herring angle condition at triple junctions.
If three phases $\Omega_1$, $\Omega_2$ and $\Omega_3$ meet at a point $x$, then
we have
\begin{align*}
 \sigma_{12}\,\nu_{12}(x)+ \sigma_{23}\,\nu_{23}(x)+ \sigma_{31}\,\nu_{31}(x)=0.
\end{align*}
In terms of the opening angles $\theta_1$, $\theta_2$ and $\theta_3$ at the junction, 
this condition reads
\begin{align*}
 \frac{\sin \theta_1}{\sigma_{23}} =  \frac{\sin \theta_2}{\sigma_{13}} =  \frac{\sin \theta_3}{\sigma_{12}},
\end{align*}
so that the opening angles at triple junctions are determined by the surface tensions.
\begin{rem}
To prove the convergence of the scheme, we will 
need the following convergence assumption:
\begin{align}\label{conv_ass}
 \int_0^T E_h(\chi^h)\,dt \to \int_0^T E(\chi)\,dt.
\end{align}
This assumption makes sure that there is no loss of area in the limit $h\to0$ as in Figure \ref{fig_loss_of_area}.
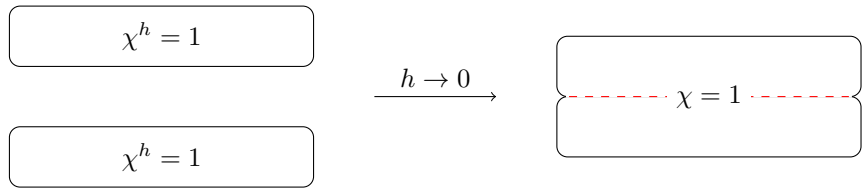
\begin{figure}[H]%loss_of_area
\centering
\begin{tikzpicture}[scale=0.8]
    \draw[rounded corners] (0,-.5) rectangle (5,-1-.5) node[pos=.5] {$\chi^h=1$};
    \draw[rounded corners] (0,.5) rectangle (5,1+.5) node[pos=.5] {$\chi^h=1$};
    \draw[rounded corners]  (5+4,0) rectangle (10+4,1);
    \draw[rounded corners]  (5+4,0) rectangle (10+4,-1);
    \draw[->] (5+1,0)--(5+1+2,0) node[midway, above] {$h\to 0$};
    \draw[white,thick] (5+4+.15,0)--(10+4-.15,0);
    \draw[red,dashed] (5+4+.2,0)--(10+4-.2,0);
%     \draw[white, very thick] (5+4+.15,0)--(10+4-.15,0);
    \node[rectangle,fill=white]  at (7.5+4,0) {$\chi=1$};
\end{tikzpicture}
\caption{For fixed $t=t_0$ as $h\to 0$ there should be no loss of area. The ruled out case is
illustrated here. The dashed line is sometimes called hidden boundary.}%
\label{fig_loss_of_area}%
\end{figure}
\end{rem}

%-----------------------------------------------
%
%		THEOREM
%
%-----------------------------------------------
\begin{thm}\label{thm1}
Let $P\in \N$, let the matrix of surface tensions $\sigma$ 
satisfy the strict triangle inequality and be conditionally negative-definite, $T<\infty$ be a finite time
horizon and let $\chi^0$ be given with $E(\chi^0) < \infty$.
Then for any sequence there exists a subsequence $h\downarrow 0$ and a $\chi\colon (0,T)\times \torus \to \{0,1\}^\numphases$ with $E(\chi(t))\leq E_0$
such that the approximate solutions $\chi^h$ obtained by Algorithm \ref{MBO_arbitrary_surface_tensions}
converge to $\chi$.
Given (\ref{conv_ass}), $\chi$ moves by mean curvature in the sense of
Definition \ref{def_motion_by_mean_curvature} with initial data $\chi^0$.
\end{thm}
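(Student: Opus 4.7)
The argument splits into a \emph{compactness} step, which produces the limit $\chi$ from the energy-dissipation estimate alone, and a \emph{consistency} step, which identifies $\chi$ as a solution of (\ref{H=v})--(\ref{v=dtX}) under the assumption (\ref{conv_ass}).

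For compactness I would start from the minimizing-movements characterization (\ref{MMinterpretation}). Testing with $\chi=\chi^{n-1}$ gives the discrete dissipation inequality
\begin{align*}
E_h(\chi^n)+\bigl(-E_h(\chi^n-\chi^{n-1})\bigr)\le E_h(\chi^{n-1}),
\end{align*}
whose two summands on the left are both nonnegative by conditional negative-definiteness of $\sigma$. Telescoping yields uniform bounds on $\sup_n E_h(\chi^n)$ and on $\sum_n\bigl(-E_h(\chi^n-\chi^{n-1})\bigr)$. Since $-E_h(\chi^n-\chi^{n-1})$ is essentially $\frac{1}{\sqrt h}\langle\chi^n-\chi^{n-1},G_h\ast(\chi^n-\chi^{n-1})\rangle$ weighted by $-\sigma$, this is a discrete $\dot{H}^{-1/2}$-control of the time increments and produces an integrated modulus of continuity in time on the mesoscopic scale $\sqrt h$ (which will be exploited via a fudge factor $\alpha\sqrt h$ later). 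A spatial modulus is provided, as usual for such nonlocal functionals, by the bound on $E_h(\chi^h)$ itself together with the Gaussian tails. Frechet-Kolmogorov then gives pre-compactness of $\chi^h$ in $L^1((0,T)\times\torus)$, and lower semicontinuity of the $\Gamma$-limit $E$ gives $E(\chi(t))\le E_0$.

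For consistency I would use the second Euler-Lagrange equation of (\ref{MMinterpretation}), obtained by the inner variation $\chi_i\mapsto\chi_i\circ(\mathrm{id}+s\xi)$ against a test field $\xi\in C_0^\infty((0,T)\times\torus,\R^d)$. This yields a discrete identity of the form (first variation of $E_h$) $=$ (first variation of the dissipation), which I would integrate in time and pass to the limit term by term. The energetic half is the content of Section \ref{sec:curvature}: under (\ref{conv_ass}) one upgrades the $\Gamma$-convergence $E_h\to E$ to a \emph{distributional} convergence of its first variation, by combining the localized consistency estimates of \cite{EseOtt14} with a Reshetnyak-continuity argument on the pair-interface measures; the limit integrand is precisely $\nabla\cdot\xi-\nu_i\cdot\nabla\xi\,\nu_i$ integrated against $\tfrac12(|\nabla\chi_i|+|\nabla\chi_j|-|\nabla(\chi_i+\chi_j)|)$.

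The main obstacle, and the technical core of the paper, is passing to the limit in the metric half of the Euler-Lagrange equation to recover the velocity-times-normal term in (\ref{H=v}); one must pass to the limit in a \emph{product} of the normal velocity and the normal, and the discrete velocity is only controlled after Gaussian mollification on the mesoscopic scale $\sqrt h$, not on the microscopic scale $h$. My plan is to localize on a space-time patch, approximate the limit configuration on the patch by a two-phase flat interface, and invoke the key analytic lemma (Lemma \ref{la_periodic} in the two-phase case and Lemma \ref{la_1d_multiphase} in general): if a mollified characteristic function $u=G_h\ast\chi$ is close to monotone in some direction (a consequence of small tilt excess of $\chi$, cf.\ Lemma \ref{lem_local_estimates}), then the spatial $L^1$-distance between the level sets $\{u>\tfrac12\}$ and $\{\tilde u>\tfrac12\}$ is controlled by the squared $L^2$-distance of $u$ and $\tilde u$. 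Because the dissipation controls exactly this latter quantity, the lemma converts the mesoscopic velocity bound into the microscopic $L^1$-statement needed to identify the product limit, up to a multi-phase tilt-excess error on the patch. Assembling patches by a partition of unity in space with complete localization in time and taking the patch size to zero, the total error is the time integral of the sum of tilt excesses on patches of finite overlap; De Giorgi's structure theorem, applied slice-by-slice in time, drives this to zero and closes (\ref{H=v}). The normal-velocity identity (\ref{v=dtX}) is extracted in parallel, since the dissipation directly controls the discrete time derivative of $\chi^h$.
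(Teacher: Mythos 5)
Your plan coincides with the paper's own proof: compactness and the construction of the normal velocities $V_i$ from the energy-dissipation estimate (Section \ref{sec:comp}), the Euler--Lagrange equation of (\ref{MMinterpretation}) with the Reshetnyak-type upgrade of $\Gamma$-convergence to convergence of first variations for the energetic term (Section \ref{sec:curvature}), the one-dimensional monotonicity lemmas together with the tilt-excess control to convert the mesoscopic $L^2$-dissipation bound into the microscopic $L^1$-control needed for the velocity term (Section \ref{sec:velocity}), and the final assembly by a partition of unity plus De Giorgi's structure theorem with the limits $r\to0$, then $\alpha\to0$ (Section \ref{sec:conv}). At this level of detail the proposal is correct and essentially identical to the argument carried out in the paper.
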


\begin{rem}
 An upcoming result of the authors will show that under the assumption (\ref{conv_ass}) the limit $\chi$ solves a localized energy inequality
 and is thus a weak solution in the sense of Brakke.
\end{rem}

\begin{rem}\label{timescales}
 Our proof uses the following three different time scales:
 \begin{enumerate}
  \item  The \emph{macroscopic time scale}, $T<\infty$, given by the finite time horizon,
  \item  the \emph{mesoscopic time scale},  $\tau=\alpha\h \sim \h>0$ and
  \item  the \emph{microscopic time scale}, $h>0$,  coming from the time discretization.
 \end{enumerate}
The mesoscopic time scale arises naturally from the scheme:
Due to the parabolic scaling, the microscopic time scale $h$ corresponds to the length scale
$\h$ as can be seen from the kernel $G_h$.
Since for a smooth evolution, the normal velocity $V$ is of order $1$, this prompts the
mesoscopic time scale $\h$.\\
The parameter $\alpha$ will be kept fixed most of the time until the very end, where we send $\alpha\to 0$.
Therefore, it is natural to think of $\alpha \sim 1$, but small.\\
These three time scales go hand in hand with the following numbers, which we will for simplicity assume
to be natural numbers throughout the proof:
\begin{enumerate}
 \item $N$ - the total number of microscopic time steps in a macroscopic time interval $(0,T)$,
 \item $\kmax$ - the number of microscopic time steps in a mesoscopic time interval $(0,\tau)$ and
 \item $\lmax$ - the number of mesoscopic time intervals in a macroscopic time interval.
\end{enumerate}
The following simple identities linking these different parameters will be used frequently:
\begin{align*}
T= Nh = L \tau , \quad \tau = K h,\quad  L = \frac NK = \frac T\tau.
\end{align*}
\end{rem}
\begin{figure}%loss_of_area
\centering
\begin{tikzpicture}[scale=1]
\draw[->] (0,0) -- (11,0);
\node[below=.37,anchor=base] at (11,0) {$t$};
\node[above] at (11,0) {$\#$steps};

\node [below=.37,anchor=base] at (0,0) {$0$};
\node [below=.37,anchor=base] at (10,0) {$T$};
\node [above] at (0,0) {$0$};
\node [above] at (10,0) {$N$};

\foreach \index in {0, ..., 24}
{%
\draw (\index*10/24,-.04) -- (\index*10/24,.04) ;
}
\node [below=.37,anchor=base] at (10/24,0) {$h$};
\node [above] at (10/24,0) {$1$};
\node [below=.37,anchor=base] at (2*10/24,0) {$2h$};
\node [above] at (2*10/24,0) {$2$};

\foreach \index in {0, ..., 4}
{%
\draw (\index*10/4,-2*.04) -- (\index*10/4,2*.04) ;
}
\draw  (0,-2*.04) -- (0,2*.04) ;
\draw  (10,-2*.04) -- (10,2*.04) ;
\node [below=.37,anchor=base] at (10/4,0) {$\tau$};
\node [above] at (10/4,0) {$K$};
\node [below, anchor=base] at (2*10/4,-10*.04) {$2\tau$};
\node [above] at (2*10/4,0) {$2K$};
\end{tikzpicture}

\caption{The micro-, meso-, and macroscopic time scales $h$, $\tau$ and $T$.}%
\label{fig_timescales}%
\end{figure}
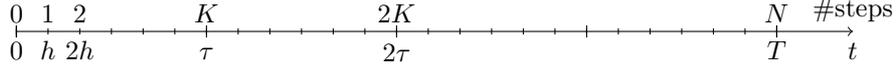

\section{Compactness}\label{sec:comp}
In this section we prove the compactness of the approximate solutions,
construct the normal velocities and derive bounds on these velocities.
In the first subsection we present all results of this section;
the proofs can be found in the subsequent subsection.

\subsection{Results}\label{sub:comp results}
The first main result of this section is the following compactness statement.
%-----------------------------------------------
%
%		PROPOSITION COMPACTNESS
%
%-----------------------------------------------
\begin{prop}[Compactness]\label{comp_prop}
 There exists a sequence $h\downarrow 0$ and a limit
$\chi \colon(0,T)\times\torus \to \{0,1\}^\numphases$ such that
\begin{align}\label{comp_conv_2}
 \chi^h\longrightarrow \chi \quad \text{a.\ e. in } (0,T)\times \torus
\end{align}
and the limit satisfies 
$
 E(\chi(t)) \leq E_0
$ and $\chi(t)$ is admissible in the sense of (\ref{admissible}) for a.\ e.\ $t\in (0,T)$.
\end{prop}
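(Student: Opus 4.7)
My strategy is to extract a subsequence converging in $L^1((0,T)\times\torus)$ by the Fr\'echet--Kolmogorov compactness criterion applied to each component $\chi_i^h$, and then to pass to a further subsequence in order to upgrade this to the claimed a.e.\ convergence (\ref{comp_conv_2}). The uniform bound $\|\chi_i^h\|_\infty \leq 1$ is trivial, so the entire content of the proposition is the uniform-in-$h$ equicontinuity of $\{\chi_i^h\}$ in space and in time, which will be provided by the forthcoming Lemma \ref{comp_lem_z_shift} and Lemma \ref{comp_lem_tau_shift}, respectively.

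Both moduli originate from a single basic energy-dissipation estimate inherent to the minimizing movements reformulation (\ref{MMinterpretation}). Inserting the competitor $\chi=\chi^{n-1}$ yields the one-step inequality
\begin{equation*}
E_h(\chi^n) - E_h(\chi^n-\chi^{n-1}) \leq E_h(\chi^{n-1}),
\end{equation*}
and by the conditional negative-definiteness of $\sigma$ the dissipation $-E_h(\chi^n-\chi^{n-1})$ is non-negative. Telescoping produces on the one hand the pointwise-in-time monotonicity
\begin{equation*}
E_h(\chi^h(t)) \leq E_h(\chi^0) \lesssim E_0,
\end{equation*}
and on the other hand the finite-total-dissipation bound
\begin{equation*}
\sum_{n=1}^{N} \bigl(-E_h(\chi^n-\chi^{n-1})\bigr) \leq E_h(\chi^0) \lesssim E_0.
\end{equation*}

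The spatial modulus is the easier of the two and follows directly from the first bound: since $E_h$ encodes a Gaussian cross-energy $\int\chi_i\,G_h\ast\chi_j$ up to a factor $1/\sqrt h$, an elementary manipulation produces an estimate of the type $\int|\chi_i(\cdot+z)-\chi_i|\,dx \lesssim \sqrt{h}\,E_h(\chi) + |z|^2/\sqrt{h}$ and hence a $z$-modulus uniform in $t$ and $h$. The temporal modulus is the delicate step and, in my view, the genuine obstacle. The dissipation only controls an $L^2$-type squared norm of the \emph{mollified} increment $G_{h/2}\ast(\chi^n-\chi^{n-1})$, not the $L^1$-norm of the raw increment $\chi^n-\chi^{n-1}$, so direct telescoping and integration over a time interval of length $\tau$ is not possible. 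The remedy is to bundle the $N=T/h$ microscopic steps into $L$ packets of size $K$, i.e.\ to work on the mesoscopic time scale $\tau = Kh$: writing
\begin{equation*}
\chi^h(t+\tau)-\chi^h(t) = \sum_{n} (\chi^n-\chi^{n-1}),
\end{equation*}
applying Cauchy--Schwarz in $n$ within each packet, integrating in $t$, and summing over the $L$ packets, the dissipation bound translates into an estimate of the form
\begin{equation*}
\int_0^{T-\tau}\!\!\int |\chi_i^h(t+\tau)-\chi_i^h(t)|\,dx\,dt \lesssim \sqrt{\tau\,E_0} + (\text{mollification error}),
\end{equation*}
where the mollification error from passing between $\chi^n-\chi^{n-1}$ and $G_{h/2}\ast(\chi^n-\chi^{n-1})$ remains controlled precisely when $\tau \gtrsim \sqrt{h}$. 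This is the first and simplest occurrence of the mesoscopic length $\sqrt{h}$ in the paper, as the scale at which the $L^2$-dissipation and the $L^1$-increment of $\chi$ become comparable.

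With both moduli in hand, Fr\'echet--Kolmogorov delivers an $L^1$-convergent subsequence, and a diagonal extraction upgrades this to a.e.\ convergence. Admissibility passes to the limit: $\chi_i \in \{0,1\}$ a.e.\ by pointwise convergence, and $\sum_i\chi_i = 1$ is preserved by linearity. Finally, for a.e.\ $t$ Fubini yields $\chi^h(t,\cdot)\to\chi(t,\cdot)$ in $L^1(\torus)$, so the $L^1$-$\Gamma$-$\liminf$ inequality from \cite{EseOtt14} combined with the pointwise monotonicity and a recovery sequence ensuring $E_h(\chi^0)\to E_0$ gives
\begin{equation*}
E(\chi(t)) \leq \liminf_{h\downarrow 0} E_h(\chi^h(t)) \leq E_0,
\end{equation*}
as claimed.
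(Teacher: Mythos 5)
Your overall route is the paper's route: an adaptation of the Riesz--Kolmogorov criterion driven by the two almost-BV moduli of Lemma \ref{comp_lem_z_shift} (space) and Lemma \ref{comp_lem_tau_shift} (time), both extracted from the one-step minimality inequality $E_h(\chi^n)-E_h(\chi^n-\chi^{n-1})\le E_h(\chi^{n-1})$ and the conditional negativity of $\sigma$; the time modulus is obtained, exactly as you propose, by bundling microscopic steps into mesoscopic packets $\tau=Kh\sim\sqrt h$ and using the triangle inequality for $\sqrt{-E_h}$ plus Jensen within each packet; admissibility and $E(\chi(t))\le E_0$ then follow from a.e.\ convergence, the $\Gamma$-$\liminf$ inequality and $E_h(\chi^0)\le E_0$ (consistency together with approximate monotonicity). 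So conceptually there is nothing new or missing in your plan.

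However, the two intermediate bounds you display are not what the argument yields, and the spatial one would sink the compactness step if taken literally. The correct spatial estimate is $\int|\chi^h(x+\delta e,t)-\chi^h(x,t)|\,dx\lesssim E_0(\delta+\sqrt h)$, obtained from $\int|\nabla G_h\ast\chi^h|\,dx\lesssim E_0$ (Hadamard's trick) together with $\int|\chi^h-G_h\ast\chi^h|\,dx\lesssim\sqrt h\,E_0$; your form $\sqrt h\,E_h+|z|^2/\sqrt h$ blows up as $h\downarrow0$ at fixed $|z|$, so it provides no modulus that is uniform over the family and the Kolmogorov argument cannot be run with it. Likewise the time-integrated increment bound is $\iint|\chi^h(t+\tau)-\chi^h(t)|\,dx\,dt\lesssim(1+T)E_0(\tau+\sqrt h)$ -- proved first for $\tau\sim\sqrt h$ and then iterated for larger $\tau$ -- not $\sqrt{\tau E_0}$. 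Finally, the step you call the ``mollification error'' is precisely where the $\{0,1\}$-valuedness enters and should be made explicit: one converts the $L^2$-control of $G_{h/2}\ast(\chi^{n}-\chi^{n-1})$ into $L^1$-control of the raw mesoscopic increment via $|\chi-\tilde\chi|\le(\chi-\tilde\chi)\,G_h\ast(\chi-\tilde\chi)+|\chi-G_h\ast\chi|+|\tilde\chi-G_h\ast\tilde\chi|$, each correction term costing $\sqrt h\,E_0$ per time slice; a plain Cauchy--Schwarz between $L^2$ and $L^1$ on the torus loses too much. Since the resulting moduli carry the additive $\sqrt h$, the criterion does not apply verbatim uniformly in $h$; as in the paper, one treats the finitely many elements of the sequence with $h>h_0$ separately.
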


The second main result of this section is the following construction of the
normal velocities and the square-integrability under the convergence assumption (\ref{conv_ass}).
%-----------------------------------------------
%		d_t X is a measure
%		d_t X << |DX|
%		v e L^2
%-----------------------------------------------
\begin{prop}\label{lem_dtX<<DX}
If the convergence assumption (\ref{conv_ass}) holds, 
the limit $\chi=\lim_{h\to0} \chi^h$ has the following properties.
\begin{enumerate}[(i)]
 \item $\partial_t \chi$ is a Radon measure with
  \begin{align*}
   \iint \left| \partial_t \chi_i\right|
 \lesssim (1+T)E_0
  \end{align*}
 for each $i\in\{1,\dots,\numphases\}.$
 \item For each $i\in\{1,\dots,\numphases\}$, $\partial_t \chi_i$ is absolutely continuous w.\ r.\ t.
$\left|\nabla \chi_i \right|dt$. 
    In particular, there exists a density
    $V_i\in L^1(\left|\nabla \chi_i \right|dt)$ such that
\begin{align*}
 -\int_0^T\int \partial_t \zeta \,\chi_i\, dx\,dt 
= \int_0^T\int \zeta \, V_i \left|\nabla \chi_i \right|dt
\end{align*}
for all $\zeta\in C_0^\infty ((0,T)\times\torus)$.
%  \item We have a weak $L^2$-bound: For all $M>0$, we have
%   \begin{align}\label{weak_L2_bound}
% %   \iint_{\{|v|>M\}} \left| v\right| \left|\nabla\chi\right| dt 
% %   \lesssim \frac1{M^{p/2}}  E_0^{1/2} \left\|v\right\|_{L^p}^{p/2}.
% \iint_{\{|v_i|>M\}} \left|\nabla\chi_i\right| dt 
% \lesssim \frac{E_0}{M^2}.
%   \end{align}
%    In particular, $v_i\in L^p(\left|\nabla \chi_i \right|dt)$ for all $1\leq p < 2$.
 \item  We have a strong $L^2$-bound: For each $i\in\{1,\dots,\numphases\}$
 \begin{align*}
  \int_0^T \int V_i^2 \left| \nabla \chi_i \right| dt \lesssim \left(1+T\right)E_0.
 \end{align*}
\end{enumerate}
\end{prop}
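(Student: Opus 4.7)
My plan is to derive all three parts from a single weak-type inequality
\begin{equation*}
\left|\int_0^T\!\!\int \partial_t\zeta\,\chi_i\,dx\,dt\right|^2 \lesssim (1+T)\,E_0 \int_0^T\!\!\int \zeta^2\,|\nabla\chi_i|\,dt,
\end{equation*}
valid for every $\zeta\in C_c^\infty((0,T)\times\torus)$. Riesz representation on $L^2(|\nabla\chi_i|\,dt)$ then produces the density $V_i$ with $\int\!\!\int V_i^2 |\nabla\chi_i|\,dt \lesssim (1+T)E_0$, giving both (ii) and (iii) in one stroke. Statement (i) follows by Cauchy–Schwarz between $|V_i|$ and $1$ in $L^2(|\nabla\chi_i|\,dt)$, combined with $\int_0^T |\nabla\chi_i|(\torus)\,dt \lesssim T\,E_0$ coming from the bound $E(\chi(t))\leq E_0$ in Proposition \ref{comp_prop}.

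The starting point for the weak-type inequality is the energy-dissipation bound coming from the minimizing movements interpretation (\ref{MMinterpretation}): testing with $\chi=\chi^{n-1}$ and using the conditional negative-definiteness of $\sigma$ yields
\begin{equation*}
\sum_n \frac{1}{\sqrt h}\sum_i \int |G_{h/2}\ast(\chi_i^n-\chi_i^{n-1})|^2\,dx \;\lesssim\; E_0.
\end{equation*}
To convert this into the desired bound, I work on the mesoscopic time scale $\tau=\alpha\sqrt h$ from Remark \ref{timescales}. Replacing $\partial_t\zeta$ by the mesoscopic difference quotient and performing a discrete summation by parts, the left-hand side reduces to
$\frac{1}{\tau}\sum_m\int \zeta(m\tau,\cdot)\,(\chi_i^{mK}-\chi_i^{(m-1)K})\,dx$.
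I then replace $\zeta$ by $G_{h/2}\ast\zeta$ at an error of order $h\,\|D^2\zeta\|_\infty$, use the self-adjointness of $G_{h/2}$ to transfer the convolution onto the characteristic-function difference, and apply Cauchy–Schwarz in space and in $m$. The resulting bound is the product of a \emph{dissipation factor}, controlled via the telescope
$\|G_{h/2}\ast(\chi_i^{mK}-\chi_i^{(m-1)K})\|_{L^2}^2 \leq K\sum_{k=0}^{K-1}\|G_{h/2}\ast(\chi_i^{(m-1)K+k+1}-\chi_i^{(m-1)K+k})\|_{L^2}^2$
together with the basic estimate above (yielding $\lesssim E_0$), and a \emph{test-function factor} of the form $\sum_m\tau\int \zeta^2\,d\mu_i^h$, where $\mu_i^h$ is a localized $\sqrt h$-approximate interface measure built from the smoothed characteristic functions. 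Here the mesoscopic choice $\tau = \alpha\sqrt h$ is crucial: the factor $K=\tau/h$ produced by the telescope is exactly compensated by the $1/\sqrt h$ in the dissipation, giving a finite limiting constant.

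The main obstacle is the passage to the limit in the test-function factor: one must identify the limit of the localized approximate interface measures $\mu_i^h\,dt$ with $|\nabla\chi_i|\,dt$. Here the convergence assumption (\ref{conv_ass}) is indispensable. The $\Gamma$-convergence $E_h\to E$ gives a lower-semicontinuity inequality pointwise in $t$, and assumption (\ref{conv_ass}) upgrades the corresponding inequality between time-integrals to equality, forcing strong $L^1_t$-convergence $E_h(\chi^h(t))\to E(\chi(t))$. Localizing the consistency ingredients of the $\Gamma$-convergence then yields the convergence $\mu_i^h\,dt \to |\nabla\chi_i|\,dt$ in the sense required to close the estimate. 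Sending $h\downarrow 0$ first with $\alpha$ fixed, and only afterwards $\alpha\downarrow 0$, produces the weak-type inequality and completes the proof.
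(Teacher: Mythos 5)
There is a genuine gap, and it sits exactly at the claimed single weak-type inequality
\begin{align*}
\Big|\int_0^T\!\!\int \partial_t\zeta\,\chi_i\,dx\,dt\Big|^2 \lesssim (1+T)E_0\int_0^T\!\!\int \zeta^2\,|\nabla\chi_i|\,dt .
\end{align*}
Your Cauchy--Schwarz scheme cannot produce a right-hand side weighted by the (approximate) interface measure alone. The basic estimate (\ref{energy_dissipation_estimate}) only controls the \emph{unweighted} dissipation $\sum_n\frac1{\sqrt h}\int|G_{h/2}\ast(\chi^n-\chi^{n-1})|^2dx$. To make the ``test-function factor'' carry $\zeta^2$ against an approximate interface density $\mu_i^h$, the dual ``dissipation factor'' produced by Cauchy--Schwarz must be the dissipation weighted by the \emph{reciprocal} of that density, and this is not controlled: the mesoscopic differences $G_{h/2}\ast(\chi_i^{mK}-\chi_i^{(m-1)K})$ need not be supported where the interface density of $\chi^h$ is bounded below, and with the trivial weight $1$ Cauchy--Schwarz only yields $\|\zeta\|_{L^2(dx\,dt)}$ with a prefactor blowing up like $\tau^{-3/2}$. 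In other words, the dissipation is a genuinely separate object: in the paper it survives into the limit as the dissipation measure $\mu$ of Definition \ref{def_diss_meas}, which is merely a finite measure and is not known to be absolutely continuous with respect to, or dominated by, $|\nabla\chi_i|\,dt$. Consequently the inequality one can actually prove is the two-measure bound (\ref{pf_dtX<<DX}),
\begin{align*}
-\int_0^T\!\!\int \partial_t\zeta\,\chi_i\,dx\,dt \lesssim \frac1\alpha\int_0^T\!\!\int|\zeta|\,|\nabla\chi_i|\,dt + \alpha\iint|\zeta|\,d\mu ,
\end{align*}
and your identification step $\mu_i^h\,dt\to|\nabla\chi_i|\,dt$ (which is fine, and is exactly Lemma \ref{la_impl_conv_ass}) only handles the first term, not the dissipation term.

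Because of this, the ``Riesz representation in $L^2(|\nabla\chi_i|\,dt)$'' shortcut that was meant to give (ii) and (iii) in one stroke does not get off the ground. Note also that optimizing the two-term bound over a constant $\alpha$ gives at best $\big|\int\!\!\int\partial_t\zeta\,\chi_i\big|^2\lesssim E_0\,\|\zeta\|_\infty\int\!\!\int|\zeta|\,|\nabla\chi_i|\,dt$, i.e.\ only a weak $L^2$ statement -- the paper explicitly remarks on this. The actual proof needs two separate devices: absolute continuity (ii) follows because the left-hand side of (\ref{pf_dtX<<DX}) is independent of $\alpha$, so on open sets that are $|\nabla\chi_i|\,dt$-null one may send $\alpha\to0$; and the strong bound (iii) follows by localizing the fudge factor, i.e.\ taking $\alpha=\alpha(t,x)\sim 1/|V_i|$ and $\zeta=V_i$ and absorbing the interface term into the left-hand side, whereupon the $L^2$ norm of $V_i$ is bounded by the total mass $\mu([0,T]\times\torus)\lesssim E_0$, not by a duality against $|\nabla\chi_i|\,dt$. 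Your statement (i) via Cauchy--Schwarz would be fine once (iii) is available, but as it stands it rests on the unproved weak-type inequality; the paper proves (i) directly and much more cheaply from the almost-BV-in-time estimate.
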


Both results essentially stem from the following 
basic estimate, a direct consequence of the minimizing movements interpretation
(\ref{MMinterpretation}).
%
%-----------------------------------------------
%
%		ENERGY DISSIPATION ESTIMATE
%
%-----------------------------------------------
\begin{lem}[Energy-dissipation estimate]\label{lem_energy_dissipation_estimate}
 The approximate solutions satisfy
\begin{align}\label{energy_dissipation_estimate}
 E_h(\chi^N) - \sum_{n=1}^N E_h(\chi^n-\chi^{n-1}) \leq E_0.
\end{align}
% for each $n\in \{1,\dots,N\}$. 
$\sqrt{-E_h}$ defines a norm on the \emph{process space}
 $\{ \omega \colon \torus \to \R^\numphases | \sum_i \omega_i = 0 \}.
 $
In particular, the algorithm dissipates energy.
\end{lem}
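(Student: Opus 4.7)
The plan splits along the two assertions of the lemma. For the energy-dissipation estimate (\ref{energy_dissipation_estimate}) the argument is a direct unwinding of the minimizing movements interpretation (\ref{MMinterpretation}). Since $\chi^n$ is the minimizer of $\chi \mapsto E_h(\chi) - E_h(\chi - \chi^{n-1})$ over admissible partitions, and since $\chi^{n-1}$ itself is admissible with $E_h(\chi^{n-1} - \chi^{n-1}) = E_h(0) = 0$ (extending $E_h$ bilinearly to arbitrary vector-valued functions), one gets the one-step inequality
\begin{align*}
E_h(\chi^n) - E_h(\chi^n - \chi^{n-1}) \leq E_h(\chi^{n-1}).
\end{align*}
Summing over $n = 1, \dots, N$ telescopes to $E_h(\chi^N) - \sum_{n=1}^N E_h(\chi^n - \chi^{n-1}) \leq E_h(\chi^0)$. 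It then remains to replace $E_h(\chi^0)$ by $E_0 = E(\chi^0)$; this is the usual recovery-sequence bound $E_h(\chi) \leq E(\chi)$ on admissible characteristic functions, obtained by rewriting $\int \chi_i\, G_h \ast \chi_j\, dx$ as a double integral in $(x,z)$ against $G_h(z)$, using the elementary BV estimate $\int |\chi_i(x+z) - \chi_i(x)|\, dx \leq |z|\int |\nabla \chi_i|$, and computing the first moment of $G_h$ to produce the constant $c_0$.

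For the second assertion, the form $E_h$ is symmetric and bilinear in the process variable, so I would diagonalize it via Fourier series on $\torus$. Using Plancherel and the convolution theorem,
\begin{align*}
-E_h(\omega) = \frac{1}{\sqrt{h}} \sum_{k} \hat{G}_h(k)\, (-\sigma)\bigl(\hat\omega(k), \overline{\hat\omega(k)}\bigr).
\end{align*}
The process-space constraint $\sum_i \omega_i = 0$ becomes $\sum_i \hat{\omega}_i(k) = 0$, so each Fourier coefficient $\hat{\omega}(k)$ lies in the complexification of $(1,\dots,1)^\perp$. The conditional negative-definiteness of $\sigma$ then gives $-\sigma(\hat\omega(k), \overline{\hat\omega(k)}) \geq \underline{\sigma}\,|\hat\omega(k)|^2$. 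Combined with the strict positivity $\hat{G}_h(k) > 0$ of the Gaussian's Fourier coefficients, this yields $-E_h(\omega) \geq 0$ with equality precisely when $\omega = 0$. Homogeneity $\sqrt{-E_h(\lambda\omega)} = |\lambda|\sqrt{-E_h(\omega)}$ is immediate from bilinearity, and the triangle inequality is Cauchy--Schwarz applied to the positive-definite inner product obtained by polarizing $-E_h$. As an immediate consequence, every term $-E_h(\chi^n - \chi^{n-1})$ in the telescoped sum is non-negative, which in particular yields $E_h(\chi^N) \leq E_h(\chi^0) \leq E_0$ and hence the stated dissipation property of the scheme.

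The main obstacle is the small but not entirely formal step $E_h(\chi^0) \leq E(\chi^0)$; everything else is either bookkeeping from the variational characterization of one step or a Fourier-space application of the conditional negative-definiteness hypothesis. Note that this pointwise bound is precisely the $\Gamma$-limsup half of the $\Gamma$-convergence result of \cite{EseOtt14}, so one could also simply cite it rather than redo the short double-integral calculation.
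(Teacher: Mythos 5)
Your telescoping of the one-step minimality inequality $E_h(\chi^n)-E_h(\chi^n-\chi^{n-1})\le E_h(\chi^{n-1})$ and your treatment of the norm statement are both sound; the latter is essentially a Plancherel rephrasing of the paper's argument, which instead splits $G_h=G_{h/2}\ast G_{h/2}$ and applies the conditional negative-definiteness pointwise to $G_{h/2}\ast\omega\in(1,\dots,1)^\perp$. The genuine gap is your justification of $E_h(\chi^0)\le E_0$. First, even in the two-phase case the isotropic estimate $\int|\chi(x+z)-\chi(x)|\,dx\le|z|\int|\nabla\chi|$ combined with the first absolute moment of $G_h$ produces the constant $\tfrac12\int|z|\,G(z)\,dz$, which for $d\ge2$ is strictly larger than $c_0=\tfrac12\int|z_1|\,G(z)\,dz$; to get the sharp constant one needs the directional estimate $\int|\chi(x+z)-\chi(x)|\,dx\le\int|\nu\cdot z|\,|\nabla\chi|$. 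Second, and more seriously, in the multi-phase case $E$ contains the negative contributions $-\int|\nabla(\chi_i+\chi_j)|$, so an upper bound on $E_h$ cannot be assembled pair by pair from single-function BV estimates: at fixed $h$ the cross term $\frac1{\sqrt h}\int\chi_i\,G_h\ast\chi_j\,dx$ can be of order one even when $\H^{d-1}(\partial^\ast\Omega_i\cap\partial^\ast\Omega_j)=0$ (two phases separated by a layer of a third phase of thickness much smaller than $\sqrt h$), and it is precisely the triangle inequality for $\sigma$ that compensates such configurations. So the inequality $E_h(\chi)\le E(\chi)$ you rely on is true, but not for the reason you give.

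The paper obtains it by combining the approximate monotonicity of Lemma \ref{lem_AM_EO} with the consistency of Lemma \ref{lem_CONS_EO}: sending $h\to0$ in (\ref{approximate monotonicity}) applied to $\chi^0$ and using (\ref{consistency}) yields $E_{h_0}(\chi^0)\le E(\chi^0)=E_0$ for every fixed $h_0$, which is exactly what is needed to replace $E_h(\chi^0)$ by $E_0$ in the telescoped inequality. Note also that your fallback of citing the $\Gamma$-limsup inequality only gives $\limsup_{h\to0}E_h(\chi^0)\le E_0$, i.e. $E_h(\chi^0)\le E_0+o(1)$, which is weaker than the lemma as stated (a bound at every fixed $h$); it is the near-monotonicity in $h$ that upgrades the limit statement to a fixed-$h$ bound.
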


%-----------------------------------------------
%
%		COMP. IN SPACE
%
%-----------------------------------------------
In order to prove Proposition \ref{comp_prop} we derive estimates on time-
and space-variations of the approximations only using the basic estimate 
(\ref{energy_dissipation_estimate}).

The estimate (\ref{energy_dissipation_estimate}) bounds the
(approximate) energies $E_h(\chi^h)$, which in turn control
$\int \left| \nabla G_h\ast \chi^h\right| dx$ and thus variations of $G_h\ast \chi^h$ in space.
On length scales greater than $\h$, this estimate also survives for the approximations $\chi^h$.

\begin{lem}[Almost BV in space]\label{comp_lem_z_shift}
The approximate solutions satisfy
\begin{align}\label{comp_z_shift}
 \int_0^{T} \int \left| \chi^h(x+\delta e,t)-\chi^h(x,t) \right| dx\,dt 
\lesssim (1+T)E_0\left(\delta +\h\right)
\end{align}
for any $\delta >0$ and $e\in \sphere$.
%\begin{align}\label{comp_bound_on_gradient}
% \int_0^T \int \left| \nabla ( G_h\ast \chi^h)\right| \,dx\,dt \lesssim (1+T)E_0.
%\end{align}
\end{lem}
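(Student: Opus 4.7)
The plan is to replace the characteristic functions by their mollifications $u_i := G_h\ast\chi_i$, and separately estimate the ``replacement error'' $\chi_i-u_i$ and the spatial translation of the smooth function $u_i$. Concretely, the triangle inequality gives, for each fixed time $t$,
\[
\int|\chi_i(\cdot+\delta e)-\chi_i|\,dx \;\le\; 2\int|\chi_i-u_i|\,dx \;+\; \delta\int|\nabla u_i|\,dx,
\]
so the task reduces to proving, for admissible $\chi$, the pointwise-in-time estimates
\[
\int|\chi_i-u_i|\,dx \;\lesssim\; \sqrt{h}\,E_h(\chi) \qquad\text{and}\qquad \int|\nabla u_i|\,dx \;\lesssim\; E_h(\chi).
\]
Integrating in $t$ and using the basic estimate (\ref{energy_dissipation_estimate}), which gives $E_h(\chi^h(t))\le E_0$ and therefore $\int_0^T E_h(\chi^h(t))\,dt\le TE_0\le(1+T)E_0$, yields the claim.

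The first estimate is a direct computation: since $u_i\in[0,1]$ and $\chi_i\in\{0,1\}$, $|\chi_i-u_i|=\chi_i(1-u_i)+(1-\chi_i)u_i$. Using $\sum_j\chi_j=1$, the self-adjointness of convolution with $G_h$, and $\sigma_{ij}\geq\sigma_{\min}$,
\[
\int|\chi_i-u_i|\,dx \;=\; 2\sum_{j\neq i}\int\chi_i\,G_h\ast\chi_j\,dx \;\le\; \tfrac{2}{\sigma_{\min}}\sum_{ij}\sigma_{ij}\int\chi_i\,G_h\ast\chi_j\,dx \;=\; \tfrac{2}{\sigma_{\min}}\sqrt{h}\,E_h(\chi).
\]

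The second estimate is the main obstacle and where the length scale $\sqrt{h}$ enters. Using $\int\nabla G_h\,dz=0$, we rewrite
\[
\nabla u_i(x) \;=\; -\int\nabla G_h(z)\,\big(\chi_i(x-z)-\chi_i(x)\big)\,dz,
\]
take absolute values, integrate in $x$, and apply Fubini to get
\[
\int|\nabla u_i|\,dx \;\le\; \int|\nabla G_h(z)|\int|\chi_i(x)-\chi_i(x-z)|\,dx\,dz \;\le\; \tfrac{2}{\sigma_{\min}}\int|\nabla G_h(z)|\,F(z)\,dz,
\]
where $F(z):=\sum_{ij}\sigma_{ij}\int\chi_i(x)\chi_j(x-z)\,dx$; the last step uses $\int|\chi_i-\chi_i(\cdot-z)|\,dx=2\sum_{j\neq i}\int\chi_i\chi_j(\cdot-z)\,dx$ and $\sigma_{ij}\geq\sigma_{\min}$. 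The explicit formula $|\nabla G_h(z)|=\tfrac{|z|}{h}G_h(z)$ combined with $\sup_z\tfrac{|z|}{h}e^{-|z|^2/(4h)}\lesssim h^{-1/2}$ yields the pointwise Gaussian bound
\[
|\nabla G_h(z)| \;\lesssim\; h^{-1/2}\,G_{2h}(z).
\]
The point of trading $G_h$ for $G_{2h}$ at the price of a factor $h^{-1/2}$ is that $G_{2h}=G_h\ast G_h$, so together with the sub-additivity $F(z_1+z_2)\le F(z_1)+F(z_2)$ (which is a direct consequence of the triangle inequality on $\sigma$ and the partition property $\sum_k\chi_k(x-z_1)=1$) we can transfer the integral against $G_{2h}$ back to one against $G_h$:
\[
\int G_{2h}(z)F(z)\,dz \;=\; \iint G_h(z_1)G_h(z_2)F(z_1+z_2)\,dz_1\,dz_2 \;\le\; 2\int G_h(z)F(z)\,dz \;=\; 2\sqrt{h}\,E_h(\chi).
\]
Combining, $\int|\nabla u_i|\,dx\lesssim h^{-1/2}\cdot\sqrt{h}\,E_h(\chi)=E_h(\chi)$, completing the estimate.

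The hard part here is the second bound: the naive route via Cauchy--Schwarz only yields $\int|\nabla u_i|\lesssim h^{-1/4}\sqrt{E_h}$, which is too weak because it squanders the $L^\infty$ bound $u_i\in[0,1]$. The resolution, described above, is to exploit the sub-additivity of $F$, which encodes the triangle inequality on surface tensions, and to pay the dimensional factor $h^{-1/2}$ only inside the Gaussian via $|\nabla G_h|\lesssim h^{-1/2}G_{2h}$; this is a discrete analogue of the classical observation that perimeter controls the $L^1$-norm of the gradient of the heat-mollified indicator.
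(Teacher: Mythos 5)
Your proof is correct and follows essentially the same route as the paper: the same splitting through the mollified functions $G_h\ast\chi^h$ (the identity (\ref{comp_x-Ghx}) for the replacement error, Hadamard's trick for the translate), the same pointwise bound $|\nabla G_h(z)|\lesssim h^{-1/2}G_{2h}(z)$, and the same use of the energy-dissipation estimate $E_h(\chi^h(t))\le E_0$ before integrating in time. The only minor variation is in passing from the $G_{2h}$-weighted covariance integral back to $E_h(\chi)$: the paper invokes the approximate monotonicity of Lemma \ref{lem_AM_EO}, whereas you re-derive exactly the needed special case inline via the semigroup property $G_{2h}=G_h\ast G_h$ together with the subadditivity of $F$ (the paper's inequality (\ref{surf_mon_triangle_inequality}), proved later from the triangle inequality for $\sigma$ and the partition property), which is the same mechanism underlying that lemma.
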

%

%-----------------------------------------------
%
%		COMP. IN TIME
%
%-----------------------------------------------
Variations in time are controlled by the following lemma coming from 
interpolating the (unbalanced) estimate (\ref{energy_dissipation_estimate})
on time scales of order $\h$.

\begin{lem}[Almost BV in time]\label{comp_lem_tau_shift}
The approximate solutions satisfy
\begin{align}\label{comp_tau_shift}
 \int_\tau^{T} \int \left| \chi^h(t)-\chi^h(t-\tau) \right| dx\,dt 
\lesssim (1+T)E_0 \left( \tau + \h\right)
\end{align}
for any $\tau>0$.
\end{lem}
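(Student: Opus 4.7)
The plan is to combine a sharp algebraic bound on the $L^1$-increment of a characteristic vector with a two-scale dichotomy at the mesoscopic scale $\sqrt h$. First I would reduce to $\tau=Kh$ for integer $K$ (non-integer $\tau$ only costs $O(h)$ boundary terms), so that the target rewrites as $h\sum_{n=K}^{N-1}\int|\chi^n-\chi^{n-K}|\,dx$.

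The core algebraic estimate is that for any two admissible characteristic vectors $\chi,\chi'$ and their difference $\omega=\chi-\chi'$,
\begin{equation*}
\sum_i\int|\omega_i|\,dx\;\lesssim\;\sqrt h\,\bigl(-E_h(\omega)+E_h(\chi)+E_h(\chi')\bigr).
\end{equation*}
The proof uses $|\omega_i|=\omega_i^2$ for $\omega_i\in\{-1,0,1\}$ together with the decomposition $\int\omega_i^2\,dx=\int\omega_i\,G_h\ast\omega_i\,dx+\int\omega_i(\omega_i-G_h\ast\omega_i)\,dx$. Summed over $i$, the first integral is bounded by $\tfrac{\sqrt h}{\underline\sigma}(-E_h(\omega))$ by applying conditional negative-definiteness of $\sigma$ pointwise to $G_{h/2}\ast\omega\in(1,\ldots,1)^\perp$ (using $G_h=G_{h/2}\ast G_{h/2}$). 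The second integral is controlled by $\int|\omega_i-G_h\ast\omega_i|\,dx\leq\int|\chi_i-G_h\ast\chi_i|\,dx+\int|\chi'_i-G_h\ast\chi'_i|\,dx$, and the admissibility $\sum_j\chi_j=1$ together with $\sigma_{ij}\geq\sigma_{\min}$ for $i\neq j$ gives $\int|\chi_i-G_h\ast\chi_i|\,dx=2\sum_{j\neq i}\int\chi_i\,G_h\ast\chi_j\,dx\lesssim\sqrt h\,E_h(\chi)$.

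Applying this with $\omega=\chi^n-\chi^{n-K}$, and using the telescope Cauchy--Schwarz bound in the seminorm $\sqrt{-E_h}$ on the process space (Lemma \ref{lem_energy_dissipation_estimate}), $-E_h(\chi^n-\chi^{n-K})\leq K\sum_{k=1}^K(-E_h(\chi^{n-k+1}-\chi^{n-k}))$, then summing $h\sum_n$ yields
\begin{equation*}
h\sum_{n=K}^{N-1}\int|\chi^n-\chi^{n-K}|\,dx\;\lesssim\;h\sqrt h\,\bigl(K^2+N\bigr)\,E_0\;=\;\frac{\tau^2}{\sqrt h}\,E_0+\sqrt h\,T\,E_0,
\end{equation*}
where the dissipation double sum contributes $K\cdot E_0$ (each microscopic step is counted at most $K$ times and $\sum_m(-E_h(\chi^m-\chi^{m-1}))\leq E_0$ by (\ref{energy_dissipation_estimate})), while $\sum_n E_h(\chi^n)\leq NE_0$ follows from the monotonicity $E_h(\chi^n)\leq E_h(\chi^{n-1})\leq E_0$ built into (\ref{MMinterpretation}) via negative-definiteness.

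The proof concludes by dichotomy at the mesoscopic scale. For $\tau\leq\sqrt h$ one has $\tau^2/\sqrt h\leq\sqrt h$, so the bound above is already $\lesssim(1+T)E_0(\tau+\sqrt h)$. For $\tau>\sqrt h$ I split the increment into $M=\lceil\tau/\sqrt h\rceil$ jumps of size $\leq\sqrt h$ via the pointwise triangle inequality
$|\chi^h(t)-\chi^h(t-\tau)|\leq\sum_{m=1}^M|\chi^h(t-(m-1)\sqrt h)-\chi^h(t-m\sqrt h)|$,
and apply the mesoscopic bound to each term after a time-shift, yielding $\lesssim M\sqrt h(1+T)E_0\lesssim(\tau+\sqrt h)(1+T)E_0$. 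The main difficulty is the algebraic key estimate: the dissipation $-E_h$ only controls an $L^2$-type norm of a smoothed difference, and the identity $|\omega_i|^2=|\omega_i|$ for characteristic differences is precisely what bridges this gap to an $L^1$-bound; the residual $\sqrt h\,T$ reflects the smoothing error and embodies the mesoscopic time scale emerging from the parabolic scaling of the scheme.
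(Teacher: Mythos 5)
Your proof is correct and follows essentially the same route as the paper: the same pointwise decomposition of $|\chi-\tilde\chi|$ into the quadratic term $(\chi-\tilde\chi)\,G_h\ast(\chi-\tilde\chi)$ plus the smoothing errors $|\chi-G_h\ast\chi|$, the same use of conditional negative-definiteness to convert the quadratic term into $-E_h$, the same triangle-plus-Jensen estimate in the seminorm $\sqrt{-E_h}$, and the energy-dissipation estimate, with the mesoscopic scale $\sqrt h$ entering through the balance $\tau^2/\sqrt h + \sqrt h\,T$. The only difference is bookkeeping: you sum over all microscopic times with multiplicity counting and then split general $\tau$ at the scale $\sqrt h$, whereas the paper first reduces to $\tau\in[\sqrt h,2\sqrt h]$ and sums over mesoscopic blocks averaged over offsets — these are the same argument arranged differently.
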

Let us also mention that with the same methods we can prove 
$C^{1/2}$-H\"older-regularity of the volumes, i.\ e.\
$\left|\Omega(s)\Delta \Omega(t)\right| \lesssim \left| s-t\right|^{\frac12}$. 
For the approximations this estimate of course only holds on time scales larger than the time-step size $h$.
%-----------------------------------------------
%
%		HOELDER IN TIME
%
%-----------------------------------------------
\begin{lem}[$C^{1/2}$-Bounds]\label{comp_lem_hoelder}
We have uniform H\"older-type bounds for the approximate solutions:
I.\ e.\ for any pair $s,t\in[0, T]$ with $|s-t|\geq h$ we have
\begin{align}\label{Hoelder_discrete}
 \int \left| \chi^h(s) -\chi^h(t)\right|\,dx \lesssim E_0 \left|s-t\right|^{\frac12}.
\end{align}
In particular, $\chi\in C^{1/2}([0,T],L^1(\torus))$: For almost every $s,t\in(0, T)$, we have
\begin{align}\label{Hoelder_continuous}
 \int \left| \chi(s) -\chi(t)\right|\,dx \lesssim E_0 \left|s-t\right|^{\frac12}.
\end{align}
\end{lem}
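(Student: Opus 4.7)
The plan is to prove the discrete bound (\ref{Hoelder_discrete}) by a telescoping + Cauchy--Schwarz argument building on the energy-dissipation estimate (\ref{energy_dissipation_estimate}), and then obtain the continuous bound (\ref{Hoelder_continuous}) by passing to the a.e.\ limit. After reducing to grid points (for $s,t\in[0,T]$ with $|s-t|\geq h$, set $n=\lfloor s/h\rfloor$, $m=\lfloor t/h\rfloor$, so that $\chi^h(s)=\chi^n$, $\chi^h(t)=\chi^m$ and $|n-m|h\leq 2|s-t|$), the telescoping
\begin{align*}
\|\chi^n-\chi^m\|_{L^1(\torus)} \leq \sum_{k=m+1}^n \|\chi^k-\chi^{k-1}\|_{L^1} \leq \sqrt{n-m}\,\left(\sum_{k=m+1}^n \|\chi^k-\chi^{k-1}\|_{L^1}^2\right)^{1/2}
\end{align*}
reduces the task to the sum estimate $\sum_{k=1}^N \|\chi^k-\chi^{k-1}\|_{L^1}^2 \lesssim h\,E_0^2$, from which $\|\chi^n-\chi^m\|_{L^1}\lesssim E_0\sqrt{(n-m)h}\lesssim E_0\sqrt{|s-t|}$ follows.

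To prove the sum estimate I would derive the single-step inequality
\begin{align*}
\|\chi^k - \chi^{k-1}\|_{L^1}^2 \lesssim h\cdot\bigl(-E_h(\chi^k - \chi^{k-1})\bigr)\cdot\bigl(E_h(\chi^{k-1})+E_h(\chi^k)\bigr),
\end{align*}
which is the discrete counterpart of the Cauchy--Schwarz inequality $\bigl(\int V|\nabla\chi|\bigr)^2 \leq \bigl(\int V^2|\nabla\chi|\bigr)\bigl(\int|\nabla\chi|\bigr)$: $-E_h(\chi^k-\chi^{k-1})$ plays the role of $h\int V^2|\nabla\chi|$ and $E_h$ that of the perimeter $\int|\nabla\chi|$. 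The starting point is the identity $\|f\|_{L^1}=\|f\|_{L^2}^2$ for $f=\chi^k-\chi^{k-1}$ (each component is $\{-1,0,1\}$-valued), which one splits via the heat kernel as $\|f\|_{L^2}^2 = \langle f, G_h\ast f\rangle + \langle f, (I-G_h\ast)f\rangle$. The first term is bounded by $\sqrt h\,(-E_h(f))$ using the conditional negative-definiteness of $\sigma$. For the second, the symmetrization $\langle f, (I-G_h\ast)f\rangle = \tfrac12\int G_h(z)\int|f(x)-f(x-z)|^2\,dx\,dz$ (as in the proof of Lemma \ref{comp_lem_tau_shift}), combined with a $BV$-type bound controlling the $L^1$-modulus of continuity of $f$ in terms of the perimeter, gives a short-scale contribution of order $\sqrt h\,E_h(\chi^{k-1})$. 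Summing the per-step bound over $k$ and using the energy-dissipation estimate $\sum_k(-E_h(\chi^k-\chi^{k-1}))\leq E_0$ from Lemma \ref{lem_energy_dissipation_estimate} together with the monotonicity $E_h(\chi^k)\leq E_0$ (obtained from (\ref{MMinterpretation}) tested with $\chi^{k-1}$) yields the $h\,E_0^2$ sum bound.

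For the continuous bound (\ref{Hoelder_continuous}), by Proposition \ref{comp_prop} we have $\chi^h\to\chi$ a.e.\ in $(0,T)\times\torus$; Fubini and dominated convergence ($|\chi^h|\leq 1$) then give $\chi^h(t)\to\chi(t)$ in $L^1(\torus)$ for a.e.\ $t\in(0,T)$ along a subsequence. At any pair of such Lebesgue times $s,t$, taking $h$ along this subsequence with $h\leq|s-t|$ in (\ref{Hoelder_discrete}) and letting $h\to 0$ yields (\ref{Hoelder_continuous}).

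The main obstacle is the single-step inequality. The difficulty is to reconcile the $L^1$-norm of the time increment (a non-Hilbert quantity) with the Gaussian-mollified, $L^2$-type dissipation $-E_h(f)$ defined at the microscopic scale $\sqrt h$; in particular, the high-frequency part $\langle f,(I-G_h\ast)f\rangle$ must be controlled well enough that the naive defect, which would produce an error term growing with the number of time steps $N=T/h$, is instead absorbed into $h E_0^2$. The symmetrization of the convolution kernel already appearing in the proof of Lemma \ref{comp_lem_tau_shift}, combined with the fact that the $BV$-norm of the increment $f$ is controlled by the surrounding energies $E_h(\chi^{k-1}), E_h(\chi^k)\leq E_0$, is the key technical device that makes the sum telescope as desired.
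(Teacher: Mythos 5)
Your reduction to grid points, the telescoping with Cauchy--Schwarz in the step index, and the passage to the continuous bound \eqref{Hoelder_continuous} at a.e.\ Lebesgue times are all fine, but the heart of the argument --- the single-step product inequality $\|\chi^k-\chi^{k-1}\|_{L^1}^2\lesssim h\,\bigl(-E_h(\chi^k-\chi^{k-1})\bigr)\bigl(E_h(\chi^{k-1})+E_h(\chi^k)\bigr)$ --- has a genuine gap. The decomposition you sketch, $\|f\|_{L^1}=\|f\|_{L^2}^2=\langle f,G_h\ast f\rangle+\langle f,(I-G_h\ast)f\rangle$ with $f=\chi^k-\chi^{k-1}$, does give $\langle f,G_h\ast f\rangle\lesssim\sqrt h\,(-E_h(f))$ by conditional negative-definiteness, and the symmetrized high-frequency part is $\lesssim\sqrt h\,\bigl(E_h(\chi^{k-1})+E_h(\chi^k)\bigr)$; but this yields only the \emph{additive} bound $\|f\|_{L^1}\lesssim\sqrt h\,(-E_h(f))+\sqrt h\,E_0$, not the multiplicative one you need. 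The multiplicative form is the natural continuum heuristic (Cauchy--Schwarz for $\int V|\nabla\chi|$), but it does not follow from your decomposition and is false for general admissible pairs: if $f$ is the indicator of $M$ well-separated specks of radius $\rho\ll\sqrt h$, then $\|f\|_{L^1}\sim M\rho^d$, while $-E_h(f)\sim M\rho^{2d}h^{-(d+1)/2}$ and the energies are $\sim M\rho^{d}h^{-1/2}$, so the claimed inequality would force $(\rho/\sqrt h)^d\gtrsim1$ --- precisely the lack of control of the scheme below the length scale $\sqrt h$ that the paper emphasizes. With only the additive per-step bound, squaring and summing over the $N=T/h$ steps gives $\sum_k\|\chi^k-\chi^{k-1}\|_{L^1}^2\lesssim hE_0^2+TE_0^2$, and your telescoping then produces $\|\chi^n-\chi^m\|_{L^1}\lesssim E_0\sqrt{T\,|s-t|/h}$, which degenerates as $h\to0$; so the route as written does not prove \eqref{Hoelder_discrete}.

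The fix is not per-step Cauchy--Schwarz in the time index but the two-level blocking at a mesoscopic scale, which is what the paper does: for $|s-t|=\tau=m^2h$ split the increment into $m$ blocks of $m$ microscopic steps, apply the additive estimate (via (\ref{comp_x-xtilde})) once per \emph{block}, and relate the block dissipation to the per-step dissipations by the triangle inequality for the norm $\sqrt{-E_h}$ together with Jensen, as in (\ref{triangle and jensen}); this costs a factor $m$. Then the dissipation contribution sums to $m\sqrt h\sum_n\bigl(-E_h(\chi^n-\chi^{n-1})\bigr)\leq m\sqrt h\,E_0$ by \eqref{energy_dissipation_estimate}, and the additive slack $\sqrt h\,\max_t E_h(\chi^h(t))\lesssim\sqrt h\,E_0$ is incurred only $m$ times, so both contributions are $\lesssim m\sqrt h\,E_0=E_0\sqrt\tau$, uniformly in $T$. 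This is exactly the mechanism already used for Lemma \ref{comp_lem_tau_shift}; your treatment of the second half of the lemma coincides with the paper's.
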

%

%-----------------------------------------------
%		
%		DEF DISSIPATION MEASURE
%		
%-----------------------------------------------
For the proof of the second main result of this section, Proposition \ref{lem_dtX<<DX},
and also for later use in Section \ref{sec:velocity} 
it is useful to define certain measures which are induced by the metric term.
These measures allow us to localize the result of Lemma \ref{comp_lem_tau_shift}.
In the two-phase case this is enough to
prove that the measure $\partial_t \chi$ is absolutely continuous
w.\ r.\ t.\ the perimeter and
the existence and integrability of the normal velocity, cf. (i) and (ii) of
Proposition \ref{lem_dtX<<DX}.
The square-integrability follows then from a refinement of these estimates
by localizing the fudge factor $\alpha$ (cf. Remark \ref{timescales}) after passage to the limit
$h\to0$.

\begin{defn}[Dissipation measure]\label{def_diss_meas}
 For $h>0$, we define the \emph{approximate dissipation measures} (associated to the approximate solution
 $\chi^h$) $\mu_h$ on $[0,T]\times \torus$ by
\begin{align}\label{diss meas}
 \iint \zeta \,d\mu_h := \sum_{n=1}^{N} 
 \frac1\h \int \overline \zeta^{n} \left(\left|G_{h/2}\ast \left( \chi^n - \chi^{n-1}\right) \right|^2
 + \left|G_{h}\ast \left( \chi^n - \chi^{n-1}\right) \right|^2\right) dx,
\end{align}
where $\zeta \in C^\infty([0,T]\times \torus)$ and $\overline \zeta^n$ is the time average of $\zeta$
on the interval $[nh,(n+1)h)$.
By the monotonicity of $h\mapsto \|G_h\ast u\|_{L^2}$ and the 
energy-dissipation estimate (\ref{energy_dissipation_estimate}), we have
\begin{align}\label{mu finite}
 \mu_h([0,T]\times\torus) \lesssim E_0
\end{align}
and $\mu_h \rightharpoonup \mu$ after passage to a further subsequence for some finite,
non-negative measure $\mu$ on $[0,T]\times\torus$ with $\mu([0,T]\times\torus) \lesssim E_0$.
We call $\mu$ the \emph{dissipation measure}.
\end{defn}
%-----------------------------------------------
%		
%		E_h -> E
%		
%-----------------------------------------------
In order to prove Proposition \ref{lem_dtX<<DX} also in the multi-phase case we have to ensure that the convergence assumption
implies the convergence of the individual interfacial areas $\frac12 \int \left(\left| \nabla \chi_i \right| + \left| \nabla \chi_j \right|
  - \left| \nabla (\chi_i+\chi_j) \right|\right)$.
\begin{lem}[Implications of convergence assumption]\label{la_impl_conv_ass}
 The convergence assumption (\ref{conv_ass}) ensures that for any pair $i\neq j$ and
 any $\zeta \in C^\infty([0,T]\times\torus)$,
 \begin{align}\label{impl_of_conv_ass}
  \int_0^T \frac 1\h \int \zeta \left(\chi^h_i \, G_h\ast \chi^h_j +  \chi^h_j \, G_h\ast \chi^h_i \right) dx \,dt
  \to c_0 \int_0^T \int \zeta \left(\left| \nabla \chi_i \right| + \left| \nabla \chi_j \right|
  - \left| \nabla (\chi_i+\chi_j) \right|\right)dt,
 \end{align}
 as $h\to0$.
\end{lem}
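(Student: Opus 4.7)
Abbreviate
\[
J_h^{ij}(\zeta) := \int_0^T \frac{1}{\h} \int \zeta \bigl(\chi_i^h\, G_h \ast \chi_j^h + \chi_j^h\, G_h \ast \chi_i^h\bigr) dx\, dt, \quad J^{ij}(\zeta) := c_0 \int_0^T \int \zeta \bigl(|\nabla\chi_i| + |\nabla\chi_j| - |\nabla(\chi_i + \chi_j)|\bigr) dt.
\]
The heart of the proof is the pair-wise, space-time localized $\Gamma$-liminf inequality: for every continuous $\zeta \ge 0$ on $[0,T] \times \torus$ and every pair $i \neq j$,
\[
\liminf_{h \to 0} J_h^{ij}(\zeta) \ge J^{ij}(\zeta). \qquad (\ast)
\]
This is the pair-wise, localized refinement of the $\Gamma$-liminf in \cite{EseOtt14}, where the multi-phase approximate energy is naturally decomposed into pair contributions via a slicing of the Gaussian kernel and a covering of the multi-phase BV surface measure.

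\textbf{Step 2 (Pair-wise total convergence).} Specializing $(\ast)$ to $\zeta \equiv 1$ yields $\liminf_h J_h^{ij}(1) \ge J^{ij}(1)$ for each pair. Since $J_h^{ij}(1)$ is bounded by the energy-dissipation estimate and there are only finitely many pairs, one may pass to a (not relabeled) subsequence along which $J_h^{ij}(1) \to \beta^{ij} \ge J^{ij}(1)$ for every pair simultaneously. Summing with the positive weights $\sigma_{ij}$ and using (\ref{conv_ass}),
\[
\sum_{i<j} \sigma_{ij}\, \beta^{ij} = \lim_{h\to 0} \int_0^T E_h(\chi^h)\, dt = \int_0^T E(\chi)\, dt = \sum_{i<j} \sigma_{ij}\, J^{ij}(1).
\]
Since $\beta^{ij} - J^{ij}(1) \ge 0$, $\sigma_{ij} > 0$, and the weighted sum of these non-negatives vanishes, each $\beta^{ij} = J^{ij}(1)$. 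As the limit is independent of the extracted subsequence, $\lim_{h\to 0} J_h^{ij}(1) = J^{ij}(1)$ pair by pair along the full sequence.

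\textbf{Step 3 (General $\zeta$).} For $\zeta \in C^\infty([0,T] \times \torus)$, let $M := \|\zeta\|_\infty$ so that $M \pm \zeta \ge 0$. Applying $(\ast)$ to each and using Step 2 to pass to the limit in the $M \cdot J_h^{ij}(1)$ piece gives both
\[
\liminf_{h\to 0} J_h^{ij}(\zeta) \ge J^{ij}(\zeta) \quad\text{and}\quad \limsup_{h\to 0} J_h^{ij}(\zeta) \le J^{ij}(\zeta),
\]
which is precisely (\ref{impl_of_conv_ass}).

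\textbf{Main obstacle.} Step 1 is the real difficulty. The tempting pointwise identity
\[
\chi_i G_h \ast \chi_j + \chi_j G_h \ast \chi_i = \chi_i(1 - G_h \ast \chi_i) + \chi_j(1 - G_h \ast \chi_j) - (\chi_i + \chi_j)\bigl(1 - G_h \ast (\chi_i + \chi_j)\bigr)
\]
would reduce $(\ast)$ to three copies of the (known, localized) two-phase $\Gamma$-liminf applied to $\chi_i^h$, $\chi_j^h$, and $\chi_i^h + \chi_j^h$, but the minus sign in front of the last term would require a matching two-phase $\Gamma$-\emph{limsup}, which is not available a priori. One must therefore isolate the $(i,j)$ pair contribution directly, in the spirit of \cite{EseOtt14}.
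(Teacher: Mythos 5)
Your Steps 2 and 3 are sound conditional logic (the weighted squeeze and the $M\pm\zeta$ trick work as written), but everything hangs on Step 1, and the asserted pair-wise localized $\Gamma$-liminf $(\ast)$ is not just unproven --- as an unconditional statement about sequences $\chi^h\to\chi$ it is \emph{false}, so no slicing-of-the-kernel or covering argument can deliver it. Take the paper's merging example (Figure \ref{fig_interfaces_merging}) with the surface tensions changed to $\sigma_{12}=\sigma_{23}=1$, $\sigma_{13}=\tfrac32$, so that the strict triangle inequality and conditional negativity still hold, and let the width $w_h$ of the strip $\Omega_2^h$ tend to $0$ with $w_h\gg\sqrt{h}$. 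Then $\chi^h\to\chi$ with $\chi_2\equiv0$ and $\chi_1,\chi_3$ complementary half spaces, and for the pair $(1,3)$ one has $J_h^{13}(1)\to0$, because $\Omega_1^h$ and $\Omega_3^h$ stay at distance $\gg\sqrt h$ so $G_h\ast\chi_3^h$ is exponentially small on $\Omega_1^h$, while $J^{13}(1)=2c_0T\,|\Sigma_{13}|>0$; hence $\liminf_h J_h^{13}(1)<J^{13}(1)$. In this example the convergence assumption (\ref{conv_ass}) fails (the energy drops from $2c_0|\Sigma|$ to $\tfrac32 c_0|\Sigma|$ per time slice), which shows that $(\ast)$ can only hold \emph{conditionally} on (\ref{conv_ass}) --- but then $(\ast)$ is essentially the statement of Lemma \ref{la_impl_conv_ass} itself, and you offer no proof of it; your closing paragraph correctly identifies the missing two-phase limsup but does not supply it. A further warning sign is that your route never uses the strict triangle inequality, while the paper's example after the lemma shows that strictness is necessary for the conclusion.

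The paper performs the squeeze at a different level, where a genuine liminf \emph{is} available, and this is where strictness enters. After reducing to fixed time, it perturbs the energy to $\chi\mapsto E_h(\chi)-\epsilon F_h(\chi_1)$ (and likewise with $\chi_2$ and $\chi_1+\chi_2$): for $\epsilon$ small the perturbed surface tensions still satisfy the triangle inequality, so approximate monotonicity (Lemma \ref{lem_AM_EO}) and consistency (Lemma \ref{lem_CONS_EO}) apply to the perturbed functional; combined with (\ref{conv_ass}) and the two-phase $\Gamma$-liminf for $F_h$, this yields full convergence $F_h(\chi_1^h)\to F(\chi_1)$, $F_h(\chi_2^h)\to F(\chi_2)$, $F_h(\chi_1^h+\chi_2^h)\to F(\chi_1+\chi_2)$ --- i.e.\ exactly the limsup information you are missing. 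Only then does it localize with $\zeta$ (via Lemmas \ref{surf_la_cons_G_and_k} and \ref{surf_la_err}) and use the very identity you wrote in your obstacle paragraph; at that stage the minus sign in front of the $(\chi_1+\chi_2)$-term is harmless because all three terms converge, not merely satisfy a liminf inequality. If you want to rescue your outline, the content of Step 1 has to be replaced by an argument of this perturbative type exploiting the strict triangle inequality.
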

The proof of Lemma \ref{la_impl_conv_ass} heavily relies on the fact that $\sigma$ satisfies the \emph{strict} triangle inequality so that
we can preserve the triangle inequality after perturbing the energy functional.
The following example shows that this is not a technical assumption but is a necessary condition for the lemma
to hold and thus plays a crucial role in identifying the normal velocities $V_i$.
\begin{expl}
 To fix ideas let  us consider three sets $\Omega_1,\,\Omega_2$ and $\Omega_3$ in dimension $d=2$ with surface tensions $\sigma_{12}= \sigma_{23}=1$, $\sigma_{13}=2$
 as illustrated in Figure \ref{fig_interfaces_merging}.
   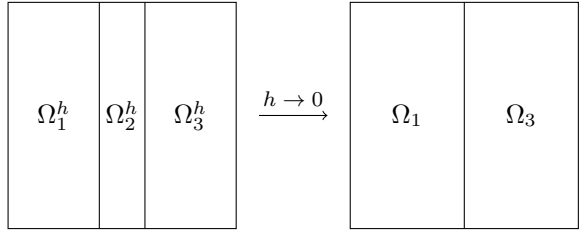
\begin{figure}
    \centering
      \begin{tikzpicture}[scale=1]
    \draw (0,0) rectangle (3,3);
    \draw (3/2-.3,0) -- (3/2-.3,3);
    \draw (3/2+.3,0) -- (3/2+.3,3);
    \node at (3/4-.3/2,3/2) {$\Omega_1^h$};
    \node at (3*3/4+.3/2,3/2) {$\Omega_3^h$};
    \node at (3/2,3/2) {$\Omega_2^h$};
%     \draw [<->] (3/2-.3,-.03*3) --(3/2+.3,-.03*3) node [midway, below] {\footnotesize$h$};
    
    \draw[->] (3+.3,3/2) -- node[above] {\footnotesize$h\to 0$} (1.5*3-.3,3/2);
\begin{scope}[shift={(1.5*3,0)}]
    \draw (0,0) rectangle (3,3);
    \draw (3/2,0) -- (3/2,3);
    \node at (3/4,3/2) {$\Omega_1$};
    \node at (3*3/4,3/2) {$\Omega_3$};
%     \draw [opacity=0,<->] (3/2-.3,-.03*3) --(3/2+.3,-.03*3) node [midway, below] {\footnotesize$h$};
\end{scope}
\end{tikzpicture}
    \caption{As $h\to0$, the two interfaces $\Sigma^h_{12}$ and $\Sigma^h_{23}$ merge into one interface, $\Sigma_{13}$, between Phases 1 and 3. Therefore the
    measure of $|\Sigma^h_{13}|$ jumps up in the limit $h\to0$ although the total interfacial energy converges due to the choice of surface tensions.}
    \label{fig_interfaces_merging}
   \end{figure}
Then, the total energy is constant in $h$ and due to the choice of the surface tensions
the convergence assumption is fulfilled.
Nevertheless, we clearly have
\begin{align*}
  |\Sigma_{12}^h| = \text{const.} > 0 
  = |\Sigma_{12}| \quad \text{and} \quad
  |\Sigma_{13}^h| = 0 < \text{const.} 
  = |\Sigma_{13}|.
\end{align*}
This example also illustrates that although the energy functional $E$ is lower semi-continuous,
the individual interfacial energies $\frac12\int \left( |\nabla\chi_i| +|\nabla\chi_j| -
|\nabla(\chi_i+\chi_j)| \right)$ are not.
\end{expl}

\subsection{Proofs}\label{sub:comp proofs}
Before proving the statements of this section
we cite two results of \cite{EseOtt14} which will be used frequently in the proofs.

The following monotonicity statement is a key tool for the $\Gamma$-convergence in \cite{EseOtt14}.
We will use it throughout our proofs but we seem not to rely heavily on it.
\begin{lem}[Approximate monotonicity]\label{lem_AM_EO}
 For all $0<h\leq h_0$ and any admissible $\chi$, we have 
 \begin{align}\label{approximate monotonicity}
  E_h(\chi) \geq \left( \frac{\sqrt{h_0}}{\h+\sqrt{h_0}}\right)^{d+1}E_{h_0}(\chi).
 \end{align}
\end{lem}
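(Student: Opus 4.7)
My plan proceeds in three steps, reducing the lemma to a pair-wise Gaussian rescaling estimate that exploits the specific structure of admissible $\chi$.

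First, I would reduce to a pair-wise inequality. Since $\sigma_{ij}\geq 0$ and each integral $\int \chi_i\, G_h\ast \chi_j\,dx$ is non-negative (both by admissibility), it suffices to prove, for each pair $(i,j)$ with $i\neq j$,
\[
\frac{1}{\h}\int \chi_i\, G_h\ast \chi_j\,dx \;\geq\; \left(\frac{\sqrt{h_0}}{\h+\sqrt{h_0}}\right)^{d+1}\frac{1}{\sqrt{h_0}}\int \chi_i\, G_{h_0}\ast \chi_j\,dx.
\]

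Second, I would rewrite both integrals in terms of the pair-correlation function $\phi(z):=\int \chi_i(x)\chi_j(x+z)\,dx$, which is non-negative, Lipschitz on $\torus$ with constant at most $\int|\nabla\chi_j|$, and vanishes at the origin since $\chi_i\chi_j=0$ a.e. With $\mu:=\sqrt{h_0/h}\geq 1$, the Gaussian scaling identity $G_{h_0}(z)=\mu^{-d}G_h(z/\mu)$ and the change of variable $z \mapsto \mu z$ yield $\int \chi_i\, G_{h_0}\ast\chi_j\,dx = \int G_h(z)\phi(\mu z)\,dz$. The claim thus reduces to the rescaling estimate
\[
\int G_h(z)\phi(\mu z)\,dz \;\leq\; \frac{(1+\mu)^{d+1}}{\mu^d}\int G_h(z)\phi(z)\,dz.
\]

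The third and most delicate step is to establish this scaling estimate. The $d$ factors of $(1+\mu)/\mu$ should come from the spatial rescaling $z\mapsto \mu z$ (reflecting the ratio of Gaussian peaks $G_{h_0}(0)/G_h(0)=\mu^{-d}$), while the extra factor of $(1+\mu)$ reflects the one-dimensional Lipschitz growth of $\phi$ along rays from $0$. Concretely, I would use the semigroup decomposition $G_h=G_{(1-t)h}\ast G_{th}$ with $t\in(0,1)$ together with the Lipschitz bound $|\phi(\mu(w+y))-\phi(\mu y)|\leq L\mu|w|$ to iterate an inequality of the form
\[
\int G_h(z)\phi(\mu z)\,dz \;\leq\; \int G_h(y)\phi(\mu\sqrt{t}\, y)\,dy + L\mu c_d\sqrt{h(1-t)},
\]
reducing $\mu$ step by step down to $1$. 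Summing the accumulated errors against $\int G_h\phi\gtrsim \sqrt{h}\,|\Sigma_{ij}|$ and optimising over the intermediate scales should produce the sharp factor $(1+\mu)^{d+1}/\mu^d$. I expect the principal obstacle to be the sharp exponent $d+1$: the naive Lipschitz-tail estimate only yields a constant of order $\mu$ for large $\mu$, whereas reproducing the refined $(1+\mu)^{d+1}/\mu^d$ (which tends to $2^{d+1}$ as $h\to h_0$ and to $\mu$ as $\mu\to\infty$) requires a careful bookkeeping of the $d$ spatial dimensions together with the one-dimensional linear growth of $\phi$, plus lower bounds on $\int G_h\phi$ that do not use more than the structure of $\chi_i,\chi_j$ as characteristic functions of a partition.
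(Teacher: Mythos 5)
Your Step 1 is exactly where the argument breaks down: the pairwise inequality you reduce to is false once there are three or more phases, and with it the rescaling estimate of Step 3. Take a slab configuration in which a third phase $k$ of width $w$ sits between phases $i$ and $j$, say $\Omega_i=\{x_1<0\}$, $\Omega_k=\{0<x_1<w\}$, $\Omega_j=\{x_1>w\}$ locally. Then your pair correlation $\phi(z)=\int\chi_i(x)\chi_j(x+z)\,dx$ vanishes for $|z_1|\le w$, so $\frac1{\sqrt h}\int\chi_i\,G_h\ast\chi_j\,dx$ is exponentially small (of order $e^{-w^2/(2h)}$) when $h\ll w^2$, while for $h_0\gg w^2$ the corresponding quantity is of order one. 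Hence the single-pair ratio cannot be bounded below by the polynomial factor $\bigl(\sqrt{h_0}/(\sqrt h+\sqrt{h_0})\bigr)^{d+1}$, and your target estimate $\int G_h(z)\phi(\mu z)\,dz\le\frac{(1+\mu)^{d+1}}{\mu^d}\int G_h(z)\phi(z)\,dz$ fails for this nonnegative Lipschitz $\phi$ with $\phi(0)=0$: no bookkeeping of Lipschitz constants can repair it, because the obstruction is geometric. The pairwise correlations are simply not subadditive --- a phase wedged between $i$ and $j$ makes $\phi(z+w)>\phi(z)+\phi(w)$ possible --- so discarding the coupling between the pairs in Step 1 throws away the very structure the lemma rests on. (Your reduction is harmless only in the two-phase case, where $\chi_j=1-\chi_i$.)

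The actual proof (from \cite{EseOtt14}, which the paper invokes rather than reproving; its ingredients are reproduced in Step 1 of the proof of Lemma \ref{surf_la_err}) works with the aggregate covariance $f(z)=\sum_{i,j}\sigma_{ij}\int\chi_i(x)\chi_j(x+z)\,dx$, for which the subadditivity $f(z+w)\le f(z)+f(w)$ \emph{does} hold; this uses the triangle inequality $\sigma_{ij}\le\sigma_{ik}+\sigma_{kj}$ together with the partition constraint $\sum_k\chi_k=1$, i.e.\ precisely the cross terms your reduction drops. Iterating gives $f(Nz)\le Nf(z)$ for $N\in\N$, and with the exact Gaussian scaling $G_{N^2h}(z)=N^{-d}G_h(z/N)$ this yields $E_{N^2h}(\chi)\le E_h(\chi)$ for every integer $N$. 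For general $h\le h_0$ one chooses $N=\lceil\sqrt{h_0/h}\,\rceil$, so that $\sqrt{h_0}\le N\sqrt h\le\sqrt{h_0}+\sqrt h$, and compares $E_{h_0}$ with $E_{N^2h}$ by the pointwise bound $G_{h_0}(z)\le(N^2h/h_0)^{d/2}G_{N^2h}(z)$; this produces exactly the factor $\bigl(N\sqrt h/\sqrt{h_0}\bigr)^{d+1}\le\bigl((\sqrt h+\sqrt{h_0})/\sqrt{h_0}\bigr)^{d+1}$, the extra power $+1$ coming from the prefactors $1/\sqrt h$ and $1/\sqrt{h_0}$. In particular no semigroup iteration, Lipschitz bound or optimization over intermediate scales is needed, and the constant is not obtained ``pair by pair''.
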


Another important tool for the $\Gamma$-convergence in \cite{EseOtt14}
is the following consistency, or pointwise convergence of the functionals $E_h$ to $E$, which we will
refine in Section \ref{sec:curvature}.

\begin{lem}[Consistency]\label{lem_CONS_EO}
 For any admissible $\chi\in BV$, we have 
 \begin{align}\label{consistency}
  \lim_{h\to0}E_h(\chi) = E(\chi).
 \end{align}
\end{lem}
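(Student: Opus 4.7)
The plan is to reduce the multi-phase statement to a scalar (two-set) non-local characterization of perimeter and then prove that by a Gaussian rescaling argument. Using $\sigma_{ii}=0$ and $\sigma_{ij}=\sigma_{ji}$, symmetrize $E_h(\chi)=\tfrac{1}{\sqrt{h}}\sum_{i<j}\sigma_{ij}\int(\chi_i\, G_h\ast\chi_j+\chi_j\, G_h\ast\chi_i)\,dx$. For any $i\neq j$, $\chi_i+\chi_j$ is again $\{0,1\}$-valued, so a direct expansion of both sides gives the pointwise algebraic identity
\begin{align*}
\chi_i\,G_h\ast\chi_j+\chi_j\,G_h\ast\chi_i=\chi_i(1-G_h\ast\chi_i)+\chi_j(1-G_h\ast\chi_j)-(\chi_i+\chi_j)\bigl(1-G_h\ast(\chi_i+\chi_j)\bigr).
\end{align*}
Comparing with the structure $|\nabla\chi_i|+|\nabla\chi_j|-|\nabla(\chi_i+\chi_j)|$ that appears inside $E(\chi)$, the lemma reduces to the scalar consistency
\begin{align*}
\frac{1}{\sqrt{h}}\int u\,(1-G_h\ast u)\,dx\;\longrightarrow\;c_0\int|\nabla u|\quad\text{as }h\downarrow 0 \tag{$\ast$}
\end{align*}
for every $\{0,1\}$-valued $u\in BV(\torus)$.

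To establish $(\ast)$ I use $u^2=u$ together with $\int G_h=1$ and the symmetry $G_h(-z)=G_h(z)$ to rewrite
\begin{align*}
\int u\,(1-G_h\ast u)\,dx=\tfrac12\iint|u(x+z)-u(x)|\,G_h(z)\,dz\,dx,
\end{align*}
and then substitute $z=\sqrt{h}\,w$ to obtain
\begin{align*}
\frac{1}{\sqrt{h}}\int u\,(1-G_h\ast u)\,dx=\tfrac12\int G(w)\,I_h(w)\,dw,\qquad I_h(w):=\frac{1}{\sqrt{h}}\int|u(x+\sqrt{h}\,w)-u(x)|\,dx.
\end{align*}
The standard BV translation estimate $I_h(w)\le |w|\int|\nabla u|$ produces the integrable majorant $|w|\,G(w)\int|\nabla u|$ (the Gaussian has a finite first moment), while the classical directional blow-up for sets of finite perimeter gives the pointwise limit $I_h(w)\to|w|\int|\hat w\cdot\nu|\,d|\nabla u|$, with $\nu$ the measure-theoretic normal. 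Dominated convergence and the explicit rotation-invariant Gaussian moment $\tfrac12\int G(w)|w\cdot\nu|\,dw=\frac{1}{\sqrt{2\pi}}=c_0$ (valid for any unit $\nu$) then combine to yield $(\ast)$.

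The only ingredient beyond elementary algebra is the pointwise directional limit $I_h(w)\to|w|\int|\hat w\cdot\nu|\,d|\nabla u|$, and this is where I expect the main (although still classical) work to sit. It can be obtained either by slicing $u$ along lines parallel to $\hat w$, which reduces matters to a one-dimensional BV computation, or by first verifying the limit on a mollified approximation $u_\varepsilon$ and then using lower semicontinuity of the perimeter together with the upper bound coming from the BV translation estimate. Everything else is bookkeeping and a single Gaussian moment computation.
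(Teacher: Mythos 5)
Your argument is correct, and it takes a somewhat different route from the one the paper relies on. In fact the paper does not prove Lemma \ref{lem_CONS_EO} at all: it cites it from \cite{EseOtt14}, and what it does prove is the localized refinement, Lemma \ref{surf_la_cons_G_and_k}, whose proof works directly with a pair of disjoint characteristic functions $\chi\,\tilde\chi=0$, passes to spherical coordinates, disintegrates in the radial and angular variables, slices along lines, and in one dimension counts the common jump points of $\chi$ and $\tilde\chi$ (dominated convergence in $r$ and $\xi$ doing the bookkeeping). You instead use the algebraic identity $\chi_i\,G_h\ast\chi_j+\chi_j\,G_h\ast\chi_i=(1-\chi_i)G_h\ast\chi_i+(1-\chi_j)G_h\ast\chi_j-(1-(\chi_i+\chi_j))G_h\ast(\chi_i+\chi_j)$ (the same identity the paper uses in the proof of Lemma \ref{la_impl_conv_ass}) to reduce everything to the scalar ``heat-content'' functional $\tfrac1{\sqrt h}\int u(1-G_h\ast u)\,dx$ for a single $\{0,1\}$-valued BV function $u$, which is legitimate here since $\chi_i$, $\chi_j$ and $\chi_i+\chi_j$ are each admissible scalar BV functions (disjointness of phases makes $\chi_i+\chi_j$ again $\{0,1\}$-valued) and the statement concerns a fixed $\chi$, so no lower-semicontinuity issues of the individual pairwise terms arise. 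The scalar limit you then prove by the Gaussian rescaling $z=\sqrt h\,w$, the classical directional difference-quotient limit $\tfrac1t\int|u(x+te)-u(x)|\,dx\to|D_e u|=\int|e\cdot\nu|\,|\nabla u|$, the BV translation estimate as dominating function, and the Gaussian first moment $\tfrac12\int G(w)|w\cdot\nu|\,dw=c_0$; this is essentially the two-phase consistency of \cite{miranda2007short}, repackaged. What your route buys is brevity and no need to manipulate jump sets or anisotropic kernels; what the paper's pairwise, sliced, jump-counting argument buys is exactly the extra flexibility needed later, namely localization with a test function $\zeta$ and the anisotropic kernel $z_1^2G(z)$ in Lemma \ref{surf_la_cons_G_and_k}, which your dominated-convergence scheme would also handle but which the reduction to the scalar functional alone does not provide. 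The one step you leave as classical (the pointwise directional limit) is indeed standard, with the lower bound by duality/lower semicontinuity and the upper bound by the directional translation estimate, so no gap remains.
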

Taking the limit $h\to 0$ in (\ref{approximate monotonicity}) with $\chi=\chi^0$
and using (\ref{consistency}), we see that that the interfacial energy $E_0$ 
of the initial data $\chi(0)\equiv \chi^0$ bounds the approximate energy of the initial data:
\begin{align*}
 E_0 := E(\chi(0))\geq E_h(\chi^0).
\end{align*}

We first prove Proposition \ref{comp_prop} which follows directly from the estimates in Lemmas \ref{comp_lem_z_shift} and \ref{comp_lem_tau_shift}.
Then we give the proofs of the Lemmas used for Proposition \ref{comp_prop}.
We present the proof of Proposition \ref{lem_dtX<<DX} at the end of this section since the proof heavily relies on the techniques developed in the proofs of the lemmas,
especially in Lemma \ref{comp_lem_tau_shift}.

%-----------------------------------------------
%
%		PROOF OF PROPOSITION
%
%-----------------------------------------------
\begin{proof}[Proof of Proposition \ref{comp_prop}] 
The proof is an adaptation of the Riesz-Kolmogorov $L^p$-compactness theorem.
By Lemma \ref{comp_lem_z_shift} and Lemma \ref{comp_lem_tau_shift}, we have
\begin{align}\label{comp_tau_z_shift}
 \int_0^{T} \int \left| \chi^h(x+\delta e,t+\tau)-\chi^h(t) \right| dx\,dt 
\lesssim \left(1+T\right)E_0 \left( \delta + \tau + \h\right)
\end{align}
for any $\delta,\tau>0$ and $e\in \sphere$.
For $\delta>0$ consider the mollifier $\varphi_\delta$ given by the scaling
 $\varphi_\delta(x) := \frac1{\delta^{d+1}}\varphi(\frac x\delta,\frac t\delta)$ and
 $\varphi\in C_0^\infty( (-1,0)\times B_1)$ such that $0\leq \varphi\leq 1$ and 
$\int_{-1}^0 \int_{B_1} \varphi =1$. 
We have the estimates
\begin{align*}
 \left|\varphi_\delta\ast \chi^h\right| \leq 1\quad \text{and} \quad
 \left| \nabla (\varphi_\delta \ast \chi^h)\right| \lesssim \frac1\delta.
\end{align*}
Hence, on the one hand, 
the mollified functions are equicontinuous and by Arzel\`a-Ascoli precompact in $C^0([0,T]\times\torus)$:
For given $\epsilon,\delta>0$ there exist functions 
$u_i\in C^0([0,T]\times\torus)$, $i=1,\dots,n(\epsilon,\delta)$ such that
\begin{align*}
\left\{ \varphi_\delta \ast \chi^h \colon h>0\right\} 
\subset \bigcup_{i=1}^{n(\epsilon,\delta)} B_\epsilon(u_i),
\end{align*}
where the balls $B_\epsilon(u_i)$ are given w.\ r.\ t.\ the $C^0$-norm. 
%Since the domain $(0,T)\times \torus$ is bounded, 
%we can estimate the $L^1$-norm (up to a constant $ T L^d$) by the $C^0$-norm and 
%can upgrade this to the same statement but taking the balls w.\ r.\ t.\ the $L^1$-norm.
On the other hand, for any function $\chi$ we have
\begin{align*}
 \int_0^T\int \left|\varphi_\delta \ast \chi -\chi \right|dx\,dt
\leq & \int \varphi_\delta(z,s) \int \left| \chi(x-z,t-s)- \chi(x,t) \right|d(x,t)\,d(z,s)\\
\leq & \sup_{(z,s)\in \supp \varphi_\delta}
 \int_0^T \int \left| \chi(x-z,t-s)- \chi(x,t) \right|dx\,dt.
\end{align*}
Using this for $\chi^h$ and plugging in (\ref{comp_tau_z_shift}) yields
\begin{align*}
 \int_0^T\int \left|\varphi_\delta \ast \chi^h -\chi^h \right|dx\,dt 
\lesssim \left(1+T\right)E_0 \left( \delta + \h \right).
\end{align*}
Given $\rho>0$, fix $\delta,h_0>0$ such that
\begin{align*}
 \int_0^T\int \left|\varphi_\delta \ast \chi^h -\chi^h \right|dx\,dt \leq \frac\rho2
\quad \text{for all }h\in(0,h_0). 
\end{align*}
Then set $\epsilon:=\frac\rho{T \Lambda ^d}$ and find $u_1,\dots,u_n$ from above.
Note that only finitely many of the elements in the sequence $\{h\}$ are greater than $h_0$. 
Therefore,
\begin{align*}
 \{\chi^h \}_h \subset \bigcup_{i=1}^{n} B_\rho(u_i) \cup \{\chi^h\}_{h>h_0}
\subset  \bigcup_{i=1}^{n} B_\rho(u_i) \cup \bigcup _{h>h_0} B_\rho(\chi^h)
\end{align*}
is a finite covering of balls (w.\ r.\ t.\ $L^1$-norm) of given radius $\rho>0$. 
Therefore, $\{\chi^h\}_h$ is precompact and hence relatively compact in $L^1$.
Hence we can extract a converging subsequence. 
After passing to another subsequence, 
we can w.\ l.\ o.\ g.\ assume that we also have pointwise convergence almost everywhere 
in $ (0,T)\times \torus$.
\end{proof}

\begin{proof}[Proof of Lemma \ref{lem_energy_dissipation_estimate}]
By the minimality condition (\ref{MMinterpretation}), we have in particular 
\begin{align*}
E_h(\chi^n) - E_h(\chi^n-\chi^{n-1}) \leq E_h(\chi^{n-1})
\end{align*}
for each $n=1,\dots,N$. 
Iterating this estimate yields (\ref{energy_dissipation_estimate}) 
with $E_h(\chi^0)$ instead of $E_0= E(\chi^0)$.
Then (\ref{energy_dissipation_estimate}) follows from the short argument after Lemma \ref{lem_CONS_EO}.\\
We claim that the pairing
$
 - \frac1\h \int \omega \cdot \sigma \left( G_h\ast \tilde \omega\right)dx
$
defines a scalar product on the process space. 
It is bilinear and symmetric thanks to the symmetry of $\sigma$ and $G_h$.
Since $\sigma$ is conditionally negative-definite,
\begin{align*}
 - \frac1\h \int \omega \cdot \sigma \left( G_h\ast \omega\right)dx
 =-\frac1\h\int \left( G_{h/2}\ast \omega \right)
\cdot \sigma \left( G_{h/2}\ast \omega \right)dx
\geq \frac{\underline\sigma}\h \|G_{h/2}\ast \omega \|_{L^2}^2 \geq 0.
\end{align*}
Furthermore, we have equality only if $\omega\equiv 0$. 
Thus, $\sqrt{-E_h}$ is the induced norm on the process space.
\end{proof}

\begin{proof}[Proof of Lemma \ref{comp_lem_z_shift}]
\step{Step 1:} We claim that
\begin{align}\label{comp_bound_on_gradient}
 \int_0^T \int \left| \nabla  G_h\ast \chi^h\right| \,dx\,dt \lesssim (1+T)E_0.
\end{align}
Indeed, for any characteristic function $\chi: \torus \to \{0,1\}$ we have
\begin{align*}
 \nabla (G_h\ast \chi)(x) % = \int \nabla G_h(z) \left( \chi(x+z) \right) \,dz
= - \int \nabla G_h(z) \left( \chi(x+z)-\chi(x) \right) dz .
\end{align*}
Therefore, since $\left| \nabla  G_h(z)\right| \lesssim \frac1\h \left| G_{2h}(z)\right|$,
\begin{align*}
 \int \left| \nabla  G_h\ast \chi \right| \,dx \lesssim \frac1\h \int G_{2h}(z) \int \left|\chi(x+z)-\chi(x) \right|\,dx\,dz.
\end{align*}
By $\chi\in \{0,1\}$, we have $\left|\chi(x+z)-\chi(x) \right| = \chi(x)\left(1-\chi\right)(x+z) + \left(1-\chi\right)(x)\chi(x+z)$
 and thus by symmetry of $G_{2h}$:
\begin{align*}
  \int \left| \nabla  G_h\ast \chi\right| \,dx 
\lesssim \frac1\h \int \left(1-\chi\right)\, G_{2h} \ast \chi \,dx.
\end{align*}
Applying this on $\chi^h_i$, summing over $i=1,\dots,\numphases$, using $\chi^h_i = 1-\sum_{j\neq i} \chi^h_j$ 
and $\sigma_{ij}\geq\sigma_{\min}>0$ for $i\neq j$ we obtain 
\begin{align*}
 \int \left| \nabla G_h\ast \chi^h(t)\right| \,dx \lesssim E_{2h}(\chi^h) 
\lesssim E_h(\chi^h),
\end{align*}
where we used the approximate monotonicity of $E_h$, cf.\ Lemma \ref{lem_AM_EO}.
Using the energy-dissipation estimate 
(\ref{energy_dissipation_estimate}), we have 
\begin{align*}
 \int \left| \nabla G_h\ast \chi^h(t)\right| dx \lesssim E_0
\end{align*}
and integration in time yields (\ref{comp_bound_on_gradient}).
\step{Step 2:}
By (\ref{comp_bound_on_gradient}) and Hadamard's trick,
we have on the one hand
\begin{align*}
 \int_0^{T}
\int \left|G_h\ast \chi^h(x+\delta e,t)- G_h\ast\chi^h(x,t) \right| dx\,dt
 \lesssim (1+T)E_0 \delta.
\end{align*}
Since $\chi\in \{0,1\}$, we have on the other hand
\begin{align*}
 \left(\chi-G_h\ast \chi\right)_+ = \chi \,G_h\ast \left(1-\chi\right)\quad \text{and}\quad
  \left(\chi-G_h\ast \chi\right)_- = \left(1-\chi\right) G_h\ast \chi,
\end{align*}
which yields
\begin{align}\label{comp_x-Ghx}
|\chi-G_h\ast \chi| =  \left(1-\chi\right) G_h\ast \chi + \chi \,G_h\ast \left(1-\chi\right).
\end{align}
Using the translation invariance and (\ref{comp_x-Ghx}) for
the components of $\chi^h$, we have
\begin{align*}
\int_0^T \int \left| \chi^h(x+\delta e,t)- \chi^h(x,t)\right| dx\,dt
\leq& 2 \int_0^T \int \left| G_h\ast \chi^h 
- \chi^h \right| dx\,dt\\
&+ \int_0^T \int \left|G_h\ast \chi^h(x+\delta e,t)
- G_h\ast\chi^h(x,t) \right| dx\,dt\\
\lesssim& \left(1+T\right) E_0 \left(\h + \delta\right).
\end{align*}
\end{proof}

\begin{proof}[Proof of Lemma \ref{comp_lem_tau_shift}]
In this proof, we make use of the mesoscopic time scale $\tau=\alpha\h$, see Remark \ref{timescales} for the notation.
First we argue that it is enough to prove
\begin{align}\label{comp_tau_shift_bounded_tau}
 \int_\tau^{T} \int \left| \chi^h(t)-\chi^h(t-\tau) \right| dx\,dt 
\lesssim (1+T)E_0  \tau 
\end{align}
for $\alpha\in[1,2]$.
If $\alpha\in(0,1)$, we can apply (\ref{comp_tau_shift_bounded_tau}) twice, 
once for $\tau= \h$ and once for $\tau = (1+\alpha)\h$ and obtain (\ref{comp_tau_shift}).
If $\alpha>2$, we can iterate (\ref{comp_tau_shift_bounded_tau}).
Thus we may assume that $\alpha\in[1,2]$.
We have
\begin{align*}
 \int_\tau^{T} \int \left| \chi^h(t)-\chi^h(t-\tau) \right| dx\,dt
 =& h \sum_{k=0}^{\kmax-1} \sum_{l=1}^\lmax \int 
 \big| \chi^{\kmax l + k}-\chi^{\kmax(l-1)+k} \big| \,dx\\
=& \frac1\kmax \sum_{k=0}^{\kmax-1} \tau \sum_{l=1}^\lmax \int  
 \big| \chi^{\kmax l + k}-\chi^{\kmax(l-1)+k} \big| \,dx.
\end{align*}
Thus, it is enough to prove
\begin{align*}
 \sum_{l=1}^\lmax \int  
 \big| \chi^{\kmax l + k}-\chi^{\kmax(l-1)+k} \big| \,dx \lesssim  (1+T)E_0
\end{align*}
for any $k=0,\dots,\kmax-1$.
By the energy-dissipation estimate (\ref{energy_dissipation_estimate}), we have
$E_h(\chi^k) \leq E_0$ for all these $k$'s.
Hence we may assume w.\ l.\ o.\ g.\ that $k=0$ and prove only
\begin{align}\label{comp_Nsteps}
 \sum_{l=1}^\lmax \int  
 \big| \chi^{\kmax l}-\chi^{\kmax(l-1)} \big|\, dx \lesssim (1+T)E_0.
\end{align}
Note that for any two characteristic functions $\chi,\tilde \chi$ we have
\begin{align}\label{comp_x-xtilde}
 |\chi-\tilde \chi| 
=& (\chi-\tilde\chi )\,G_h\ast (\chi-\tilde\chi ) 
+ (\chi-\tilde\chi ) (\chi-\tilde\chi -G_h\ast (\chi-\tilde\chi ))\notag \\
\leq& (\chi-\tilde\chi )\,G_h\ast (\chi-\tilde\chi ) 
+ \left| \chi-G_h\ast \chi \right| + \left| \tilde \chi-G_h\ast \tilde \chi \right|.
\end{align}
% With (\ref{comp_x-Ghx}), one can estimate the second term on the right-hand side.
Now we post-process the energy-dissipation estimate (\ref{energy_dissipation_estimate}). 
Using the triangle inequality for the norm $\sqrt{-E_h}$ on the process space and 
Jensen's inequality, we have
\begin{align}\label{triangle and jensen}
  -E_h\left(\chi^{\kmax l}-\chi^{\kmax(l-1)}\right)
\leq& \left( \sum_{n=\kmax(l-1)+1}^{\kmax l} 
\!\!\!\Big(  -E_h\left(\chi^{n}-\chi^{n-1}\right) \Big)^\frac12 \right)^2
\leq   \kmax\!\!\! \sum_{n=\kmax(l-1)+1}^{\kmax l}  \!\!\! -E_h\left(\chi^{n}-\chi^{n-1}\right).
\end{align}
% Dividing by $N=\frac1\h$ and using the conditional negativity of $\sigma$, we have
% \begin{align*}
%  \sum_{i=1}^\numphases \int \left(\chi_i^{kN}-\chi_i^{(k-1)N}\right)\,
% G_h\ast \left(\chi_i^{kN}-\chi_i^{(k-1)N}\right)\,dx \leq \frac 1{\underline \sigma} E_0.
% \end{align*}
Using (\ref{comp_x-xtilde}) for $\chi^{\kmax l}_i$ and $\chi^{\kmax (l-1)}_i$ with
(\ref{comp_x-Ghx}) for the second and the third right-hand side term and the conditional negativity of $\sigma$ and the above inequality for the first right-hand side term we obtain
\begin{align*}
   \sum_{l=1}^\lmax \int  
\big| \chi_i^{\kmax l }-\chi_i^{\kmax(l-1)} \big|\, dx 
\lesssim  \h\kmax
\sum_{n=1}^{N}  -E_h\left(\chi^{n}-\chi^{n-1}\right) +  \lmax   
\max_{n} \int \left(1-\chi_i^n\right)\,G_h\ast \chi_i^n\,dx.
\end{align*}
Since $(1-\chi_i^n) = \sum_{j\neq i} \chi_j^n$ a.\ e.\ and $\sigma_{ij}\geq \sigma_{\min}>0$ 
for all $i\neq j$, the energy-dissipation estimate (\ref{energy_dissipation_estimate}) yields
\begin{align*}
  \sum_{l=1}^{\lmax} \int  \big| \chi^{\kmax l}-\chi^{\kmax (l-1)} \big|\, dx 
\lesssim  \alpha E_0 + \frac1\alpha T   E_0 \lesssim (1+T) E_0,
\end{align*}
which establishes (\ref{comp_Nsteps}) and thus concludes the proof.
\end{proof}

\begin{proof}[Proof of Lemma \ref{comp_lem_hoelder}]
 First note that (\ref{Hoelder_continuous}) follows directly from (\ref{Hoelder_discrete})
 since we also have $\chi^h(t) \to \chi(t)$ in $L^1$ for almost every $t$.
The argument for (\ref{Hoelder_discrete}) comes in two steps. 
Let $s>t$, $\tau:= s-t$ and $t\in [nh,(n+1)h)$.
\step{Step 1:} Let $\tau$ be a multiple of $h$.
We may assume w.\ l.\ o.\ g.\ that $\tau = m^2h$ for some $m\in \N$.
As in the proof of Lemma \ref{comp_lem_tau_shift}, using (\ref{comp_x-xtilde}) and (\ref{triangle and jensen}) we derive
\begin{align*}
 \int \left| \chi^{n+m}-\chi^n\right| \,dx\lesssim m\h \sum_{k=1}^m 
 -E_h(\chi^{n+k}-\chi^{n+k-1}) +  \h \max_{t} E_h(\chi^h(t)). 
\end{align*}
As before, we sum these estimates:
\begin{align*}
  \int \big| \chi^{n+m^2}-\chi^n\big| \,dx
  \leq & \sum_{l=0}^{m-1}\int \big| \chi^{n+m(l+1)}-\chi^{n+ml}\big| \,dx\\
 \lesssim& m \h \sum_{n'=n}^{n+m^2} -E_h(\chi^{n'}-\chi^{n'-1})
 +  m\h \max_{t} E_h(\chi^h(t))\\
\lesssim& m\h E_0 = E_0 \sqrt{\tau}.
\end{align*}
% \step{Step 2:} Let $\tau= mh$. Thanks to Lagrange's four-square theorem, we can find $m_i\in \N_0$ such that $m=m_1^2+m_2^2+m_3^2+m_4^2$. Using Step 1, we find
% \begin{align*}
%  \int \left| \chi^{n+m}-\chi^n\right| \,dx \leq& \sum_{i=1}^4 \int \left| \chi^{n+\sum_{j\leq i}m_i^2}-\chi^{n+\sum_{j< i}m_i^2}\right| \,dx\\
% \lesssim &  \frac14\sum_{i=1}^4 E_0 m_i\h\\
% \leq & E_0 \h\left(\frac14\sum_{i=1}^4 m_i^2\right)^{1/2}
% \lesssim E_0 \sqrt{\tau}.
% \end{align*}
\step{Step 2:} Let $\tau\geq h$ be arbitrary. Take $m\in \N$ such that $s\in [(m+n)h, (m+n+1)h)$. 
From Step 2 we obtain the bound in terms of $mh$ instead of $\tau$.
If $\tau\geq mh$, we are done. If $h\leq \tau<  mh$, 
then $m\geq2$ and thus $mh\leq \frac{m}{m-1}\tau \lesssim \tau$.
\end{proof}

\begin{proof}[Proof of Lemma \ref{la_impl_conv_ass}]
W.\ l.\ o.\ g. let $i=1$, $j=2$.
We prove the statement in three steps. In the first step we reduce the statement to a time-independent one.
In the second step, we show that due to the strict triangle inequality, the convergence of the energies implies the
convergence of the individual perimeters.
In the third step, we conclude by showing that this convergence still holds true if we localize with a test function $\zeta$, which
proves the time-independent statement formulated in the first step.
\step{Step 1: Reduction to a time-independent problem.}
It is enough to prove that $\chi^h\to \chi $ in $L^1(\torus,\R^\numphases)$ and
$ E_h(\chi^h)\to E(\chi)$
imply 
\begin{align}\label{pf_impl_of_conv_ass}
  \frac 1\h \int \zeta \left(\chi^h_1 \, G_h\ast \chi^h_2 +  \chi^h_2 \, G_h\ast \chi^h_1 \right) \,dx
  \to c_0  \int \zeta \left(\left| \nabla \chi_1 \right| + \left| \nabla \chi_2 \right|
  - \left| \nabla (\chi_1+\chi_2) \right|\right)
\end{align}
for any $\zeta \in C^\infty(\torus)$.\\
Given $\chi^h\to \chi$ in $L^1((0,T)\times \torus)$, for a subsequence we clearly have
$\chi^h(t)\to \chi(t)$ in $L^1(\torus)$ for a.\ e. t.
We further claim that for a subsequence
\begin{align}\label{EhtoEpointwise}
 E_h(\chi^h) \to E(\chi)\quad  \text{for a.\ e. } t.
\end{align}
Writing
$
 \big|E_h(\chi^h) - E(\chi)\big| = 2 \big(  E(\chi)- E_h(\chi^h)\big)_+ + E_h(\chi^h) - E(\chi)
$
and using the $\liminf$-inequality of the $\Gamma$-convergence of $E_h$ to $E$, we have
\begin{align*}
 \lim_{h\to 0}\left(  E(\chi)- E_h(\chi^h)\right)_+ = 0\quad \text{for a.\ e. } t.
\end{align*}
Then Lebesgue's dominated convergence, cf. (\ref{energy_dissipation_estimate}), and the convergence assumption (\ref{conv_ass}) yield
\begin{align*}
 \lim_{h\to0} \int_0^T \left|E_h(\chi^h) - E(\chi)\right| dt = 0
\end{align*}
and thus (\ref{EhtoEpointwise}) after passage to a subsequence.
Therefore, we can apply (\ref{pf_impl_of_conv_ass}) for a.\ e. $t$ and 
the time-dependent version follows from the time-independent one by Lebesgue's dominated convergence theorem and (\ref{energy_dissipation_estimate}).
\step{Step 2: Convergence of perimeters.} 
We claim that given $\chi^h\to \chi $ in $L^1(\torus,\R^\numphases)$ and
$ E_h(\chi^h)\to E(\chi)$, the individual perimeters converge in the following sense:
We have
 \begin{align*}
 F_h(\chi^h_1) \to F(\chi_1),\quad F_h(\chi^h_2) \to F(\chi_2)\quad \text{and} \quad  
 F_h(\chi^h_1+\chi^h_2) \to F(\chi_1+\chi_2).
 \end{align*}
 where $F_h$ and $F$ are the two-phase analogues of the (approximate) energies:
 \begin{align*}
 F_h(\tilde\chi) := \frac2\h \int \left(1-\tilde \chi\right) G_h\ast \tilde \chi \, dx \quad\text{and}\quad
 F(\tilde\chi) :=2 c_0 \int \left| \nabla \tilde \chi \right|.
\end{align*}
We will prove this claim by perturbing the functional $E_h$. We recall that the functionals $F_h$ $\Gamma$-converge to $F$ (see e.\ g. \cite{miranda2007short} or \cite{EseOtt14}).
Since the argument for the three cases work in the same way, we restrict ourself to the first case,
$F_h(\chi_1^h) \to F(\chi_1)$.
Since the matrix of surface tensions $\sigma$ satisfies the strict triangle inequality, we can perturb
the functionals $E_h$ in the following way:
For sufficiently small $\epsilon>0$, the associated surface tensions for the functional
$
 \chi \mapsto  E_h(\chi) - \epsilon F_h(\chi_1)
$
satisfy the triangle inequality so that approximate monotonicity, Lemma \ref{lem_AM_EO},
and consistency, Lemma \ref{lem_CONS_EO}, still apply.
Therefore, by Lemma \ref{lem_AM_EO}, we have for any $h_0\geq h$
\begin{align*}
 E_h(\chi^h) = & E_h(\chi^h) - \epsilon F_h( \chi^h_1) + \epsilon F_h( \chi^h_1) 
 \geq \left( \frac{\sqrt{h_0}}{\h + \sqrt{h_0}}\right)^{d+1}
 \left( E_{h_0}(\chi^h) - \epsilon F_{h_0}( \chi^h_1)\right) +  \epsilon F_h( \chi^h_1).
\end{align*}
By assumption, the left-hand side converges to $E(\chi)$.
Since for fixed $h_0$, $\chi\mapsto E_{h_0}(\chi)-\epsilon F_{h_0}(\chi_1)$
is clearly a continuous functional on $L^2$, the first right-hand side term converges as $h\to 0$.
Thus, for any $h_0>0$,
\begin{align*}
 \limsup_{h\to0} \epsilon F_h(\chi^h_1) \leq E(\chi) 
 - \left( E_{h_0}(\chi) - \epsilon F_{h_0}( \chi_1)\right).
\end{align*}
As $h_0 \to 0$, Lemma \ref{lem_CONS_EO} yields
\begin{align*}
  \limsup_{h\to0} F_h(\chi^h_1) \leq F(\chi_1).
\end{align*}
By the $\Gamma$-convergence we also have
\begin{align*}
  \liminf_{h\to0} F_h(\chi^h_1) \geq F(\chi_1)
\end{align*}
and thus the convergence
$
 F_h(\chi^h_1) \to F(\chi_1).
$
\step{Step 3: Conclusion.} 
We claim that given $\chi^h\to \chi $ in $L^1(\torus,\R^\numphases)$ and
$ E_h(\chi^h)\to E(\chi)$, for any $\zeta \in C^\infty(\torus)$
we have (\ref{pf_impl_of_conv_ass}).\\
We will not prove (\ref{pf_impl_of_conv_ass}) directly but prove that for any 
$ \zeta\in C^\infty(\torus)$ 
\begin{align}\label{pf_impl_of_conv_ass2}
F_h(\chi^h_1,\zeta) \to F(\chi_1,\zeta),\quad
F_h(\chi^h_2,\zeta) \to F(\chi_2,\zeta) \quad \text{and}\quad
F_h(\chi^h_1+\chi^h_2,\zeta) &\to F(\chi_1+\chi_2,\zeta)
\end{align}
for the localized functionals
\begin{align}\label{def F_h}
 F_h(\tilde\chi,\zeta) := \frac1\h \int \zeta \left[ \left(1-\tilde \chi\right) G_h\ast \tilde \chi +\tilde \chi\, G_h\ast \left(1-\tilde \chi\right) \right]  dx 
 \quad\text{and}\quad
 F(\tilde\chi,\zeta) := 2 c_0 \int \zeta \left| \nabla \tilde \chi \right|
\end{align}
instead.
This is indeed sufficient since for any $\chi_1,\chi_2$, we clearly have
\begin{align*}
 \chi_1 \,G_h\ast \chi_2 + \chi_2 \,G_h\ast \chi_1
= \left(1- \chi_1\right) G_h\ast \chi_1 
+ \left(1- \chi_2\right) G_h\ast \chi_2
- \left(1- (\chi_1+\chi_2)\right) G_h\ast (\chi_1+\chi_2)
\end{align*}
and (\ref{pf_impl_of_conv_ass2}) therefore implies (\ref{pf_impl_of_conv_ass}).

% \begin{align*}
%  \frac1\h \int \zeta \left(\chi^h_1 \,G_h\ast \chi^h_2 + \chi^h_2 \,G_h\ast \chi^h_1 \right) dx
%   \to c_0 \int \zeta \left(\left| \nabla \chi_1 \right| + \left| \nabla \chi_2\right|
%   - \left| \nabla (\chi_1+\chi_2) \right|\right).
% \end{align*}
% Since $|z|\,G(z) \lesssim G_2(z)$, we have
% \begin{align*}
%   \left| \frac1\h \int \zeta\, \chi^h_1 \,G_h\ast \chi^h_2 - \chi^h_1 \,G_h\ast (\zeta \chi^h_2) \,dx \right|
%   \leq &\|\nabla \zeta \|_\infty \int \frac{|z|}{\h} G_h(z) \int \chi_1^h(x) \, \chi^h_2(x+z)\,dx \,dz\\
%   \lesssim& \|\nabla \zeta \|_\infty\int \chi_1^h\, G_{2h}\ast \chi_2^h \,dx\\
%   \lesssim& \|\nabla \zeta \|_\infty E_{2h}(\chi^h) \h,
% \end{align*}
% which is of order $\h$ by Lemma \ref{lem_AM_EO} and the energy-dissipation
% estimate (\ref{energy_dissipation_estimate}).
% Thus, (\ref{pf_impl_of_conv_ass2}) implies (\ref{pf_impl_of_conv_ass}).\\
Now we give the argument for (\ref{pf_impl_of_conv_ass2}).
As before, we only prove one of the statements, namely $F_h(\chi^h_1,\zeta) \to F(\chi_1,\zeta)$.
For this we use two lemmas that we will prove in Section \ref{sec:curvature}.
First, by applying Lemma \ref{surf_la_cons_G_and_k}, which is the localized version of Lemma \ref{lem_CONS_EO},
we have for the functional $F_h$ instead of $E_h$ we have
$
  F_h(\chi_1) \to  F(\chi_1).
$
Then, by Lemma \ref{surf_la_err} we can estimate
$
 \left|F_h(\chi_1) - F_h(\chi^h_1)\right| \to 0
$
and thus conclude the proof.

Let us mention that one can also follow a different line of proof by localizing the monotonicity statement of Lemma \ref{lem_AM_EO} with a test function $\zeta$.
Since Lemma \ref{surf_la_err} seems more robust, we only prove the statement in this fashion.
\end{proof}

\begin{proof}[Proof of Proposition \ref{lem_dtX<<DX}]
We make use of the mesoscopic time scale $\tau$, see Remark \ref{timescales} for the notation.
\step{Argument for (i):} Let $\zeta\in C_0^\infty ((0,T)\times\torus)$.
We have to show that
\begin{align*}
 - \int_0^T \int \partial_t \zeta \, \chi_i \, dx\,dt 
\lesssim (1+T) E_0 \left\|\zeta\right\|_{\infty}.
\end{align*}
In this part we choose $\alpha = 1$.
Using the notation $\partial^{\tau}\zeta $ for the
 discrete time derivative $ \frac1\tau\left( \zeta(t+\tau) - \zeta(t)\right)$, by the smoothness of $\zeta$,
\begin{align*}
 \partial^\tau \zeta \to \partial_t \zeta \quad \text{uniformly in }
 (0,T)\times\torus \text{ as } h\to 0.
\end{align*}
Since $\chi^h \to \chi$ in $L^1((0,T)\times\torus) $, the product converges:
\begin{align*}
 \int_0^T \int \partial_t \zeta \, \chi_i \,dx \,dt 
= \lim_{h\to 0} \int_0^T \int \partial^{ \tau} \zeta \, \chi^h_i \,dx \,dt.
\end{align*}
Since $\supp \zeta$ is compact, by Lemma \ref{comp_lem_tau_shift} we have
\begin{align*}
 -\int_0^T \int \partial^{\tau} \zeta \, \chi^h_i \,dx \,dt
=  \int_0^T \int  \zeta \,\partial^{-\tau} \chi^h_i \,dx \,dt
\leq  \|\zeta\|_{\infty}
\int_\tau^T \int  \left|\partial^{- \tau} \chi^h_i\right|dx \,dt
\lesssim  \left( 1+T\right)E_0 \, \|\zeta\|_{\infty}
\end{align*}
for sufficiently small $h$.
\step{Argument for (ii):}
First we prove
\begin{align}\label{pf_dtX<<DX}
  -\int_0^T \int \partial_t \zeta \, \chi_i \,dx\,dt
 \lesssim \frac1\alpha \int_0^T \int \left|\zeta \right| \left| \nabla \chi_i\right|dt 
 + \alpha \iint \left|\zeta\right| d\mu
\end{align}
for any $\alpha>0$ and any $\zeta\in C_0^\infty((0,T)\times\torus)$.
We fix $\zeta$ and by linearity we may assume that $\zeta\geq0$ if we prove the inequality
with absolute values on the left-hand side.
We use the identity from above
\begin{align*}
 -\int_0^T \int \partial_t\zeta \, \chi_i\, dx\,dt
= \lim_{h\to0} \int_0^T \int \zeta \, \partial^{-\tau}\chi_i^h  dx\,dt.
\end{align*}
Setting 
\begin{align*}
\zeta^n:= \frac1h\int_{nh}^{(n+1)h}\zeta(t)\,dt
\end{align*}
to be the time average over a microscopic time interval, we have
\begin{align*}
 \left|\int_0^T \int \zeta \, \partial^{-\tau}\chi^h_i  dx\,dt\right|
\leq & \kmean \sum_{l=1}^\lmax \int \zeta^{\kmax l + k}
\big| \chi_i^{\kmax l + k} - \chi_i^{\kmax (l-1) + k}\big|\, dx.
\end{align*}
%Note that $\lmax = T/\tau$.
Now fix $k\in\{1,\dots, \kmax\}$. 
For simplicity, we will ignore $k$ at first.
We can argue as in the proof of Lemma \ref{comp_lem_tau_shift}, here with the localization $\zeta$:
By (\ref{comp_x-Ghx}) we have for any $\chi\in\{0,1\}$ 
\begin{align*}
 \frac1\h \int \zeta \left| G_h \ast \chi - \chi \right|\,dx 
 = \frac1\h\int \zeta \left[ \left(1-\chi\right) G_h\ast \chi
 + \chi\, G_h \ast \left(1-\chi\right)\right]dx = F_h(\chi,\zeta)
\end{align*}
with $F_h$ as in (\ref{def F_h}) and furthermore
\begin{align*}
 \left| \int \big(\zeta^{\kmax (l+1)} - \zeta^{\kmax l}\big) \left(1-\chi\right) G_h\ast \chi \,dx\right|
 \leq \|\partial_t \zeta  \|_{\infty}\alpha \h \int \left(1-\chi\right) G_h\ast \chi \,dx.
\end{align*}
Therefore, using (\ref{comp_x-xtilde}) we obtain
\begin{align*}
   \sum_{l=1}^{\lmax} \int \zeta^{\kmax l}\,
\big| \chi_i^{\kmax l} - \chi_i^{\kmax (l-1)}\big|\, dx
\lesssim &\sum_{l=1}^{\lmax} \int \zeta^{\kmax l} \big( \chi_i^{\kmax l} - \chi_i^{\kmax (l-1)} \big) G_h \ast \big( \chi_i^{\kmax l} - \chi_i^{\kmax (l-1)} \big) dx\\
&+ \frac \tau \alpha \sum_{l=1}^{\lmax} F_h(\chi_i^{\kmax l},\zeta^{\kmax l}) +  \h \left\|\partial_t\zeta\right\|_{\infty} \tau \sum_{l=1}^{\lmax} E_h(\chi^{\kmax l}),
\end{align*}
where the last right-hand side term vanishes as $h\downarrow 0$ by (\ref{energy_dissipation_estimate}).
For the first right-hand side term we note that for any $\zeta\in C^\infty(\torus)$ and any $\chi,\,\tilde\chi\in \{0,1\}$ we have
\begin{align*}
 &\left|\int \zeta \left[ G_{h/2}\ast\left( \chi-\tilde\chi\right)\right]^2dx
- \int \zeta \left( \chi-\tilde\chi\right) G_{h}\ast\left( \chi-\tilde\chi\right) dx\right|\\
&= \left|\int \left( \zeta \, G_{h/2}\ast \left( \chi-\tilde\chi\right)
-G_{h/2}\ast \left[ \zeta \left( \chi-\tilde\chi\right)\right]\right)
G_{h/2}\ast\left( \chi-\tilde\chi\right) dx\right|\\
&\leq \int G_{h/2}(z) \int \left|\zeta(x+z) -\zeta(x) \right| \left| \chi-\tilde\chi\right|(x+z)
\left|G_{h/2}\ast\left( \chi-\tilde\chi\right)\right|(x) \,dx\,dz\\
&\lesssim \left\|\nabla \zeta\right\|_\infty \h \int \frac{|z|}{\h} G_{h/2}(z)\,dz\,
\int \left| \chi-\tilde\chi\right|dx\\
&\lesssim  \left\|\nabla \zeta\right\|_\infty \h \int \left| \chi-\tilde\chi\right|dx,
\end{align*}
so that we can replace the first right-hand side term by 
\begin{align*}
  \sum_{l=1}^{\lmax} \int \zeta^{\kmax l} \left( G_{h/2} \ast \big( \chi_i^{\kmax l} - \chi_i^{\kmax (l-1)} \big)\right)^2 dx,
\end{align*}
up to an error that vanishes as $h\downarrow 0$, due to the above calculation and e.\ g.\ Lemma \ref{comp_lem_hoelder}.
As in (\ref{triangle and jensen}) for $-E_h$, now for this localized version, we can use the triangle inquality and Jensen's inequality to bound this term by
\begin{align*}
  \sum_{l=1}^{\lmax} \kmax \sum_{n=\kmax(l-1)+1}^{\kmax l} \int \zeta^{\kmax l} \left( G_{h/2} \ast \big( \chi_i^{n} - \chi_i^{n-1} \big)\right)^2 dx
  \leq  \alpha \iint \zeta \, d\mu_h +o(1),
\end{align*}
as $h\downarrow 0$, where $\mu_h$ is the (approximate) dissipation measure defined in (\ref{diss meas}).
Therefore we have
\begin{align*}
  \sum_{l=1}^{\lmax}  \int \zeta^{\kmax l}\,
\big| \chi_i^{\kmax l} - \chi_i^{\kmax (l-1)}\big|\, dx
\lesssim &  \frac \tau\alpha \sum_{l=1}^\lmax F_h(\chi_i^{\kmax l},\zeta^{\kmax l})
+ \alpha \iint \zeta \,d\mu_h 
+ o(1),
\end{align*}
as $h\downarrow 0$.
Taking the mean over the $k$'s we obtain
\begin{align*}
\left| \int_0^T \int \zeta \, \partial^{-\tau}\chi_i^h  dx\,dt\right|
\lesssim 
\frac1\alpha \int_0^T F_h(\chi_i^h,\zeta)\,dt
+ \alpha \iint \zeta \,d\mu_h
+ o(1).
\end{align*}
Passing to the limit $h\to0$, (\ref{impl_of_conv_ass}), which is guaranteed by the 
convergence assumption (\ref{conv_ass}), implies (\ref{pf_dtX<<DX}).\\
Now let $U \subset (0,T)\times \torus$ be open such that
\begin{align*}
 \iint_U \left| \nabla \chi_i \right|dt=0.
\end{align*}
If we take $\zeta\in C_0^\infty(U)$, the first term on the right-hand side of (\ref{pf_dtX<<DX}) 
vanishes and therefore
\begin{align*}
-\int_0^T \int \partial_t \zeta \,  \chi_i \,dx\,dt
\lesssim \alpha \iint \left|\zeta \right| d\mu.
\end{align*}
Since the left-hand side does not depend on $\alpha$, we have
\begin{align*}
 -\int_0^T \int \partial_t \zeta \,  \chi_i\,dx\,dt \leq 0.
\end{align*}
Taking the supremum over all $\zeta\in C_0^\infty(U)$ yields
\begin{align*}
 \iint_U \left| \partial_t \chi_i \right| = 0.
\end{align*}
Thus, $\partial_t \chi_i$ is absolutely continuous w.\ r.\ t.\ 
$\left|\nabla \chi_i\right|dt$ and the Radon-Nikodym theorem completes the proof.
% \step{Argument for (iii):} Estimating the right-hand side of (\ref{pf_dtX<<DX}) and optimizing
% in $\alpha$ yields
% \begin{align*}
%  \int_0^T \int  \zeta \,  v_i \left| \nabla \chi_i  \right| dt
% \lesssim \left( E_0 \left\| \zeta \right\|_{L^\infty}
% \int_0^T \int\left|\zeta\right| \left|\nabla\chi_i \right|dt \right)^{1/2}.
% \end{align*}
% Using an approximation argument, we see that this holds for
% $\zeta\in L^\infty(\left| \nabla\chi\right|  dt)$. 
% Fix $M>0$. Using $\zeta = \sign(v_i) \chara_{\left\{|v_i|>M\right\}}$ as a tester, we
% derive
% \begin{align*}
%   M \iint_{\{|v_i|>M\}} \left|\nabla\chi_i\right| dt
% \leq \iint_{\{|v_i|>M\}} \left| v_i\right| \left|\nabla\chi_i\right| dt
% \lesssim 
% \left( E_0 \iint_{\{|v_i|> M\}} \left| \nabla \chi_i \right| dt  \right)^{1/2}.
% % \lesssim E_0^{1/2} \frac1{M^{p/2}} \left(\int_0^T 
% % \int \left|v\right|^p\left| \nabla \chi \right| dt \right)^{1/2},
% \end{align*}
% Thus, we have (\ref{weak_L2_bound}).\\
% Obtaining the $L^p$-bound from (\ref{weak_L2_bound}) is straightforward: Let $1\leq p<2$. Then
% \begin{align*}
%  \iint_{|v_i|>1}  \left| v_i\right|^p \left| \nabla \chi_i \right|dt
% \leq \sum_{k=0}^\infty 2^{(k+1)p} \iint_{\{2^k < |v_i|\leq 2^{k+1}\}} 
% \left| \nabla \chi_i \right|dt
% \lesssim E_0 \sum_{k=0}^\infty \left(2^{p-2}\right)^k.
% \end{align*}
% Since $2^{p-2}\in(0,1)$, the sum converges
% and $v_i\in L^p( \left| \nabla \chi_i \right|dt)$.
\step{Argument for (iii):}
We refine the estimate in the argument for (ii).
Instead of estimating the right-hand side of 
(\ref{pf_dtX<<DX}) and optimizing afterwards, which leads to a weak $L^2$-bounds, we localize.
Starting from (\ref{pf_dtX<<DX}), we notice that we can localize with the test function 
$\zeta$. Thus, we can post-process the estimate and obtain
\begin{align*}
 \left| \int_0^T\int V_i \,\zeta \left| \nabla \chi_i\right| dt \right| 
 \leq C  \int_0^T\int \frac1{\alpha} \left|\zeta\right|
 \left| \nabla \chi_i\right| dt
 +   C\iint \alpha\left|\zeta\right| d\mu
\end{align*}
for any integrable $\zeta:(0,T)\times \torus \to \R$, any measurable
$\alpha \colon (0,T)\times \torus \to (0,\infty)$ and some constant $C<\infty$ which depends only
on the dimension $d$, the number of phases $\numphases$ and the matrix of surface tensions $\sigma$.
Now choose 
\begin{align*}
 \zeta = V_i \quad \text{and}\quad \alpha = \frac{2C}{| V_i|},
\end{align*}
where we set $\alpha:= 1$ if $V_i=0$, in which case all other integrands vanish.
Then, the first term on the right-hand side can be absorbed in the left-hand side and we obtain
\begin{align*}
  \int_0^T\int V_i^2 \left| \nabla \chi_i\right| dt \lesssim \mu([0,T]\times\torus) \lesssim E_0.
\end{align*}
\end{proof}

%
%
%		CURVATURE
%
%
%
%
%
%
%
%
%
\section{Energy Functional and Curvature}\label{sec:curvature}
It is a classical result by Reshetnyak \cite{reshetnyak1968weak} that the convergence
$\chi^h\to \chi$ in $L^1$ and 
\begin{align*}
 \int | \nabla\chi^h|\to \int \left| \nabla\chi\right|=:E(\chi)
\end{align*}
imply convergence of the first variation
\begin{align*}
 \delta E(\chi,\xi) = \int \left( \nabla \cdot \xi - \nu\cdot \nabla \xi \,\nu \right)
 \left| \nabla \chi \right|.
\end{align*}
A result by Luckhaus and Modica \cite{luckhaus1989gibbs} shows that this may extend to a $\Gamma$-convergence
situation, namely in case of the Ginzburg-Landau functional
\begin{align*}
 E_h(u) := \int h \left| \nabla u\right|^2 + \frac1h \left( 1-u^2\right)^2 dx.
\end{align*}
We show that this also extends to our $\Gamma$-converging functionals $E_h$.
Let us first address why the first variation of the approximate energies is of interest in view of our
minimizing movements scheme.
We recall (\ref{MMinterpretation}): the approximate solution $\chi^n$ at time $nh$
minimizes $E_h(\chi) - E_h(\chi-\chi^{n-1})$ among all $\chi$.
The natural variations of such a minimization problem are \emph{inner variations},
i.\ e.\ variations of the independent variable.
Given a vector field $\xi\in C^\infty(\torus,\R^d)$ and an admissible $\chi$, we define the deformation
$\chi_s$ of $\chi$ along $\xi$ by the distributional equation
\begin{align*}
  \frac{\partial}{\partial s} \chi_{i,s} + \nabla \chi_{i,s} \cdot \xi =0,\quad 
  \chi_{i,s}\big|_{s=0} = \chi_i,
\end{align*}
which means that the phases are deformed by the flow generated through $\xi$.
The inner variation $\delta E_h$ of the energy $E_h$ at $\chi$ along the vector field
$\xi$ is then given by
\begin{align}\label{variation_E}
 \delta E_h (\chi,\xi):= \frac{d}{ds} E_h(\chi_s)\big|_{s=0}
%  =& \frac1{\sqrt{h}} \sum_{i,j} \sigma_{ij} \frac{d}{d s}\Big|_{s=0} \int \chi_{i,s} 
%  \, G_h\ast \chi_{j,s} \,dx \\
=  \frac2{\sqrt{h}} \sum_{i,j} \sigma_{ij} \int \chi_i \, G_h\ast\left( -\nabla \chi_j\cdot \xi\right) dx.
\end{align}
For an admissible $\tilde \chi$ the inner variation
of the metric term $-E_h(\chi-\tilde \chi)$ is given by
\begin{align}\label{variation_D}
  -\delta E_h(\,\cdot\,-\tilde \chi)(\chi,\xi):= \frac{d}{ds} \big( -E_h(\chi_s-\tilde \chi) \big)\big|_{s=0}
  =  \frac2{\sqrt{h}} \sum_{i,j} \sigma_{ij} \int \left( \chi_i -\tilde \chi_i\right)  G_h\ast\left( \nabla \chi_j\cdot \xi\right) dx.
\end{align}
The (chosen and not necessarily unique) minimizer $\chi^n$ 
in Algorithm \ref{MBO_arbitrary_surface_tensions} therefore
satisfies the Euler-Lagrange equation
\begin{align}\label{ELG}
 \delta E_h(\chi^n,\xi) - \delta E_h(\,\cdot\,- \chi^{n-1})(\chi^n,\xi) = 0
\end{align}
for any vector field $\xi\in C^\infty(\torus,\R^d)$.
\subsection{Results}\label{sub:surf results}
%-----------------------------------------------
%
%		PROPOSITION
%
%-----------------------------------------------
The goal of this section is to prove the following statement about the convergence of the first term
in the Euler-Lagrange equation.
\begin{prop}\label{surf_prop_integrated_in_time}
Let  $\chi^h,\, \chi \colon (0,T)\times \torus \to \{0,1\}^\numphases$ be such that
$\chi^h(t),\, \chi(t)$ are admissible in the sense of (\ref{admissible}) and $E(\chi(t)) < \infty$ for a.\ e. $t$. Let
\begin{align}
 \chi^h & \longrightarrow \chi \quad \text{a.\ e. in } (0,T)\times \torus,\label{surf_conv_ass1_spacetime}
\end{align}
and furthermore assume that
\begin{align}
 \int_0^T E_h(\chi^h)\,dt &\longrightarrow \int_0^T E(\chi)\,dt.\label{surf_conv_ass3_spacetime}
\end{align}
Then, for any $\xi\in C_0^\infty((0,T)\times\torus,\R^d)$, we have
\begin{align*}
\lim_{h\to 0} \int_0^T\delta E_h (\chi^h,\xi) \,dt=
c_0 \sum_{i,j} \sigma_{ij} \int_0^T \int \left( \nabla\cdot \xi - \nu_i\cdot \nabla \xi\,\nu_i\right)
\frac12\left( \left|\nabla\chi_i\right| +\left|\nabla\chi_j\right|
-\left|\nabla(\chi_i+\chi_j)\right|\right) dt.
\end{align*}
\end{prop}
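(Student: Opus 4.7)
My plan is to reduce the multi-phase first variation to a sum of two-phase first variations via polarization, then prove the two-phase convergence via a consistency-plus-error strategy that mirrors the proof of Lemma \ref{la_impl_conv_ass}. First, from $\sum_k\chi_k = 1$ one has the algebraic identity
\begin{equation*}
 \chi_i\, G_h\ast\chi_j + \chi_j\, G_h\ast\chi_i = (1-\chi_i)\,G_h\ast\chi_i + (1-\chi_j)\,G_h\ast\chi_j - (1-\chi_i-\chi_j)\,G_h\ast(\chi_i+\chi_j),
\end{equation*}
which by the symmetry $\sigma_{ij}=\sigma_{ji}$ yields the polarization
$E_h(\chi) = \tfrac14\sum_{i\neq j}\sigma_{ij}\bigl(F_h(\chi_i)+F_h(\chi_j)-F_h(\chi_i+\chi_j)\bigr)$
with $F_h$ the two-phase approximate perimeter functional from (\ref{def F_h}). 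Since the inner variation is linear in the functional, $\delta E_h(\chi,\xi)$ decomposes analogously into a weighted sum of two-phase first variations $\delta F_h(\tilde\chi,\xi)$ with $\tilde\chi \in \{\chi_i,\chi_j,\chi_i+\chi_j\}$. This reduces the claim to proving, for each such binary $\tilde\chi$, that $\int_0^T \delta F_h(\tilde\chi^h,\xi)\,dt \to \int_0^T \delta F(\tilde\chi,\xi)\,dt$.

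The strengthened convergence assumption (\ref{surf_conv_ass3_spacetime}), combined with the strict triangle inequality, makes the perturbation argument of Step 2 in the proof of Lemma \ref{la_impl_conv_ass} applicable. This gives convergence of the individual interfacial energies and, more importantly for what follows, of their localized versions: $\int_0^T F_h(\tilde\chi^h,\zeta)\,dt \to \int_0^T F(\tilde\chi,\zeta)\,dt$ for every smooth $\zeta$ and each relevant $\tilde\chi$. This is the input that will drive the convergence of the first variation.

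For the two-phase convergence, I symmetrize the double integral representation of (\ref{variation_E}) under $(u,v)\mapsto(v,u)$, and use that $(\tilde\chi(u)-\tilde\chi(v))^2 = |\tilde\chi(u)-\tilde\chi(v)|$ for binary $\tilde\chi$, to arrive at
\begin{equation*}
 \delta F_h(\tilde\chi,\xi) = \tfrac{1}{\sqrt{h}} \int\!\!\int (\tilde\chi(u) - \tilde\chi(v))^2 \Bigl[\nabla G_h(u-v)\cdot(\xi(u)-\xi(v)) + G_h(u-v)\bigl(\nabla\cdot\xi(u) + \nabla\cdot\xi(v)\bigr)\Bigr] du\,dv.
\end{equation*}
Under the rescaling $z=(u-v)/\sqrt{h}$ and Taylor expansion of $\xi$, the kernel concentrates on the interface: the divergence terms produce $\nabla\cdot\xi$, while $\nabla G_h(u-v)\cdot(\xi(u)-\xi(v)) \approx -\tfrac{1}{h}G_h(u-v)(u-v)\cdot\nabla\xi(v)(u-v)$ averages, along normal directions, to $-\nu\cdot\nabla\xi\,\nu$. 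So the integrand against the approximate interface measure should tend to $2c_0(\nabla\cdot\xi-\nu\cdot\nabla\xi\,\nu)\,|\nabla\tilde\chi|$, i.e.\ $\delta F(\tilde\chi,\xi)$.

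To make this rigorous, I would split
$\delta F_h(\tilde\chi^h,\xi) - \delta F(\tilde\chi,\xi) = (\delta F_h(\tilde\chi^h,\xi) - \delta F_h(\tilde\chi,\xi)) + (\delta F_h(\tilde\chi,\xi) - \delta F(\tilde\chi,\xi))$
and bound each piece in $L^1([0,T])$ separately. The second term is a pure consistency statement for a fixed BV limit, which after the symmetric representation above amounts to Lemma \ref{surf_la_cons_G_and_k} applied with the entries of $\nabla\xi$ playing the role of the test function $\zeta$. The first term is an error estimate of the type of Lemma \ref{surf_la_err}, and is where I expect the main difficulty: because the first variation involves $\nabla G_h$, which carries an extra factor $1/\sqrt{h}$ compared to $G_h$, a naive $L^1$-closeness of $\tilde\chi^h$ to $\tilde\chi$ is not enough. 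I would instead estimate the error by Cauchy-Schwarz, extracting the extra $1/\sqrt{h}$ as $\tfrac{1}{\sqrt{h}}\|G_{h/2}\ast(\tilde\chi^h-\tilde\chi)\|_{L^2}^2$, a quantity which is exactly controlled after time integration by the convergence of the energies from Step 2 (via the dissipation-type identity relating the $L^2$-norm of $G_{h/2}\ast(\tilde\chi^h-\tilde\chi)$ to $F_h$). Finally, recombining the two-phase limits with the polarization weights from Step 1 reproduces the claimed right-hand side.
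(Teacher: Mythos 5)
Your reduction is sound up to the decisive step: the polarization identity, the decomposition of the inner variation into two-phase first variations, the symmetrized double-integral formula for $\delta F_h$, and the consistency piece $\delta F_h(\tilde\chi,\xi)-\delta F(\tilde\chi,\xi)$ (which is indeed Lemma \ref{surf_la_cons_G_and_k} with the entries of $\nabla\xi$ as test functions) all run parallel to the paper, which likewise splits the kernel into $G_h\,Id$ and $h\nabla^2G_h$ and handles the isotropic part via Lemma \ref{la_impl_conv_ass}. Note also that the localized energy convergence you import in your second step is exactly Lemma \ref{la_impl_conv_ass}, whose proof already uses the error-estimate machinery you are trying to avoid.

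The gap is in your treatment of $\delta F_h(\tilde\chi^h,\xi)-\delta F_h(\tilde\chi,\xi)$. The quantity you propose as an error bound, $\frac1{\sqrt h}\|G_{h/2}\ast(\tilde\chi^h-\tilde\chi)\|_{L^2}^2$, is \emph{not} controlled by the convergence of the energies. The identity you invoke reads
\begin{equation*}
\frac1{\sqrt h}\int(\tilde\chi^h-\tilde\chi)\,G_h\ast(\tilde\chi^h-\tilde\chi)\,dx
=\frac1{\sqrt h}\int\Bigl[\tilde\chi^h\,G_h\ast(1-\tilde\chi)+\tilde\chi\,G_h\ast(1-\tilde\chi^h)\Bigr]dx-\tfrac12F_h(\tilde\chi^h)-\tfrac12F_h(\tilde\chi),
\end{equation*}
and the hypotheses fix the limits of the last two terms but say nothing about the mixed term. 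In fact the left-hand side need not vanish under (\ref{surf_conv_ass1_spacetime}) and (\ref{surf_conv_ass3_spacetime}): take $P=2$, $\tilde\chi=\chara_{\{x_1>\lambda\}}$ time-independent and $\tilde\chi^h$ its translate by $c\sqrt h\,e_1$. Then $\tilde\chi^h\to\tilde\chi$ a.e.\ and $F_h(\tilde\chi^h)=F_h(\tilde\chi)\to F(\tilde\chi)$ by translation invariance and consistency, yet a direct computation gives, per interface, $\frac1{\sqrt h}\int(\tilde\chi^h-\tilde\chi)G_h\ast(\tilde\chi^h-\tilde\chi)\,dx\to\Lambda^{d-1}\int_0^c\int_0^cG^1(s-t)\,ds\,dt>0$; time integration does not help since the example is stationary. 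In this example the two first variations do converge to the same limit, but only through a cancellation that any Cauchy--Schwarz (or $\frac1{\sqrt h}\|\tilde\chi^h-\tilde\chi\|_{L^1}$) bound discards, so your estimate returns $O(1)$ rather than $o(1)$ and the final step does not close. The paper's mechanism for precisely this point is Lemma \ref{surf_la_err}: one works with the correlation functions $f_h(z)$, $f(z)$ (integrated in $x$ before taking absolute values), exploits their subadditivity in $z$ --- a consequence of the triangle inequality for the surface tensions and admissibility --- together with the approximate monotonicity of Lemma \ref{lem_AM_EO}, to compare the scale $\sqrt h$ with a fixed scale $\sqrt{h_0}$ at which plain $L^1$-convergence of $\tilde\chi^h$ suffices; the energy convergence pins down the signed quantity $\frac1{\sqrt h}\int G_h(f_h-f)\,dz\to0$, and the two-scale argument upgrades it to $\frac1{\sqrt h}\int k_h|f_h-f|\,dz\to0$ for kernels $k\le p(|z|)G$, which is what controls the $h\nabla^2G_h$-part of the first variation. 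Without an argument of this type, or a genuine substitute for it, your proof is incomplete at its central step.
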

%

%-----------------------------------------------
%
%		PROPOSITION
%
%-----------------------------------------------
It is easy to reduce the statement to the following time-independent statement.
\begin{prop}\label{surf_prop}
Let $\chi^h,\, \chi \colon \torus \to \{0,1\}^\numphases$ 
 be admissible in the sense of (\ref{admissible}) with $E(\chi) <\infty$ such that
\begin{align}
 \chi^h & \longrightarrow \chi \quad \text{a.\ e.},\label{surf_conv_ass1}
 \end{align}
 and furthermore assume that
 \begin{align}
E_h(\chi^h)&\longrightarrow E(\chi).\label{surf_conv_ass3}
\end{align}
Then, for any $\xi\in C^\infty(\torus,\R^d)$, we have
\begin{align*}
\lim_{h\to 0} \delta E_h (\chi^h,\xi) =
c_0 \sum_{i,j} \sigma_{ij} \int \left( \nabla\cdot \xi - \nu_i\cdot \nabla \xi\,\nu_i\right)
 \frac12\left( \left|\nabla\chi_i\right| +\left|\nabla\chi_j\right| -\left|\nabla(\chi_i+\chi_j)\right|\right).
\end{align*}
\end{prop}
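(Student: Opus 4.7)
\medskip
\noindent\textbf{Proof plan.}
First I would reduce to the time-independent Proposition~\ref{surf_prop}. Under the assumptions of Proposition~\ref{surf_prop_integrated_in_time}, the $\liminf$-inequality of $\Gamma$-convergence gives $E(\chi(t))\le\liminf E_h(\chi^h(t))$ for a.\,e.\ $t$; combined with $\int_0^T E_h(\chi^h)\,dt\to\int_0^T E(\chi)\,dt$ and Lebesgue's dominated convergence (using the uniform energy bound $E_h(\chi^h(t))\le E_0$ from (\ref{energy_dissipation_estimate})), we get $E_h(\chi^h(t))\to E(\chi(t))$ for a.\,e.\ $t$ along a subsequence. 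Then Proposition~\ref{surf_prop} is applied slicewise and the uniform bound $|\delta E_h(\chi^h(t),\xi)|\lesssim \|\nabla\xi\|_\infty E_h(\chi^h(t))$ (which follows from the expression below) combined with dominated convergence yields the time-integrated statement.

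For Proposition~\ref{surf_prop} itself, I would integrate by parts to split $\delta E_h$ into two pieces. Using $\nabla\chi_j\cdot\xi=\nabla\cdot(\chi_j\xi)-\chi_j\nabla\cdot\xi$ and the symmetry of $G_h$, one obtains $\delta E_h(\chi,\xi)=I_1(\chi,\xi)+I_2(\chi,\xi)$ with
\begin{align*}
 I_1=\tfrac{2}{\sqrt h}\sum_{i,j}\sigma_{ij}\int \chi_j\,\nabla(G_h*\chi_i)\cdot\xi\,dx,\qquad
 I_2=\tfrac{2}{\sqrt h}\sum_{i,j}\sigma_{ij}\int \chi_j(G_h*\chi_i)\,\nabla\cdot\xi\,dx.
\end{align*}
The divergence piece $I_2$ is handled directly by the (time-independent content of) Lemma~\ref{la_impl_conv_ass} applied with $\zeta=\nabla\cdot\xi$, yielding convergence to $2c_0\sum\sigma_{ij}\int \nabla\cdot\xi\cdot\tfrac12(|\nabla\chi_i|+|\nabla\chi_j|-|\nabla(\chi_i+\chi_j)|)$.

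The key step is $I_1$. Using $\nabla G_h(z)=-\tfrac{z}{h}G_h(z)$ one rewrites
\begin{align*}
 I_1=-\tfrac{2}{\sqrt h}\sum_{i,j}\sigma_{ij}\iint \chi_j(x)\chi_i(x-z)\,\tfrac{z\cdot\xi(x)}{h}\,G_h(z)\,dz\,dx.
\end{align*}
Symmetrizing in $(i,j)$ via the substitution $x\mapsto x+z$, $z\mapsto -z$ (relying on the evenness of $G_h$ and the symmetry of $\sigma$), the $\xi(x)$ gets replaced by $\xi(x-z)-\xi(x)$, and a Taylor expansion gives $\tfrac{z\cdot(\xi(x-z)-\xi(x))}{h}=-\tfrac{1}{h}z^T\nabla\xi(x)z+O(|z|^3/h)$. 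The cubic remainder contributes $o(1)$ because $\int \chi_j(x)\chi_i(x-z)\,dx\lesssim |z|$ uniformly thanks to the uniform BV-type control from Lemma~\ref{comp_lem_z_shift}. So the limit of $I_1$ is controlled by a ``tensor-valued'' localized approximate energy, namely
\begin{align*}
 F_h^{ij}(\chi;A):=\tfrac{1}{\sqrt h\cdot h}\iint \chi_j(x)\chi_i(x-z)\,A(x){:}(z\otimes z)\,G_h(z)\,dz\,dx
\end{align*}
applied to $A=\nabla\xi$. I would then prove the tensor-valued Reshetnyak-type limit
\begin{align*}
 F_h^{ij}(\chi^h;A)\longrightarrow c_0\int A{:}(I+\nu_{ij}\otimes\nu_{ij})\cdot\tfrac12\bigl(|\nabla\chi_i|+|\nabla\chi_j|-|\nabla(\chi_i+\chi_j)|\bigr)
\end{align*}
by combining a localized consistency statement (Lemma~\ref{surf_la_cons_G_and_k}: pointwise convergence for fixed admissible $\chi$, obtained by splitting $z=s\nu+z_\perp$ along the measure-theoretic normal and computing the Gaussian moments $\int(z\cdot\nu)_+ z_kz_l G_h\,dz=c_0 h^{3/2}(\delta_{kl}+\nu_k\nu_l)$) with an error-stability statement (Lemma~\ref{surf_la_err}) that uses the perturbation trick of Lemma~\ref{la_impl_conv_ass}: perturbing the surface tensions by $\varepsilon$ preserves the strict triangle inequality and thus lets approximate monotonicity (Lemma~\ref{lem_AM_EO}) squeeze $F_h^{ij}(\chi^h;A)$ between quantities that converge to $F^{ij}(\chi;A)$.

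Assembling the two pieces, the identity $\nabla\xi{:}(I+\nu\otimes\nu)=\nabla\cdot\xi+\nu\cdot\nabla\xi\,\nu$ gives
\[
 I_1\to -c_0\sum \sigma_{ij}\int(\nabla\cdot\xi+\nu_{ij}\cdot\nabla\xi\,\nu_{ij})\tfrac12(|\nabla\chi_i|+|\nabla\chi_j|-|\nabla(\chi_i+\chi_j)|),
\]
and adding $I_2$ (noting $\nu_i\otimes\nu_i=\nu_{ij}\otimes\nu_{ij}$ $\mathcal H^{d-1}$-a.\,e.\ on $\Sigma_{ij}$) yields exactly the claimed limit. The hard part will be the tensor-valued Reshetnyak convergence in Step~5: both localizing the consistency of Esedo\u{g}lu--Otto to the $A{:}(z\otimes z)$-weighted functional and ensuring the error estimate $|F_h^{ij}(\chi^h;A)-F_h^{ij}(\chi;A)|\to 0$ require delicate perturbation arguments that exploit the strict triangle inequality, since the total energy $E_h(\chi^h)$ controls only the trace-like combination and not the individual tensorial moments.
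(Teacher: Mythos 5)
Your plan follows essentially the same route as the paper's: your integration by parts plus symmetrization and Taylor expansion reproduces Lemma \ref{surf_la_ELG} (note $\tfrac{z\otimes z}{h}G_h = G_h\,Id + h\nabla^2 G_h$, so your tensorial functional $F_h^{ij}(\cdot;A)$ is exactly the kernel combination treated there), the divergence piece is handled by Lemma \ref{la_impl_conv_ass} just as in the paper, and your ``tensor-valued Reshetnyak limit'' is Proposition \ref{surf_prop_A}, which the paper proves by precisely the two ingredients you name, the localized consistency Lemma \ref{surf_la_cons_G_and_k} (with the same Gaussian-moment computation along the normal) and the error estimate Lemma \ref{surf_la_err}.

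Two points need repair. First, your bound on the cubic Taylor remainder is not justified as stated: Lemma \ref{comp_lem_z_shift} concerns the time-integrated approximate solutions produced by the scheme, and even there gives only an error $\lesssim \delta+\sqrt h$; in Proposition \ref{surf_prop} the $\chi^h$ are arbitrary admissible configurations with no BV-type control, so $\int\chi^h_j(x)\chi^h_i(x-z)\,dx\lesssim|z|$ is not available. The remainder should instead be estimated as in Lemma \ref{surf_la_ELG}: using $|z|^3G(z)\lesssim G_2(z)$ it is bounded by $\|\nabla^2\xi\|_\infty\,\sqrt h\,E_{2h}(\chi^h)\lesssim \|\nabla^2\xi\|_\infty\,\sqrt h\,E_h(\chi^h)$ by the approximate monotonicity of Lemma \ref{lem_AM_EO}, which is $o(1)$ since $E_h(\chi^h)$ converges. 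Second, you state the key limit pairwise and claim its stability in $\chi^h$ requires the strict-triangle-inequality perturbation trick; this is a stronger statement than needed and a detour. The paper's Lemma \ref{surf_la_err} works directly with the $\sigma$-weighted covariance $f_h$, whose subadditivity uses only the plain triangle inequality, so no perturbation of the surface tensions is needed at this stage (the strict inequality enters only in Lemma \ref{la_impl_conv_ass}, where individual interfacial areas must converge, and you already invoke that lemma for $I_2$). With these adjustments your argument coincides with the paper's proof.
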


\begin{rem}
 Proposition \ref{surf_prop} and all other statements in this section hold also in a more general context.
 We do not need the approximations $\chi^h$ to be characteristic functions. In fact the statements hold for any sequence
 $u^h \colon \torus \to [0,1]^\numphases$ with $\sum_i u_i^h =1$ a.\ e.\ converging to some $\chi\colon \torus \to \{0,1\}^\numphases$ with
 $E(\chi) <\infty$ in the sense of (\ref{surf_conv_ass1})--(\ref{surf_conv_ass3}).
\end{rem}

%-----------------------------------------------
%
%		LEMMA ELG
%
%-----------------------------------------------
The following first lemma brings the first variation $\delta E_h$ of $E_h$ into a more convenient form,
up to an error vanishing as $h\to0$ because of the smoothness of $\xi$.
Already at this stage one can see the structure 
\begin{align*}
 \nabla \cdot \xi - \nu \cdot \nabla \xi \, \nu=\nabla \xi\colon (Id-\nu\otimes \nu)
\end{align*}
in the first variation of $E$ in the form of
$\nabla\xi \colon ( G_h Id-h\nabla^2 G_h)$ on the level of the approximation.
\begin{lem}\label{surf_la_ELG} 
Let $\chi$ be admissible and $\xi\in C^\infty(\torus,\R^d)$ then
\begin{align}\label{surf_eq_ELG}
 \delta E_h (\chi,\xi)
= \frac{1}{\sqrt{h}} \sum_{i,j} \sigma_{ij} \int \chi_i\, 
\nabla \xi : \left(  G_hId - h\nabla^2 G_h\right) \ast
 \chi_j \,dx
 + O\left(\|\nabla^2 \xi\|_\infty E_h(\chi)\h\right).
\end{align}
\end{lem}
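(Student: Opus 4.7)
Start from the formula \eqref{variation_E} for $\delta E_h(\chi,\xi)$. The natural first step is to move the derivative off of $\chi_j$ by integration by parts in $y$, so that we work with $\chi_j$ (not $\nabla\chi_j$) integrated against derivatives of $G_h$. Concretely, writing $\chi_i G_h\ast(-\nabla\chi_j\cdot\xi)$ as the double integral $\int\int \chi_i(x)\chi_j(y)\bigl[-\nabla G_h(x-y)\cdot \xi(y)+G_h(x-y)\,\nabla\cdot\xi(y)\bigr]\,dx\,dy$, we split $\delta E_h=\frac{2}{\sqrt h}\sum_{i,j}\sigma_{ij}(-A_{ij}+B_{ij})$ with the obvious definitions of $A_{ij}$ and $B_{ij}$. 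This is valid since $\chi_j\in BV$, so the integration by parts is just a property of the finite-perimeter structure.

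The next idea is to symmetrize in $(i,j)$, exploiting $\sigma_{ij}=\sigma_{ji}$ together with the antisymmetry $\nabla G_h(-z)=-\nabla G_h(z)$. After relabeling, one finds the clean identity
\begin{equation*}
A_{ij}+A_{ji}=\int\!\!\int \chi_i(x)\chi_j(y)\,[\xi(y)-\xi(x)]\cdot\nabla G_h(x-y)\,dx\,dy.
\end{equation*}
The difference $\xi(y)-\xi(x)$ can now be Taylor expanded as $\nabla\xi(x)(y-x)+R(x,y)$ with $|R|\lesssim \|\nabla^2\xi\|_\infty |y-x|^2$. The leading term is turned into something recognizable by the key algebraic identity for the Gaussian kernel,
\begin{equation*}
(y-x)_\beta\,\partial_\alpha G_h(x-y) \;=\; h\,\partial_\alpha\partial_\beta G_h(x-y)+\delta_{\alpha\beta}\,G_h(x-y),
\end{equation*}
which follows from $\nabla G_h(z)=-(z/h)G_h(z)$ and $\nabla^2 G_h(z)=-(1/h)G_h(z)\,I+(z\otimes z/h^2)G_h(z)$. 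Contracting with $\nabla\xi(x)$ then gives
\begin{equation*}
A_{ij}+A_{ji}\;\approx\;\int \chi_i\,\nabla\xi:\bigl[h\,\nabla^2 G_h+G_h I\bigr]\ast\chi_j\,dx.
\end{equation*}
An analogous but simpler Taylor expansion $\nabla\cdot\xi(y)=\nabla\cdot\xi(x)+O(|y-x|\|\nabla^2\xi\|_\infty)$ in $B_{ij}$, using that the symmetrized contribution equals $\sum_{ij}\sigma_{ij}\int\chi_i\,(\nabla\cdot\xi)\,G_h\ast\chi_j\,dx$, yields a term of the form $\nabla\xi:(G_h I)\ast\chi_j$. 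Combining $-\frac12(A_{ij}+A_{ji})+B_{ij}$, the $G_h I$ pieces combine with coefficient $-\frac12+1=\frac12$, the $h\nabla^2 G_h$ contribution appears with coefficient $-\frac12$, and multiplying by $\frac{2}{\sqrt h}$ gives exactly the claimed formula \eqref{surf_eq_ELG}.

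The main (but mild) obstacle is the error estimate. The Taylor remainders produce integrands like $\chi_i(x)\chi_j(y)|y-x|^2|\nabla G_h(x-y)|$ and $\chi_i(x)\chi_j(y)|y-x|\,G_h(x-y)$. Using $|\nabla G_h(z)|=\frac{|z|}{h}G_h(z)$ together with the Gaussian moment bound $\frac{|z|^k}{h^{k/2}}G_h(z)\lesssim G_{2h}(z)$, both are controlled by $\sqrt{h}\int\!\!\int \chi_i(x)\chi_j(y)\,G_{2h}(x-y)\,dx\,dy$. This in turn equals $\sqrt{h}\int \chi_i\,G_{2h}\ast\chi_j\,dx$, which after summation against $\sigma_{ij}$ is $\sqrt{h}\cdot\sqrt{2h}\,E_{2h}(\chi)\lesssim h\,E_h(\chi)$ by the approximate monotonicity of Lemma \ref{lem_AM_EO}. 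Multiplying by the overall $\frac{1}{\sqrt h}\|\nabla^2\xi\|_\infty$ prefactor yields precisely the stated error $O(\|\nabla^2\xi\|_\infty E_h(\chi)\sqrt h)$, completing the proof.
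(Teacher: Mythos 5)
Your argument is correct and follows essentially the same route as the paper's proof: move the derivative off $\chi_j$ via the product rule, symmetrize in $(i,j)$ using $\sigma_{ij}=\sigma_{ji}$ and the antisymmetry of $\nabla G_h$, Taylor expand $\xi$ to second order, identify the leading term through the identity $z\otimes\nabla G_h = -G_h\,Id - h\nabla^2 G_h$, and absorb the remainder with the Gaussian moment bound $|z|^3 G_h(z)\lesssim \sqrt{h}\,h\,G_{2h}(z)$ plus the approximate monotonicity of Lemma \ref{lem_AM_EO}. The only cosmetic deviation is that you (optionally) Taylor expand $\nabla\cdot\xi$ in the $B_{ij}$ term, whereas the paper handles that term exactly by the symmetry of $G_h$; either way the error is of the admissible order $O\left(\|\nabla^2\xi\|_\infty E_h(\chi)\sqrt h\right)$.
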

We have already seen in Lemma \ref{la_impl_conv_ass} that we can pass to the limit in the term involving only the kernel $G_h Id$:
\begin{align*}
 \lim_{h\to0}  \frac{1}{\sqrt{h}} \sum_{i,j} \sigma_{ij} \!
\int\!\, \zeta \, \chi^h_i \,G_h\ast \chi^h_j \,dx 
= c_0 \sum_{i,j} \sigma_{ij}\! \int \zeta\, \frac12\left(|\nabla\chi_i| 
+|\nabla\chi_j| 
- |\nabla(\chi_i+\chi_j)|\right),
\end{align*}
where now $\zeta =\nabla \cdot \xi$.
The next proposition shows that we can also pass to the limit in the term involving the second derivatives $h \nabla^2G_h$ of the kernel,
which yields the projection $\nu\otimes \nu$ onto the normal direction in the limit.

%-----------------------------------------------
%
%		PROPOSITION
%
%-----------------------------------------------
\begin{prop}\label{surf_prop_A}
Let $\chi^h,\, \chi$ satisfy the convergence assumptions 
(\ref{surf_conv_ass1}) and (\ref{surf_conv_ass3}). Then for any $A\in C^\infty([0,\Lambda)^d,\R^{d\times d})$
 \begin{align*}
  \lim_{h\to0}  \frac{1}{\sqrt{h}} \sum_{i,j} \sigma_{ij} \!
\int\!\chi^h_i\, A : h\nabla^2 G_h\ast \chi^h_j\, dx 
= c_0 \sum_{i,j} \sigma_{ij}\! \int \! \nu\! \cdot\! A\, \nu\, \frac12\left(|\nabla\chi_i| 
+|\nabla\chi_j| 
- |\nabla(\chi_i+\chi_j)|\right).
 \end{align*}
\end{prop}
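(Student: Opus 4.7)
The plan is to decompose the kernel $h\nabla^2 G_h$ into a moment part and a trace part, handle each via the parabolic rescaling $z=\sqrt h w$, and identify the projection $\nu\otimes\nu$ from a direct Gaussian moment computation. The trace part will follow immediately from Lemma \ref{la_impl_conv_ass}, while the moment part requires dominated convergence in $w$ together with a pointwise-in-$w$ shifted-perimeter limit that I identify as the main technical obstacle.

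Starting from the pointwise identity $h\nabla^2 G_h(z)=G_h(z)(z\otimes z/h-\mathrm{Id})$, I would split $I_h(A):=\tfrac{1}{\sqrt h}\sum_{i,j}\sigma_{ij}\int\chi^h_i A:h\nabla^2 G_h\ast\chi^h_j\,dx=J_h(A)-K_h(A)$, where $K_h(A)$ has the kernel $G_h\,\mathrm{tr}(A)$ and $J_h(A)$ carries the moment kernel $G_h\,z\otimes z/h$. Lemma \ref{la_impl_conv_ass} applied with $\zeta=\mathrm{tr}(A)$ immediately gives $K_h(A)\to c_0\sum_{i,j}\sigma_{ij}\int\mathrm{tr}(A)\,d\mu_{ij}$ with $d\mu_{ij}:=\tfrac12(|\nabla\chi_i|+|\nabla\chi_j|-|\nabla(\chi_i+\chi_j)|)$. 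For $J_h(A)$ the substitution $z=\sqrt h w$ yields $J_h(A)=\int G(w)Q_h(w)\,dw$ with
\begin{align*}
Q_h(w):=\sum_{i,j}\sigma_{ij}\frac{1}{\sqrt h}\int\chi_i^h(x)\bigl(w\cdot A(x) w\bigr)\chi_j^h(x-\sqrt h w)\,dx.
\end{align*}
Using the pointwise bound $\chi_i^h\chi_j^h(\cdot-\sqrt h w)\le|\chi_j^h-\chi_j^h(\cdot-\sqrt h w)|$ together with Lemma \ref{comp_lem_z_shift} at shift $\sqrt h|w|$ yields the $G(w)$-integrable estimate $|Q_h(w)|\lesssim\|A\|_\infty E_0(1+|w|)(1+|w|^2)$, which sets up dominated convergence.

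The main obstacle is to prove, for each fixed $w$, the pointwise limit
\begin{align*}
Q_h(w)\longrightarrow\sum_{i,j}\sigma_{ij}\int\bigl(w\cdot A w\bigr)(-w\cdot\nu_i)_+\,d\mu_{ij}.
\end{align*}
My approach mirrors the proof of Lemma \ref{la_impl_conv_ass}: the convergence assumption combined with the strict triangle inequality forces convergence of the individual two-phase energies, and the localized consistency (Lemma \ref{surf_la_cons_G_and_k}, referenced later in this section) together with the error estimate (Lemma \ref{surf_la_err}) upgrade this to test-function integrals, where the shift is handled by approximating $\delta_{\sqrt h w}$ by a narrow Gaussian $G_{\varepsilon h}(\cdot-\sqrt h w)$ and sending $\varepsilon\downarrow 0$ after $h\downarrow 0$. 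The one-sided factor $(-w\cdot\nu_i)_+$ arises because $\chi_i^h\chi_j^h(\cdot-\sqrt h w)$ is supported only where the shift by $\sqrt h w$ carries $\Omega_j$ across $\Sigma_{ij}$ into $\Omega_i$, which requires $w\cdot\nu_i<0$. Once this pointwise limit is in hand, dominated convergence in $w$ combined with the elementary moment $\int G(w)(w\otimes w)(-w\cdot\nu)_+\,dw=c_0(\mathrm{Id}+\nu\otimes\nu)$ (computed in coordinates with $\nu=e_d$ by factorizing $G=G^1(w_d)G^{d-1}(w')$ and using $\int_0^\infty sG^1(s)\,ds=c_0$ and $\int_0^\infty s^3G^1(s)\,ds=2c_0$) gives $J_h(A)\to c_0\sum\sigma_{ij}\int A:(\mathrm{Id}+\nu_i\otimes\nu_i)\,d\mu_{ij}$; subtracting the trace contribution cancels the $\mathrm{Id}$ term and yields $I_h(A)\to c_0\sum\sigma_{ij}\int\nu_i\cdot A\,\nu_i\,d\mu_{ij}$, matching the claim since $\nu_i=-\nu_j$ on $\Sigma_{ij}$.
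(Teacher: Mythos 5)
Your decomposition $A:h\nabla^2G_h(z)=G_h(z)\,(w\cdot Aw-\operatorname{tr}A)$ with $w=z/\sqrt h$, the domination $|Q_h(w)|\lesssim(1+|w|)(1+|w|^2)$, and the moment identity $\int G(w)\,(w\otimes w)\,(-w\cdot\nu)_+\,dw=c_0(\mathrm{Id}+\nu\otimes\nu)$ are all correct, and the final algebra does reproduce the claimed limit. But the step you yourself flag as the main obstacle -- the pointwise-in-$w$ limit of $Q_h(w)$ -- is asserted, not proved, and it does not follow from the tools you invoke. What is needed there is convergence of the \emph{localized} (weight $w\cdot A(x)w$), \emph{pair-resolved} and \emph{orientation-resolved} two-point function of the approximations at a \emph{single} displacement $\sqrt h\,w$. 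Lemma \ref{surf_la_err} only controls Gaussian-averaged quantities $\frac1\h\int k_h(z)\,|f_h(z)-f(z)|\,dz$, and only for the $\sigma$-weighted sum over all pairs without an $x$-dependent weight; Lemma \ref{la_impl_conv_ass} and Lemma \ref{surf_la_cons_G_and_k} concern the centered kernels $G$ and $z_1^2G$. Your proposed fix -- mollifying $\delta_{\sqrt h w}$ by $G_{\varepsilon h}(\cdot-\sqrt h w)$ and sending $\varepsilon\downarrow0$ after $h\downarrow0$ -- does not close the gap: the replacement error is of order $\sqrt\varepsilon\,F_{\varepsilon h}(\chi_j^h)$, i.e.\ it involves the approximations' two-phase energy at the foreign scale $\varepsilon h$, which none of the quoted results controls (only $E_h(\chi^h)$ at the scheme's own scale is known; crude subadditivity only gives $F_{\varepsilon h}(\chi^h_j)\lesssim\varepsilon^{-1/2}F_h(\chi^h_j)$, making the error $O(1)$ rather than $o(1)$). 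To make it work one would have to rerun the strict-triangle-inequality perturbation argument and the error estimate at scale $\varepsilon h$, and even then the mollified quantity carries the non-centered kernel $G_\varepsilon(\cdot-w)$, for which the consistency computation for the fixed limit $\chi$ -- in particular the one-sided factor $(-w\cdot\nu_i)_+$, whose justification is exactly a blow-up argument of the type carried out in Steps 5--6 of the proof of Lemma \ref{surf_la_cons_G_and_k} -- is not available and would have to be redone. In short, the hard content of Proposition \ref{surf_prop_A} has been relocated into the asserted pointwise limit, which is of essentially the same difficulty as the two lemmas of this section.

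For comparison, the paper avoids any pointwise-in-displacement statement by keeping the $z$-integration inside: since $|A:h\nabla^2G_h(z)|\lesssim\|A\|_\infty(1+|z|^2/h)\,G_h(z)$, the kernel is dominated by a polynomial times a Gaussian, so Lemma \ref{surf_la_err} swaps $\chi^h$ for $\chi$ in one stroke, and Lemma \ref{surf_la_cons_G_and_k} -- whose proof already contains the disintegration in radius and direction and the moment computation you redo -- identifies the limit for the fixed BV partition $\chi$. If you want to salvage your route, the honest way is to prove a consistency statement and an error estimate for the family of off-center kernels appearing after your mollification; but at that point you are reproving the paper's two lemmas in a harder, parametrized form.
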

%

%-----------------------------------------------
%
%		LEMMA MONOTONICITY
%
%-----------------------------------------------
The following two statements are used to prove Proposition \ref{surf_prop_A}.
% The next lemma shows that the monotonicity (\ref{approximate monotonicity}) of $E_h(\chi)$ in $h$, which is
% crucial for the lower semi-continuity part of the $\Gamma$-convergence of $E_h$ to $E$ in \cite{EseOtt14},
% approximately survives if the energy functional is localized by a smooth $\zeta\geq0$.
% \begin{lem}[Approximate monotonicity]\label{surf_la_mon} Let $k$ be a non-negative, 
% radially non-increasing kernel, i.\ e.\
% \begin{align}\label{surf_rad_decr}
%  \nabla k(z) \cdot z \leq 0.
% \end{align}
% Let $\zeta\in C^\infty(\torus)$, $\zeta\geq0$ and let $\chi$ be admissible. Set
% \begin{align*}
%  E_h(\chi,\zeta):=   \frac{1}{\sqrt{h}} \sum_{i,j} \sigma_{ij} \int \zeta \, \chi_i \,k_h\ast \chi_j \,dx,\quad 
%  F_h(\chi)      :=   \frac{1}{\sqrt{h}} \sum_{i,j} \sigma_{ij} \int \chi_i \,\tilde k_h\ast \chi_j \,dx,
% \end{align*}
% where $\tilde k(z):=|z|k(z)$. Then we have 
% \begin{align}
%  E_h(\chi,\zeta) \geq \left( \frac{\sqrt{h_0}}{\h+\sqrt{h_0}}\right)^{d+1} E_{h_0}(\chi,\zeta) - 
%  \| \nabla \zeta\|_\infty F_h(\chi) \sqrt{h_0}
% \end{align}
% for all $0<h\leq h_0$.
% \end{lem}
%
%-----------------------------------------------
%
%		LEMMA CONSISTENCY
%
%-----------------------------------------------
The following lemma yields in particular the construction part in the $\Gamma$-convergence result of $E_h$
to $E$.
We need it in a localized form; the proof closely follows 
the proof of Lemma 4 in Section 7.2 of \cite{EseOtt14}.
\begin{lem}[Consistency]\label{surf_la_cons_G_and_k}
 Let $\chi\in BV(\torus,\{0,1\}^\numphases)$ be admissible in the sense of (\ref{admissible}). Then for any $\zeta\in C^\infty(\torus)$
 \begin{align*}
 \lim_{h\to0}\frac{1}{\sqrt{h}} \sum_{i,j} \sigma_{ij} \int \zeta\,\chi_i\, G_h\ast \chi_j \,dx 
 =& c_0 \sum_{i,j}\sigma_{ij}  \int \zeta \,\frac12\left(|\nabla\chi_i| 
 +|\nabla\chi_j| -|\nabla(\chi_i+\chi_j)|\right)
\end{align*}
 and for any  $A\in C^\infty(\torus,\R^{d\times d})$ 
\begin{align*}
 \lim_{h\to0}\frac{1}{\sqrt{h}} \sum_{i,j} \sigma_{ij} \int \chi_i\,A\colon  h \nabla^2 G_h\ast \chi_j \,dx 
 =& c_0 \sum_{i,j}\sigma_{ij}  \int \nu \cdot A \, \nu \,\frac12\left(|\nabla\chi_i| 
 +|\nabla\chi_j| -|\nabla(\chi_i+\chi_j)|\right).
\end{align*}
\end{lem}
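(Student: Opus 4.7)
The plan is to reduce both assertions to a two-phase localized consistency and then to adapt the proof of the unlocalized consistency in \cite{EseOtt14} by a symmetrization of the convolution variable that commutes the smooth localizer through the kernel at the cost of an error of order $h$. Using $\sigma_{ij}=\sigma_{ji}$ to symmetrize each summand, the partition identity $\sum_k\chi_k=1$, $G_h\ast 1=1$, and $h\nabla^2 G_h\ast 1=0$, I would first verify the pointwise identities
\begin{align*}
\chi_i\,G_h\ast\chi_j+\chi_j\,G_h\ast\chi_i&=\chi_i\,G_h\ast(1-\chi_i)+\chi_j\,G_h\ast(1-\chi_j)-(\chi_i+\chi_j)\,G_h\ast(1-\chi_i-\chi_j),\\
\chi_i\,h\nabla^2 G_h\ast\chi_j+\chi_j\,h\nabla^2 G_h\ast\chi_i&=-\chi_i\,h\nabla^2 G_h\ast\chi_i-\chi_j\,h\nabla^2 G_h\ast\chi_j+(\chi_i+\chi_j)\,h\nabla^2 G_h\ast(\chi_i+\chi_j).
\end{align*}
Because $|\nabla\chi_i|+|\nabla\chi_j|-|\nabla(\chi_i+\chi_j)|=2\mathcal{H}^{d-1}\llcorner\Sigma_{ij}$ with a common normal $\nu=\nu_{ij}$ (the rank-one object $\nu\otimes\nu$ being independent of orientation), both statements reduce to the two-phase localized identities: for every $\tilde\chi\in BV(\torus,\{0,1\})$,
\begin{align*}
\frac1{\h}\int\zeta\,\tilde\chi\,G_h\ast(1-\tilde\chi)\,dx\to c_0\int\zeta\,|\nabla\tilde\chi|,\qquad\frac1{\h}\int A:\tilde\chi\,h\nabla^2 G_h\ast\tilde\chi\,dx\to -c_0\int\nu\cdot A\,\nu\,|\nabla\tilde\chi|.
\end{align*}

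The unlocalized versions ($\zeta\equiv 1$, $A$ constant) are exactly Lemma 4 of Section 7.2 in \cite{EseOtt14}. To localize, I would write each integral as a double integral in $(x,z)$ against the even kernel $G_h(z)$ and symmetrize by combining it with its image under the change of variables $(x,z)\mapsto(x+z,-z)$. This replaces $\zeta(x)$ by the midpoint-average $\tfrac12(\zeta(x)+\zeta(x+z))=\zeta(x+z/2)+O(|z|^2\|\nabla^2\zeta\|_\infty)$, the odd first-order Taylor term cancelling by the symmetrization itself. Combined with the BV-type bound $\int|\tilde\chi(x)-\tilde\chi(x+z)|\,dx\lesssim|z|\,|\nabla\tilde\chi|(\torus)$ and the Gaussian moments $\int|z|^k G_h(z)\,dz\lesssim h^{k/2}$, the $O(|z|^2)$ remainder contributes $O(\|\nabla^2\zeta\|_\infty h^{3/2})$ to the double integral, hence $O(h)$ after division by $\h$. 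The same symmetrization applies to the tensor localizer $A(x)$ in Part 2, after which the concentration-on-$\Sigma$ analysis from the unlocalized proof produces the pointwise-on-$\Sigma$ value of $\zeta$ (resp.\ $A$) in the limit.

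For Part 2 I would also need an explicit tensorial moment computation. Using $\tilde\chi(x)\tilde\chi(x+z)=\tilde\chi(x)-\tilde\chi(x)(1-\tilde\chi)(x+z)$ together with $\int G_h(z)(z\otimes z/h-I)\,dz=0$, the quantity rewrites as
\begin{align*}
\frac1{\h}\int A:\tilde\chi\,h\nabla^2G_h\ast\tilde\chi\,dx=-\frac1{\h}\iint A(x):\tilde\chi(x)(1-\tilde\chi)(x+z)\,G_h(z)\,\Bigl(\frac{z\otimes z}{h}-I\Bigr)\,dz\,dx.
\end{align*}
After the rescaling $z=\h w$, for smooth $\tilde\chi$ the $x$-integral concentrates on $\Sigma=\partial\{\tilde\chi=1\}$ and is governed by the moment $M(\nu):=\int w\otimes w\,(\nu\cdot w)_+\,G_1(w)\,dw$, which a direct computation in the frame $\nu=e_d$ evaluates to $c_0(I+\nu\otimes\nu)$. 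The $(z\otimes z)/h$ piece then contributes $-c_0\int(\trace(A)+\nu\cdot A\,\nu)|\nabla\tilde\chi|$, while the $-I$ piece, via Part 1 applied to the scalar $\trace A\in C^\infty(\torus)$, contributes $+c_0\int\trace(A)|\nabla\tilde\chi|$; the trace contributions cancel, leaving the desired $-c_0\int\nu\cdot A\,\nu|\nabla\tilde\chi|$. A density argument in the strict $BV$-topology (smooth sets are dense, and the localized functionals are continuous under $L^1$-convergence together with convergence of the localized perimeter) then extends the smooth case to general $\tilde\chi\in BV$. The main technical hurdle I expect is precisely this localization step: showing that the $\zeta$-$G_h$ (respectively $A$-$h\nabla^2 G_h$) commutator is $o(\h)$; the symmetrization is what makes this work, upgrading the naive $O(|z|)$ Taylor error to $O(|z|^2)$, which the second Gaussian moment $\int|z|^2 G_h(z)\,dz\sim h$ renders negligible after division by $\h$.
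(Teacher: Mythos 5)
Your algebraic reduction to the two-phase ``self-interaction'' functionals, the commutator estimate for the localizer (symmetrization giving an $O(|z|^2)$ error, hence $O(h)$ after dividing by $\h$), and the moment computation $\int w\otimes w\,(\nu\cdot w)_+G(w)\,dw=c_0(I+\nu\otimes\nu)$ are all correct, and the latter is the exact analogue of the sphere integral $\tfrac12\int_\sphere\xi_1^2|\xi\cdot\nu|\,d\xi=\tfrac{|B^{d-1}|}{d+1}(\nu_1^2+1)$ computed in the paper. The genuine gap is the final step: extending the anisotropic limit from smooth sets to general $\tilde\chi\in BV$ ``by a density argument in the strict $BV$-topology''. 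What that argument needs is not continuity of the \emph{limit} functional under strict convergence (that is Reshetnyak and is fine), but a uniform-in-$h$ statement allowing you to interchange $h\to0$ with the approximation parameter: something like $\limsup_{h\to0}\big|F^A_h(\tilde\chi)-F^A_h(\tilde\chi_n)\big|\to0$ as $\tilde\chi_n\to\tilde\chi$ strictly. No such estimate is available from $L^1$-closeness plus closeness of perimeters: the naive bounds give either $\h^{-1}\|\tilde\chi-\tilde\chi_n\|_{L^1}$ or, after integrating by parts ($h\nabla^2G_h\ast\tilde\chi=\h\nabla G_h\ast(\h\nabla\tilde\chi)$), an $O(1)$ term of the size of the perimeters, neither of which vanishes; and since $h\nabla^2G_h$ is sign-indefinite there is no monotonicity or semicontinuity structure to fall back on. A diagonal choice $\epsilon_n\ll\h$ does not help either, because the rate of the smooth-case convergence degenerates with the curvature of $\tilde\chi_n$.

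This is not a technicality: comparing the approximate functionals at two configurations that are merely close in $L^1$ and in (localized) perimeter is exactly the content of Lemma \ref{surf_la_err}, whose proof in the paper \emph{uses} the consistency Lemma \ref{surf_la_cons_G_and_k} you are trying to prove (and, in the multi-phase energy setting, the strict triangle inequality and the energy-convergence hypothesis), so invoking anything of that kind here would be circular. The paper avoids approximation of $\chi$ altogether: after reducing to rank-one $A=\zeta\,\xi\otimes\xi$ and the kernel $z_1^2G$, it disintegrates in the radial and angular variables, evaluates the sphere integral, and then slices in the $e_1$-direction, where the limit for a one-dimensional $BV$ function is computed by counting common jump points of the two (disjoint) phases --- an argument valid directly for arbitrary $BV$ partitions. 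If you want to keep your blow-up strategy, you would have to replace the density step by a direct argument on the reduced boundary (De Giorgi's structure theorem plus dominated convergence over blow-up points), which is essentially a different proof and closer in spirit to the paper's slicing. A minor additional point: the unlocalized constant-$A$ anisotropic statement is \emph{not} contained in Lemma 4 of Section 7.2 of \cite{EseOtt14}; only the Gaussian-kernel consistency is, as the paper itself notes.
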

%

%-----------------------------------------------
%
%		LEMMA ERROR ESTIMATE
%
%-----------------------------------------------
The next lemma shows that under our convergence assumption of $\chi^h$ to $\chi$, the corresponding spatial
covariance functions $f_h$ and $f$ are very close and allows us to pass from Lemma \ref{surf_la_ELG} and Lemma \ref{surf_la_cons_G_and_k} to
Proposition \ref{surf_prop}. 
\begin{lem}[Error estimate]\label{surf_la_err}
Let $\chi^h,\, \chi$ satisfy the convergence assumptions 
(\ref{surf_conv_ass1}) and (\ref{surf_conv_ass3}) and let $k$ be a non-negative kernel such that
\begin{align*}
 k(z)\leq p(|z|) G(z) 
\end{align*}
for some polynomial $p$.
Then
\begin{align}\label{surf_conv_err}
  \lim_{h\to0}\frac1\h \int k_h(z) |f_h( z)-f( z)|\,dz = 0,
\end{align}
where
\begin{align*}
 f_{h}(z) &:= \sum_{i,j} \sigma_{ij} \int \chi^{h}_i(x)\chi^{h}_j(x+z)\,dx\quad \text{and} \quad
 f(z) := \sum_{i,j} \sigma_{ij} \int \chi_i(x)\chi_j(x+z)\,dx.
\end{align*}
\end{lem}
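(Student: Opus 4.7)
The plan is to combine the non-negativity of the covariances $f_h$ and $f$ with the elementary identity $|a-b| = (a+b) - 2\min(a,b)$ (valid for $a,b \geq 0$) in order to reduce the two-sided statement to a one-sided (lower) bound, which in turn will follow from dominated convergence after a parabolic rescaling. The inputs from the paper will be the consistency of Lemma \ref{lem_CONS_EO}, the approximate monotonicity of Lemma \ref{lem_AM_EO}, and the hypothesis (\ref{surf_conv_ass3})---used once for a signed Gaussian limit, and once in a sharper perturbative form mimicking Step~2 of the proof of Lemma \ref{la_impl_conv_ass}.

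First I would substitute $z = \sqrt h\,\hat z$ to rewrite the claim as $\int k(\hat z)\,|f_h-f|(\sqrt h\,\hat z)/\sqrt h\,d\hat z \to 0$. Using $k \leq p(|\cdot|)G$ and the elementary bound $p(|\hat z|)\,G(\hat z) \lesssim e^{-|\hat z|^2/4}$ (polynomials are dominated by exponentials at infinity), the polynomial prefactor gets absorbed into a Gaussian at a larger variance, so it suffices to treat the case $k=G$. Then I would apply the identity pointwise to $f_h,f\geq 0$:
\[
\frac{1}{\sqrt h}\int G_h\,|f_h-f|\,dz \;=\; E_h(\chi^h)+E_h(\chi) - \frac{2}{\sqrt h}\int G_h\,\min(f_h,f)\,dz.
\]
By hypothesis (\ref{surf_conv_ass3}) and Lemma \ref{lem_CONS_EO}, the first two right-hand side terms tend to $2\,E(\chi)$, so the problem becomes
\[
\frac{1}{\sqrt h}\int G_h\,\min(f_h,f)\,dz \longrightarrow E(\chi).
\]

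The $\limsup$ bound is immediate from $\min(f_h,f)\leq f$ and Lemma \ref{lem_CONS_EO}. For the matching $\liminf$, I would write $\min(f_h,f)=f-(f-f_h)_+$, so the task becomes $\frac{1}{\sqrt h}\int G_h\,(f-f_h)_+\,dz \to 0$. Rescaling once more, the integrand $(f-f_h)_+(\sqrt h\,\hat z)/\sqrt h$ is dominated pointwise by $f(\sqrt h\,\hat z)/\sqrt h \lesssim |\hat z|\,E(\chi)/c_0$, which is $G$-integrable (the bound follows from the standard BV estimate $\int|\chi_j(\cdot+z)-\chi_j|\,dx \leq |z|\,|\nabla\chi_j|(\torus)$, valid because $\chi$ has finite perimeter). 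By dominated convergence it is then enough to show that $(f-f_h)_+(\sqrt h\,\hat z)/\sqrt h \to 0$ in measure on $\hat z$, equivalently that $f_h(\sqrt h\,\hat z)/\sqrt h$ converges in measure to the BV covariance density of $\chi$.

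The hard part will be this last in-measure convergence. The crude uniform estimate $|f_h-f|(z)\leq C\,\|\chi^h-\chi\|_{L^1}$ is too weak to conclude: by itself it would force $\|\chi^h-\chi\|_{L^1}=o(\sqrt h)$, which is not assumed. To overcome this I would mimic Step~2 of the proof of Lemma \ref{la_impl_conv_ass}: perturb the matrix of surface tensions as $\sigma \mapsto \sigma\pm\epsilon\tau$ with $\tau$ chosen so that both perturbed matrices still satisfy the (ordinary) triangle inequality---possible thanks to the \emph{strict} triangle inequality imposed on $\sigma$. Applying Lemma \ref{lem_AM_EO} and Lemma \ref{lem_CONS_EO} to each of the perturbed energies $E_h^\pm$, and pinching $\liminf E_h^\pm(\chi^h)$ against $\limsup E_h^\pm(\chi^h)$ using the equality $\lim E_h(\chi^h)=E(\chi)$, yields the convergence $E_h^\pm(\chi^h)\to E^\pm(\chi)$ and hence of the perturbation term itself. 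Choosing $\tau$ supported in the $(i,j)$-block relevant to (\ref{surf_conv_err}) produces exactly the fine-scale cancellation of $f_h$ against $f$ required by the dominated-convergence step above, completing the proof.
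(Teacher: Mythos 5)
Your reduction is fine up to the point where everything hinges on showing $\frac1\h\int G_h(z)\,(f-f_h)_+(z)\,dz\to 0$, but the mechanism you propose for that step does not deliver it, and this is exactly where the real content of the lemma lies. The perturbation trick $\sigma\mapsto\sigma\pm\epsilon\tau$ (Step~2 of Lemma \ref{la_impl_conv_ass}) only upgrades the convergence of the total energy to convergence of \emph{other Gaussian-averaged quantities} -- individual pair energies $\frac1\h\int G_h f_h^{(ij)}\,dz$ -- and says nothing about the behavior of $f_h$ at individual lags $z=\h\hat z$, which is what your dominated-convergence step needs (convergence in measure of $(f-f_h)_+(\h\hat z)/\h$). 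In fact, once the signed convergence $\frac1\h\int G_h(f_h-f)\,dz\to0$ is known, controlling the positive part is \emph{equivalent} to the full claim, so invoking yet another averaged convergence begs the question. The missing idea is the subadditivity of the covariances in the lag variable, $f_{(h)}(z+w)\le f_{(h)}(z)+f_{(h)}(w)$, which follows from the (ordinary) triangle inequality for $\sigma$ and yields $\frac1{\sqrt{h_0}}f_{(h)}(\sqrt{h_0}z)\le\frac1\h f_{(h)}(\h z)$ for $h=h_0/N^2$. This is what lets one compare the scale $\h$ with a \emph{fixed} scale $\sqrt{h_0}$: pointwise in $z$ one bounds $\big(\tfrac1\h f(\h z)-\tfrac1\h f_h(\h z)\big)_+$ by $\tfrac1\h f(\h z)-\tfrac1{\sqrt{h_0}}f(\sqrt{h_0}z)$ plus $\tfrac1{\sqrt{h_0}}|f-f_h|(\sqrt{h_0}z)$; after integration against $G$ the first contribution is $E_h(\chi)-E_{h_0}(\chi)$ (small by consistency), and the second is handled at fixed $h_0$ by the crude bound $|f-f_h|\lesssim\|\chi^h-\chi\|_{L^1}$ -- precisely the bound you correctly identified as too weak at scale $\h$, but which suffices at scale $\sqrt{h_0}$. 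Without this (or some substitute giving pointwise-in-lag control), your argument does not close.

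There is also a flaw in your preliminary reduction to $k=G$: dominating $p(|\hat z|)G(\hat z)$ by $Ce^{-|\hat z|^2/4}$ reduces the problem to a Gaussian of variance $2$, i.e.\ to the kernel $G_{2h}$, which is \emph{not} the case $k=G$ and is not even admissible in the lemma ($G_2/G$ grows like $e^{|z|^2/4}$). Your min-identity would then require $E_{2h}(\chi^h)\to E(\chi)$, which does not follow from the hypotheses (approximate monotonicity only gives a fixed constant for the ratio of scales $h$ and $2h$, and $\Gamma$-convergence only the $\liminf$). The correct treatment of a general $k\le p(|z|)G$ is to split into $\{|z|\le M\}$, where $p$ is bounded and the case $k=G$ genuinely applies, and $\{|z|>M\}$, where $k\le\epsilon\,G_2$ and one does not need convergence to zero but only an $O(\epsilon)$ bound via approximate monotonicity and the convergence of the energies.
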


\subsection{Proofs}\label{sub:surf proofs}
\begin{proof}[Proof of Proposition \ref{surf_prop_integrated_in_time}]
The proposition is an immediate consequence of the time-independent analogue, Proposition \ref{surf_prop}.
Indeed, according to Step 1 in the proof of Lemma \ref{la_impl_conv_ass}
we have $E_h(\chi^h) \to E(\chi)$ for a.\ e.\ $t$.
Thus all conditions of Proposition \ref{surf_prop} are fulfilled.
Proposition \ref{surf_prop_integrated_in_time} 
follows then from Lebesgue's dominated convergence theorem.
\end{proof}

%-----------------------------------------------
%
%		PROOF OF PROPOSITION
%
%-----------------------------------------------
\begin{proof}[Proof of Proposition \ref{surf_prop}]
We may apply Lemma \ref{surf_la_ELG} for $\chi^h$ and obtain by the energy-dissipation estimate
(\ref{energy_dissipation_estimate}) that
\begin{align*}
 \delta E_h (\chi,\xi^h)
= \frac{1}{\sqrt{h}} \sum_{i,j} \sigma_{ij} \int \chi^h_i\, 
\nabla \xi : \left( Id \, G_h - h\nabla^2 G_h\right) \ast
 \chi^h_j \,dx
 + O\left(\|\nabla^2 \xi\|_\infty E_0 \h\right).
\end{align*}
Applying Proposition \ref{surf_prop_A} for the kernel $\nabla^2 G$ with $\nabla \xi$ playing the role of the matrix field $A$
and Lemma \ref{la_impl_conv_ass} for the kernel $G$
with $\zeta = \nabla \cdot \xi$, we can conclude the proof.
\end{proof}

\begin{proof}[Proof of Lemma \ref{surf_la_ELG}]
Recall the definition of $\delta E_h$ in (\ref{variation_E}).
Since $ -\nabla \tilde \chi\cdot \xi = -\nabla \cdot (\tilde \chi\,\xi) +\tilde  \chi\left(\nabla\cdot\xi\right) $
for any function $\tilde \chi\colon \torus \to \R$, we can rewrite the integral on the right-hand side of (\ref{variation_E}):
\begin{align*}
 \int  \chi_i \, G_h\ast\left( -\nabla \chi_j\cdot \xi\right) dx
=&  \int - \chi_i \, G_h\ast\left( \nabla \cdot(\chi_j\,\xi)\right) + \chi_i \,
 G_h\ast\left( \chi_j\,\nabla \cdot \xi\right) dx\\
=& \int - \chi_i\, \nabla G_h \ast \left(\chi_j\, \xi \right)  
+ \chi_j \left(\nabla\cdot\xi\right)  G_h\ast \chi_i \,dx.
\end{align*}
Let us first turn to the first right-hand side term.
For fixed $(i,j)$, we can collect the two
terms in the sum that belong to the interface between phases $i$ and $j$ and obtain by the antisymmetry of the kernel $\nabla G_h$ 
that the resulting term with the prefactor $\frac{2\sigma_{ij}}{\h}$ is
\begin{align*}
 & \int - \chi_i\, \nabla G_h \ast \left(\chi_j\, \xi \right)
  - \chi_j\, \nabla G_h \ast \left(\chi_i\, \xi \right) dx =  \int \chi_i(x) \int \left( \xi(x) - \xi(x-z)\right)\cdot \nabla G_h(z)\,  \chi_j(x-z)\, dz \,dx.
 \end{align*}
A Taylor expansion of $\xi$ around $x$ gives the first-order term
\begin{align*}
\frac{2\sigma_{ij}}{\h} \int \chi_i(x) \int \left( \nabla \xi(x) \,z \right)\cdot \nabla G_h(z)\,  \chi_j(x-z)\, dz \,dx.
\end{align*}
Now we argue that the second-order term is controlled by $\|\nabla^2 \xi\|_\infty E_h(\chi) \h$.
Indeed, since $|z|^3G(z) \lesssim G_2(z)$, the contribution of the second-order term is controlled by
\begin{align*}
 &\|\nabla^2 \xi\|_\infty \frac1\h \sum_{i,j} \sigma_{ij} \int |z|^2 \frac{|z|}{h} G_h(z) \int \chi_i(x)\, \chi_j(x+z)\,dx\,dz\\
 &\lesssim \|\nabla^2 \xi\|_\infty \sum_{i,j} \sigma_{ij} \int G_{2h}(z) \int \chi_i(x)\,\chi_j(x+z)\,dx\,dz \\ 
 &\sim  \|\nabla^2 \xi\|_\infty \h \, E_{2h}(\chi).
 \end{align*}
Using the approximate monotonicity (\ref{approximate monotonicity}) of $E_h$, we have suitable control over this term.
After distributing the first-order term on both summand $(i,j)$ and $(j,i)$ we therefore have
\begin{align*}
 \delta E_h (\chi,\xi)
=& \frac{1}{\sqrt{h}} \sum_{i,j} \sigma_{ij} \int \chi_i(x)  
\nabla \xi(x) : \int \left(2  G_h(z) Id+ z \otimes \nabla G_h(z)\right) \chi_j(x+z) \,dz\,dx\\
 &+ O\left(\|\nabla^2 \xi\|_\infty E_h(\chi)\h\right)
\end{align*}
and since $\nabla^2 G(z) = -Id\, G - z \otimes \nabla G(z)$, we conclude the proof. 
\end{proof}

\begin{proof}[Proof of Proposition \ref{surf_prop_A}]
By Lemma \ref{surf_la_cons_G_and_k} we know that the term converges if we take $\chi$ instead of the approximation $\chi^h$ on the left-hand side of the statement.
Lemma \ref{surf_la_err} in turn controls the error by substituting $\chi^h$ by $\chi$ on the left-hand side.
\end{proof}

\begin{proof}[Proof of Lemma \ref{surf_la_cons_G_and_k}]
Our main focus in this proof lies on the anisotropic kernel $\nabla^2 G$. The statement for $G$ is -- up to the localization -- already contained in
the proof of Lemma 4 in Section 7.2 of \cite{EseOtt14}.
\step{Step 1: Reduction of the statement to a simpler kernel.}
Since $\nabla^2G(z)$ is a symmetric matrix, the inner product
\begin{align*}
 A : \nabla^2G(z) = A^{\textup{sym}} : \nabla^2G(z).
\end{align*}
depends only the symmetric part $A^{\textup{sym}}$ of $A$; 
hence w.\ l.\ o.\ g.\ let $A$ be a symmetric matrix field. 
But then there exist functions $\zeta_{ij}\in C^\infty(\torus)$, such that
\begin{align*}
 A(x) = \sum_{i,j} \frac12 \zeta_{ij}(x) \left( e_i \otimes e_j +e_j \otimes e_i \right).
\end{align*}
We also note
\begin{align*}
 e_i \otimes e_j +e_j \otimes e_i 
 = \left( e_i + e_j\right) \otimes \left( e_i + e_j\right)
 - \left( e_i \otimes e_i +e_j \otimes e_j\right).
\end{align*}
Hence by linearity it is enough to prove the statement for $A$ of the form
\begin{align*}
 A(x) = \zeta(x)\; \xi \otimes \xi
\end{align*}
for some $\xi\in\sphere$.
By rotational invariance we may assume
\begin{align*}
  A(x) = \zeta(x)\; e_1 \otimes e_1.
\end{align*}
Hence the statement can be reduced to
\begin{align}\label{surf_conv_red}
 \lim_{h\to0}&  \frac{1}{\sqrt{h}} \sum_{i,j} \sigma_{ij} \int \zeta\, \chi_i\,
h\partial_1^2 G_h\ast \chi_j \,dx =c_0 \sum_{i,j} \sigma_{ij}  \int \zeta \,\nu_1^2\,\frac12 \left(|\nabla\chi_i| 
+|\nabla\chi_j| 
- |\nabla(\chi_i+\chi_j)|\right)
\end{align}
for any $\zeta\in C^\infty([0,\Lambda)^d).$
In the following we will show that for any such $\zeta$ and 
$\chi,\, \tilde \chi\in BV(\torus,\{0,1\})$ such that
\begin{align}\label{surf_cons_XX=0}
 \chi\,\tilde \chi = 0 \quad \ae
\end{align}
and for the anisotropic kernel $k(z) = z_1^2 G(z)$ we have
\begin{align}\label{surf_cons_proof1}
 \lim_{h\to0} \frac{1}{\sqrt{h}} \int \zeta\,\tilde\chi\, k_h\ast \chi \,dx 
 =& c_0  \int \zeta  \left(  \nu_1^2 +1 \right) \frac12 \left( |\nabla\chi| +|\nabla\tilde\chi| 
-|\nabla(\chi+\tilde\chi)|\right).
\end{align}
The analogous statement for the Gaussian kernel $G$ instead of the anisotropic kernel $k$ is -- up to the localization with $\zeta$ -- contained in \cite{EseOtt14}.
In that case the right-hand side of (\ref{surf_cons_proof1}) turns into the localized energy, i.\ e. replacing the anisotropic term $(\nu_1^2+1)$ by $1$.
Since $\partial_1^2 G(z) = \left(z_1^2-1\right)G(z)$ it is indeed sufficient to prove (\ref{surf_cons_proof1}).
We will prove this in five steps.
Before starting, we introduce spherical coordinates $z=r\xi$ on the left-hand side:
\begin{align}\label{surf_cons_red_0}
\frac{1}{\sqrt{h}} \int \zeta\, \tilde\chi\, k_h\ast \chi \,dx  
= &\frac{1}{\h} \int k(z) \int \zeta(x)\tilde\chi(x) \chi(x+\h z)\,dx\, dz \notag\\
=& \int_0^\infty G(r) r^{d+2} \frac{1}{\h r}\int_\sphere \xi_1^2  \int \zeta(x)\tilde\chi(x) \chi(x+\h r \xi)\,dx\,d\xi\, dr.
\end{align}
In the following two steps of the proof, we
simplify the problem by disintegrating in $r$ (Step 2) and $\xi$ (Step 3).
Then we explicitly calculate an integral that arises in the second reduction 
and which translates the anisotropy of the kernel $k$ into a geometric information about the normal (Step 4).
We simplify further by disintegration in the vertical component (Step 5) and 
conclude by solving the one-dimensional problem (Step 6). 
\step{Step 2: Disintegration in $r$.} We claim that it is sufficient to show
\begin{align}\label{surf_cons_red_1}
 \lim_{h\to0}&
 \frac{1}{\h } \int_\sphere \xi_1^2  
 \int \zeta(x)\tilde\chi(x) \chi(x+\h \xi)\,dx\,d\xi\notag \\
&\qquad\qquad =
\frac{|B^{d-1}|}{d+1} \int \zeta \left(\nu_1^2+1\right) \frac12
\left( |\nabla\chi| +|\nabla\tilde\chi| 
-|\nabla(\chi+\tilde\chi)|\right).
\end{align}
Indeed, note that since $G(z)=G(|z|)$ and $\frac{d}{dr} G(r) = -r G(r)$ we have, using integration by parts,
\begin{align*}
\int_0^\infty G(r)r^{d+2} \,dr = - \int_0^\infty\frac{d}{dr}(G(r))r^{d+1} \,dr
= (d+1) \int_0^\infty G(r)r^{d} \,dr.
\end{align*}
Replacing $\h$ by $\h\,r$ on the left-hand side of (\ref{surf_cons_red_1})
and integrating w.\ r.\ t.\ the non-negative measure $G(r)r^{d}dr$ 
and using the equality from above shows that (\ref{surf_cons_red_1}), 
in view of (\ref{surf_cons_red_0}), formally implies (\ref{surf_cons_proof1}). 
To make this step rigorous, we use Lebesgue's dominated convergence theorem.
A dominating function can be obtained as follows:
\begin{align*}
  \left|\frac{1}{\h r}\int_\sphere \xi_1^2  \int \zeta(x)\tilde\chi(x) \chi(x+\h r \xi)\,dx\,d\xi\right|
&\overset{ (\ref{surf_cons_XX=0})}{=} \left|\frac{1}{\h r}\int_\sphere \xi_1^2 \int \zeta(x)\tilde\chi(x) \left(\chi(x+\h r \xi) - \chi(x)\right) \,dx\,d\xi\right|\\
&\,\,\leq \|\zeta\|_\infty \frac{1}{\h r}\int_\sphere   \int \left|\chi(x+\h r \xi) - \chi(x)\right| \,dx\,d\xi\\
&\,\, \leq \|\zeta\|_\infty \,|\sphere| \int |\nabla \chi|,
\end{align*}
which is finite and independent of $r$. 
Hence, it is integrable w.\ r.\ t.\ the finite measure $G(r)r^{d+2}dr$.
\step{Step 3: Disintegration in $\xi$.}
We claim that it is sufficient to show that for each $\xi\in \sphere$,
\begin{align}\label{surf_cons_red_2}
 \lim_{h\to0}& \frac{1}{\h} \int \zeta(x)\tilde\chi(x) 
 \left(\chi(x+\h \xi) + \chi(x-\h \xi)\right)\,dx 
=  \int\zeta  \left|\xi\cdot\nu\right|\frac12
\left( |\nabla \chi| +|\nabla\tilde\chi| - |\nabla(\chi+\tilde\chi)|\right).
\end{align}
Indeed, if we integrate w.\ r.\ t.\ the non-negative measure $\frac12 \xi_1^2 d\xi$ we obtain the left-hand side of (\ref{surf_cons_red_1}) 
from the left-hand side of (\ref{surf_cons_red_2}).
At least formally, this is obvious because of the symmetry under $\xi\mapsto -\xi$.
The dominating function to interchange limit and integration is obtained as in Step 1:
\begin{align*} 
 &\left| \frac{1}{\h} \int \zeta(x)\tilde\chi(x) \left(\chi(x+\h \xi) + \chi(x-\h \xi)\right)\,dx \right|\\
&\quad\overset{ (\ref{surf_cons_XX=0})}{\leq} 
\frac{1}{\h} \sup|\zeta| \int  \left|\chi(x+\h \xi)  - \chi(x)\right|+ 
\left| \chi(x-\h \xi)-\chi(x)\right|\,dx
\leq 2 \|\zeta\|_\infty \int |\nabla \chi|.
\end{align*}
For the passage from the right-hand side of (\ref{surf_cons_red_2})
to the right-hand side of (\ref{surf_cons_red_1}) we note that since
\begin{align*}
 \int_\sphere \xi_1^2  \int\zeta  \left|\xi\cdot\nu\right| |\nabla \chi| \frac12d\xi 
 =\frac12 \int \int_\sphere \xi_1^2 \left|\xi\cdot\nu\right| d\xi\, \zeta  \left|\nabla\chi\right|
\end{align*}
and $|\nu|=1 $ $|\nabla\chi|$- a.\ e. it is enough to prove
\begin{align}\label{surf_cons_1_plus_nu12}
 \frac12\int_\sphere \xi_1^2 |\xi\cdot \nu| \,d\xi =\frac{|B^{d-1}|}{d+1} \left(\nu_1^2+1\right) \quad \text{for all }\nu\in \sphere
\end{align}
to obtain the equality for the right-hand side.
\step{Step 4: Argument for (\ref{surf_cons_1_plus_nu12}).}
By symmetry of $\int_\sphere d\xi$ under the reflection that maps $e_1$ into $\nu$, we have
\begin{align*}
 \int_\sphere \xi_1^2 |\xi\cdot \nu| \,d\xi = \int_\sphere (\xi\cdot \nu)^2 |\xi_1| \,d\xi.
\end{align*}
Applying the divergence theorem to the vector field $|\xi_1|\left( \xi\cdot \nu \right) \nu$,
we have
\begin{align*}
 \int_\sphere (\xi\cdot \nu)^2 |\xi_1| \,d\xi 
 = \int_B \nabla \cdot \big(|\xi_1|\left( \xi\cdot \nu \right) \nu \big) d\xi.
\end{align*}
Since 
$\nabla \cdot \left(|\xi_1|\left( \xi\cdot \nu \right) \nu \right)
= \sign \xi_1 \left( \xi\cdot \nu \right) \nu_1 + |\xi_1|$,
the right-hand side is equal to
\begin{align*}
 \left( \int_B \sign \xi_1 \,\xi\, d\xi\right) \cdot \nu \, \nu_1 + \int_B |\xi_1| \,d\xi.
\end{align*}
By symmetry of $d\xi$ under rotations that leave $e_1$ invariant, we see that
$\int_B \sign \xi_1 \,\xi\, d\xi$ points in direction $e_1$, so that the above reduces to
\begin{align*}
 \left(\nu_1^2+1\right) \int_B \left| \xi_1\right|d\xi.
\end{align*}
We conclude by observing
\begin{align*}
 \int_B \left| \xi_1\right|d\xi 
 = \int_{-1}^1 \left| \xi_1\right| | B^{d-1}|\left( 1-\xi_1^2\right)^{\frac{d-1}{2}} d\xi_1
 = 2 |B^{d-1}| \int_0^1 \frac{d}{d\xi_1} 
 \left[ -\frac1{d+1}\left( 1-\xi_1^2\right)^{\frac{d-1}{2}}\right] d\xi_1
 = 2\frac{| B^{d-1}|}{d+1}.
\end{align*}
\step{Step 5: One-dimensional reduction.} 
The problem reduces to the one-dimensional analogue, namely: 
For all $\chi,\,\tilde\chi\in BV([0,\Lambda),\{0,1\})$ such that
\begin{align}\label{surf_cons_XX1d}
 \chi\,\tilde \chi = 0 \quad \ae
\end{align}
and every $\zeta\in C^\infty([0,\Lambda))$ we have
\begin{align}\label{surf_cons_red_3}
 \lim_{h\to0} \frac{1}{\h} \int_0^\Lambda \zeta(s)\tilde\chi(s) 
\left(\chi(s+\h) + \chi(s-\h)\right) ds
=  \int_0^\Lambda\zeta  \,\frac12\Big(|\frac{d \chi }{ds}| 
+ |\frac{d \tilde \chi}{ds}| 
-  |\frac{d(\chi+\tilde\chi)}{ds}|\Big).
\end{align}
Indeed, by symmetry, it suffices to prove (\ref{surf_cons_red_2}) for $\xi=e_1$. 
Using the decomposition $x= se_1 + x'$ we see that (\ref{surf_cons_red_2})  
follows from (\ref{surf_cons_red_3}) using the functions 
$ \chi_{x'}(s):=\chi(se_1 + x')$, $\tilde\chi_{x'}$,
 $\zeta_{x'}$ in (\ref{surf_cons_red_3}) and integrating w.\ r.\ t.\ $dx'$. 
For the left-hand side, this is formally clear.
For the right-hand side, one uses $BV$-theory: If $\chi\in BV(\torus)$, 
we have $\chi_{x'}\in BV([0,\Lambda))$ for a.\ e. $x'\in [0,\Lambda)^{d-1}$ and 
\begin{align*}
 \int_{[0,\Lambda)^{d-1}} \int_0^\Lambda \zeta_{x'}(s)\, |\frac{d\chi_{x'}}{ds}|\, dx'
 = \int_\torus \zeta \left|e_1\cdot \nu\right| \left|\nabla \chi \right| 
\end{align*}
for any $\zeta\in C^\infty(\torus).$
To make the argument rigorous, we use again Lebesgue's dominated convergence. 
As before, using (\ref{surf_cons_XX1d}), we obtain
\begin{align*}
&\left| \frac{1}{\h} \int_0^\Lambda \zeta_{x'}(s)\tilde\chi_{x'}(s) \left(\chi_{x'}(s+\h) 
+ \chi_{x'}(s-\h)\right)\,ds\right|\\
&\leq  \|\zeta\|_\infty \frac{1}{\h} \int_0^\Lambda \left|\chi_{x'}(s+\h)- \chi_{x'}(s)\right| 
+ \left|\chi_{x'}(s-\h)-\chi_{x'}(s)\right|\,ds\\
&\leq  2 \|\zeta\|_\infty \int_0^\Lambda |\frac{d \chi_{x'}}{ds}|.
\end{align*}
Since
\begin{align*}
  \int_{[0,\Lambda)^{d-1}} \int_0^\Lambda |\frac{d\chi_{x'}}{ds}| dx' 
= \int_\torus \left|e_1\cdot \nu\right| \left|\nabla \chi \right| 
\leq \int_\torus |\nabla \chi|,
\end{align*}
this is indeed an integrable dominating function.
\step{Step 6: Argument for (\ref{surf_cons_red_3}).}
Since $\chi,\,\tilde\chi$ are $\{0,1\}$-valued, 
every jump has height $1$ and since $\chi,\, \tilde\chi\in BV([0,\Lambda))$,
the total number of jumps is finite.
Let $J,\,\tilde J\subset [0,\Lambda)$ denote the jump sets of $\chi$ and $\tilde \chi$, respectively. 
Now, if $\h$ is smaller than the minimal distance between two different points in $J\cup\tilde J$, 
then in view of (\ref{surf_cons_XX1d}), the only contribution
to the left-hand side of (\ref{surf_cons_red_3}) comes from neighborhoods of points where 
both, $\chi$ and $\tilde\chi$, jump:
\begin{align*}
 &\frac1\h \int_0^\Lambda \zeta(s)\,\tilde\chi(s) \left(\chi(s+\h) + \chi(s-\h)\right)\,ds
= \sum_{s\in J\cap \tilde J}\frac1\h\int_{s-\h}^{s+\h} \zeta(\sigma)\,\tilde\chi(\sigma) 
\left(\chi(\sigma+\h) + \chi(\sigma-\h)\right)\,d\sigma.
\end{align*}
Note that $\chi(\sigma+\h) + \chi(\sigma-\h) \equiv 1$ on each of these intervals and that
\begin{align*}
 \tilde \chi = \chara_{I_s^h}\quad \text{on } (s-\h,s+\h)
\end{align*}
for intervals of the form
\begin{align*}
 I_s^h = (s-\h,s) \quad \text{or}\quad  I_s^h = (s,s+\h).
\end{align*}
Since $|I_s^h|=\h$, we have
\begin{align*}
\frac1\h \int_0^\Lambda \zeta(s)\tilde\chi(s) \left(\chi(s+\h) + \chi(s-\h)\right)\,ds
= \sum_{s\in J\cap \tilde J} \frac1\h \int_{I_s^h} \zeta(\sigma)\,d\sigma 
\longrightarrow \sum_{s\in J\cap \tilde J} \zeta(s).
\end{align*}
Note that by (\ref{surf_cons_XX1d}), $\chi+\tilde\chi$ jumps precisely where 
either $\chi$ or $\tilde \chi$ jumps. Thus
\begin{align*}
 \int_0^\Lambda \zeta  \,\frac12\Big(|\frac{d \chi }{ds}| 
+ |\frac{d \tilde \chi}{ds}| 
-  |\frac{d(\chi+\tilde\chi)}{ds}|\Big)
=\frac12\bigg(\sum_{s\in J} \zeta(s) +\sum_{s\in  \tilde J} \zeta(s) 
- \sum_{s\in J\Delta \tilde J} \zeta(s)\bigg)
=\sum_{s\in J\cap \tilde J} \zeta(s).
\end{align*}
Therefore, (\ref{surf_cons_red_3}) holds, which concludes the proof.
\end{proof}

\begin{proof}[Proof of Lemma \ref{surf_la_err}]
The proof is divided into two steps. First, we prove the claim for $k=G$, 
to generalize this result for arbitrary kernels $k$ in the second step.
\step{Step 1: $k=G$.}
By Lemma \ref{surf_la_cons_G_and_k} and the convergence assumption (\ref{surf_conv_ass3}), 
we already know
\begin{align*}
 \lim_{h\to0}\frac1\h \int G_h(z) \left(f_h( z)-f( z)\right)\,dz = 0.
\end{align*}
Hence, it is sufficient to show that
\begin{align*}
 \lim_{h\to0}\frac1\h \int G_h(z) \left(f( z)-f_h( z)\right)_+\,dz = 0.
\end{align*}
Fix $h_0>0$ and $N\in \N$ and set $h:=\frac{1}{N^2}h_0$. 
We will make use of the following triangle inequality for $f_{(h)}=f,\, f_h$:
\begin{align}\label{surf_mon_triangle_inequality}
 f_{(h)}(z+w) \leq  f_{(h)}(z) +  f_{(h)}(w)
 \quad \text{for all } z,\,w\in \R^d.
\end{align}
This inequality has been proven in the proof of Lemma 3 in Section 7.1 of \cite{EseOtt14}. 
For the convenience of the reader we reproduce the argument here:
Using the admissibility of $\chi$ in the form of $\sum_k \chi_k =1$, we obtain the following
identity for any pair $1\leq i,j\leq \numphases $ of phases and any points $x,x',x''\in \torus$:
\begin{align*}
 &\chi_i(x)\chi_j(x'') - \chi_i(x)\chi_j(x') - \chi_i(x')\chi_j(x'')\\
= & \chi_i(x)\sum_k \chi_k(x')\chi_j(x'') - \chi_i(x)\chi_j(x')\sum_k \chi_k(x'') 
- \sum_k \chi_k(x)\chi_i(x')\chi_j(x'')\\
= & \sum_k \left[ \chi_i(x) \chi_k(x')\chi_j(x'') - \chi_i(x)\chi_j(x') \chi_k(x'')
- \chi_k(x)\chi_i(x')\chi_j(x'')\right].
\end{align*}
Note that the contribution of $k\in\{i,j\}$ to the sum has a sign:
\begin{align*}
 &\sum_{k\in\{i,j\}} \left[\chi_i(x) \chi_k(x')\chi_j(x'') 
 - \chi_i(x)\chi_j(x') \chi_k(x'') - \chi_k(x)\chi_i(x')\chi_j(x'')\right]\\
&\qquad= \chi_i(x) \chi_i(x')\chi_j(x'') - \chi_i(x)\chi_j(x') \chi_i(x'') - \chi_i(x)\chi_i(x')\chi_j(x'')\\
 &\qquad \quad+ \chi_i(x) \chi_j(x')\chi_j(x'') - \chi_i(x)\chi_j(x') \chi_j(x'') - \chi_j(x)\chi_i(x')\chi_j(x'')\\
&\qquad=  - \left[\chi_i(x)\chi_j(x') \chi_i(x'') + \chi_j(x)\chi_i(x')\chi_j(x'') \right] \leq 0.
\end{align*}
% We now multiply with $\zeta\geq 0$ and obtain
% \begin{align*}
%   &\zeta(x)\chi_i(x)\chi_j(x'') - \zeta(x)\chi_i(x)\chi_j(x') - \zeta(x)\chi_i(x')\chi_j(x'')\\
%   \leq& \zeta(x)\sum_{k\notin\{i,j\}} \left\{ \chi_i(x) \chi_k(x')\chi_j(x'')
%   - \chi_i(x)\chi_j(x') \chi_k(x'')
% - \chi_k(x)\chi_i(x')\chi_j(x'')\right\}.
% \end{align*}
% Since $\zeta$ is smooth, we can replace $\zeta(x)$ by $\zeta(x')$ in the last left-hand side term at the expense of the error
% \begin{align*}
%   \left(\zeta(x)-\zeta(x')\right)\chi_i(x')\chi_j(x'') \leq \|\nabla \zeta\|_\infty 
%   |x-x'| \,\chi_i(x')\chi_j(x'').
% \end{align*}
We now fix $z,\,w\in \R^d$ and use the above inequality for $x'=x+z$, $x''= x+z+w$
so that after multiplication with $\sigma_{ij}$, summation
over $1\leq i,j\leq \numphases$ and  integration over $x$, 
we obtain $f(z+w) -  f(z) - f(w)$ on the left-hand side.
Indeed, using the translation invariance for the term appearing in $f_\zeta(w)$, we have
\begin{align*}
 &f(z+w) -  f(z) - f(w) \\
 & = \int  \sum_{i\neq j}\sigma_{ij}
 \left[ \chi_i(x)\chi_j(x+z+w) - \chi_i(x)\chi_j(x+z) - \chi_i(x+z)\chi_j(x+z+w)\right] dx\\
\leq & \int  \sum_{i\neq j,k\neq i,j } \sigma_{ij}
\big[ \chi_i(x) \chi_k(x+z)\chi_j(x+z+w) - \chi_i(x)\chi_j(x+z) \chi_k(x+z+w) \\
& \qquad\qquad\qquad\qquad\quad
   - \chi_k(x)\chi_i(x+z)\chi_j(x+z+w)\big]dx.
\end{align*}
Using the triangle inequality for the surface tensions, 
we see that the first right-hand side integral is non-positive:
\begin{align*}
 &\sum_{i\neq j,k\neq i,j } \sigma_{ij}\left( \chi_i(x) \chi_k(x')\chi_j(x'') - \chi_i(x)\chi_j(x') \chi_k(x'') - \chi_k(x)\chi_i(x')\chi_j(x'')\right)\\
 \leq&
 \sum_{i\neq j,k\neq i,j } \sigma_{ik}  \chi_i(x) \chi_k(x')\chi_j(x'')
+\sum_{i\neq j,k\neq i,j } \sigma_{kj}  \chi_i(x) \chi_k(x')\chi_j(x'')\\
&-\sum_{i\neq j,k\neq i,j } \sigma_{ij}  \chi_i(x) \chi_j(x')\chi_k(x'')
-\sum_{i\neq j,k\neq i,j } \sigma_{ij}  \chi_k(x) \chi_i(x')\chi_j(x'')
=0.
\end{align*}
Indeed, the first and the third term, and the second and the last term cancel since the domain
of indices in the sums is symmetric and thus we have (\ref{surf_mon_triangle_inequality}).

By iterating the triangle inequality (\ref{surf_mon_triangle_inequality}) for $f_{(h)}=f,\, f_h$ we have
\begin{align*}%\label{surf_err_triangle}
  f_{(h)}(Nz) \leq  N f_{(h)}(z)\quad \text{for all } z\in \R^d.
\end{align*} 
Hence, by the definition of $h$,
\begin{align}\label{surf_err_triangle_2}
 \frac1{\sqrt{h_0}} f_{(h)}(\sqrt{h_0}z) \leq  \frac1\h f_{(h)}(\h z)\quad \text{for all } 
z\in \R^d.
\end{align}
Therefore, using (\ref{surf_err_triangle_2}) for $f_h$, the subadditivity of $u \mapsto u_+$ and finally 
(\ref{surf_err_triangle_2}) for $f$, we obtain
\begin{align*}
 \Big(\frac1\h f(\h z)-\frac1\h f_h(\h z)\Big)_+ 
\leq& \Big(\frac1\h f(\h z)-\frac1{\sqrt{h_0}} f_h(\sqrt{h_0} z)\Big)_+ \\
\leq& \Big(\frac1\h f(\h z)-\frac1{\sqrt{h_0}} f(\sqrt{h_0} z)\Big)_+ + \Big(\frac1{\sqrt{h_0}} f(\sqrt{h_0} z)-\frac1{\sqrt{h_0}} f_h(\sqrt{h_0} z)\Big)_+\\
\leq & \frac1\h f(\h z)-\frac1{\sqrt{h_0}} f(\sqrt{h_0} z) + \frac1{\sqrt{h_0}}\Big| f(\sqrt{h_0} z) - f_h(\sqrt{h_0} z)\Big|.
\end{align*}
Integrating w.\ r.\ t.\ the positive measure $G(z)\,dz$ yields
\begin{align}\label{error_estimate_Fplus}
 \frac1\h \int G_h(z) \left(f( z)-f_h( z)\right)_+\,dz \notag
\leq &\frac1\h \int G(z) f(\h z)\,dz-\frac1{\sqrt{h_0}}\int G(z)  f(\sqrt{h_0} z)\,dz\\
&+ \frac1{\sqrt{h_0}} \int G(z) \Big| f(\sqrt{h_0} z)- f_h(\sqrt{h_0} z)\Big|\,dz\notag\\
= & E_h(\chi) - E_{h_0}(\chi) + \frac1{\sqrt{h_0}} \int G_{h_0}(z) \left| f( z)- f_h( z)\right|dz.
\end{align}
Given $\delta>0$, by Lemma \ref{surf_la_cons_G_and_k} we may first choose $h_0>0$ 
such that for all $0<h<h_0$:
\begin{align*}
\left|E_h(\chi) - E_{h_0}(\chi) \right| < \frac\delta2.
\end{align*}
We note that we may now choose $N\in \N$ so large that for all $0<h<\frac1{N^2}h_0$:
\begin{align*}
\left| f(\sqrt{h_0} z) - f_h(\sqrt{h_0} z)\right| \leq \frac\delta2 \sqrt{h_0}\quad \text{for all } z\in\R^d.
\end{align*}
Indeed, using the triangle inequality and translation invariance we have
\begin{align*}
 &\left| f(\sqrt{h_0} z) - f_h(\sqrt{h_0} z)\right|\\
 &\leq  \sum_{i,j} \sigma_{ij} \int 
 \left|\chi_i(x)\chi_j(x+z)-\chi_i(x)\chi^h_j(x+z)\right| 
 + \left|\chi_i(x)\chi^h_j(x+z)-\chi^h_i(x)\chi^h_j(x+z)\right| \,dx\\
&\lesssim  \sum_{i=1}^\numphases 
\int \left| \chi_i(x)-\chi_i^h(x) \right|\,dx,
\end{align*}
which tends to zero as $h\to 0$ because by Lebesgue's dominated convergence and (\ref{surf_conv_ass1}). 
Hence  also the second term on the right-hand side of (\ref{error_estimate_Fplus}) is small:
\begin{align*}
 \frac1{\sqrt{h_0}} \int G_{h_0}(z) \left|f( z)-f_h( z)\right|dz \leq \frac\delta2.
\end{align*}
\step{Step 2: $k=p\,G$.}
Fix $\epsilon>0$. Since $G$ is exponentially decaying, 
we can find a number $M= M(\epsilon)< \infty$ such that
\begin{align}\label{k<G}
 k(z) \leq \epsilon\,  G\big(\frac{z}{\sqrt{2}}\big)= \epsilon\,  G_2(z)\quad \text{for all } |z| > M.
\end{align}
Hence we can split the integral into two parts. On the one hand, using (\ref{surf_conv_err}) for $k=G$, 
\begin{align*}
 \frac1\h \int_{\{|z|\leq M\}}  k(z)\, |f_h(\h z)-f(\h z)|\,dz \leq& \left(\sup_{[0,M]} p\right) \frac1\h \int G(z) |f_h(\h z)-f(\h z)|\,dz \to 0, \text{ as } h\to 0,
\end{align*}
and on the other hand, using (\ref{k<G}) and the approximate monotonicity in Lemma \ref{lem_AM_EO},
\begin{align*}
\frac1\h \int_{\{|z|> M\}}  k(z)\, |f_h(\h z) - f(\h z)|\,dz
\leq& \epsilon \frac1\h \int G_2(z) \left(f_h(\h z) + f(\h z)\right) dz
\lesssim \epsilon \left( E_{h}(\chi^h) + E_{h}(\chi)\right).
\end{align*}
By the convergence assumption (\ref{surf_conv_ass3}) and the consistency, cf. Lemma \ref{lem_CONS_EO}, we can take the limit $h\to0$ on the right-hand side and obtain
\begin{align*}
 \limsup_{h\to 0} \frac1\h \int k_h(z) |f_h( z)-f( z)|\,dz 
\lesssim \epsilon  \sum_{i,j} \sigma_{ij}  \int\frac12 \left(|\nabla\chi_i| +|\nabla\chi_j| -   |\nabla(\chi_i+\chi_j)|\right).
\end{align*}
Since the left-hand side does not depend on $\epsilon>0$, this implies (\ref{surf_conv_err}).
\end{proof}

%
%
%
%	VELOCITY
%
%
%

\section{Dissipation Functional and Velocity}\label{sec:velocity}
As for any minimizing movements scheme, the time derivative of the solution should arise from
the metric term in the minimization scheme.
For the minimizing movements scheme of our interfacial motion, the time derivative
is the normal velocity.
The goal of this section, which is the core of the paper, 
is to compare the first variation of the dissipation
functional to the normal velocity.

\subsection{Idea of the proof}
Let us first give an idea of the proof in a simplified setting with only two phases, a constant test vector field $\xi$ and no localization.
Then the first variation (\ref{variation_D}) of the metric term reads
\begin{align*}
 \frac2\h \int \left(\chi^n-\chi^{n-1}\right) G_h \ast \left(-\nabla \chi^n\cdot \xi\right) dx.
\end{align*}
Using the distributional equation $\nabla \chi \cdot \xi = \nabla \cdot (\chi\xi) - \left(\nabla \cdot \xi\right) \chi$, this is equal to
\begin{align*}
 \frac2\h \int \left(\chi^n-\chi^{n-1}\right) 
 \left(- \nabla G_h \ast \left( \chi^n \xi\right)
 + G_h\ast(\chi^n \xi)\right) dx
 \approx- 2 \int \frac{\chi^n-\chi^{n-1}}{h} 
 \,\xi \cdot \h \nabla G_h \ast \chi^n dx
\end{align*}
as $h\to 0$. We will prove this in Lemma \ref{lem_1st_var_of_-diss}.
Since $\partial^{-h}_t \chi^h = \frac{\chi^n-\chi^{n-1}}{h} \rightharpoonup V \left| \nabla \chi \right| dt$ and 
$\h \nabla G_h \ast \chi^n \approx c_0 \nu$ only in a weak sense, we cannot pass to the limit a priori.
Our strategy is to freeze the normal and to control
\begin{align}\label{weak times weak}
 \int_0^T \int \partial_t^{-h} \chi^h \,\xi \cdot \h \nabla G_h \ast \chi^h \,dx \,dt
 - \int_0^T \int \partial_t^{-h} \chi^h \,c_0\, \xi \cdot\nu^\ast\, dx \,dt
\end{align}
by the excess
\begin{align*}
 \gap := \int_0^T \left(E_h(\chi^h) - E_h(\chi^\ast)\right)dt,
\end{align*}
where $\chi^\ast = \chara_{\{x\cdot \nu^\ast >\lambda\}}$ is a half space in direction of $\nu^\ast$. By the convergence assumption $\gap$ converges to
\begin{align*}
 \EE :=  c_0 \int_0^T \left( \int\left| \nabla \chi \right| - \int\left| \nabla \chi^\ast \right| \right)dt,
\end{align*}
as $h\to 0$, which is small by De Giorgi's structure theorem -- at least after localization in space and time;
i.\ e.\ sets of finite perimeter have (approximate)
tangent planes almost everywhere.
To be self-consistent we will prove this application of De Giorgi's result in Section \ref{sec:conv}.

The main difficulty in controlling (\ref{weak times weak}) lies in finding good bounds on
\begin{align*}
  \int_0^T \int \left| \partial_t^{h} \chi^h \right| dx\,dt. 
\end{align*}
For the sake of simplicity we set $E_0 =T= \Lambda=1$ and write $\chi$ instead of $\chi^h$ in the following.
In Section \ref{sec:comp} we have seen the bound
\begin{align}\label{cheap bound}
 \iint\left| \partial_t^\tau \chi \right| dx\,dt = O(1) \quad \text{for } \tau \sim \h.
\end{align}
For this, we used the energy-dissipation estimate (\ref{energy_dissipation_estimate}) to bound the dissipation
\begin{align*}
\h \iint \left(G_{h/2}\ast \partial_t^h \chi \right)^2dx\,dt%= &\h \iint \partial_t^h \chi G_h \ast \partial_t^h \chi\,dx\,dt\\
= &\sum_{n} \frac1\h \int \left(\chi^n-\chi^{n-1}\right) G_h \ast \left(\chi^n-\chi^{n-1}\right) dx \lesssim 1
\end{align*}
and Jensen's inequality gave us control over the function
\begin{align}\label{overview alpha function}
 \alpha^2(t) := \frac1\h \int \left( G_{h/2}\ast \left(\chi(t+\tau) - \chi(t)\right) \right)^2 dx
 = \alpha^2 \h \int \left(G_{h/2}\ast \partial_t^\tau \chi \right)^2dx
\end{align}
by the fudge factor $\alpha$ appearing in the definition of the mesoscopic time scale $\tau=\alpha\h$:
\begin{align}\label{overview alpha and alpha}
 \int_0^T \alpha^2(t)\,dt \lesssim \alpha^2.
\end{align}
This estimate is the reason for the slight abuse of notation: We call the function in (\ref{overview alpha function}) $\alpha^2(t)$ in order to keep the relation (\ref{overview alpha and alpha})
between the two quantities in mind.
In the following we will always carry along the argument $t$ of the function $\alpha^2(t)$ to make the difference clear.
Writing $\chi^\tau$ short for $\chi(\, \cdot\,+\tau)$ we have shown in the proof of Lemma \ref{comp_lem_tau_shift} that (\ref{cheap bound})
holds in the more precise form of
\begin{align}\label{cheap bound precise}
 \iint \left| \chi^\tau -\chi\right| dx\,dt
 \lesssim \h \int \alpha^2(t)\,dt + \h \int E_h(\chi)\,dt
 \lesssim  \h \left( \gap+1\right) + \frac{\tau^2}\h.
\end{align}

In this section we will derive the following more subtle bound:
\begin{align}\label{subtle bound}
 \iint\left| \partial_t^\tau \chi \right| dx\,dt = O(1) \quad \text{for } \tau =o(\h). 
\end{align}
While the argument for (\ref{cheap bound}) was based on
\begin{align*}
 \chi^\tau - \chi  = G_h \ast(\chi^\tau -\chi) + (\chi-G_h\ast\chi) + (\chi^\tau-G_h\ast \chi^\tau)
\end{align*}
we now start from the thresholding scheme:
\begin{align*}
 \chi^\tau - \chi  =  \chara_{\{u^\tau >\frac12\}} - \chara_{\{u>\frac12\}}
\quad \text{with} \quad u^\tau := G_h\ast \chi^{\tau-h}\quad \text{and}\quad  u := G_h\ast \chi^{-h}.
\end{align*}
We will use an elementary one-dimensional estimate, Lemma \ref{la_periodic} (cf. Corollary \ref{cor_1_d_introducing_h} for this rescaled version),
in direction $\nu^\ast = e_1$ (w.\ l.\ o.\ g.) and integrate transversally to obtain
 \begin{align}\label{overview 1dla}
 \frac1\h \int\left| \chi^\tau -\chi \right| dx
\lesssim \frac1\h \int_{\frac13 \leq u\leq \frac23} \left(\h\partial_1 u -\overline c \right)_-^2 dx +  s + \frac1{s^2}\frac1\h \int (u^\tau-u)^2dx.
 %&\lesssim \h \gap + \h s + \frac1{s^2} \h \alpha^2(t).
\end{align}
The first right-hand side term measures the monotonicity of the phase function $u$ in normal direction in the transition zone $\{\frac13 \leq u \leq \frac23\}$.
It is clear that this term vanishes for $\chi^{-h} = \chi^\ast$, provided the universal constant $\overline c>0$ is sufficiently small.
In Lemma \ref{lem_local_estimates} we will indeed bound this term by the excess
\begin{align*}
 \varepsilon^2(-h) := E_h(\chi^{-h}) - E_h(\chi^\ast)
\end{align*}
at the previous time step.
Compared to the first approach which yielded (\ref{cheap bound precise}),
where the limiting factor is that the first right-hand side term is only $O(\h)$,
the result of the latter approach yields the improvement
\begin{align}\label{subtle bound exact result}
  \iint\left| \chi^\tau -\chi \right| dx\,dt \lesssim \h \left(\gap+s\right)+  \frac1{s^2} \frac{\tau^2}\h
\end{align}
for an arbitrary (small) parameter $s>0$.
Now we show how to use the bound (\ref{subtle bound exact result}) in order to estimate (\ref{weak times weak}).
First, in Lemma \ref{lem_1st_var_of_-diss} by freezing time for $\chi$ on the mesoscopic time scale $\tau = \alpha \h$ and using a
telescoping sum for the first term $\partial_t^h \chi$ we will show that
\begin{align}\label{overview 1}
 \iint \partial_t^{h} \chi \,\xi \cdot \h \nabla G_h\ast \chi \,dx \, dt
 =\iint \partial_t^{\tau} \chi\, \xi \cdot \h \nabla G_h\ast \frac{\chi+\chi^\tau}2 \,dx\,dt +
 O\left( \left(\frac{\tau}\h \iint \left| \partial_t^{\tau} \chi \right| dx\,dt \right)^{\frac12}\right).
\end{align}
By (\ref{subtle bound exact result}) the error term is controlled by 
\begin{equation}\label{overview 1 error}
 \left(\gap +s +  \frac1{s^2} \alpha^2\right)^{\frac12} %\frac s \alpha \gap + \frac {s^2}\alpha + \left( \frac\alpha s\right)^2 
 \lesssim \frac1\alpha \gap + \alpha^{\frac13}
\end{equation}
by choosing $s\sim \alpha^{\frac23}$.
Second, in Lemma \ref{lem_1st_var_of_-diss_K_h} we will show how to use the algebraic relation $(\chi^\tau-\chi)(\chi^\tau + \chi) = \chi^\tau -\chi$ 
for the product $(\chi^\tau-\chi)\h \nabla G_h\ast (\chi^\tau + \chi) $  so that we can rewrite the right-hand side of (\ref{overview 1}) as
\begin{align}\label{overview 2}
 \iint \partial_t^{\tau} \chi \, c_0 \, \xi \cdot e_1 \,dx\,dt
 +O\left(\iint \left| \partial_t^\tau \chi \right| \, k_h \ast \left| \chi^\tau -\chi \right| dx \,dt\right)
 +O\left(\gap\right)
\end{align}
for some kernel $k$.
Third, in Lemma \ref{lem_1st_var_of_-diss_remainder} we will control the first error term by using its quadratic structure and the estimate (\ref{subtle bound exact result})
before the transversal integration in $x'$:
\begin{align}
 \int \left| \partial_t^\tau \chi \right|\, k_h \ast \left| \chi^\tau -\chi \right| dx 
 & \lesssim \frac1\tau \int \left( \int\left| \chi^\tau-\chi\right| dx_1 \right)\notag
 k_h' \ast'\left[ 1 \wedge \left( \frac1\h  \int\left| \chi^\tau - \chi\right| dx_1 \right)  \right] dx'\\
 &\lesssim \frac1\alpha \left[ \gap+\frac1{s^2} \alpha^2 \label{overview 3}
 + s\left( \h \left(\tilde s+\gap\right)+ \frac1{\tilde s^2} \alpha^2 \right)\right]\\
 & \lesssim \frac1\alpha \gap +  \frac1\alpha s\tilde s + \left( \frac s{\tilde s^2} + \frac1{s^2} \right) \alpha\notag
 \sim\frac1\alpha \gap + \alpha^{\frac19},
\end{align}
by choosing $\tilde s \sim \alpha^{\frac23}$ and $s \sim \alpha^{\frac49}$.
We note that the values of the exponents of $\alpha$ in (\ref{overview 1 error}) and (\ref{overview 3}) do not play any role and can be easily improved.
We only need the extra terms, here $\alpha^{\frac13}$ and $\alpha^{\frac19}$, to be $o(1)$ as $\alpha\to0$; the prefactor of the excess $\gap$, here $\frac1\alpha$, can be large.
Indeed, after sending $h\to 0$ we will obtain the error $\frac1\alpha \EE + \alpha^{\frac19}$. We will handle this term in Section \ref{sec:conv}
by first sending the fineness of the localization to zero so that $\EE$ vanishes, and then sending the parameter $\alpha\to0$.

In the following we will make the above steps rigorous and give a full proof in the multi-phase case.
First we state the main result, Proposition \ref{prop_velocity_good_balls}, then we explain the tools we will be using more carefully in the subsequent lemmas.
We turn first to the two-phase case to present the one-dimensional estimate (\ref{overview 1dla}) in Lemma \ref{la_periodic}, its rescaled and localized
version Corollary \ref{cor_periodic} and the estimate for the error term Lemma \ref{lem_local_estimates}.
Subsequently we state the same results in Lemma \ref{la_1d_multiphase} and Corollary \ref{lem_local_estimates_multiphase} for the multi-phase case.
These estimates are the core of the proof of Proposition \ref{prop_velocity_good_balls} and use the explicit structure of the scheme.
Let us note that in these estimates we are using the two steps of the scheme, the convolution step (\ref{convolve}) and the
thresholding step (\ref{threshold}), in a well-separated way. Indeed, the one-dimensional estimate,
Lemma \ref{la_1d_multiphase}, analyzes the thresholding step (\ref{threshold}); and Corollary \ref{lem_local_estimates_multiphase}
brings the (transversally integrated) error term in the form of the excess $\gap$ at the previous time step by analyzing the convolution step (\ref{convolve}).

\subsection{Results}\label{sub:velocity results}
The main result of this section is the following proposition which will be used
for small time intervals in Section \ref{sec:conv}
where we will control the limiting error terms which appear here
with soft arguments from Geometric Measure Theory.
In view of the definition of $\EE$ below, the proposition assumes
that $\chi_3,\dots,\chi_\numphases$ are the \emph{minority phases}
 in the space-time cylinder $(0,T)\times B_r$; likewise it assumes that the normal between $\chi_1$ and 
 $\chi_2$ is close to the first unit vector $e_1$.
 This can be assumed since on the one hand we can relabel the phases in case we want to
 treat another pair of phases as the majority phases.
 On the other hand, due to the rotational invariance, it is no restriction to assume that $e_1$ is
 the approximate normal.
%  \begin{minipage}{.4\textwidth}
 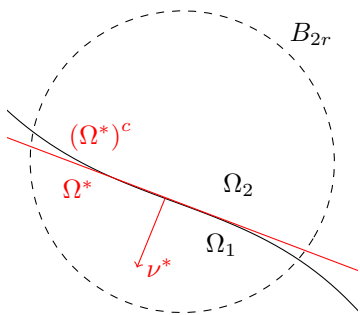
\begin{figure}[H]
\centering
  \begin{tikzpicture}[scale=1]
\clip (-2.3,-2.05) rectangle (2.3,2.05);

\begin{scope}[rotate=-30]

\draw[domain=-3:3,smooth,variable=\x] plot %PLOTTING DELTA OMEGA 
({\x},{.1*sin(50*\x)+.1*sin(50*(\x-.5))-.5-.007*\x*\x*\x});
\node at (1,-.7) {$\Omega_1$};
\node[red] at (-1,-1) {$\Omega^\ast$};
\node at (.8,.1) {$\Omega_2$};
\node[red] at (-1.1,-.25) {$\left(\Omega^{\ast}\right)^c$};
\draw[red] plot [domain=-3:3,variable=\x]  %PLOTTING DELTA OMEGA*
  ({\x},{0.16*\x-.5-.03});
\draw[red,->] (275:.525)-- +(278:1) node [right] {$\nu^\ast$};%{$\nu^\ast$};  %PLOTTING DELTA OMEGA* 
  
\draw[dashed] (0,0) circle (2); %B_r
\node at (45+30:2.4) {$B_{2r}$}; 
\end{scope}

 \end{tikzpicture}
\caption{The majority phases $\Omega_1$ and $\Omega_2$ 
and the half space $\Omega^\ast = \{x \cdot \nu^\ast > \lambda\}$
approximating $\Omega_1$ inside the ball $B_{2r}$. 
Its complement $\left( \Omega^\ast \right)^c $ approximates $\Omega_2$ inside $B_{2r}$.}
\label{fig_tangent_space}
\end{figure}
%  \end{minipage}

%-----------------------------------------------
%
%		PROPOSITION
%
%-----------------------------------------------
\begin{prop}\label{prop_velocity_good_balls}
For any $\alpha \ll 1$, $T>0$, $\xi\in C_0^\infty((0,T)\times B_r,\R^d)$
and any $\eta\in C_0^\infty(B_{2r})$
radially symmetric and radially non-increasing cut-off 
for $B_r$ in $B_{2r}$ with $\left| \nabla \eta\right| \lesssim \frac1r$
 and $\left| \nabla^2 \eta\right| \lesssim \frac1{r^2}$, we have
\begin{align}\label{error_in_velocity_local_in_time}
&\limsup_{h\to0}\left|
 \int_0^T\! -\delta E_h(\,\cdot\,-\chi^h(t-h)) 
(\chi^h(t),\xi(t)) \,dt
+ 2c_0 \sigma_{12} \int_0^T\left(\int \xi_1\, V_1 \left|\nabla \chi_1 \right|
 -\int \xi_1\, V_2 \left|\nabla \chi_2 \right|\right)
dt
\right| \notag\\
&\qquad\qquad \qquad  \lesssim \|\xi\|_{\infty}\left[ 
  \int_0^T  
  \Big(\frac1{\alpha^2}\EE(t)  + \alpha^{\frac19} r^{d-1}\Big) dt + \alpha^{\frac19} \iint \eta \,d\mu \right].
\end{align}
Here we use the notation 
\begin{align*}
\EE(t) := 
 \sum_{i=3}^\numphases \int \eta \left| \nabla \chi_{i}(t)\right|
 +\inf_{\chi^\ast}
\Bigg\{&
\left|\int \eta \left(\left| \nabla \chi_{1}(t) \right|
 -\left| \nabla \chi^\ast\right| \right) \right|
  + \frac1r \int_{B_{2r}} \left|\chi_{1}(t)-\chi^\ast\right|dx\\
+&\left|\int \eta \left(\left| \nabla \chi_{2}(t) \right|
 -\left| \nabla \chi^\ast\right| \right)\right|
+ \frac1r \int_{B_{2r}} \left|\chi_{2}(t)-\left(1-\chi^\ast\right)\right|dx \Bigg\},
\end{align*}
where the infimum is taken over all half spaces
$\chi^\ast = \chara_{\{ x_1 > \lambda \}}$ 
in direction $e_1$.
\end{prop}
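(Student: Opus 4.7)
The plan is to carry out the four-step program sketched in the Idea of the Proof subsection, now adapted to multiple phases with spatial localization on $B_{2r}$. First, following the same integration-by-parts in the convolution that yielded Lemma \ref{surf_la_ELG}, I would rewrite the dissipation-side first variation (\ref{variation_D}) as
\[
-\delta E_h(\,\cdot\,-\chi^{n-1})(\chi^n,\xi)
\;=\;
-\frac{2}{\sqrt h}\sum_{i,j}\sigma_{ij}\int(\chi_i^n-\chi_i^{n-1})\,\xi\cdot h\nabla G_h\ast \chi_j^n\,dx
\;+\; O\bigl(\|\nabla^2\xi\|_\infty\sqrt h\,E_h(\chi^n)\bigr),
\]
so that the quantity of interest becomes a pairing between the discrete time derivative $\partial_t^{-h}\chi^h$ and the approximate normal-energy $\sqrt h\,\nabla G_h\ast\chi^h$. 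Formally the former converges (only weakly) to $V\,|\nabla\chi|\,dt$ by Proposition \ref{lem_dtX<<DX}, while the latter is of Reshetnyak type and should converge to $c_0\,\nu\,|\nabla\chi|$.

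Second, I would freeze the $\chi^n$-factor on the mesoscopic time scale $\tau=\alpha\sqrt h$. Summing in blocks of $\kmax=\tau/h$ microscopic steps and telescoping replaces $\sum_n(\chi^n-\chi^{n-1})F^n$ by $\frac{1}{\kmax}\sum_l(\chi^{\kmax l}-\chi^{\kmax(l-1)})\bar F^l$ up to a commutator; the commutator is controlled by Cauchy--Schwarz using the energy-dissipation estimate (\ref{energy_dissipation_estimate}) combined with the subtle $L^1$-in-time bound
\[
\int_0^T\!\!\int |\chi^h(t+\tau)-\chi^h(t)|\,dx\,dt
\;\lesssim\; \sqrt h\,(\gap+s)+\frac{1}{s^2}\frac{\tau^2}{\sqrt h},
\]
coming from the one-dimensional monotonicity lemma, Lemma \ref{la_1d_multiphase}, and the tilt-excess estimate of Corollary \ref{lem_local_estimates_multiphase} applied transversally to the approximate normal $e_1$. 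Optimizing the auxiliary parameter $s\sim\alpha^{2/3}$ produces the commutator error $\frac{1}{\alpha}\gap+\alpha^{1/3}$ of (\ref{overview 1 error}).

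Third, I would use the algebraic identity $(\chi^\tau-\chi)(\chi^\tau+\chi)=\chi^\tau-\chi$, valid for characteristic functions, to concentrate the product $(\chi_i^\tau-\chi_i)\,\xi\cdot \sqrt h\,\nabla G_h\ast\tfrac{\chi_j^\tau+\chi_j}{2}$ near the discrete interface, and then replace the second factor by $c_0\,\nu^\ast=c_0\,e_1$, where $\nu^\ast$ is the normal of the half-space minimizing $\EE(t)$. The remainder is of the form $\iint|\partial_t^\tau\chi|\cdot k_h\ast|\chi^\tau-\chi|\,dx\,dt$ for some admissible kernel $k$ and is controlled by a second application of the one-dimensional lemma (with an extra parameter $\tilde s$); after optimizing $s\sim\alpha^{4/9}$, $\tilde s\sim\alpha^{2/3}$ one obtains the error $\frac{1}{\alpha}\gap+\alpha^{1/9}$ of (\ref{overview 3}). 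Passing to $h\to 0$ via the convergence assumption (\ref{conv_ass}) and Lemma \ref{la_impl_conv_ass}, and then identifying $\partial_t^\tau\chi_i^h\rightharpoonup V_i\,|\nabla\chi_i|\,dt$ through Proposition \ref{lem_dtX<<DX}, produces the leading-order term $2c_0\sigma_{12}\int_0^T\!\int\xi_1(V_1|\nabla\chi_1|-V_2|\nabla\chi_2|)\,dt$. The contributions of the minority phases $\chi_3,\dots,\chi_\numphases$ are small perturbations which are absorbed into the $\sum_{i\geq 3}\int\eta\,|\nabla\chi_i|$-term of $\EE(t)$.

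The principal obstacle, as emphasized in the overview, is precisely the absence of strong compactness for the discrete time derivative $\partial_t^{-h}\chi^h$ on the microscopic scale $h$: one must pass to the limit in a product of two only weakly converging factors. The mesoscopic scale $\tau=\alpha\sqrt h$ together with its fudge factor $\alpha$, paired with the one-dimensional monotonicity lemma that trades the $L^1$-difference of the level sets $\{u>1/2\}$ and $\{\tilde u>1/2\}$ against the $L^2$-difference of $u,\tilde u$, is the device that quantifies this gap; the price is the $\alpha$-dependent remainder $\alpha^{-2}\EE(t)+\alpha^{1/9}$ in the statement, with the dissipation measure $\mu$ appearing because the commutator sources in both steps are summable against it. The final manipulation, performed in Section \ref{sec:conv}, is to first send the patch radius $r\to 0$ so that $\EE(t)$ vanishes almost everywhere by the multi-phase analogue of De Giorgi's structure theorem, and then to send $\alpha\to 0$ to dispose of the $\alpha^{1/9}$-remainder.
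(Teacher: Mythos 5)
Your proposal follows essentially the same route as the paper's proof: coarsening the first variation to the mesoscopic scale $\tau=\alpha\sqrt h$ (Lemma \ref{lem_1st_var_of_-diss}), producing the approximate normal $c_0 e_1$ via the kernel identity (Lemma \ref{lem_1st_var_of_-diss_K_h}), controlling the quadratic remainder with the one-dimensional monotonicity lemma and Corollary \ref{lem_local_estimates_multiphase} with the same parameter choices (Lemma \ref{lem_1st_var_of_-diss_remainder}), and finally identifying the velocities via discrete integration by parts, Lemma \ref{comp_lem_hoelder} and Proposition \ref{lem_dtX<<DX}. The only ingredient you leave implicit is the paper's small technical Step 2, the choice of a suitably shifted microscopic index $k_0$ so that the mesoscopic time slices of $\ggap$ are controlled by the full time average.
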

The exponents of $\alpha$ in this statement are of no importance and can be easily improved.
It is only relevant that the two extra error terms, i.\ e.\ $r^{d-1}T$ and $\iint \eta\, d \mu$,
are equipped with prefactors which vanish as $\alpha\to 0$.
In Section \ref{sec:conv} we will show that -- even after summation -- the excess will vanish as the fineness of the localization, i.\ e.\ the radius $r$
of the ball in the statement of Proposition \ref{prop_velocity_good_balls}
tends to zero.
There we will take first the limit $r\to 0$ and then $\alpha \to 0$ to prove Theorem \ref{thm1}.
The prefactor of the excess, here $\frac1{\alpha^2}$ differs from the one in the two-phase case since the one-dimensional estimate is slightly different in the multi-phase case.

Let us comment on the structure of $\EE$. 
The first term, describing the surface area of Phases $3,\dots,\numphases$ inside the ball $B_{2r}$, 
will be small in the application when $\chi_3,\dots,\chi_\numphases$ are indeed the minority phases.
The second term, sometimes called the \emph{excess energy} describes how far $\chi_1$ and $\chi_2$ are
away from being half spaces in direction $e_1$ or $-e_1$, respectively.
The terms comparing the surface energy inside $B_{2r}$ do not see the orientation of 
the normal, whereas the bulk terms measuring the $L^1$-distance inside the ball
$B_{2r}$ do see the orientation of the normal.
%

%
% \subsection{A digression to the two-phase case}\label{subsec:2phase}
The estimates in Section \ref{sec:comp} are not sufficient to understand the link between the first
variation of the metric term and the normal velocities.
For this, we need refined estimates which we will first present for the two-phase case, where only one
interface evolves.
%-----------------------------------------------
%
%		1D - LEMMA
%
%-----------------------------------------------
The main tool of the proof is the following one-dimensional lemma.
For two functions $u,\,\tilde u$, it estimates the $L^1$-distance between the characteristic functions
$\chi = \chara_{\{u\geq \frac12\}}$ and $\tilde \chi = \chara_{\{\tilde u\geq \frac12\}}$ in terms of the $L^2$-distance between the $u$'s - at the
expense of a term that measures the strict monotonicity of one of the functions $u$.
We will apply it in a rescaled version for $x_1$ being the normal direction.
\begin{lem}\label{la_periodic}
Let $I\subset\R$ be an interval,
Let $u,\, \tilde u\in C^{0,1}(I)$, 
$\chi:= \chara_{\{u \geq \frac12\}}$ and $\tilde \chi := \chara_{\{\tilde u \geq \frac12\}}$. Then
\begin{align}
\int_I \left|\chi-\tilde \chi\right|dx_1  \lesssim 
\int_{| u-\frac12|<s}\left( \partial_1 u -1 \right)_-^2 dx_1
+s + \frac{1}{s^2}\int_I \left(u-\tilde u \right)^2dx_1
%\int_I |\chi_1-\chi_0| dx_1 \lesssim \int_{\{ \frac13\leq u_0 \leq \frac23 \}}\left( \partial_1 u_0 -1 \right)_-^2dx_1 +t |I|+ \frac{1}{t^2}\int_I \left(u_1-u_0 \right)^2 dx_1
\end{align}
for every $s>0$.
\end{lem}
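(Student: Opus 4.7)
The plan is to bound the symmetric difference $\{\chi\neq\tilde\chi\}$ by two manageable pieces, depending on whether the point lies in the transition zone $J:=\{|u-\tfrac12|<s\}$ or not. The key observation is: if $\chi(x)\neq\tilde\chi(x)$ but $|u(x)-\tfrac12|\geq s$, then $u$ and $\tilde u$ sit on opposite sides of $\tfrac12$ with $u$ at distance $\geq s$, so $|u-\tilde u|\geq s$. Hence
\begin{equation*}
\{\chi\neq\tilde\chi\}\subset J\cup\{|u-\tilde u|\geq s\}.
\end{equation*}
Chebyshev's inequality immediately gives $|\{|u-\tilde u|\geq s\}|\leq s^{-2}\int_I(u-\tilde u)^2\,dx_1$, which is the third right-hand side term. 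What remains is to show
\begin{equation*}
|J|\lesssim \int_J(\partial_1 u-1)_-^2\,dx_1+s.
\end{equation*}

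The main step, and the only nontrivial one, is the bound on $|J|$. Here I would use the \textbf{clipping trick}: define the truncation $w:=\max\bigl(0,\min(2s,u-\tfrac12+s)\bigr)$, which is Lipschitz with $0\leq w\leq 2s$ and $\partial_1 w=\partial_1 u\cdot\chara_J$ a.e. Writing $I=(a,b)$, the fundamental theorem of calculus gives
\begin{equation*}
\Bigl|\int_J\partial_1 u\,dx_1\Bigr|=|w(b)-w(a)|\leq 2s.
\end{equation*}
This is the cancellation effect that makes the monotonicity term appear only with a \emph{squared} defect: even if $u$ oscillates wildly in $J$, the net contribution of $\partial_1 u$ is bounded by $2s$.

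From the pointwise inequality $1\leq (\partial_1 u-1)_-+\partial_1 u$ (which is just $1-\partial_1 u\leq(1-\partial_1 u)_+$), I integrate over $J$ and apply Cauchy--Schwarz:
\begin{equation*}
|J|\leq\int_J(\partial_1 u-1)_-\,dx_1+\int_J\partial_1 u\,dx_1\leq |J|^{1/2}\Bigl(\int_J(\partial_1 u-1)_-^2\,dx_1\Bigr)^{1/2}+2s.
\end{equation*}
Absorbing the $|J|^{1/2}$-term via Young's inequality $ab\leq\tfrac12a^2+\tfrac12b^2$ yields $|J|\leq \int_J(\partial_1 u-1)_-^2\,dx_1+4s$, and combining the three estimates finishes the proof.

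I do not expect any serious obstacle: the whole argument is elementary once one spots that the truncation $w$ captures the right cancellation. The only subtle point is the squared defect versus linear defect, which is precisely what the Cauchy--Schwarz/Young step is designed to produce, and which matches the $L^2$-type control on the right-hand side of the lemma.
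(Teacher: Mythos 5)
Your proof is correct, and its core step differs genuinely from the paper's. The reduction via the inclusion $\{\chi\neq\tilde\chi\}\subset\{|u-\tfrac12|<s\}\cup\{|u-\tilde u|\geq s\}$ and Chebyshev is the same as in the paper; the difference lies in how you bound $|J|$ with $J=\{|u-\tfrac12|<s\}$. The paper proves the unscaled estimate $|\{|u|\leq1\}|\lesssim\int_{\{|u|\leq1\}}(\partial_1 u-1)_-^2\,dx_1+1$ by decomposing the band into countably many intervals, classifying them as up-crossings, down-crossings, returns and boundary pieces, applying Jensen's inequality on each, and counting crossings, then rescales. You instead use the Lipschitz truncation $w=\max(0,\min(2s,u-\tfrac12+s))$, for which $\partial_1 w=\partial_1 u\,\chara_J$ a.e., so that the fundamental theorem of calculus gives the cancellation bound $\bigl|\int_J\partial_1 u\,dx_1\bigr|\leq 2s$; combined with the pointwise inequality $1\leq(\partial_1 u-1)_-+\partial_1 u$, Cauchy--Schwarz and Young's inequality, this yields $|J|\leq\int_J(\partial_1 u-1)_-^2\,dx_1+4s$ with explicit constants. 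This is shorter and avoids the combinatorial bookkeeping, and it delivers directly the rescaled form the paper only gets in its Step 2; the paper's decomposition gives some extra (unused) information such as a bound on the number of down-crossings. Two small points you should make explicit: the absorption of $\tfrac12|J|$ requires $|J|<\infty$, and on an unbounded interval $\partial_1 u\,\chara_J$ need not be integrable a priori, so one should run the argument on $I\cap[-R,R]$ (where $|w(b)-w(a)|\leq 2s$ still holds since $0\leq w\leq 2s$) and let $R\to\infty$; with this standard exhaustion the proof is complete in full generality.
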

%

%
%-----------------------------------------------
%
%		1D-Corollary
%
%-----------------------------------------------
The following modified version of Lemma \ref{la_periodic} is the estimate one would use in the two-phase case.
\begin{cor}\label{cor_periodic}
Let $u,\, \tilde u\in C^{0,1}(I)$, 
$\chi:= \chara_{\{u \geq \frac12\}}$, $\tilde \chi := \chara_{\{\tilde u \geq \frac12\}}$
and $\eta\in C_0^\infty(\R)$, $0\leq \eta\leq 1$ radially non-increasing.
Then
\begin{align*}%\label{eq_periodic_cor}
\frac{1}{\sqrt h}\int \eta \left|\chi-\tilde \chi\right| dx_1
\lesssim \frac{1}{\sqrt h}\int_{| u -\frac12|<s}
\eta \left( \sqrt{h}\,\partial_1 u -1 \right)_-^2dx_1 +s
+ \frac1{s^2}\frac{1}{\sqrt h}\int \eta \left(u-\tilde u \right)^2dx_1
\end{align*}
for any $s>0$.
\end{cor}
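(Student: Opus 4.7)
My plan is to deduce the corollary from Lemma \ref{la_periodic} in two clean steps: first a parabolic rescaling to produce the factor $\sqrt{h}$ inside the $(\cdot)_-^2$ term, then a localization via a layer-cake argument that exploits the radial monotonicity of $\eta$.

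For the rescaling, I would introduce $v(y) := u(\sqrt{h}\, y)$ and $\tilde v(y) := \tilde u(\sqrt{h}\, y)$, so that $\partial_y v(y) = \sqrt{h}\,(\partial_1 u)(\sqrt{h}\, y)$ and the level set $\{v \geq \tfrac12\}$ is the image of $\{u \geq \tfrac12\}$ under $x_1 \mapsto x_1/\sqrt{h}$. Applying Lemma \ref{la_periodic} to $v,\tilde v$ on an arbitrary interval and then changing variables back via $x_1 = \sqrt{h}\, y$ (which produces the overall prefactor $1/\sqrt{h}$ in front of each length integral) will yield the unlocalized rescaled estimate
\[
\frac{1}{\sqrt{h}} \int_I |\chi - \tilde\chi|\, dx_1 \lesssim \frac{1}{\sqrt{h}} \int_{I \cap \{|u - \frac12| < s\}} \bigl(\sqrt{h}\, \partial_1 u - 1\bigr)_-^2\, dx_1 + s + \frac{1}{s^2}\,\frac{1}{\sqrt{h}} \int_I (u-\tilde u)^2\, dx_1
\]
for any interval $I \subset \R$ and any $s>0$.

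Next comes the localization. Since $0 \leq \eta \leq 1$, the layer-cake formula yields $\eta(x_1) = \int_0^1 \chara_{\{\eta > t\}}(x_1)\, dt$. Because $\eta$ is radially symmetric and radially non-increasing, each super-level set $I_t := \{\eta > t\}$ is a (symmetric) interval, so I can apply the rescaled estimate above to $I = I_t$ for every $t \in (0,1)$. Integrating that inequality in $t$ and using Fubini on the terms carrying $\chara_{I_t}$ then converts them into the $\eta$-weighted integrals appearing in the statement, while the middle term integrates to $\int_0^1 s\, dt = s$, precisely reproducing the claimed bound.

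I do not anticipate a serious obstacle here: the rescaling is a direct change of variables, and the layer-cake trick handles the cutoff $\eta$ cleanly thanks to its radial monotonicity. The one point to watch is the middle $s$-term, where one must avoid gaining a stray length factor after integrating in $t$; the assumption $\eta \leq 1$ guarantees $\int_0^1 dt \leq 1$, so no such factor arises and the inequality comes out with a universal constant depending on no geometric data.
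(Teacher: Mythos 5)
Your proof is correct and follows essentially the same route as the paper: a parabolic rescaling of Lemma \ref{la_periodic} followed by decomposing $\eta$ into characteristic functions of intervals, which is legitimate because radial monotonicity makes the super-level sets intervals. The only (cosmetic) difference is that you use the continuous layer-cake formula with Fubini, whereas the paper approximates $\eta$ by the simple functions $\frac1N\sum_n\chara_{\{\eta>n/N\}}$ and lets $N\to\infty$; both handle the middle term correctly so that it contributes exactly $s$.
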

%-----------------------------------------------
%
%		LOCAL ESTIMATES
%
%-----------------------------------------------
In the previous corollary, it was crucial to control strict monotonicity of one of the two functions
via the term 
\begin{align*}
 \frac{1}{\sqrt h}\int_{| u -\frac12|<s}
\eta \left( \sqrt{h}\,\partial_1 u -1 \right)_-^2dx_1.
\end{align*}
In the following lemma, we consider the $d$-dimensional version, i.\ e.\ $dx_1$ replaced by $dx$, of this term in case of $u=G_h\ast \chi$.
We show that this term can be controlled in terms of the excess, measuring the energy
difference to a half space $\chi^\ast$ in direction $e_1$.
\begin{lem}\label{lem_local_estimates}
Let $\chi\colon \torus \to \{0,1\}$, $\chi^\ast=\chara_{\{x_1>\lambda\}}$
a half space in direction $e_1$ and $\eta\in C_0^\infty(B_{2r})$ a cut-off 
of $B_r$ in $B_{2r}$ with $\left| \nabla \eta\right| \lesssim \frac1r$
 and $\left| \nabla^2 \eta\right| \lesssim \frac1{r^2}$.
Then there exists a universal constant $\overline{c}>0$ such that
\begin{align}
\frac1\h \int_{z_1 \lessgtr 0} G_h(z) \int\eta(x)
\left( \chi(x+z) -\chi(x)\right)_\pm dx \,dz 
\lesssim &  \gap  + \h \frac1{r^2},
 \label{eq_periodic_1}\\
 \frac1\h\int_{\frac13 \leq G_{h}\ast \chi \leq \frac23}
\eta \left( \h\,\partial_1(G_{h}\ast \chi) - \overline{c}\right)_-^2 dx 
\lesssim &\gap + \h \frac1{r^2}
 +\h \frac1{r} E_h(\chi),
\label{eq_periodic_2}
\end{align}
where $\gap$ is defined via
\begin{align*}
 \gap := \frac1\h \int  \eta \left[ \left(1-\chi\right) G_h \ast \chi +\chi\, G_h \ast \left(1-\chi\right)\right] dx
 -&\frac1\h \int \eta \left[\left(1-\chi^\ast\right) G_h\ast \chi^\ast+\chi^\ast\, G_h \ast \left(1-\chi^\ast\right)\right]dx\\
+&\frac1r \int_{B_{2r}}  \left|\chi-\chi^\ast\right|dx
\end{align*}
and the integral on the left-hand side of (\ref{eq_periodic_1}) with the two cases $<,+$ and $>,-$, respectively
is a short notation for the sum of the two integrals.
\end{lem}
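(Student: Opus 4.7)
I would approach the proof as two separate arguments, first establishing (\ref{eq_periodic_1}), and then using its proof structure (together with the one-dimensional analysis it inspires) in the proof of (\ref{eq_periodic_2}).

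The plan for (\ref{eq_periodic_1}): introduce the localized approximate energy
\begin{equation*}
F_h(\chi,\eta) := \tfrac1\h \int \eta(x)\bigl[(1-\chi)G_h\ast\chi+\chi\, G_h\ast(1-\chi)\bigr]\,dx = \tfrac1\h \int G_h(z)\int \eta(x)\,|\chi(x+z)-\chi(x)|\,dx\,dz,
\end{equation*}
so that $\gap = F_h(\chi,\eta)-F_h(\chi^\ast,\eta)+\tfrac{1}{r}\int_{B_{2r}}|\chi-\chi^\ast|\,dx$. Decomposing $|a|=a_++a_-$ and splitting the $z$-integration by $\sign(z_1)$ writes $F_h(\chi,\eta)=F_h^\uparrow(\chi,\eta)+F_h^\downarrow(\chi,\eta)$, where the left-hand side of (\ref{eq_periodic_1}) is precisely $F_h^\downarrow(\chi,\eta)$. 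For the half-space $\chi^\ast=\chara_{\{x_1>\lambda\}}$ the ``wrong-orientation'' part vanishes identically, so $F_h^\downarrow(\chi^\ast,\eta)=0$ and $F_h(\chi^\ast,\eta)=F_h^\uparrow(\chi^\ast,\eta)$. Using the algebraic identity $a_+-a_-=a$, the signed difference $F_h^\uparrow(\chi,\eta)-F_h^\downarrow(\chi,\eta)$ reduces to $\tfrac1\h\int G_h(z)\sign(z_1)\int \eta(x)[\chi(x+z)-\chi(x)]\,dx\,dz$. A translation $x\mapsto x-z$ in the inner integral and a second-order Taylor expansion of $\eta$ then yield $F_h^\uparrow(\chi,\eta)-F_h^\downarrow(\chi,\eta)=-c_\ast\int\chi\,\partial_1\eta\,dx+O(\h/r^2)$ for an explicit dimensional constant $c_\ast>0$; the first-order term uses $\int_{z_1>0}z_1 G_h(z)\,dz\sim\sqrt h$, the second-order remainder uses $|\nabla^2\eta|\lesssim 1/r^2$ together with the compact support of $\eta$. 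Applying this identity for both $\chi$ and $\chi^\ast$ and subtracting eliminates the constant contribution, leaving $-c_\ast\int(\chi-\chi^\ast)\,\partial_1\eta\,dx$ modulo $O(\h/r^2)$, which is bounded by $\tfrac1r\int_{B_{2r}}|\chi-\chi^\ast|\,dx$ via $|\nabla\eta|\lesssim 1/r$. Substituting $F_h^\uparrow(\chi,\eta)=F_h(\chi,\eta)-F_h^\downarrow(\chi,\eta)$ and rearranging yields $2F_h^\downarrow(\chi,\eta)\lesssim\gap+\h/r^2$, which is (\ref{eq_periodic_1}).

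The plan for (\ref{eq_periodic_2}): setting $u:=G_h\ast\chi$, use $\nabla G_h(z)=-\tfrac{z}{h}G_h(z)$ and $\int z_1 G_h\,dz=0$ to write
\begin{equation*}
\h\,\partial_1 u(x)=\tfrac1\h\int z_1\,G_h(z)\bigl[\chi(x+z)-\chi(x)\bigr]\,dz=\mathcal G(x)-\mathcal B(x),
\end{equation*}
where $\mathcal G,\mathcal B\geq0$ collect the ``aligned'' and ``misaligned'' contributions (i.e.\ $\mathcal B$ pairs $z_1>0$ with $\chi(x+z)<\chi(x)$ and $z_1<0$ with $\chi(x+z)>\chi(x)$). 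From $(\h\,\partial_1 u-\bar c)_-^2=(\bar c-\mathcal G+\mathcal B)_+^2\leq 2(\bar c-\mathcal G)_+^2+2\mathcal B^2$ there are two pieces to estimate. Since $\mathcal B\lesssim 1$ pointwise and $\mathcal B(x)$ is morally $\sqrt h$ times the integrand producing $F_h^\downarrow$ (use $|z_1|G_h(z)\lesssim\sqrt h\,\tilde G_h(z)$ for a slightly dilated Gaussian $\tilde G_h$ and apply the approximate monotonicity Lemma~\ref{lem_AM_EO}), we have $\tfrac1\h\int \eta\,\mathcal B^2\,dx\lesssim F_h^\downarrow(\chi,\eta)\lesssim\gap+\h/r^2$ by (\ref{eq_periodic_1}). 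For the aligned part, choose $\bar c$ small enough (say $\bar c<c_0/2$, where $c_0=1/\sqrt{2\pi}$ is precisely the value $\h\,\partial_1(G_h\ast\chi^\ast)$ attains at the flat interface $x_1=\lambda$) so that $(\bar c-\mathcal G)_+\lesssim 1$ uniformly; then $\tfrac1\h\int_{\{1/3\leq u\leq 2/3\}}\eta(\bar c-\mathcal G)_+^2\,dx\lesssim \tfrac1\h|\{1/3\leq u\leq 2/3\}\cap\supp\eta|$. The co-area formula together with $\int|\nabla u|\,dx\lesssim \h\,E_h(\chi)$ (from Step~1 of Lemma~\ref{comp_lem_z_shift}) and the localization $|\nabla\eta|\lesssim 1/r$ bound this last quantity by $\h\,E_h(\chi)/r$, producing the final term in (\ref{eq_periodic_2}).

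The main obstacle I expect is the bookkeeping of error terms: in particular, keeping the Taylor remainder in the first part at the claimed order $\h/r^2$ rather than a weaker bound (which requires carefully using the compact support of $\eta$ in $B_{2r}$), and sharpening the pointwise bound of $\mathcal B$ so that $\mathcal B^2$ does not lose against $\mathcal B$. The conceptual core, which I expect to be robust, is the observation that the half-space in direction $e_1$ is the unique extremizer making $F_h^\downarrow$ vanish, so that the energy excess in $\gap$ controls $F_h^\downarrow$; the Taylor error and the $L^1$-term in $\gap$ absorb exactly the discrepancy between the $\chi$-dependent and $\chi^\ast$-dependent linear parts that arise upon integrating by parts against $\eta$.
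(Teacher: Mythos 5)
Your argument for (\ref{eq_periodic_1}) is essentially the paper's proof: the decomposition of $|\chi(x+z)-\chi(x)|$ by $\sign(z_1)$ and $(\cdot)_\pm$, the observation that the downward part vanishes for the half space, and the translation-plus-Taylor expansion of $\eta$ producing the $\frac1r\int_{B_{2r}}|\chi-\chi^\ast|$ term and the $\h/r^2$ remainder are exactly the steps used there, so this half is fine.

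For (\ref{eq_periodic_2}) there is a genuine gap in your treatment of the aligned part. After writing $(\h\partial_1 u-\overline c)_-^2\leq 2(\overline c-\mathcal G)_+^2+2\mathcal B^2$ you simply bound $(\overline c-\mathcal G)_+\lesssim 1$ and then claim $\frac1\h|\{\tfrac13\leq u\leq\tfrac23\}\cap\supp\eta|\lesssim \h\, E_h(\chi)/r$ via the co-area formula and $\int|\nabla u|\lesssim \h E_h(\chi)$. The co-area formula only converts $\int|\nabla u|$ into an integral of level-set areas; to get the \emph{volume} of the transition zone you would need a lower bound on $|\nabla u|$ there, which is precisely what the lemma is trying to establish and is not available. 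Indeed, if $\chi$ is a fine two-phase mixture (oscillating on a scale $\ll\h$) filling a region of volume $V$ inside $\supp\eta$, then $u\approx\tfrac12$ on that region, so $\frac1\h|\{\tfrac13\leq u\leq\tfrac23\}\cap\supp\eta|\sim V/\h$, while $\h E_h(\chi)/r\sim V/r\ll V/\h$; your chain of inequalities fails, and the control in this situation must come from $\gap$, not from $\h E_h(\chi)/r$. The missing idea is a pointwise lower bound on the aligned part: one shows $\mathcal G(x)\gtrsim\bigl(\int G_h(z)|\chi(x+z)-\chi(x)|\,dz/\text{(normalization)}\bigr)^2\gtrsim\bigl[u\wedge(1-u)\bigr]^2\geq\tfrac19$ on $\{\tfrac13\leq u\leq\tfrac23\}$ (the first inequality by restricting to $|z_1|\geq\epsilon$ and optimizing in $\epsilon$, the second from $\int G_h|\chi(\cdot+z)-\chi|\,dz\geq u\wedge(1-u)$). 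With $\overline c$ chosen small relative to this constant, $(\h\partial_1 u-\overline c)_-$ can only be nonzero on the ``bad set'' where the misaligned part $\mathcal B$ exceeds $\overline c/2$, whose measure is then estimated by Chebyshev through the quantity in (\ref{eq_periodic_1}). A secondary issue is your handling of $\mathcal B$: the kernel there is $\frac{|z_1|}{\h}G_h(z)$, not $G_h$, and the approximate monotonicity Lemma \ref{lem_AM_EO} (a statement about the global, two-sided energies) does not let you compare this one-sided localized quantity to $F_h^\downarrow(\chi,\eta)$; the correct reduction uses $|z|G_h(z)\lesssim\h\,G_h(z/2)$, the substitution $z\mapsto 2z$, subadditivity of $(\cdot)_\pm$, and a Taylor expansion of $\eta$ — and it is exactly this step that generates the extra error $\h\,E_h(\chi)/r$ appearing in (\ref{eq_periodic_2}), which your accounting omits for this term.
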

In our application, we use the following lemma
which is valid for any number of phases with arbitrary surface tensions instead of Lemma \ref{la_periodic} or Corollary \ref{cor_periodic}.
Nevertheless, the core of the proof is already contained in the respective estimates in the two-phase case above.
As in Proposition \ref{prop_velocity_good_balls}, we assume that $\chi_1$ and $\chi_2$ are 
the majority phases and that $e_1$ is the approximate normal to $\Omega_1=\{\chi_1=1\}$.
%-----------------------------------------------
%
%		1D LEMMA FOR MULTIPHASE
%
%-----------------------------------------------
\begin{lem}\label{la_1d_multiphase}
Let $I\subset \R$ be an interval, $h>0$, $\eta\in C_0^\infty(\R)$, $0\leq \eta\leq 1$ radially non-increasing and
$u,\,\tilde u \colon I \to \R^{\numphases}$ be two smooth maps into the standard simplex 
$\{U_i\geq 0,\,\sum_i U_i = 1\} \subset \R^\numphases$.
We define $\phi_i := \sum_j \sigma_{ij} u_j$, $\tilde \phi_i := \sum_j \sigma_{ij} \tilde u_j$, $\chi_i := \chara_{\{\phi_i < \phi_j\,\forall j\neq i\}}$ and 
$\tilde \chi_i := \chara_{\{\tilde \phi_i < \tilde \phi_j\,\forall j\neq i\}}$.
% Let $I\subset \R$ be an interval, $u,\,\tilde u \colon I \to \R^{\numphases}$ two smooth maps into the standard simplex $\{U_i\geq 0,\,\sum_i U_i = 1\} \subset \R^\numphases$.
% We define $\phi_i := \sum_j \sigma_{ij} u_j$, $\tilde \phi_i := \sum_j \sigma_{ij} \tilde u_j$, $\chi_i := \chara_{\phi_i < \phi_j\,\forall j\neq i}$ and 
% $\tilde \chi_i := \chara_{\tilde \phi_i < \tilde \phi_j\,\forall j\neq i}$.
Then
% \begin{align}
% \int_I \left|\chi-\tilde \chi \right| dx_1
%    \lesssim \int_{\left| u_1 - \sfrac12\right| < s } \left(\partial_1 u_1 - \overline c\right)_-^2 dx_1 + \frac1s \int_I\sum_{j\geq 3} u_j \,dx_1 
%   +s   + \frac1{s^2} \int_I \eta \left| u-\tilde u \right|^2 dx_1
% \end{align}
\begin{align}
\frac1\h \int \eta \left|\chi-\tilde \chi \right| dx_1
   \lesssim &\frac1\h \int_{\frac13 \leq u_1 \leq \frac23 }\eta \left(\h \partial_1 u_1 - \overline c\right)_-^2 dx_1 
   +
 \frac1s \frac1\h \sum_{j\geq 3}\int \eta \left[ u_j \wedge(1-u_j) \right] dx_1\notag\\
  &+s   + \frac1{s^2} \frac1\h\int \eta \left| u-\tilde u \right|^2 dx_1\label{1dmulti_eq1}
\end{align}
for any $s\ll 1$.
\end{lem}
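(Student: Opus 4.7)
The plan is to reduce to the one-dimensional two-phase estimate, Lemma \ref{la_periodic} (in the rescaled form underlying Corollary \ref{cor_periodic}), by separating the region in which the minority phases $u_3,\dots,u_\numphases$ are negligible from the rest. I would use $s$ itself as the cutoff threshold and split $I$ into a \emph{good} set $G$ on which the problem reduces to two phases, and a \emph{bad} set $B$ controlled directly by Chebyshev. The key asymmetry to exploit is that when $u_j\in(s,1-s)$ for some $j\geq 3$, Chebyshev directly produces the second right-hand side term, while the regions $\{u_j\text{ near }0\}$ and $\{u_j\text{ near }1\}$ require different treatments.

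\emph{Step 1 (the bad set $B$).} Set $B:=\{x\in I : \max_{j\geq 3}u_j(x)\geq s\}$. On the subset $B\cap\{\exists j\geq 3 : s\leq u_j\leq 1/2\}$, Chebyshev gives
\begin{align*}
\int \eta\,\chara_B\,dx_1 \lesssim \frac{1}{s}\sum_{j\geq 3}\int \eta\,[u_j\wedge(1-u_j)]\,dx_1,
\end{align*}
which produces the second right-hand side term of (\ref{1dmulti_eq1}) after multiplying by $1/\sqrt h$ and using $|\chi-\tilde\chi|\lesssim 1$. On the complementary subset where some $u_j>1/2$ with $j\geq 3$, the strict triangle inequality forces $\chi_j=1$, and $\tilde\chi_j=1$ as well unless $|u-\tilde u|(x)\gtrsim 1-u_j=u_j\wedge(1-u_j)$; the $\eta$-weighted length of the set where this fails is absorbed by $\frac{1}{s^2}\int \eta|u-\tilde u|^2$ when $1-u_j\geq s$, and otherwise by the minority term itself. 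The analogous bad set for $\tilde u$ differs from $B$ only by a subset of $\{|u-\tilde u|\geq s/2\}$, also absorbed by the last right-hand side term.

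\emph{Step 2 (the good set $G$).} On $G:=I\setminus B$, every minority $u_j$ satisfies $u_j<s$, so $u_1+u_2>1-\numphases s$ and
\begin{align*}
\phi_2-\phi_1 \;=\; \sigma_{12}(u_1-u_2)+\sum_{j\geq 3}(\sigma_{2j}-\sigma_{1j})u_j \;=\; \sigma_{12}(2u_1-1)+O(s).
\end{align*}
Combined with $\phi_1<\phi_k$ for $k\geq 3$ (again from the strict triangle inequality, since $u_1\approx 1$ forces the summand involving $\sigma_{1k}$ to dominate), this gives $\chi_1=\chara_{\{u_1>1/2+\theta(x)\}}$ on $G$ for a pointwise threshold $|\theta|\lesssim s$, and likewise for $\tilde\chi_1$. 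I would then apply Lemma \ref{la_periodic} to the scalar $u_1$ on (each connected component of) $G$, with the perturbed threshold; the shift is absorbed by the wide monotonicity window $\{\tfrac13\leq u_1\leq\tfrac23\}$ once $s\ll 1$. Adding the symmetric estimate for $|\chi_2-\tilde\chi_2|$, together with $\chi_j=\tilde\chi_j=0$ for $j\geq 3$ on $G$, produces the remaining three right-hand side terms.

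The main technical difficulty is the bookkeeping in $B$ where some minority phase is near $1$: one must use the strict triangle inequality to identify the $\phi$-minimizer and then quantify how large $|u-\tilde u|$ must be to flip the $\tilde\chi$ value, arranging the resulting contributions into exactly the second and fourth right-hand side terms. Once this accounting is settled, the analysis on $G$ is a direct transcription of the two-phase one-dimensional estimate.
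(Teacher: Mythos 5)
Your overall strategy -- reduce to the scalar one-dimensional estimate of Lemma \ref{la_periodic} plus Chebyshev, using the triangle inequality for $\sigma$ to decide which $\phi_i$ is minimal -- is the right one and close in spirit to the paper's, but two steps fail as written. First, in Step 1 on the set where some $u_j>\tfrac12$, $j\geq 3$: the dichotomy ``$\tilde\chi_j=1$ unless $|u-\tilde u|\gtrsim 1-u_j$'' is false. What the triangle inequality gives is only that $\tilde\chi_j=0$ forces $\tilde u_j\leq\tfrac12$, hence $|u-\tilde u|\geq u_j-\tfrac12$, which can be far smaller than $1-u_j$ (take $u_j$ slightly above $\tfrac12$ and another minority component $\tilde u_k$ slightly above $\tfrac12$, so that $\tilde\chi_j=0$ while $|u-\tilde u|$ is tiny). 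Moreover your assignment of mechanisms to the two sub-cases is inverted: when $1-u_j\geq s$ you cannot conclude $|u-\tilde u|\gtrsim s$, but you do not need to, since there $u_j\wedge(1-u_j)=1-u_j\geq s$ and the minority-term Chebyshev covers the whole set irrespective of $\tilde u$; when $1-u_j<s$ the minority term is $<s$ and cannot ``absorb the set itself'' at level $s$ -- there you must use $|u-\tilde u|\geq u_j-\tfrac12\geq\tfrac12-s\gtrsim 1$ and the $\tfrac1{s^2}|u-\tilde u|^2$ term. So the two tools must be swapped and the false intermediate claim replaced by the correct bound $u_j-\tfrac12$.

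Second, applying Lemma \ref{la_periodic} ``on each connected component of $G$'' does not work as stated: each application produces the additive error $+s$ (the rescaled $+1$ coming from boundary intervals and the counting of up-crossings in its proof), and the number of components of $G=\{\max_{j\geq3}u_j<s\}$ is controlled by nothing on the right-hand side of (\ref{1dmulti_eq1}), so these boundary contributions can add up to $Ns\gg s$. The cure is to avoid decomposing the domain altogether and argue by set inclusion, which is exactly what the paper does: one shows $\{\chi\neq\tilde\chi\}\subset\{|u_1-\tfrac12|\lesssim s\}\cup\{\sum_{j\geq3}[u_j\wedge(1-u_j)]\gtrsim s\}\cup\{|u-\tilde u|\gtrsim s\}$ (this subsumes both your good/bad split and your $O(s)$ threshold-shift discussion), and then applies the rescaled monotonicity estimate (\ref{1dlemma_step1_s}) once, globally on $I$, to the first set and Chebyshev to the other two. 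The paper obtains this inclusion pointwise from the comparison function $v=\min_{j\neq i}\phi_j-\phi_i$ together with the triangle-inequality bounds (\ref{uleq12}), i.e.\ $u_i,u_j\leq\tfrac12+\tfrac{s}{2\sigma_{ij}}$ and $u_k\leq\tfrac12$ on $\{|\phi_i-\phi_j|<s,\ \phi_j=\min_{k\neq i}\phi_k\}$, which accomplishes in one stroke what your case analysis is after; the remaining ingredients of your Step 2 (threshold representation of $\chi_1,\chi_2$ with $O(s)$ shift, and $\chi_k=\tilde\chi_k=0$ for $k\geq3$ for $s\ll1$ with constants depending on the strict triangle defect) are fine.
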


As Lemma \ref{lem_local_estimates} can be used to estimate the integrated version of the error in Corollary \ref{cor_periodic} against the excess,
the following corollary shows that the integrated version of the corresponding error term in the multi-phase version, Lemma \ref{la_1d_multiphase}, can be estimated
against a multi-phase version of the excess $\gap$.

\begin{cor}\label{lem_local_estimates_multiphase}
Let $\chi$ be admissible, $\chi^\ast=\chara_{\{x_1>\lambda\}}$
a half space in direction $e_1$ and $\eta\in C_0^\infty(B_{2r})$ a cut-off 
of $B_r$ in $B_{2r}$ with $\left| \nabla \eta\right| \lesssim \frac1r$
 and $\left| \nabla^2 \eta\right| \lesssim \frac1{r^2}$.
Then there exists a universal constant $\overline{c}>0$ such that for $u=G_h\ast \chi$
 \begin{align*}
  \frac1\h \int_{\frac13 \leq u_1 \leq \frac23 }\eta \left(\h\, \partial_1 u_1 - \overline c\right)_-^2 dx +
 \frac1\h \sum_{j\geq 3}\int \eta \left[ u_j \wedge(1-u_j) \right] dx
 \lesssim \gap(\chi) + \h \frac1{r^2} +\h \frac1{r} E_h(\chi),
 \end{align*}
where the functional $\gap(\chi)$ is defined via
\begin{align*}
 \gap(\chi) := \sum_{i\geq 3} F_h(\chi_j,\eta) +  &F_h(\chi_1,\eta) - F_h(\chi^\ast,\eta) + \frac1r \int_{B_{2r}} \left|\chi_1 - \chi^\ast \right| dx\\
 +& F_h(\chi_2,\eta) - F_h(\chi^\ast,\eta)+ \frac1r \int_{B_{2r}} \left|\chi_2 - (1-\chi^\ast) \right| dx
\end{align*}
and the functional $F_h$ is the following localized version of the approximate energy in the two-phase case
\begin{align*}
 F_h(\tilde \chi,\eta) := \frac1\h \int \eta 
 \left[ \left(1-\tilde \chi\right) G_h\ast \tilde \chi +  \tilde \chi\, G_h\ast \left(1-\tilde \chi\right)\right] dx, \quad \tilde \chi \in \{0,1\}.
\end{align*}
\end{cor}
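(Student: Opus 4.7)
\medskip
\noindent\textbf{Proof plan.}
The strategy is to decouple the two summands on the left-hand side, handling each by a bound involving only the two-phase functional $F_h$ applied to an individual phase, which are precisely the ingredients appearing in $\gap(\chi)$. The first summand, which measures the failure of monotonicity of $u_1=G_h\ast\chi_1$ in direction $e_1$, depends only on $\chi_1$, so I would invoke the two-phase estimate \eqref{eq_periodic_2} of Lemma \ref{lem_local_estimates} with $\chi$ replaced by $\chi_1$. The second summand, a sum over the minority phases $j\ge 3$, is purely pointwise and will be handled by an elementary inequality of the form $u_j\wedge(1-u_j)\le (1-\chi_j) u_j + \chi_j(1-u_j)$. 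Neither step is expected to present a real obstacle -- the essential analytic work has already been done in Lemma \ref{lem_local_estimates}.

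\medskip
\noindent\textbf{Step 1: the monotonicity term.} Applying Lemma \ref{lem_local_estimates} to $\chi_1$ (with the same half-space $\chi^\ast=\chara_{\{x_1>\lambda\}}$ and the same cut-off $\eta$) yields directly
\begin{equation*}
\frac1\h \int_{\frac13\le u_1\le \frac23}\eta\bigl(\h\,\partial_1 u_1-\overline c\bigr)_-^2\,dx
\;\lesssim\; \gap_1(\chi_1)+\h\,r^{-2}+\h\,r^{-1}E_h(\chi_1),
\end{equation*}
where $\gap_1(\chi_1):=F_h(\chi_1,\eta)-F_h(\chi^\ast,\eta)+\frac1r\int_{B_{2r}}|\chi_1-\chi^\ast|\,dx$ is precisely one of the pieces that make up $\gap(\chi)$. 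For the error term, I would observe that $F_h(\chi_1)\lesssim E_h(\chi)$ -- indeed, writing $1-\chi_1=\sum_{j\ne 1}\chi_j$ together with $\sigma_{ij}\ge\sigma_{\min}>0$ gives the pair-energy bound
$\int (1-\chi_1)\,G_h\ast\chi_1\,dx\lesssim \sum_{j\ne 1}\sigma_{1j}\int \chi_j\,G_h\ast\chi_1\,dx$, so that $\h^{-1/2}E_h(\chi_1)\lesssim E_h(\chi)$ in the sense of the black-box notation of the paper.

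\medskip
\noindent\textbf{Step 2: the minority-phase term.} For each $j\ge 3$, since $\chi_j\in\{0,1\}$ and $u_j=G_h\ast\chi_j\in[0,1]$, one has the elementary pointwise inequality
\begin{equation*}
u_j\wedge(1-u_j)\;\le\;(1-\chi_j)\,u_j+\chi_j\,(1-u_j)
\;=\;(1-\chi_j)\,G_h\ast\chi_j+\chi_j\,G_h\ast(1-\chi_j),
\end{equation*}
obtained by splitting according to the value of $\chi_j$ (on $\{\chi_j=1\}$, $u_j\wedge(1-u_j)\le 1-u_j$; on $\{\chi_j=0\}$, $u_j\wedge(1-u_j)\le u_j$). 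Multiplying by $\eta$ and integrating, then dividing by $\h$, yields
\begin{equation*}
\frac1\h\int \eta\,\bigl[u_j\wedge(1-u_j)\bigr]\,dx\;\le\; F_h(\chi_j,\eta),
\end{equation*}
and summing over $j\ge 3$ produces a contribution that is itself a summand in $\gap(\chi)$.

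\medskip
\noindent\textbf{Step 3: assembly.} Adding the two bounds gives
\begin{equation*}
\text{LHS}\;\lesssim\;\gap_1(\chi_1)+\sum_{j\ge 3}F_h(\chi_j,\eta)+\h\,r^{-2}+\h\,r^{-1}E_h(\chi);
\end{equation*}
since the two retained surface-tension-like pieces $\gap_1(\chi_1)$ and $\sum_{j\ge 3}F_h(\chi_j,\eta)$ are precisely two of the three groups of summands that form $\gap(\chi)$, and the remaining $\chi_2$-group in $\gap(\chi)$ may be discarded (it enters with a positive coefficient and so only strengthens the right-hand side), this is exactly the desired estimate. The only mildly delicate point is the bookkeeping in the last sentence: one must check that the $\chi_2$-group is non-negative in the relevant regime, or else simply note that since the bound sought is an inequality of ``$\lesssim$'' type, one may freely replace $\gap(\chi)$ by any of its upper-bounding sub-sums and absorb any negative contribution of the $\chi_2$-excess into the error by symmetry with Step 1 (applying Lemma \ref{lem_local_estimates} to $\chi_2$ with the complementary half-space $1-\chi^\ast$). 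I expect no further difficulty beyond this accounting.
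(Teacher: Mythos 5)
Your proposal is correct and is essentially the paper's own (one-line) proof: the paper argues exactly by applying Lemma \ref{lem_local_estimates} to $\chi_1$ (with the same $\eta$ and $\chi^\ast$) together with the pointwise inequality $u_j\wedge(1-u_j)\le(1-\chi_j)\,G_h\ast\chi_j+\chi_j\,G_h\ast(1-\chi_j)$ for the minority phases, which are your Steps 1 and 2. The bookkeeping points you flag in Steps 1 and 3 --- controlling the two-phase energy of $\chi_1$ by $E_h(\chi)$ via $1-\chi_1=\sum_{j\neq 1}\chi_j$ and $\sigma_{ij}\ge\sigma_{\min}$, and the harmlessness of discarding the $\chi_2$-group, which is bounded below by $-C\h\,r^{-2}$ because the left-hand side of (\ref{eq_periodic_1}) applied to $\chi_2$ with the mirrored half space is nonnegative --- are left implicit in the paper and are handled adequately by your remarks.
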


%-----------------------------------------------
%
%		LEMMA 1st VAR OF DISS
%
%-----------------------------------------------
% To rewrite the first variation of the dissipation, we use the smoothness of the test function $\xi$
% and the cut-off $\eta$ for basic manipulations. 
% We will use Corollary \ref{la_1d_multiphase} to freeze time for $\chi^h$ and switch from the 
% grid of fineness $h$ to a coarser grid with fineness $\tau=\alpha\h$.

With these tools  we can now turn to the rigorous proof of (\ref{overview 1})--(\ref{overview 3}) in the following lemmas.
In the next two lemmas, we approximate the first variation of the metric term by an expression
that makes the normal velocity appear.
The main idea is to work, as for Lemma \ref{comp_lem_tau_shift}, on a mesoscopic time scale $\tau\sim \h$,
introducing a fudge factor $\alpha$, cf.\ Remark \ref{timescales}.
The first lemma shows that we may coarsen the first variation from the microscopic time scale $h$
to the mesoscopic time scale $\alpha\h$ and is therefore the rigorous analogue of (\ref{overview 1}).
It also shows that we may pull the test vector field $\xi$ out of the convolution.
\begin{lem}\label{lem_1st_var_of_-diss}
 Let $\xi\in C_0^\infty((0,T)\times B_r,\R^d)$. Then
\begin{align*}
 \lefteqn{ \int_0^T -\delta E_h(\,\cdot\,-\chi^h(t-h)) (\chi^h(t),\xi(t)) \,dt}\\
&\approx
\sum_{i,j}\sigma_{ij}
\tau \sum_{l=1}^\lmax 
\int  \frac{\chi_i^{\kmax l} - \chi_i^{\kmax (l-1)}}\tau \,
\xi(l \tau)\cdot \left( \h\,\nabla G_h\right)
\ast\left(\chi_j^{\kmax(l-1)}+\chi_j^{\kmax l}\right) dx
\end{align*}
in the sense that the error is controlled by
\begin{align*}
 \|\xi\|_\infty \left(\frac1\alpha \tau\sum_{l=1}^\lmax \ggap(\chi^{\kmax l-1}) + \alpha^{\frac13} r^{d-1}T + \alpha^{\frac13} \iint \eta \,d\mu_h
 \right) + o(1),
\quad \text{as }h\to0,
\end{align*}
where  $\eta\in C_0^\infty(B_{2r})$ is
 a radially symmetric, radially
non-increasing cut-off for $B_r$ in $B_{2r}$  with $\left| \nabla \eta \right|\lesssim \frac1r$ 
and the functional $\ggap(\chi)$ is defined in Corollary \ref{lem_local_estimates_multiphase}.
\end{lem}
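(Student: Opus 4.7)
The plan is to reduce, in three stages, the left-hand side of the claimed approximation to the right-hand side, modulo an error of the prescribed form.

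In the first stage, I bring the first variation into a form where $\xi(t)$ sits outside the convolution and $\h\,\nabla G_h\ast\chi_j^n$ appears inside. Starting from (\ref{variation_D}) and mimicking the proof of Lemma \ref{surf_la_ELG}, I use the identity $G_h\ast(\nabla\chi_j\cdot\xi)=\nabla G_h\ast(\chi_j\xi)-G_h\ast(\chi_j\nabla\cdot\xi)$ to move the gradient onto the kernel, then pull $\xi$ out of the convolution with $\nabla G_h$ by Taylor-expanding it around the base point. The linear term produces $\xi(t)\cdot\h\,\nabla G_h\ast\chi_j^n$ (using $|\h\,\nabla G_h(z)|\lesssim G_{2h}(z)$), the quadratic term contributes a remainder of order $\|\nabla^2\xi\|_\infty\,\h\,E_h(\chi^n)$ controlled by the approximate monotonicity (\ref{approximate monotonicity}), and the $G_h\ast(\chi_j\nabla\cdot\xi)$ contribution is similarly lower-order after time integration against (\ref{energy_dissipation_estimate}). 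This brings the LHS, modulo $o(1)$ as $h\downarrow 0$, to
\begin{equation*}
2\sum_{i,j}\sigma_{ij}\sum_{n=1}^N\int(\chi_i^n-\chi_i^{n-1})\,\bar\xi^n\cdot\h\,\nabla G_h\ast\chi_j^n\,dx.
\end{equation*}

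In the second stage, I coarsen from the microscopic to the mesoscopic time scale by grouping microscopic indices into blocks $\{K(l-1)+1,\dots,Kl\}$ of size $K=\tau/h$. Writing $A_n:=\bar\xi^n\cdot\h\,\nabla G_h\ast\chi_j^n$ and $\bar A_l:=\xi(l\tau)\cdot\h\,\nabla G_h\ast\tfrac12(\chi_j^{Kl}+\chi_j^{K(l-1)})$, the trivial telescope $\sum_{n\in\text{block }l}(\chi_i^n-\chi_i^{n-1})\bar A_l=(\chi_i^{Kl}-\chi_i^{K(l-1)})\bar A_l$ identifies the desired right-hand side and leaves the error $\sum_l\sum_{n\in\text{block }l}\int(\chi_i^n-\chi_i^{n-1})(A_n-\bar A_l)\,dx$. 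The key manipulation is to recast, via the antisymmetry of $\h\,\nabla G_h$ (and the smoothness of $\xi$, whose time variation contributes a factor $\tau=\alpha\h$), the factor $A_n-\bar A_l$ so that $\chi_j^n-\tfrac12(\chi_j^{Kl}+\chi_j^{K(l-1)})$ appears pointwise. A Cauchy-Schwarz in the mesoscopic index, combined with the uniform bound $\|\h\,\nabla G_h\ast f\|_\infty\lesssim\|f\|_\infty$, reduces the error to $\bigl(\alpha\iint|\partial_t^\tau\chi^h|\,dx\,dt\bigr)^{1/2}$ up to multiplicative $\|\xi\|_\infty$ factors, as in the heuristic after (\ref{overview 1}).

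In the third stage, I estimate the mesoscopic total variation via the subtle bound of Corollary \ref{lem_local_estimates_multiphase}, which is the multi-phase/localized form of the one-dimensional estimate from Lemma \ref{la_1d_multiphase} in direction $e_1$. After transversal integration, choosing $s\sim\alpha^{2/3}$, and converting the $(u^\tau-u)^2$ remainder into $\iint\eta\,d\mu_h$ via the definition (\ref{diss meas}) of the approximate dissipation measure (Jensen's inequality within each block pays a factor $K$ that is absorbed by $\tau$), one obtains
\begin{equation*}
\tfrac{\tau}{\h}\iint|\partial_t^\tau\chi^h|\,dx\,dt\lesssim \tau\sum_l\ggap(\chi^{Kl-1})+\alpha^{2/3}r^{d-1}T+\alpha^{2/3}\iint\eta\,d\mu_h.
\end{equation*}
Taking the square root, using $\sqrt{a+b+c}\le\sqrt a+\sqrt b+\sqrt c$, and splitting $\sqrt{A}\le\tfrac{A}{2\beta}+\tfrac{\beta}{2}$ with $\beta=\alpha$ on the $\ggap$ piece and $\beta=\alpha^{1/3}$ on the $\mu_h$ piece, yields exactly the announced error $\|\xi\|_\infty\bigl[\tfrac{1}{\alpha}\tau\sum_l\ggap(\chi^{Kl-1})+\alpha^{1/3}r^{d-1}T+\alpha^{1/3}\iint\eta\,d\mu_h\bigr]+o(1)$.

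The principal obstacle lies in the coarsening of the second stage: because the microscopic total variation $\sum_n\int|\chi^n-\chi^{n-1}|\,dx$ is not uniformly bounded in $h$, the Abel summation and subsequent Cauchy-Schwarz must be arranged so that only the mesoscopic quantity $\iint|\partial_t^\tau\chi^h|$ enters the final estimate. This mesoscopic quantity can then be controlled only through the scheme-specific subtle bound that exploits the thresholding step (Lemma \ref{la_1d_multiphase} and Corollary \ref{lem_local_estimates_multiphase}), rather than through the generic energy-dissipation inequality (\ref{energy_dissipation_estimate}); marrying these two pieces at the right place is what lets the argument close.
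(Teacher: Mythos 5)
Your strategy is essentially the paper's (coarsen to the mesoscopic scale, telescope, control the mesoscopic variation through Lemma \ref{la_1d_multiphase} and Corollary \ref{lem_local_estimates_multiphase}, then optimize in $s\sim\alpha^{2/3}$), but the error bookkeeping in Stages 2--3 has a genuine gap. You collapse the coarsening error to $\|\xi\|_\infty\bigl(\alpha\iint|\partial_t^\tau\chi^h|\,dx\,dt\bigr)^{1/2}$, i.e.\ you absorb the microscopic-dissipation factor of the Cauchy--Schwarz into a constant, as in the two-phase heuristic where $E_0=T=1$. The lemma, however, only allows errors of the three specific forms $\frac1\alpha\tau\sum_l\ggap(\chi^{\kmax l-1})$, $\alpha^{1/3}r^{d-1}T$ and $\alpha^{1/3}\iint\eta\,d\mu_h$, all localized to the ball. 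With a single square root, your concluding step $\sqrt X\le\frac{X}{2\beta}+\frac\beta2$ unavoidably leaves a leftover of size $\sim\alpha^{1/3}$ times a global constant (in effect $\alpha^{1/3}E_0$), which is neither localized nor proportional to $r^{d-1}T$; this is fatal downstream, because in Section \ref{sec:conv} the lemma is summed over the $\sim r^{-d}$ balls of the covering, and only errors with finite-overlap structure ($\iint\eta_B\,d\mu_h$) or of size $\lesssim r^{d-1}$ survive the limit $r\to0$ taken before $\alpha\to0$. The repair is to keep the Cauchy--Schwarz as a product of two \emph{localized} factors: split $\nabla G_h=G_{h/2}\ast\nabla G_{h/2}$, move one mollifier onto the increments $\chi^n-\chi^{n-1}$ and smuggle in the cutoff $\eta$ (at cost $o(1)$, since $\eta\equiv1$ on $\supp\xi$), so the first factor is $\bigl(\iint\eta\,d\mu_h\bigr)^{1/2}$ and the second is the $\eta$-localized mesoscopic variation; then Young's inequality with weight $\alpha/s$ gives precisely $\frac1\alpha\tau\sum_l\ggap+\frac{s^2}\alpha r^{d-1}T+\frac\alpha s\iint\eta\,d\mu_h$ with no constant leftover. (Minor: the 1D estimate produces $\frac1s\ggap$, not $\ggap$, and it is this $\frac1s$ interacting with the Young weight that fixes the exponents.)

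A second, smaller gap is in Stage 1. When you pull $\xi$ out of $\nabla G_h\ast(\chi_j\,\xi)$ at the microscopic level, the first-order term of the Taylor expansion carries the kernel $\frac{z\otimes z}{h}G_h$, whose mass is $O(1)$; it is not covered by your $\|\nabla^2\xi\|_\infty\,\h\,E_h(\chi^n)$ remainder, and it cannot be dismissed by pointwise kernel bounds because $\sum_n\int|\chi^n-\chi^{n-1}|\,dx$ is not controlled at the microscopic scale. It is indeed $o(1)$, but only by the argument the paper uses in its Step 6: put the symmetric kernel onto the increment, use $\|K_h\ast v\|_{L^2}\lesssim\|G_{h/2}\ast v\|_{L^2}$, Jensen's inequality and the energy-dissipation estimate (\ref{energy_dissipation_estimate}), which yields $O(h^{1/4})$. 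With these two repairs your outline coincides with the paper's six-step proof.
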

%

%
%-----------------------------------------------
%
%		LEMMA 1st Var = d_tau
%
%-----------------------------------------------
% To obtain the normal velocity from the first variation of the dissipation functional, 
% we first assume w.\ l.\ o.\ g.\ that the minority phase in the support of $\eta$ is phase $1$ and
% - as before - that the normals to the other two phases $2,3$ are given by $e_1$ and $-e_1$, 
% respectively.
While the first lemma made the mesoscopic time derivative 
$\frac1\tau \big( \chi_i^{\kmax l} - \chi_i^{\kmax(l-1)}\big)$ appear, 
the upcoming second lemma makes the approximate normal, here $e_1$, appear. This is the analogue of (\ref{overview 2}).
\begin{lem}\label{lem_1st_var_of_-diss_K_h}
Given $\xi$ and $\eta$ as in Lemma \ref{lem_1st_var_of_-diss} we have
\begin{align*}
 & \sum_{i,j}\sigma_{ij}\,
\tau \sum_{l=1}^\lmax 
\int  \frac{\chi_i^{\kmax l}
- \chi_i^{\kmax (l-1)}}\tau\;
\xi(l \tau)\cdot \left( \h\,\nabla G_h\right)
\ast\left(\chi_j^{\kmax(l-1)}+\chi_j^{\kmax l}\right) dx\\
&\approx
-2c_0 \,\sigma_{12}\, \tau \sum_{l=1}^\lmax 
\left(  \int \xi_1(l \tau)\, \frac{\chi_1^{\kmax l}- \chi_1^{\kmax(l-1)}}{\tau} dx
-\int \xi_1(l \tau)\,\frac{\chi_2^{\kmax l}- \chi_2^{\kmax(l-1)}}{\tau} dx \right),
\end{align*}
in the sense that the error is controlled by
\begin{align*}
 \|\xi\|_{\infty} \Bigg[\frac1\alpha& \tau \sum_{l=1}^\lmax \ggap(\chi^{\kmax l})
 + \alpha \iint \eta \, d\mu_h
 + \tau \sum_{l=1}^\lmax 
 \frac1\tau \int \eta \,\big| \chi^{\kmax l} - \chi^{\kmax(l-1)}\big| \,k_h\ast
 \left( \eta \,\big| \chi^{\kmax l} - \chi^{\kmax(l-1)}\big|\right) dx
 \Bigg]
 +o(1),
\end{align*}
as $h\to0$, where $0\leq k(z) \leq |z|G(z)$
and the functional $\ggap(\chi)$ is defined in Corollary \ref{lem_local_estimates_multiphase}.
\end{lem}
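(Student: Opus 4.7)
The plan is to split the sum over $(i,j)$ into majority--majority and minority contributions, reduce the principal $(1,2)$ and $(2,1)$ terms via an algebraic identity to a half-space comparison plus a residual ``weak-times-weak'' pairing, and verify that the residual takes exactly the self-interaction form appearing in the stated error. Throughout, for a fixed mesoscopic step I abbreviate $\chi=\chi^{\kmax(l-1)}$, $\chi^\tau=\chi^{\kmax l}$, and $\xi^l=\xi(l\tau)$.

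First I would peel off the minority contributions. Any pair $(i,j)$ with $i\geq 3$ or $j\geq 3$ carries a factor $\chi_k^\tau+\chi_k$ or $|\chi_k^\tau-\chi_k|$ with some $k\geq 3$, whose $L^1(\eta\,dx)$-mass is part of the multi-phase excess $\ggap(\chi^{\kmax l})$ of Corollary \ref{lem_local_estimates_multiphase}. Combined with $\|\h\nabla G_h\ast g\|_\infty\lesssim\|g\|_\infty$ and Young's inequality in the fudge factor $\alpha$, these contributions are absorbed into the first term of the stated error. For the surviving $(1,2)+(2,1)$ pair I use $\sum_k\chi_k=1$ to rewrite $\chi_2^\tau+\chi_2=2-(\chi_1^\tau+\chi_1)-\sum_{k\geq 3}(\chi_k^\tau+\chi_k)$ and the symmetric identity: the constant $2$ is annihilated by $\h\nabla G_h$, and the minority correction is absorbed as above. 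A Hadamard argument then pulls the smooth $\xi^l$ inside the convolution modulo an error $\lesssim\|\nabla\xi\|_\infty\sqrt{h}\int\eta|\chi^\tau-\chi|\,G_{2h}\ast|\chi^\tau+\chi|\,dx$, which after summation enters the $\alpha\iint\eta\,d\mu_h$ part of the error via Cauchy--Schwarz and the dissipation bound on $\mu_h$.

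The main step is a half-space comparison. Using the mean-zero property $\int\h\nabla G_h(x-y)\,dy=0$, the reduced integrand for $i\in\{1,2\}$ reads
\begin{equation*}
-\sigma_{12}\int(\chi_i^\tau-\chi_i)(x)\,\xi_1^l(x)\int\h\nabla G_h(x-y)\bigl[(\chi_i^\tau+\chi_i)(y)-(\chi_i^\tau+\chi_i)(x)\bigr]\,dy\,dx,
\end{equation*}
and I split the bracket as $2[\chi_i^\ast(y)-\chi_i^\ast(x)]+\rho(x,y)$ with $\chi_1^\ast=\chi^\ast$, $\chi_2^\ast=1-\chi^\ast$ the optimal half space of direction $e_1$. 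The half-space piece yields $2\h\nabla G_h\ast\chi_i^\ast=\pm 2\h G_h^1(x_1-\lambda)\,e_1$ (with $+$ for $i=1$, $-$ for $i=2$); integrated against $\xi_1^l(\chi_i^\tau-\chi_i)$ and summed over $l$, the one-dimensional consistency embedded in Lemma \ref{surf_la_cons_G_and_k} produces exactly the advertised $-2c_0\sigma_{12}\tau\sum_l\bigl[\int\xi_1^l\,(\chi_1^{\kmax l}-\chi_1^{\kmax(l-1)})/\tau-\int\xi_1^l\,(\chi_2^{\kmax l}-\chi_2^{\kmax(l-1)})/\tau\bigr]\,dx$ with the correct signs.

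Finally, the residue $\rho(x,y)$ is controlled pointwise by $(|\chi^\tau-\chi|+|\chi-\chi^\ast|)(x)+(|\chi^\tau-\chi|+|\chi-\chi^\ast|)(y)$. The $|\chi-\chi^\ast|$ part feeds the bulk $L^1$-term of $\ggap$ and is absorbed into the first error. The $|\chi^\tau-\chi|$ part, paired with the outer factor $|\chi^\tau-\chi|(x)$ and integrated against $\h|\nabla G_h(x-y)|$, produces exactly the self-interaction $\int\eta\,|\chi^\tau-\chi|\,k_h\ast(\eta\,|\chi^\tau-\chi|)\,dx$ of the stated error, with the rescaled profile $k(z)=|z|G(z)$ satisfying $0\leq k(z)\leq|z|G(z)$. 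The main obstacle is precisely this last pairing: since $\h\nabla G_h$ does not converge strongly and the microscopic time derivative admits no useful bound below the mesoscopic scale $\tau=\alpha\sqrt h$, the two factors $|\chi^\tau-\chi|(x)$ and $|\chi^\tau-\chi|(y)$ cannot be estimated separately; the point of recasting the residue as a self-interaction is that the subsequent Lemma \ref{lem_1st_var_of_-diss_remainder}, through Lemma \ref{la_1d_multiphase} and the refined bound (\ref{subtle bound exact result}), will trade it for the excess and a small power of $\alpha$.
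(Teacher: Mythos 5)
Your proposal breaks down at the main step. Substituting the half space inside the convolution does not produce the constant $2c_0e_1$: for $\chi^\ast=\chara_{\{x_1>\lambda\}}$ one has $2\h\nabla G_h\ast\chi^\ast(x)=2\h G^1_h(x_1-\lambda)\,e_1$, a Gaussian profile of height $2c_0$ but width $\sqrt h$ concentrated on the plane $\{x_1=\lambda\}$, not the constant $2c_0e_1$. The factor $\chi_i^{\kmax l}-\chi_i^{\kmax(l-1)}$ lives near the \emph{actual} interface, which is close to that plane only in the integrated sense measured by $\ggap$ (tilt/curvature make the pointwise distance much larger than $\sqrt h$ for fixed $r$), so the pairing does not yield $2c_0\int\xi_1\,(\chi_i^{\kmax l}-\chi_i^{\kmax(l-1)})\,dx$ up to admissible errors, and Lemma \ref{surf_la_cons_G_and_k} (a consistency statement for a fixed $BV$ function as $h\to0$) does not cover this product. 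The residual $\rho$ is equally problematic: the part involving $|\chi-\chi^\ast|$ is a bulk quantity, and $\ggap$ only controls it with the weight $\frac1r$, i.e.\ $\int_{B_{2r}}|\chi-\chi^\ast|\lesssim r\,\ggap$; summing over the $\lmax\sim T/\tau$ mesoscopic slices then exceeds the allowed error $\frac1\alpha\tau\sum_l\ggap(\chi^{\kmax l})$ by the diverging factor $r/\h$. The paper's route is different precisely to avoid this: it first replaces $\h\nabla G_h\ast\chi_1$ by the symmetric combination $\chi_1K_h\ast(1-\chi_1)+(1-\chi_1)K_h\ast\chi_1$ with the one-sided kernel $K(z)=\sign(z_1)\,z\,G(z)$, where the replacement error is \emph{perimeter-normalized} and bounded by the excess through (\ref{eq_periodic_kernel_estimate}) and (\ref{eq_periodic_1}); then the exact pointwise identity of Step 4, with $\int K=2c_0e_1$, produces $2c_0e_1(\chi-\tilde\chi)$ and the quadratic self-interaction $-|\chi-\tilde\chi|\,K_h\ast(\chi-\tilde\chi)$ without ever substituting the half space into the main term.

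The minority reduction is also not justified as written. For pairs with $j\geq3$ the bound $\|\h\nabla G_h\ast g\|_\infty\lesssim\|g\|_\infty$ gives no smallness; you need the mean-zero property of $\nabla G_h$ to bound $\int\eta\,|\h\nabla G_h\ast\chi_j|\,dx$ by the localized energy $F_h(\chi_j,\eta)\leq\ggap$ (or, as in the paper, Cauchy--Schwarz against the dissipation measure). More seriously, when the minority index sits on the difference quotient ($i\geq3$, $j\in\{1,2\}$), the only available $L^1(\eta\,dx)$ control of $|\chi_i^{\kmax l}-\chi_i^{\kmax(l-1)}|$ is again the bulk bound $\lesssim r\,\ggap$, and the summation over $l$ reintroduces the fatal factor $r/\tau$; the paper handles this case by a discrete summation by parts in $l$ combined with the antisymmetry of $\nabla G_h$ to move the time difference onto the minority phase. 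Your identification of the self-interaction kernel ($\h|\nabla G_h|$ rescaling to $|z|G(z)$) is correct, but without the two fixes above the stated error bound is not reached.
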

Let us comment on the error term:
The first part of the error term arises because $e_1$ is only the approximate normal.
The last part arises in the passage from a diffuse to a sharp interface and formally is of
quadratic nature.
%

%-----------------------------------------------
%
%		LEMMA DECAY OF THE REMAINDER
%
%-----------------------------------------------
The following lemma deals with the error term in the foregoing lemma and brings it into
the standard form. The only difference to the two-phase case in (\ref{overview 3}) is the
prefactor in front of the excess $\gap$ which comes from the slight difference in the two one-dimensional estimates.
\begin{lem}\label{lem_1st_var_of_-diss_remainder}
With $\eta$ as in Lemma \ref{lem_1st_var_of_-diss} we have
\begin{align*}
&\tau \sum_{l=1}^\lmax \frac1\tau \int \eta \,\big| \chi^{\kmax l} 
- \chi^{\kmax(l-1)}\big| k_h\ast
 \left( \eta \,\big| \chi^{\kmax l} - \chi^{\kmax(l-1)}\big|\right) dx\\
&\lesssim \frac{1}{\alpha^2}  \tau\sum_{l=1}^\lmax \ggap(\chi^{\kmax l-1}) +\alpha^{\frac19} r^{d-1}T
    + \alpha^{\frac19} \iint \eta \,d\mu_h,
\end{align*}
where the functional $\ggap(\chi)$ is defined in Corollary \ref{lem_local_estimates_multiphase}.
\end{lem}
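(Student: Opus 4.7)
The plan is to follow the outline (\ref{overview 3}) of the overview: reduce the double integral to slice-wise estimates in the normal direction $x_1 = e_1\cdot x$, and exploit the \emph{quadratic} structure of the left-hand side by applying the one-dimensional monotonicity estimate Corollary \ref{lem_local_estimates_multiphase} twice (once for each of the two factors $\eta\,|\chi^{\kmax l}-\chi^{\kmax(l-1)}|$), with two small parameters $s$ and $\tilde s$ to be optimised at the end.

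\textit{Splitting the kernel.} Since $k(z)\le |z|\,G(z)\le (|z_1|+|z'|)\,G^1(z_1)G^{d-1}(z')$, the kernel $k_h$ splits into a ``transversal'' piece carrying the factor $|z'|$ and a ``normal'' piece carrying the factor $|z_1|$. For the transversal piece, $\int |z'|\,G^{d-1}_h(z')\,dz'\sim \h$, so Young's inequality together with $|\chi^{\kmax l}-\chi^{\kmax(l-1)}|\le 1$ reduces its contribution to $\frac{1}{\alpha}\,\tau \sum_l \int \eta\,|\chi^{\kmax l}-\chi^{\kmax(l-1)}|\,dx$, which is directly controlled by the refined bound (\ref{subtle bound exact result}). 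The normal piece factorises as $|z_1|\,G^1_h(z_1)\cdot G^{d-1}_h(z')$; we use Fubini to pull the $x_1$-integration inside the $z_1$-convolution, leaving a purely transversal convolution on the outside.

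\textit{Two nested 1D estimates.} After this rearrangement the remaining term has the structure $\tfrac{1}{\tau}\int dx'\,F(x')\,\bigl(G^{d-1}_h \ast' H\bigr)(x')$, with $F(x'):=\int \eta\,|\chi^\tau-\chi|(x_1,x')\,dx_1$ and $H(x')$ coming from a $z_1$-integration of $|z_1|G^1_h(z_1)\,\eta\,|\chi^\tau-\chi|$. Using $|\chi^\tau-\chi|\le 1$ and the $\h$-width of $G^1_h$, we obtain $H(x')\lesssim 1\wedge \tfrac{1}{\h}F(x')$, which is the ``$1\wedge$'' in the overview. For $F$ we apply Corollary \ref{lem_local_estimates_multiphase} slice-wise in $x_1$ with the small parameter $\tilde s$: this bounds $\tfrac{1}{\h}F(x')$, after integration in $x'$ and summation in $l$, by the excess sum $\tau\sum_l \ggap(\chi^{\kmax l-1})$ plus $\tilde s\,r^{d-1}T$ plus the dissipation $\tfrac{1}{\tilde s^2}\iint\eta\,d\mu_h$. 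A second application of the corollary, this time with parameter $s$, then handles the outer factor $F$ through the $1\wedge$ bound on $H$.

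\textit{Optimisation and main difficulty.} After summing in $l$ the resulting error assembles in the form of (\ref{overview 3}), but with an additional factor $\frac{1}{\alpha}$ on the excess produced by the extra minority-phase term in the multi-phase one-dimensional estimate Lemma \ref{la_1d_multiphase}. Choosing $\tilde s\sim \alpha^{2/3}$ and $s\sim \alpha^{4/9}$ gives the prefactors $\frac{1}{\alpha^2}$ on $\tau\sum_l \ggap(\chi^{\kmax l-1})$ and $\alpha^{1/9}$ on the remaining $r^{d-1}T$ and $\iint\eta\,d\mu_h$ terms, exactly as claimed. The main obstacle is the careful bookkeeping in the two nested applications of Corollary \ref{lem_local_estimates_multiphase}: the inner-application errors must be absorbed either into $\iint\eta\,d\mu_h$ or paid for by the outer $s$-parameter, so that after optimisation no contribution with a bad prefactor in $\alpha$ survives.
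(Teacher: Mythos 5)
Your treatment of the normal direction is essentially the paper's argument: the $\sup_{x_1}$ (or Fubini) step producing the factor $1\wedge\frac1\h\int\eta\,|\chi^{\kmax}-\chi^{0}|\,dx_1$, two slice-wise applications of Lemma \ref{la_1d_multiphase} with parameters $s,\tilde s$, integration of the slice-wise errors in $x'$ via Corollary \ref{lem_local_estimates_multiphase}, the pairing of terms inside the $1\wedge(\cdot)$, and the choice $\tilde s\sim\alpha^{2/3}$, $s\sim\alpha^{4/9}$. The gap is in your transversal piece. The bound $k(z)\le|z|\,G(z)$ holds at unit scale; after the parabolic rescaling the kernel appearing in the lemma satisfies $k_h(z)\lesssim\frac{|z|}{\h}G_h(z)$, so the tangential part of $k_h$ is $\frac{|z'|}{\h}G_h$, of total mass $O(1)$ --- not $|z'|G^{d-1}_h(z')$ of mass $O(\h)$ as you use. (In the normal piece you implicitly use the correct normalization, since $H\lesssim 1\wedge\frac1\h\int\eta|\chi^\tau-\chi|\,dx_1$ needs a $z_1$-kernel of unit mass with sup $\lesssim\frac1\h$.) With the correct normalization your crude Young bound for the transversal piece only gives $\sum_l\int\eta\,|\chi^{\kmax l}-\chi^{\kmax(l-1)}|\,dx=\frac1\alpha\cdot\frac1\h\,\tau\sum_l\int\eta\,|\chi^{\kmax l}-\chi^{\kmax(l-1)}|\,dx$, and the refined linear bound (the localized form (\ref{apply 1d lem}) of (\ref{subtle bound exact result})) then yields $\frac1{\alpha s}\tau\sum_l\ggap(\chi^{\kmax l-1})+\frac{s}{\alpha}r^{d-1}T+\frac{\alpha}{s^2}\iint\eta\,d\mu_h$: making the middle term $\lesssim\alpha^{1/9}r^{d-1}T$ forces $s\le\alpha^{10/9}$, while the last term forces $s\ge\alpha^{4/9}$, so no choice of $s$ closes the estimate. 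This is not a technicality: for a smoothly moving interface $\frac1\h\int\eta\,|\chi^\tau-\chi|\,dx\sim\alpha\,r^{d-1}$, so the linear quantity is of size $r^{d-1}T$ with no small factor of $\alpha$, and such a term would ruin the final argument of Section \ref{sec:conv}, where the $r^{d-1}T$-contributions survive the limit $r\to0$ and must carry a prefactor vanishing as $\alpha\to0$. (The fact that your version made the $|z'|$-part trivially $O(\h)$ should have been a warning sign.)

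The repair is simply not to split: dominate the whole kernel by the product $k^1_h(z_1)\,k'_h(z')$ with $k^1(z_1)=(1+z_1^2)^{1/2}G^1(z_1)$ and $k'(z')=(1+|z'|^2)^{1/2}G^{d-1}(z')$, both of unit mass after rescaling and with $\sup_{z_1}k^1_h\lesssim\frac1\h$, and run your sup-over-$x_1$/quadratic argument for the full kernel; the tangential factor then enters only through its $L^1$-mass in the final convolution estimate in $x'$. This is exactly what the paper does, and with that modification the rest of your outline (including the bookkeeping $\int\alpha^2(x')\,dx'\le\frac{\alpha^2}{\tau}\int_0^\tau\int\eta\,d\mu_h$ and the optimization in $s,\tilde s$) goes through.
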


With the above lemma we can conclude the proof of Proposition \ref{prop_velocity_good_balls}.
Since one of the error terms includes the factor $r^{d-1}$ we will only use the proposition in case
there the behavior in the ball $B_r$ is non-trivial.
In the trivial case -- meaning that the measure of the boundary inside $B$ is much smaller than $r^{d-1}$ --
we can use the following easy estimate.
%-----------------------------------------------
%
%		LEMMA ROUGH ESTIMATE
%
%-----------------------------------------------

\begin{lem}\label{lem_rough_error_in_velocity}
In the situation as in Proposition \ref{prop_velocity_good_balls}, we have
\begin{align*}
&\left| 
\sum_{i,j} \sigma_{ij} \int_0^T \int \left(\nabla\cdot\xi - \nu_i \cdot \nabla \xi \,\nu_i
- 2\,\xi \cdot \nu_i \,V_i \right) 
\left(\left|\nabla\chi_i\right| + \left|\nabla\chi_j\right| - \left|\nabla(\chi_i+\chi_j)\right|\right)  dt
\right|\\
&\lesssim \|\xi\|_{\infty}
\left[  \sum_{i=1}^\numphases\int_0^T \int \eta \left(\frac1\alpha + \alpha\, V_i^2 \right)
\left| \nabla\chi_i\right|dt
+ \alpha \iint \eta\,d\mu
\right].
\end{align*}
\end{lem}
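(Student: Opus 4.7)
The plan is to split the left-hand side by the triangle inequality into a \emph{curvature} part $\sum_{i,j}\sigma_{ij}\iint (\nabla\cdot\xi-\nu_i\cdot\nabla\xi\,\nu_i)\bigl(|\nabla\chi_i|+|\nabla\chi_j|-|\nabla(\chi_i+\chi_j)|\bigr)\,dt$ and a \emph{velocity} part $2\sum_{i,j}\sigma_{ij}\iint\xi\cdot\nu_i\,V_i\bigl(|\nabla\chi_i|+|\nabla\chi_j|-|\nabla(\chi_i+\chi_j)|\bigr)\,dt$, and to bound each separately by the right-hand side.

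For the velocity part I would use that $\supp\xi\subset B_r$ and $\eta\equiv 1$ on $B_r$, so $|\xi|\leq \|\xi\|_\infty\,\eta$. Young's inequality $2|V_i|\leq \tfrac{1}{\alpha}+\alpha V_i^2$ together with the elementary observation $\sum_{j}\sigma_{ij}\bigl(|\nabla\chi_i|+|\nabla\chi_j|-|\nabla(\chi_i+\chi_j)|\bigr)\lesssim |\nabla\chi_i|$ then reproduces the $\|\xi\|_\infty \sum_i\int_0^T\!\int\eta(\tfrac{1}{\alpha}+\alpha V_i^2)|\nabla\chi_i|\,dt$ contribution of the right-hand side.

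For the curvature part a naive estimate only yields $\|\nabla\xi\|_\infty$, not $\|\xi\|_\infty$, so one has to use the structure of the scheme. Proposition \ref{surf_prop_integrated_in_time} combined with the Euler--Lagrange equation (\ref{ELG}) rewrites the curvature part as $\tfrac{2}{c_0}\lim_{h\to 0}\int_0^T \delta E_h(\,\cdot\,-\chi^h(t-h))(\chi^h(t),\xi(t))\,dt$. Lemmas \ref{lem_1st_var_of_-diss}, \ref{lem_1st_var_of_-diss_K_h} and \ref{lem_1st_var_of_-diss_remainder} then approximate this limit, after the passage through the mesoscopic time scale $\tau=\alpha\sqrt{h}$, by the velocity integral $-2c_0\sigma_{12}\int_0^T\bigl(\int\xi_1 V_1|\nabla\chi_1|-\int\xi_1 V_2|\nabla\chi_2|\bigr)\,dt$ up to error terms involving $\ggap$, the dissipation measure and $r^{d-1}T$. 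Unlike in the proof of Proposition \ref{prop_velocity_good_balls}, here I would \emph{not} exploit the smallness of $\ggap$ coming from De Giorgi's structure theorem; instead I would bound each summand of $\ggap$ trivially, using $F_h(\chi_i,\eta)-F_h(\chi^\ast,\eta)\leq F_h(\chi_i,\eta)$, which in the limit $h\to 0$ is controlled by $\int\eta|\nabla\chi_i|$, together with $\tfrac{1}{r}\int_{B_{2r}}|\chi_i-\chi^\ast|\,dx\lesssim r^{d-1}$. The remaining main term $\int\xi_1 V_i|\nabla\chi_i|$ is then handled by the same Young inequality as in the velocity part, which converts it to the $\|\xi\|_\infty\int\eta(\tfrac{1}{\alpha}+\alpha V_i^2)|\nabla\chi_i|$ term.

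The main obstacle is precisely the need to trade the $\|\nabla\xi\|_\infty$ of a naive curvature estimate for $\|\xi\|_\infty$; this is only possible via the Euler--Lagrange equation combined with the mesoscopic-scale machinery of Lemmas \ref{lem_1st_var_of_-diss}--\ref{lem_1st_var_of_-diss_remainder}, which encode the cancellation between the curvature and the discrete velocity inherent to the gradient-flow structure of the thresholding scheme.
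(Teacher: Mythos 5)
Your treatment of the velocity part coincides with the paper's: Young's inequality together with $|\xi|\leq\|\xi\|_\infty\eta$ and $\sum_j\sigma_{ij}\bigl(|\nabla\chi_i|+|\nabla\chi_j|-|\nabla(\chi_i+\chi_j)|\bigr)\lesssim|\nabla\chi_i|$ is exactly what is done there, and your identification of the curvature part via the Euler--Lagrange equation (\ref{ELG}) and Proposition \ref{surf_prop_integrated_in_time} is also the paper's starting point. The gap is in what you do next. Running the curvature part through the full mesoscopic machinery of Lemmas \ref{lem_1st_var_of_-diss}--\ref{lem_1st_var_of_-diss_remainder} and then bounding $\ggap$ crudely does not give the stated inequality: those lemmas produce error terms with prefactor $\frac1{\alpha^2}$ in front of the (crudely bounded) excess, terms $\alpha^{\frac13}r^{d-1}T$ and $\alpha^{\frac19}r^{d-1}T$, and $\alpha^{\frac19}\iint\eta\,d\mu$, none of which is dominated by the claimed right-hand side: the right-hand side carries $\frac1\alpha$, not $\frac1{\alpha^2}$, has $\alpha\iint\eta\,d\mu$, not $\alpha^{\frac19}\iint\eta\,d\mu$, and contains no term of size $r^{d-1}T$ at all (it vanishes when $\chi$ is constant near $B_{2r}$ and the $V_i$ vanish there, while $r^{d-1}T>0$). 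This is not a cosmetic loss: the whole point of this ``rough'' lemma is that it is applied on the bad balls, in particular on balls carrying little or no interface; there are $\sim r^{-d}$ balls in the covering, so any per-ball penalty of order $r^{d-1}$ sums to order $r^{-1}$ and destroys the $r\to0$ limit in Step 3 of the proof of Theorem \ref{thm1}. There is also a smaller technical issue you pass over: the error in those lemmas involves $\tau\sum_l\ggap(\chi^{\kmax l})$, i.e.\ localized energies sampled only at mesoscopic slices, and even with your crude bound this is not controlled by the time-integrated localized energy without the shifted-slice selection argument ($k_0$) of Step 2 in the proof of Proposition \ref{prop_velocity_good_balls} or a substitute for it.

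The paper's actual argument for the curvature part is both simpler and sharper, and avoids extracting the velocity from the dissipation term altogether: after the Euler--Lagrange identification one stays on the microscopic time scale, performs only Step 1 and a Step-3-type manipulation from the proof of Lemma \ref{lem_1st_var_of_-diss} (dropping the $\nabla\cdot\xi$-contribution, writing $\nabla G_h=G_{h/2}\ast\nabla G_{h/2}$ and smuggling in $\eta$), and then applies the Cauchy--Schwarz inequality: the factor $G_{h/2}\ast(\chi_i^n-\chi_i^{n-1})$ is controlled by $\iint\eta\,d\mu_h$, while the factor $\h\,\nabla G_{h/2}\ast\chi_j^n$ is controlled by the localized approximate energy, which converges to $2c_0\int_0^T\int\eta|\nabla\chi_j|\,dt$ by Lemma \ref{la_impl_conv_ass}. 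Young's inequality then yields exactly $\frac1\alpha\sum_i\int_0^T\int\eta|\nabla\chi_i|\,dt+\alpha\iint\eta\,d\mu$, with no excess, no $r^{d-1}$ term, and no mesoscopic time-slicing. If you want to salvage your route you would have to redo it with this observation; as written, it proves only a strictly weaker statement that would not serve its purpose in the convergence proof.
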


\subsection{Proofs}\label{sub:velocity proofs}
\begin{proof}[Proof of Proposition \ref{prop_velocity_good_balls}]
\step{Step 1: The discrete analogue of (\ref{error_in_velocity_local_in_time}).} 
The statement follows easily from
\begin{align}\label{error_in_velocity_local_in_time_reduced1}
 &\left|
 \int_0^T\! -\delta E_h(\,\cdot\,-\chi^h(t-h)) 
(\chi^h(t),\xi(t)) \,dt
+ 2c_0\sigma_{12} \int_0^T\left(\int \xi_1\, V_1 \left|\nabla \chi_1 \right|
 -\int \xi_1\, V_2 \left|\nabla \chi_2 \right|\right)
dt
\right|\notag\\
&\qquad\qquad \lesssim
 \|\xi\|_{\infty}\left[ 
 \frac1{\alpha^2} \int_0^T   \ggap(t)\, dt  + \alpha^{\frac19} r^{d-1}T
  + \alpha^{\frac19} \iint \eta \,d\mu_h \right] +o(1),\quad \text{as } h\to0.
\end{align}
Here we use the notation $\ggap(t) := \ggap(\chi^h(t))$, 
where the functional $\ggap(\chi)$ is defined in Corollary \ref{lem_local_estimates_multiphase}.
% \begin{align*}% Short version
%  \ggap(\chi) := \sum_{i=3}^\numphases \frac1\h \int &\eta  \left(1-\chi_i\right)G_h\ast \chi_i \, dx\\
% 	  +\inf_{\chi^\ast} \Bigg\{&
% 	    \frac1\h \int \eta  \left(1-\chi_1\right)G_h\ast \chi_1 \,dx
% 	      -\frac1\h \int \eta \left(1-\chi^\ast\right)G_h\ast \chi^\ast \,dx
% 	   + \frac1r \int_{B_{2r}} \left| \chi_1-\chi^\ast \right| dx\\
% 	   +&\frac1\h \int \eta  \left(1-\chi_2\right)G_h\ast \chi_2\, dx
% 	      - \frac1\h \int \eta  \left(1-\chi^\ast\right)G_h\ast \chi^\ast \,dx
% 	   + \frac1r \int_{B_{2r}} \left| \chi_2-\left(1-\chi^\ast\right) \right| dx
% 	    \Bigg\}.
% \end{align*}
% \begin{align*}% Long version
%  \ggap(\chi) := \sum_{i=3}^\numphases \frac1\h \int &\eta \left[ \left(1-\chi_i\right)G_h\ast \chi_i 
% 	   +\chi_i \,G_h\ast \left(1-\chi_i\right)  \right] dx\\
% 	  +\inf_{\chi^\ast} \Bigg\{&
% 	    \frac1\h \int \eta \left[ \left(1-\chi_1\right)G_h\ast \chi_1 
% 	      +\chi_1 \,G_h\ast \left(1-\chi_1\right)  \right] dx\\
% 	      -& \frac1\h \int \eta \left[ \left(1-\chi^\ast\right)G_h\ast \chi^\ast 
% 	   +\chi^\ast \,G_h\ast \left(1-\chi^\ast\right)  \right] dx
% 	   + \frac1r \int_{B_{2r}} \left| \chi_1-\chi^\ast \right| dx\\
% 	   +&\frac1\h \int \eta \left[ \left(1-\chi_2\right)G_h\ast \chi_2
% 	      +\chi_2 \,G_h\ast \left(1-\chi_2\right)  \right] dx\\
% 	      -& \frac1\h \int \eta \left[ \left(1-\chi^\ast\right)G_h\ast \chi^\ast 
% 	   +\chi^\ast \,G_h\ast \left(1-\chi^\ast\right)  \right] dx
% 	   + \frac1r \int_{B_{2r}} \left| \chi_2-\left(1-\chi^\ast\right) \right| dx
% 	    \Bigg\}.
% \end{align*}
The infimum is taken over all half spaces $\chi^\ast=\chara_{\{x_1>\lambda\}}$ in direction $e_1$.
All terms appearing in $\ggap$ correspond to terms in $\EE$.
The first term is the sum of the localized approximate energies of $\chi_3,\dots,\chi_\numphases$,
the second term describes the approximate energy excess of Phases 1 and 2.
The convergence of these terms as $h\to 0$ for a fixed half space $\chi^\ast$ follows as in 
the proof of Lemma \ref{la_impl_conv_ass}.
Taking the infimum over the half spaces yields (\ref{error_in_velocity_local_in_time}).
\step{Step 2: Choice of appropriately shifted mesoscopic time slices.}
In order to prove (\ref{error_in_velocity_local_in_time_reduced1}),
we use the machinery that we develop later on in this section. 
There we work on the mesoscopic time scale $\tau=\alpha\h$ instead of the microscopic time scale $h$, see Remark \ref{timescales} for the notation.
To apply these results, we have to adjust the time shift of time slices of mesoscopic distance.
At the end, we will choose a microscopic time shift $ k_0\in\{1,\dots, \kmax\}$ such that
the average over time slices of mesoscopic distance is controlled by the average over all time slices:
\begin{align}\label{choose_time_coordinates_gap}
 \tau \sum_{l=1}^\lmax \left[\ggap(\chi^{\kmax l +  k_0})
 + \ggap(\chi^{\kmax l +  k_0-1})\right]
 \lesssim  h\sum_{n=1}^N\ggap(\chi^n)  = \int_0^T \ggap(t)\,dt.
\end{align}
This follows from the simple fact that $\ggap(k_0) \leq \kmean \ggap(k)$ for some $k_0$.
For notational simplicity, we shall assume that $k_0=0$ in (\ref{choose_time_coordinates_gap}).
\step{Step 3: Argument for (\ref{error_in_velocity_local_in_time_reduced1}).}
Using Lemmas \ref{lem_1st_var_of_-diss}, \ref{lem_1st_var_of_-diss_K_h} and 
\ref{lem_1st_var_of_-diss_remainder}, we obtain 
\begin{align}\label{error_in_velocity_local_in_time_reduced2}
 &\int_0^T\! -\delta E_h(\,\cdot\,,\chi^h(t-h)) 
(\chi^h(t),\xi(t)) \,dt\notag\\
&\approx -2c_0 \sigma_{12}\,\tau \!\sum_{l=1}^\lmax \!\left(  
 \int \! \xi_1(l\tau) \frac{\chi_1^{\kmax l}- \chi_1^{\kmax(l-1)}}{\tau} \,dx
-\int \!\xi_1(l\tau) \frac{\chi_2^{\kmax l}- \chi_2^{\kmax(l-1)}}{\tau} \,dx \right)
\end{align}
up to an error
\begin{align*}
\|\xi\|_\infty  \left(\frac1{\alpha^2} \int_0^T \ggap(t)\, dt
 + \alpha^{\frac19} r^{d-1}T + \alpha^{\frac19} \iint \eta \,d\mu_h\right)+o(1),\quad \text{as }h\to0,
\end{align*}
where we used the choice of time slices (\ref{choose_time_coordinates_gap}).
Since $\xi$ has compact support in $(0,T)$, a discrete integration by parts yields
\begin{align*}
 \tau \sum_{l=1}^\lmax \int \xi_1( l\tau)
 \frac1\tau\left(\chi_i^{\kmax l}-\chi_i^{\kmax (l-1)}\right)dx
= - \tau \sum_{l=0}^{\lmax-1} \int 
 \frac1\tau\left(\xi_1( (l+1)\tau)-\xi_1(l\tau)\right)
 \chi_i^{\kmax l}\, dx.
\end{align*}
By the H\"older-type bounds in Lemma \ref{comp_lem_hoelder} we can replace the mesoscopic scale on 
the right-hand side by the microscopic scale for $\chi$:
\begin{align*}
 & \left| \tau \sum_{l=0}^{\lmax-1} \int 
 \frac1\tau\left(\xi_1( (l+1)\tau)-\xi_1(l\tau)\right)
 \chi_i^{\kmax l}\, dx
 -\tau \sum_{l=0}^{\lmax-1} \frac1\kmax \sum_{k=1}^\kmax   \int 
 \frac1\tau\left(\xi_1( (l+1)\tau)-\xi_1(l\tau)\right)
 \chi_i^{\kmax l+k }\, dx \right|\\
& \leq \|\partial_t \xi\|_\infty h \sum_{l=0}^{\lmax-1} \sum_{k=1}^\kmax
 \int \left|\chi^{\kmax l} - \chi^{\kmax l + k}\right|dx
 \lesssim  \|\partial_t \xi\|_\infty E_0 T \sqrt{\tau}.
\end{align*}
By the smoothness of $\xi$, we can easily do the same for $\xi$ to obtain by (iii) 
in Proposition \ref{lem_dtX<<DX} that for $h\to 0$
\begin{align*}
 \tau \sum_{l=0}^{\lmax-1} \int 
 \frac1\tau\left(\xi_1( (l+1)\tau)-\xi_1(l\tau)\right)
 \chi_i^{\kmax l} \,dx \to
\int_0^T \int \partial_t \xi_1\, \chi_i \,dx \,dt
= - \int_0^T\int \xi_1 V_i\left|\nabla\chi_i \right| dt.
\end{align*}
Using this for the right-hand side of
(\ref{error_in_velocity_local_in_time_reduced2}) establishes 
(\ref{error_in_velocity_local_in_time_reduced1}) and thus concludes the proof.
\end{proof}

\begin{proof}[Proof of Lemma \ref{la_periodic}]
\step{Step 1: An easier inequality.}
We claim that for any function $u\in C^{0,1}(I)$, we have
\begin{align}\label{1dlemma_step1}
\left|\{ |u|\leq 1 \}\right| \lesssim \int_{\{|u|\leq 1\}}\! \left(\partial_1 u -1  \right)_-^2 dx_1 +1.
\end{align}
In order to prove (\ref{1dlemma_step1}), we decompose the set that we want to measure on the left-hand side
\begin{align*}
	\{|u|\leq1 \} = \bigcup_{J\in\J} J
\end{align*}
into countably many pairwise disjoint intervals.
As illustrated in Figure \ref{fig_1dlemma}, 
we distinguish the following four different 
cases for an interval $J=[a,b]\in \J$:
\begin{enumerate}[(i)]
	\item $J\in \J_{\nearrow}$: $u(a) = -1$ and $u(b)=1$
	\item $J\in \J_{\searrow}$: $u(a) = 1$ and $u(b)=-1$
	\item $J\in \J_{\to}$: $u(a) =u(b)$,
	\item $J\in \J_{\partial}$: $J$ contains a boundary point of $I$.
\end{enumerate}
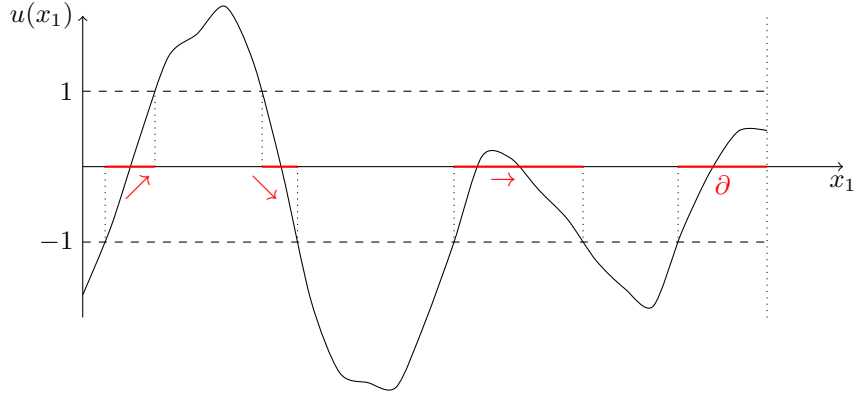
\begin{figure}
\centering
 \begin{tikzpicture}[scale=1]
\draw [->] (0,0) -- (9+1,0);
\node [below] at (9+1,0) {$x_1$};

\draw[dashed] (0,1)  -- (9,1);
\draw[dashed] (0,-1) -- (9,-1);

\node[left] at (0,1) {$1$};
\node[left] at (0,-1) {$-1$};

\draw plot [smooth,domain=0:9,variable=\x] 
  ({\x},{-1.5*cos(100*\x)
%   + .5*cos(20*\x)
%    + 1.3*sin(20*\x)
  +sin(64*\x)-.1*sin(deg(3*pi*\x))-.2*cos(deg(4*pi*\x))-.1*\x-.04*\x*\x
  +.0006*\x*\x*\x*\x});

  \draw [dotted]
    (.295,0) -- (.295,-1);
  \draw [dotted]
    (.95,0) -- (.95,1);
  \draw [dotted]
    (2.36,0) -- (2.36,1);
  \draw [dotted]
    (2.825,0) -- (2.825,-1);
  \draw [dotted]
    (4.883,0) -- (4.883,-1);
  \draw [dotted]
    (6.58,0) -- (6.58,-1);
  \draw [dotted]
    (7.83,0) -- (7.83,-1);
%   \draw 
%     (\h,0) -- (\h,1);
%   \draw 
%     (\i,0) -- (\i,1);
%   \draw 
%     (\j,0) -- (\j,-1);
%   \draw 
%     (\k,0) -- (\k,-1);
% 
  \draw[thick,red] (.295,0) -- (.95,0);
  \draw[thick,red] (2.36,0) -- (2.825,0);
  \draw[thick,red] (4.883,0) -- (6.58,0);
  \draw[thick,red] (7.83,0) -- (9,0);
  \node [below,red] at (.5*.295+.5*.95+.1,0) {$\nearrow$};
  \node [below,red] at (.5*2.36+.5*2.825-.2,0) {$\searrow$};
  \node [below,red] at (.5*4.883+.5*6.58-.2,0) {$\rightarrow$};
  \node [below,red] at (.5*7.83+.5*9,0) {$\partial$};
  
  \draw [->] (0,-2) -- (0,2);
  \node [left] at (0,2) {$u(x_1)$};
  \draw [dotted] (9,-2) -- (9,2);

\end{tikzpicture}
\caption{The four cases (i)--(iv) for an interval $J\subset I$ 
(from left to right).}
\label{fig_1dlemma}
\end{figure}

By Jensen's inequality for the convex function $z\mapsto z_-^2$, we have
\begin{align*}
\frac{1}{|J|}\int_J \left( \partial_1 u -1\right)_-^2 dx_1
 \geq & \bigg( \frac{1}{|J|}\int_J \left( \partial_1 u -1\right) dx_1\bigg)_-^2
 =  \left( 1- \frac{u(b)-u(a)}{|J|}\right)_+^2\\
 = & \begin{cases}
 \left( 1-\frac2{|J|}\right)_+^2, &\text{if } J \in\J_\nearrow,\\
 \left( 1+\frac2{|J|}\right)^2, & \text{if } J\in\J_\searrow,\\
 1, & \text{if } J\in\J_\to.
 \end{cases} 
\end{align*}
If $|J|\geq 4$, then $-1\leq 2(u(b)-u(a))/|J| \leq1$ and so
\begin{align}\label{1dlem_foranyJ}
\frac{1}{|J|}\int_J \left( \partial_1 u -1\right)_-^2 dx_1  \geq \frac14.
\end{align}
Thus, we have 
\begin{align*}
 |J|\lesssim 1 \vee \int_J \left( \partial_1 u -1\right)_-^2 dx_1
\end{align*}
for any interval $J\in \J$.
Since $\# \J_\partial \leq 2$, we have
\begin{align*}
 \sum_{J\in \J_\partial} |J| \lesssim 1 + \int_J \left( \partial_1 u -1\right)_-^2 dx_1,
\end{align*}
which is enough in case (iv).
In case (iii), we immediately have
\begin{align}\label{1dlem_case_stay}
 |J| \lesssim \int_J \left(\partial_1 u-1 \right)_-^2dx_1,
\end{align}
while in case (ii) we even have the stronger estimate
\begin{align*}
	\int_J \left(\partial_1 u-1 \right)_-^2dx_1 
\gtrsim |J| \left( 1+\frac2{|J|}\right)^2 \gtrsim 1 \vee |J|
\end{align*}
since $1+s^2\geq 1$ and $1+s^2 \geq 2 s $ for all $s\in \R$. 
Thus on the one hand we can estimate the measure of such an interval $J\in \J_{\searrow}$ as in (\ref{1dlem_case_stay}).
On the other hand, we can bound the total number of these intervals:
\begin{align}\label{numberJdown}
\#\J_{\searrow} \lesssim \sum_{J\in \J_{\searrow}} \int_J \left(\partial_1 u-1 \right)_-^2dx_1 \leq \int_{|u|\leq1}\left(\partial_1 u-1\right)_-^2dx_1,
\end{align}
which clearly yields
\begin{align*}
	\# \J_{\nearrow} \leq 	\#\J_{\searrow} +1 
	\lesssim  \int_{|u|\leq1}\left(\partial_1 u-1\right)_-^2dx_1+1.
\end{align*}
Hence, using (\ref{1dlem_foranyJ}) for those $J\in \J_{\nearrow}$ with $|J| \geq 4$, we have
\begin{align*}
	\sum_{J\in \J_\nearrow} |J| = & \sum_{\substack{J\in \J_\nearrow \\ |J|\geq 4}} |J| + \sum_{\substack{J\in \J_\nearrow \\ |J|< 4}} |J| 
	\lesssim  \int_{|u|\leq1}\left(\partial_1 u-1\right)_-^2dx_1 + \# \J_\nearrow 
	\lesssim  \int_{|u|\leq1}\left(\partial_1 u-1\right)_-^2dx_1 + 1.
\end{align*}
Using these estimates, we derive
\begin{align*}
	|\{|u|\leq 1\}| = \sum_{J\in \J} |J| 
	\lesssim \int_{|u|\leq1}\left(\partial_1 u-1\right)_-^2dx_1 + 1.
\end{align*}
\step{Step 2: Rescaling (\ref{1dlemma_step1}).} 
Let $s>0$. We use Step 1 for $\hat{u}$ and set $u:=s\hat u$, $x_1=s\hat x_1$. 
Then $\partial_1 u = \hat \partial_1 \hat u$ and 
\begin{align*}
	|\{|u|\leq s\}| = s\left|\{ \hat u \leq 1 \}\right| 
	\overset{(\ref{1dlemma_step1})}{\lesssim} s \int_{|\hat u|\leq1}\left(\hat \partial_1 \hat u-1\right)_-^2d
	\hat x_1 + s
	=  \int_{|u|\leq s}\left(\partial_1 u-1\right)_-^2dx_1 + s.
\end{align*}
Therefore, using this for $u - \frac12$ instead of $u$, we have
\begin{align}\label{1dlemma_step1_s}
  |\{|u-\tfrac12|\leq s\}| \lesssim \int_{|u-\frac12|\leq s}\left(\partial_1 u-1\right)_-^2dx_1 + s.
\end{align}
\step{Step 3: Introducing $\tilde u$.} By Chebyshev's inequality, we have
\begin{align*}
	\left|\{|u-\tilde u|\geq s\} \right| \leq \frac{1}{s^2} \int_I (u-\tilde u)^2dx_1
\end{align*}
for all $s>0$.
Set 
\begin{align*}
	E:= \{|u-\tfrac12|\leq s\} \cup \{|u-\tilde u|\geq s\}\subset I. 
\end{align*}
Then, since e.\ g. $u\geq\frac12>\tilde u$ and $|u-\frac12|>s$ imply $|\tilde u-u|>s$,
\begin{align*}
	\{\chi \neq \tilde \chi \} = \{u\geq \tfrac12\} \Delta \{\tilde u\geq \tfrac12\} \subset E.
\end{align*}
Hence,
\begin{align*}
	\int_I \left|\chi-\tilde \chi\right| dx_1 \leq |E| 
	\lesssim \int_{| u -\frac12|<s}\left( \partial_1 u -1 \right)_-^2dx_1 
	+s + \frac{1}{s^2}\int_I \left(u-\tilde u \right)^2 dx_1,
\end{align*}
which concludes the proof.
\end{proof}

\begin{proof}[Proof of Corollary \ref{cor_periodic}]
By rescaling $x_1= \sqrt{h}\,\hat{x_1}$,
 $\hat u (\hat x_1)=u(\sqrt{h}\,\hat x_1)$, and analogously for $\tilde u$ and using Lemma \ref{la_periodic}
for the transformed functions we obtain:
\begin{align}\label{cor_1_d_introducing_h}
 \frac{1}{\sqrt h}\int_I \left|\chi-\tilde \chi\right|dx_1 
\lesssim \frac{1}{\sqrt h}\int_{| u -\frac12|<s}
\left( \sqrt{h}\,\partial_1 u -1 \right)_-^2dx_1+s + 
\frac{1}{s^2}\frac{1}{\sqrt h}\int_I \left(u-\tilde u \right)^2dx_1.
\end{align}
Now we approximate $\eta$ by simple functions: 
Let
\begin{align*}
 \tilde\eta := \frac{[N\eta]}{N} = \frac1N \sum_{n=1}^N  \chara_{J_n},
\quad \text{where}\quad 
J_n := \left\{ x\in I \colon \eta(x) > \frac nN \right\}.
\end{align*}
Then $0 \leq \tilde\eta \leq \eta$,
 $\left| \eta -\tilde \eta \right| \leq \frac1N$ and since $\eta$ is
radially non-increasing, each $J_n$ is an open interval.
We can apply 
(\ref{cor_1_d_introducing_h}) with $J_n$ playing the role of $I$.
By linearity we have
\begin{align*}
 \frac{1}{\sqrt h}\int \tilde\eta \left|\chi-\tilde \chi\right| dx_1
&\lesssim \frac{1}{\sqrt h}\int_{| u -\frac12|<s}
\tilde\eta \left( \sqrt{h}\,\partial_1 u -1 \right)_-^2dx_1+s
 + \frac{1}{s^2}\frac{1}{\sqrt h}\int \tilde\eta  \left(u-\tilde u \right)^2dx_1\\
&\leq \frac{1}{\sqrt h}\int_{| u -\frac12|<s}
\eta \left( \sqrt{h}\,\partial_1 u -1 \right)_-^2dx_1+s + 
\frac{1}{s^2}\frac{1}{\sqrt h}\int \eta  \left(u-\tilde u \right)^2dx_1.
\end{align*}
Passing to the limit $N\to \infty$, the left-hand side converges to
$\frac{1}{\sqrt h}\int \eta \left|\chi-\tilde \chi\right| dx_1$
and we obtain the claim.
\end{proof}

\begin{proof}[Proof of Lemma \ref{lem_local_estimates}]
\step{Argument for (\ref{eq_periodic_1}).}
As in Step 1 of the proof of Lemma \ref{comp_lem_z_shift}, by (\ref{comp_x-Ghx}) we have
\begin{align}\label{rewritelocalenergy}
  \frac1\h\int\eta \left[(1-\tilde\chi)\, G_h\ast\tilde\chi 
+ \tilde\chi\,G_h\ast(1-\tilde\chi)\right] dx=  \frac1\h\int G_h(z)  \int   \eta(x)\left| \tilde\chi(x+z) - \tilde\chi(x) \right| dx\,dz.
\end{align}
Using $\left| \chi^\ast(x+z) - \chi^\ast(x) \right|
= \sign(z_1)\left( \chi^\ast(x+z) - \chi^\ast(x) \right)$,
and $2u_+ = |u|+u$ on the set $\{z_1>0\}$ and $2u_- = |u|-u$ on $\{z_1<0\}$, we thus obtain
\begin{align*}
 \lefteqn{\frac2\h \int_{z_1 \lessgtr 0} G_h(z) 
\int\eta(x)\left( \chi(x+z) -\chi(x)\right)_\pm dx \,dz}\\
=& \frac1{\h}\int G_h(z)  \int  \eta(x) 
\left(\left| \chi(x+z) - \chi(x) \right| - 
\left| \chi^\ast(x+z) - \chi^\ast(x) \right| \right) dx\,dz \\
&- \frac1{\h}\int\sign(z_1) \, G_h(z) \int  \eta(x) 
\left((\chi^\ast-\chi)(x+z) - (\chi^\ast-\chi)(x) \right) dx\,dz\\
\leq & \gap
 -  \frac1{\h}\int  \sign(z_1)\,G_h(z) 
\int  \left(\eta(x)-\eta(x-z)\right) (\chi^\ast-\chi)(x)\,dx\,dz,
\end{align*}
where we used again $<,+$ and $>,-$, respectively
as a short notation for the sum of the two integrals.
Now we can apply a Taylor expansion for $\eta$ around $x$, 
i.\ e.\ write $\eta(x) - \eta(x-z) =  \nabla\eta(x)\cdot z + O(|z|^2)$, 
where the constant in the $O(|z|^2)$-term depends linearly on 
$\left\|\nabla^2\eta\right\|_{\infty}$. 
By symmetry, the first-order term is
\begin{align*}
\frac1{\h}\int  \sign(z_1)\, z\,G_h(z) \,dz\cdot  
\int  \nabla\eta(x) (\chi^\ast-\chi)(x)\,dx 
=  \int  \frac{|z_1|}\h G_h(z) \,dz \int  \partial_1\eta(x) 
(\chi^\ast-\chi)(x)\,dx.
\end{align*}
Note that the right-hand side can be controlled by
\begin{align*}
 \| \partial_1 \eta \|_\infty \int_{B_{2r}} \left| \chi - \chi^\ast\right| dx
 \lesssim \frac1r\int_{B_{2r}} \left| \chi - \chi^\ast\right| dx\leq \gap.
\end{align*}
The second-order term is controlled by
\begin{align*}
\left\|\nabla^2\eta\right\|_{\infty}\frac{1}{\h}\int |z|^2 G_h(z) \,dz 
= \left\|\nabla^2\eta\right\|_{\infty} \h \int |z|^2 G(z)\,dz 
\lesssim  \h\frac1{r^2},
\end{align*}
which completes the  proof of (\ref{eq_periodic_1}).
\step{Argument for (\ref{eq_periodic_2}).}
For the first arguments let w.\ l.\ o.\ g.\  $h=1$. 
The first ingredient is the identity
\begin{align}\label{eq_periodic_2_1}
	\partial_1 (G\ast \chi)(x) = \int |z_1| G(z) \left|\chi(x+z)-\chi(x)\right|dz 
	- 2 \!\int_{z_1\lessgtr 0} |z_1| G(z) \left( \chi(x+z)-\chi(x) \right)_\pm dz,
\end{align}
where the last term is the sum of the two integrals. 
Indeed, since $\partial_1 G(z) = -z_1 G(z)$ is odd in $z_1$,
\begin{align*}
	\partial_1 (G\ast \chi)(x) =  \int\partial_1 G(z)\chi(x-z)\,dz
 = \int z_1 G(z) \left( \chi(x+z) - \chi(x) \right) dz
\end{align*}
and splitting the integrand in the form $u=|u|-2u_-$ on the set $\{z_1>0\}$ 
and $-u=|u|-2u_+$ on $\{z_1<0\}$, respectively, we derive
\begin{align*}
\partial_1 (G\ast \chi)(x) = &\int_{z_1>0} 
|z_1| G(z) \left|\chi(x+z)-\chi(x)\right|dz + \int_{z_1<0} 
|z_1| G(z) \left|\chi(x+z)-\chi(x)\right|dz \\
 -& 2 \!\int_{z_1> 0} |z_1| G(z) \left( \chi(x+z)-\chi(x) \right)_-dz  
 - 2 \!\int_{z_1< 0} |z_1| G(z) \left( \chi(x+z)-\chi(x) \right)_+dz,
\end{align*}
which is (\ref{eq_periodic_2_1}).\\
The second ingredient for (\ref{eq_periodic_2}) is
\begin{equation}\label{eq_periodic_2_3}
 \int |z_1| G(z) |\chi(x+z)-\chi(x)|dz \gtrsim \left( \int G(z) |\chi(x+z)-\chi(x)|dz \right)^2.
\end{equation}
To obtain (\ref{eq_periodic_2_3}), we estimate
\begin{align*}
 \int |z_1| G(z) |\chi(x+z)-\chi(x)|dz 
\geq & \int_{|z_1|\geq \epsilon} |z_1| G(z) |\chi(x+z)-\chi(x)|dz \\
\geq & \epsilon \int_{|z_1|\geq \epsilon} G(z) |\chi(x+z)-\chi(x)|dz \\
 =   & \epsilon \int G(z) |\chi(x+z)-\chi(x)|dz \\
 & -\epsilon \int_{|z_1|< \epsilon} G(z) |\chi(x+z)-\chi(x)|dz.
\end{align*}
We recall that we $G$ factorizes in a one-dimensional Gaussian $G^1$ and a $(d-1)$-dimensional Gaussian $G^{d-1}$, i.\ e.\ $G(z) = G^1(z_1)\, G^{d-1}(z')$ so that the second integral can be estimated from above by $2 G^1(0)\epsilon$. Therefore we have
\begin{equation*}
 \int |z_1| G(z) |\chi(x+z)-\chi(x)|dz \geq \epsilon \int G(z) |\chi(x+z)-\chi(x)|dz 
-2G^1(0)\epsilon^2.
\end{equation*}
Optimizing in $\epsilon$ yields (\ref{eq_periodic_2_3}).\\
Using the fact that $\chi\in\{0,1\}$,
\begin{align*}
\int G(z) |\chi(x+z)-\chi(x)|dz = (1-\chi)(x) (G\ast \chi)(x) + \chi(x)(G\ast(1-\chi))(x)
\end{align*}
implies the third ingredient:
\begin{align}\label{eq_periodic_2_2}
\int G(z) \left| \chi(x+z)-\chi(x) \right| dz \geq
 (G\ast \chi)(x)  \wedge (1-G\ast \chi)(x).
\end{align}
Combining (\ref{eq_periodic_2_1}), (\ref{eq_periodic_2_3}) and (\ref{eq_periodic_2_2}),
 one finds a positive constant $\overline{c}$ such that
\begin{equation*}
 \partial_1 (G\ast\chi)(x) \geq 18 \,\overline{c} \left[  (G\ast\chi)(x)\wedge 
(1-G\ast\chi)(x) \right]^2 
- 2\! \int_{z_1\lessgtr 0} |z_1| G(z) \left( \chi(x+z)-\chi(x) \right)_\pm dz,
\end{equation*}
where we recall that the last term is the sum of the two integrals.
We consider the ``bad'' set
\begin{equation*}
 E := \Bigg\{  x \colon \int_{z_1\lessgtr 0} |z_1| G(z) \left( \chi(x+z)-\chi(x) \right)_\pm dz \geq \frac{\overline{c}}{2} \Bigg\}.
\end{equation*}
By construction of $E$ we have a good estimate on $E^c$:
\begin{equation*}
 \partial_1 (G\ast\chi)(x) \geq 18\,\overline{c} \left[ \min\left\{(G\ast\chi)(x) 
 ,\, (1-G\ast\chi)(x)\right\} \right]^2 - \overline{c} \quad\text{on } E^c,
\end{equation*}
and thus we obtain strict monotonicity of $G\ast \chi$ in $e_1$-direction outside $E$ as long as
the first term on the left-hand side dominates the second term:
\begin{equation*}
 \partial_1 (G\ast\chi) \geq \overline{c} \quad \text{on } E^c\cap \left\{\frac13 \leq G\ast\chi\leq 
 \frac23\right\}.
\end{equation*}
Therefore
\begin{align*}
 \int_{\frac13 \leq G\ast \chi \leq \frac23} \eta \left( \partial_1 G\ast \chi - \overline{c}\right)_-^2 dx
 =  \int_{E \cap \left\{\frac13 \leq G\ast \chi \leq \frac23\right\}} \eta \left( \partial_1 G\ast \chi - \overline{c}\right)_-^2 dx \lesssim \int_E \eta \,dx.
\end{align*}
We introduce the parameter $h$ again. Then this turns into 
\begin{align*}
 \frac1\h 
 \int_{\frac13 \leq G_h\ast \chi \leq \frac23} \eta \left(\h \partial_1 G_h\ast \chi - \overline{c}\right)_-^2 dx \lesssim \frac1\h  \int_{E_h} \eta \,dx,
\end{align*}
with now
\begin{align*}
 E_h := \Bigg\{  x \colon \frac1\h\int_{z_1\lessgtr 0} \frac{|z_1|}{\h} G_h(z) \left( \chi(x+z)-\chi(x) \right)_\pm dz \geq \frac{\overline{c}}{2} \Bigg\}.
\end{align*}
By construction of $E$ and since $|z|G_h(z) \lesssim \h\, G_h(\frac z2)$, we have
\begin{align}\label{argumentforlocestimatesusedfor2ndlemma}
\frac1\h\int_{E_h} \eta \,dx \lesssim& % \int \eta(x) \int_{\{z_1\lessgtr 0\}} |z_1| G(z) \left( \chi(x+z)-\chi(x) \right)_\pm dz \,dx\\
 \frac1 h\int_{z_1\lessgtr 0} |z_1| G_h(z) \int \eta(x)\left( \chi(x+z)-\chi(x) \right)_\pm dx\, dz\\
\notag\lesssim &\frac1\h\int_{z_1\lessgtr 0} G_h(z/2) \int \eta(x)\left( \chi(x+z)-\chi(x) \right)_\pm dx\, dz\\
\notag\lesssim &\frac1\h\int_{z_1\lessgtr 0} G_h(z) \int \eta(x)\left( \chi(x+z)-\chi(x) \right)_\pm dx\, dz\\
\notag+ & \frac1\h\int_{z_1\lessgtr 0} G_h(z) \int \eta(x)\left( \chi(x+2z)-\chi(x+z) \right)_\pm dx\, dz
\end{align}
by a change of coordinates $z\mapsto 2z$ and the subadditivity of the functions $u\mapsto u_\pm$.
The last term can be handled using a Taylor expansion of $\eta$ around $x$:
\begin{align*}
\frac1\h \lefteqn{ \int_{z_1\lessgtr 0} G_h(z) \int \eta(x)\left( \chi(x+2z)-\chi(x+z) \right)_\pm dx\, dz}\\
=& \frac1\h\int_{z_1\lessgtr 0} G_h(z) \int \eta(x-z)\left( \chi(x+z)-\chi(x) \right)_\pm dx\, dz\\
=&\frac1\h\int_{z_1\lessgtr 0} G_h(z) \int \eta(x)\left( \chi(x+z)-\chi(x) 
\right)_\pm dx\, dz + O\big(\h\,\big),
\end{align*}
where the constant in the $O(\h)$-term depends linearly on $E_h(\chi)$ and
$\left\|\nabla\eta\right\|_{\infty}$.
Indeed, the error in the equation above is 
- up to a constant times $\left\|\nabla\eta\right\|_{\infty}$ 
- estimated by
\begin{align*}
 \int  \frac{|z|}\h G_h(z) \int\left|\chi(x+z)-\chi(x) \right| dx\, dz
\lesssim  \int G_h(\frac z 2) \int\left|\chi(x+z)-\chi(x) \right| dx\, dz
\overset{(\ref{approximate monotonicity})}{\lesssim} \h  E_h(\chi).
\end{align*}
Using (\ref{eq_periodic_1}), we obtain 
\begin{align*}
 \frac1\h\int_{E_h} \eta \,dx 
\lesssim \gap  + \h \frac1{r^2}
 + \h\frac1r  E_h(\chi)
\end{align*}
and thus (\ref{eq_periodic_2}) holds.
\end{proof}

\begin{proof}[Proof of Lemma \ref{la_1d_multiphase}]
By the same argument as in Corollary \ref{cor_periodic}, we can ignore the cut-off $\eta$ and the parameter $h>0$ and reduce the claim to the following version:
\begin{align}\label{la_1d_multiphase_pf0}
\int_I \left|\chi-\tilde \chi \right| dx_1
   \lesssim \int_{| u_1 - \frac12| \lesssim s } \left(\partial_1 u_1 - \overline c\right)_-^2 dx_1 + \frac1s \int_I\sum_{j\geq 3}\left[ u_j \wedge(1-u_j) \right] \,dx_1 
  +s   + \frac1{s^2} \int_I \left| u-\tilde u \right|^2 dx_1.
\end{align}
We will prove
\begin{align}\label{la_1d_multiphase_pf1}
 \{\chi \neq \tilde \chi  \} \subset \Big \{|u_1-\tfrac12| \lesssim s \Big \} \cup \Big \{\sum_{j\geq  3}\left[ u_j \wedge(1-u_j) \right] \gtrsim s \Big \} \cup  \{|u-\tilde u| \gtrsim s  \}.
 \end{align}
Then (\ref{la_1d_multiphase_pf0}) follows from the one-dimensional case in the form of (\ref{1dlemma_step1_s}) for the first right-hand side set and 
Chebyshev's inequality for the second and third right-hand side set. The fact that we replaced the $1$ in (\ref{1dlemma_step1_s}) by the universal constant $\overline c>0$ can be justified by a simple rescaling.

In order to prove (\ref{la_1d_multiphase_pf1}), we fix $i\in \{1,\dots,\numphases\}$ and define the functions
\begin{align*}
 v:= \min_{j\neq i} \phi_j - \phi_i \in C^{0,1}(I)
\end{align*}
and $\tilde v$ in the same way, so that $\chi_i = \chara_{\{v>0\}}$, $\tilde\chi_i = \chara_{\{\tilde v>0\}}$ and
\begin{align}\label{chiandv}
 \{\chi_i \neq \tilde \chi_i  \} \subset 
%   \{|v| < s  \} \cup  \{|v| \geq s, v\tilde v < 0  \} \subset 
  \{|v| < s  \} \cup  \{|v-\tilde v| \geq s  \}.
\end{align}
We clearly have
\begin{align*}
  \left|v-\tilde v \right| 
 \leq   \big| \phi_i - \tilde \phi_i\big| 
 +  \big|\min_{j\neq i} \phi_j - \min_{j\neq i} \tilde \phi_j \big|
 \leq \sum_{i=1}^\numphases \big| \phi_i - \tilde\phi_i\big| \lesssim \left| u-\tilde u\right|,
\end{align*}
which together with Chebyshev's inequality yields the desired bound on the measure of the second right-hand side set of (\ref{chiandv}).
Therefore our goal is to prove
\begin{align}\label{la_1d_multiphase_pf2}
  |u_1-\tfrac12| \lesssim s \quad \text{or}\quad   \sum_{j\geq  3} \left[ u_j \wedge(1-u_j) \right]\gtrsim s  \qquad \text{on } \{|v| <s\},
\end{align}
which then implies (\ref{la_1d_multiphase_pf1}).

Now we give the argument for (\ref{la_1d_multiphase_pf2}).
First, we decompose the set $\{|v|<s\}$ in the following way:
$$
\{\left| v\right|<s\} = 
\bigcup_{j \neq i} E_j,
\quad E_j := \big\{\left| \phi_i-\phi_j\right|<s,\, \phi_j = \min_{k \neq i} \phi_k\big\}.
$$
We claim that
\begin{align}\label{uleq12}
 u_i,\, u_j \leq \frac12+\frac s{2\sigma_{ij}}, \quad
  u_k \leq \frac12, \; k\notin \{i,j\} \quad  \text{on } E_j.
\end{align}
Indeed, plugging in the definition of $\phi$, using the triangle inequality
for the surface tensions and $\sum_{l} u_l =1$, for $k\notin \{i,j\}$, we have on $E_j$
\begin{align*}
 \phi_j \leq \phi_k = \sum_{ l \neq k} \sigma_{kl} u_l
 \leq \sum_{ l \neq k} \sigma_{jl} u_l + \sigma_{jk} \sum_{ l\neq k}  u_l
 = \phi_j - \sigma_{jk} u_k + \sigma_{jk} (1-u_k)
 = \phi_j+ \sigma_{jk} (1-2u_k).
\end{align*}
Subtracting $\phi_j$ on both sides, we obtain $u_k\leq \frac12$.
Since $\phi_j-s \leq \phi_i$ on $E_j$ with the same chain of inequalities as before we obtain
\begin{align*}
   -s \leq \sigma_{ij} (1-2u_i).
\end{align*}
The same inequality holds for $u_j$ since $\phi_i -s  \leq \phi_j$, which concludes the argument for (\ref{uleq12}).

On the one hand, (\ref{uleq12}) gives us the upper bound for $u_1$ on $\{|v|<s\}$.
On the other hand, since $ u \wedge(1-u)  %= u + 0 \wedge (1-2u)
=  u- (2u-1)_+$ for any $u$ 
we infer from (\ref{uleq12}) that on the set $\{u_1 \leq \frac12 - C s\}\cap \{|v|<s\}$ we have
\begin{align*}
 \sum_{j\geq 3} \left[ u_j \wedge(1-u_j) \right] =  1-u_1-u_2 - \sum_{j\geq 3} (2u_j-1)_+ \geq\left( C- \frac 1{\sigma_{\min}}\right) s \gtrsim s,
\end{align*}
if $C \geq \frac{2}{\sigma_{\min}}$.
This concludes the argument for (\ref{la_1d_multiphase_pf2}) and therefore the proof of the lemma.
\end{proof}

\begin{proof}[Proof of Corollary \ref{lem_local_estimates_multiphase}]
 By Lemma \ref{lem_local_estimates}, the claim follows from the obvious inequality
 \begin{align*}
   u_j \wedge(1-u_j) = G_h\ast \chi_j \wedge G_h \ast (1-\chi_j) \leq \left(1- \chi\right) G_h\ast \chi_j +   \chi\, G_h\ast \left(1- \chi_j\right).
 \end{align*}
\end{proof}

\begin{proof}[Proof of Lemma \ref{lem_1st_var_of_-diss}]
We recall the definition of the inner variation of $-E_h(\chi-\tilde \chi)$ in
(\ref{variation_D}): We have for any pair of admissible functions $\chi,\tilde\chi$
and any test function $\xi \in C^\infty(\torus,\R^d)$
\begin{align*}
-\delta E_h(\,\cdot\,-\tilde \chi) (\chi,\xi) 
 =& \frac2\h \sum_{i,j}\sigma_{ij} \int \left( \chi_i-\tilde\chi_i\right)G_h\ast
  \left( \xi\cdot \nabla \chi_j\right) dx\\
= & \frac2\h \sum_{ij}\sigma_{ij} \int \left( \chi_i-\tilde\chi_i\right)
  \left[ \nabla G_h\ast\left( \xi \,\chi_j\right)
  -G_h \ast \left(\left( \nabla\cdot \xi\right)
  \chi_j\right)\right] dx.
\end{align*}
In our case, after integration in time, this yields
\begin{align*}
 &\int_0^T -\delta E_h(\,\cdot\,-\chi^h(t-h)) (\chi^h(t),\xi(t)) \,dt\\
 &= \sum_{i,j}\sigma_{ij} h\sum_{n=1}^N \frac2\h \int \left( \chi_i^n-\chi_i^{n-1}\right)
\left[ \nabla G_h\ast\left( \overline\xi^n \chi_j^n\right)
- G_h \ast \left(\left( \nabla\cdot \overline\xi^n\right)\chi_j^n\right)\right] dx,
\end{align*}
where
\begin{align*}
 \overline\xi^n := \frac1h\int_{nh}^{(n+1)h} \xi(t)\,dt
\end{align*}
denotes the time average of $\xi$ over a microscopic time interval $[nh,(n+1)h)$.\\
Now we prove step by step that 
\begin{enumerate}
 \item  the $(\nabla\cdot\xi)$-term is negligible as $h\to0$;
 \item  we can freeze mesoscopic time for $\xi$, that is, substitute
	$ \overline\xi^n $ by some nearby value $\xi(l_n\tau)$ at the expense of an $o(1)$-term;
 \item	we can smuggle in $\eta$ at the expense of an $o(1)$-term;
 \item  we can freeze mesoscopic time for $\chi^h$ and substitute
	$\chi^n$ in the second factor by the mean \\
	$\frac12\left(\chi^h((l_n-1)\tau) + 
\chi^h(l_n\tau)\right)$, which is the main step;
 \item  we can get rid of $\eta$ again at the expense of an $o(1)$-term; and finally
 \item  we can pull $\xi$ out of the convolution at the expense of an $o(1)$-term.
\end{enumerate}
Note that Step 3 and Step 5 are just auxiliary steps for Step 4. 
\step{Step 1: The $(\nabla\cdot\xi)$-term vanishes as $h\to0$.}
 By Jensen's inequality, for any pair $i, j$ we have
\begin{align*}
%\lefteqn{
\left| 
h \sum_{n=1}^N \frac1\h \int \left( \chi_i^n-\chi_i^{n-1}\right)G_h \ast
\left(\left(\nabla\cdot \overline\xi^n\right)\chi_j^n\right)\,dx\right|%}\\
\leq &\|\nabla\xi\|_{{\infty}}T \frac1\h \nmean \int \left|G_h \ast
\left( \chi_i^n-\chi_i^{n-1}\right)\right| dx\\
\lesssim &\|\nabla\xi\|_{{\infty}} T\frac1\h \left(\nmean \int 
\left|G_h \ast\left( \chi^n-\chi^{n-1}\right)\right|^2 dx\right)^{\frac12}.
\end{align*}
Since the $L^2$-norm of $G_h\ast u$ is decreasing in $h$ and by
the energy-dissipation estimate (\ref{energy_dissipation_estimate}),
the error is controlled by
\begin{align*}
  \|\nabla\xi\|_{{\infty}} T\frac1\h 
\left(\frac1N\h E_0\right)^{\frac12} 
\leq \|\nabla \xi\|_{{\infty}} E_0^{\frac12} T^{\frac12} h^{\frac14} = o(1).
\end{align*}
\step{Step 2: Time freezing for $\xi$.}
We can approximate $\overline\xi^n$ by a nearby value $\xi(l_n\tau)$,
where $l_n\in\{1,\dots \lmax\}$ is chosen such that 
$\kmax (l_n-1) < n\leq \kmax l_n$.
Note that $|\overline\xi^n - \xi^{l_n}| \leq \tau \|\partial_t \xi\|_{\infty}$. 
Therefore, by Jensen's inequality, we have for any pair $i,j$
\begin{align*}
 \left|h\sum_{n=1}^N \frac1\h \int \left( \chi_i^n-\chi_i^{n-1}\right)
\nabla G_h\ast\big((\xi^{l_n}- \overline\xi^n ) \,\chi_j^n\big) dx\right|
\leq &\alpha \|\partial_t \xi\|_{\infty}T \nmean 
\int \left|\nabla G_h\ast\left( \chi_i^n-\chi_i^{n-1}\right)\right| dx\\
\lesssim &\alpha \|\partial_t \xi\|_{\infty} T\left( \!\! \nmean 
\int \left|\nabla G_h\ast\left( \chi^n-\chi^{n-1}\right)\right|^2 dx \!\right)^{\frac12}\!\!.
\end{align*}
But $\h\|\nabla G_h\ast u\|_{L^2} \lesssim\| G_{h/2}\ast u\|_{L^2}  $ yields
\begin{align*}
\int \left|\nabla G_h\ast\left( \chi^n-\chi^{n-1}\right)\right|^2 dx 
\lesssim \frac1h \int \left[G_{h/2}\ast\left( \chi^n-\chi^{n-1}\right)\right]^2 dx.
\end{align*}
Using the energy-dissipation estimate (\ref{energy_dissipation_estimate}), 
the error is controlled by
\begin{align*}
  \alpha \|\partial_t \xi\|_{\infty}T \left( \frac1N\frac1\h E_0\right)^{\frac12}
 = \alpha \|\partial_t \xi\|_{\infty} E_0^{\frac12}T^{\frac12} h^{\frac14}=o(1).
\end{align*}
\step{Step 3: Smuggling in $\eta$.}
We claim
\begin{align*}
 &h\sum_{n=1}^N \frac1\h \int \left( \chi_i^n-\chi_i^{n-1}\right)
\nabla G_h\ast\left( \xi(l_n\tau) \chi_j^n\right) dx\\
&=h\sum_{n=1}^N \frac1\h \int \eta \, G_{h/2}\ast \left( \chi_i^n-\chi_i^{n-1}\right)
\nabla G_{h/2}\ast\left( \xi(l_n\tau) \chi_j^n\right)
 dx + o(1)
\quad\text{as } h\to 0.
\end{align*}
Using $\nabla G_h = G_{h/2}\ast \nabla G_{h/2}$, the left-hand
side is equal to
\begin{align*}
 h\sum_{n=1}^N \frac1\h \int G_{h/2}\ast \left( \chi_i^n-\chi_i^{n-1}\right)
 \nabla G_{h/2}\ast\left( \xi(l_n\tau) \chi_j^n\right) 
 dx.
\end{align*}
Note that since $\eta \equiv 1$ on the support of $\xi$ and 
$\left|z\right| \left|\nabla G_{1/2}(z) \right| \lesssim \left|z\right|^2 G(z)$ has
finite integral, we have
for any $\chi\in\{0,1\}$,
\begin{align*}
\left| (1-\eta) \nabla G_{h/2} \ast \left( \xi \chi \right) \right|
=&\left| \int \nabla G_{h/2}(z) (\eta(x+z)-\eta(x))  \xi(x+z) \chi(x+z) \, dz \right|\\
\lesssim& \|\nabla\eta \|_\infty \|\xi\|_\infty \int \left|z\right| \left|\nabla G_{h/2}(z) \right| \,dz
\lesssim  \frac1r \|\xi\|_\infty.
\end{align*}
Thus, using the Cauchy-Schwarz inequality and 
the energy-dissipation estimate (\ref{energy_dissipation_estimate}),
the error is controlled by
\begin{align*}
h^{\frac14}\left( \sum_{n=1}^N \frac1\h \int \left| G_{h/2}\ast 
\left( \chi^n-\chi^{n-1}\right)\right|^2 dx \right)^{\frac12}
\left( h\sum_{n=1}^N \left( \frac1r \|\xi\|_\infty\right)^2\right)^{\frac12}
\lesssim E_0^{\frac12} T^{\frac12}\frac1r \|\xi\|_\infty h^{\frac14}=o(1).
\end{align*}
\step{Step 4: Time freezing for $\chi^h$.}
We claim that for any pair of indices $i, j$
\begin{align*}
  &h\sum_{n=1}^N \frac2\h \int \eta \,G_{h/2}\ast \left( \chi_i^n-\chi_i^{n-1}\right)
  \nabla G_{h/2}\ast\left( \xi(l_n\tau) \chi_j^n\right)
  dx\\
&\approx h\sum_{n=1}^N \frac1\h \int \eta \,G_{h/2}\ast \left( \chi_i^n-\chi_i^{n-1}\right)
\nabla G_{h/2}\ast\left( \xi(l_n\tau) \left( \chi_j^h((l_n-1)\tau)+
\chi_j^h(l_{n}\tau)\right)\right)  dx,
\end{align*}
in the sense that the error is controlled by
\begin{align*}
\|\xi\|_\infty \left(\frac1\alpha \tau \sum_{l=1}^\lmax \ggap(\chi^{Kl-1}) + \alpha^{\frac13} \iint \eta \,d\mu_h
 + \alpha^{\frac13} r^{d-1}T
 \right) + o(1),
\quad \text{as }h\to0.
\end{align*}
Here, we assumed that Phases $1$ and $2$ are the majority phases in the support of $\eta$.
Indeed, we can control the error using the Cauchy-Schwarz inequality by
\begin{align*}
&\left( \sum_{n=1}^N\frac1\h \int \eta^2 \left| G_{h/2}\ast 
\left(\chi^n-\chi^{n-1}\right)\right|^2 dx  \right)^{\frac12}\times\\
 &  \qquad \left(
 \tau \sum_{l=1}^\lmax \frac1\kmax \sum_{k=1}^\kmax
 \frac1\h\int \Big[ \h
\nabla G_{h/2} \ast \Big(\xi(l\tau) 
\big[  \chi_j^{\kmax (l-1) + k} -\tfrac12 \big(\chi_j^{\kmax (l-1)}
+\chi_j^{\kmax l} \big) \big]\Big)\Big]^2 dx \right)^{\frac12}.
\end{align*}
Since $0\leq \eta\leq 1$, the term in the first parenthesis
is controlled by $\iint \eta \, d\mu_h$.
For the term in the second parenthesis, we fix 
the mesoscopic block index $l$ and the microscopic time step index $k$
and sum at the end. 
Let $l=1$ and write $\xi$ instead of $\xi(l\tau)$ for notational simplicity.
We use the $L^2$-convolution estimate and introduce $\eta$ in the second integral, which is equal to
$1$ on the support of $\xi$:
\begin{align*}
\lefteqn{\frac1\h\int \Big[ \h
\nabla G_{h/2} \ast \Big(\xi(l\tau) 
\big[  \chi_j^{k} -\tfrac12 \big(\chi_j^{0}
+\chi_j^{\kmax} \big) \big]\Big)\Big]^2 dx}\\
&\leq  \frac1\h \left( \int  \big| \h\nabla G_{h/2}\big|dz\right)^2 \int 
\left| \xi\right|^2\left[  \chi_j^{ k} 
-\tfrac12\left(\chi_j^{0}+\chi_j^{\kmax}\right)\right]^2 dx\\
&\lesssim      \left\| \xi \right\|^2_\infty\left( 
\frac1\h\int\eta \left|\chi^{k} -\chi^{0}\right|dx + 
\frac1\h\int\eta \left|\chi^{\kmax } -\chi^{k}\right|dx \right).
\end{align*}
With Lemma \ref{la_1d_multiphase} in the integrated form and Corollary \ref{lem_local_estimates_multiphase}, 
we can estimate these terms in the following way.
We set for abbreviation
\begin{align*}
 \alpha^2(k,k'):= 
\frac1\h\int\eta \left(G_{h}\ast\big(\chi^{k}-\chi^{k'}\big)\right)^2dx.
\end{align*}
By Minkowski's triangle inequality w.\ r.\ t.\ the measure $\eta \,dx$,
we see that $\alpha$ also satisfies a triangle inequality. Thus, thanks to 
Jensen's inequality,
\begin{align*}
 \alpha^2(k-1,-1) \leq& \left(\sum_{n=0}^{k-1}\alpha(n,n-1) \right)^2
\leq  k \sum_{n=0}^{k-1}\alpha^2(n,n-1)
\leq  \kmax \sum_{n=0}^{\kmax-1}\alpha^2(n,n-1).
\end{align*}
Therefore, by integrating Lemma \ref{la_1d_multiphase} over the tangential directions $x_2,\dots,x_d$ and using Corollary \ref{lem_local_estimates_multiphase}, we have
\begin{align}\label{apply 1d lem}
  \frac1\h\int \eta \left|\chi^{k} -\chi^{0}\right|dx
\lesssim &\frac1s \gap(\chi^{-1})
+ sr^{d-1} + \frac1{s^2}\kmax \sum_{n=0}^{\kmax-1}\alpha^2(n,n-1) + o(1).
\end{align}
By (\ref{diss meas}) we have $\sum_n \alpha^2(n,n-1) \leq \iint \eta \, d\mu_h$ and the relation $K\tau = \alpha^2$, we have
\begin{align}\label{alpha and alpha}
 \tau \sum_{l=1}^\lmax \alpha^2(\kmax l -1, \kmax(l-1)-1) \leq \alpha^2 \iint \eta \, d\mu_h.
\end{align}
This justifies the name $\alpha^2(k,k')$, since the term arising from $\alpha^2(k,k')$ is estimated in (\ref{alpha and alpha}) by $\alpha^2$,
the square of the fudge factor in the definition of the
mesoscopic time scale $\tau = \alpha \h$.
Therefore, after summation over the mesoscopic block index $l$, (\ref{apply 1d lem}) in conjunction with (\ref{alpha and alpha}) yields
\begin{align*}
  \tau \sum_{l=1}^\lmax \frac1\kmax \sum_{k=1}^\kmax \frac1\h\int \eta \left|\chi^{\kmax l+k} 
 -\chi^{\kmax l}\right|dx 
\lesssim \frac1s  \tau \sum_{l=1}^\lmax \ggap(\chi^{Kl-1})
+  s r^{d-1}T
+\frac1{s^2}\alpha^2 \iint \eta \, d\mu_h.
\end{align*}
Using Young's inequality, the total error in this step is controlled by $\| \xi\|_\infty$ times
\begin{align*}
&\left( \iint \eta \, d\mu_h\right)^{\frac12} \left( \frac1s \tau \sum_{l=1}^\lmax \ggap(\chi^{Kl-1})
+ s r^{d-1}T + \left(\frac\alpha s\right)^2\iint \eta \, d\mu_h\right)^{\frac12}\\
&\leq 
\frac1{\alpha}  \tau \sum_{l=1}^\lmax \ggap(\chi^{Kl-1}) +  \frac{s^2}\alpha r^{d-1}T +  \frac{\alpha}s \iint \eta \, d\mu_h.
\end{align*}
If we now choose $s= \alpha^{\frac23} \ll 1$, this is the desired error term.
\step{Step 5: Getting rid of $\eta$ again.} As in Step 3, we can estimate
\begin{align*}
& h \sum_{n=1}^N\frac1\h \int \eta \,G_{h/2}\ast \left( \chi_i^n-\chi_i^{n-1}\right)
\nabla G_{h/2}\ast\left[ \xi(l_n\tau) \left( \chi_j^h((l_n-1)\tau)+
\chi_j^h(l_{n}\tau)\right)\right] dx\notag\\
&= h \sum_{n=1}^N \frac1\h \int \left( \chi_i^n-\chi_i^{n-1}\right)
\nabla G_{h}\ast\left[ \xi(l_n\tau) \left( \chi_j^h((l_n-1)\tau)+
\chi_j^h(l_{n}\tau)\right)\right] dx + o(1),
\end{align*}
as $h\to 0$.
% Furthermore, by the time freezing in Step 4, we constructed a telescope sum:
% \begin{align*}
%  &h \sum_{n=1}^N \frac1\h \int \left( \chi^n-\chi^{n-1}\right)
% \nabla G_{h}\ast\left( \xi(l_n\tau) \left( \chi^h((l_n-1)\tau)+
% \chi^h((l_{n})\tau)\right)\right) dx \\
% &= h \sum_{l=1}^{\lmax} 
% \frac1\h \int \sum_{k=1}^\kmax\left( \chi^{\kmax (l-1) + k}-\chi^{\kmax (l-1) + k -1}\right)
% \nabla G_{h}\ast\left( \xi(l \tau) \left( \chi^{\kmax(l-1)}+
% \chi^{\kmax l}\right)\right) dx \\
% &= \frac1\alpha\tau \sum_{l=1}^{\lmax}  \int \left( \chi^{\kmax l}-\chi^{\kmax(l-1)}\right)
% \nabla G_{h}\ast\left( \xi(l \tau) \left( \chi^{\kmax(l-1)}+
% \chi^{\kmax l}\right)\right) dx \\
% \end{align*}
%
\step{Step 6: Pulling out $\xi$.}
First, fix $l$ and write $\xi=\xi(l\tau)$. For simplicity of the formula,
we will ignore $l$ and formally set $l=1$.
Note that since $\nabla G$ is antisymmetric, we have for any two functions $\chi,\,v$,
\begin{align*}
 &\int v
\left[ \xi\cdot \nabla G_h\ast \chi -
 \nabla G_h\ast\left( \xi\,\chi\right)\right]dx
=  \int \nabla G_h(z) \int v(x+z)\, \chi(x)
 \left( \xi(x+z)-\xi(x) \right)dx\,dz.
\end{align*}
Set $ K(z) := z\otimes z \;G(z)$, take a Taylor-expansion of 
$\xi$ around $x$: $\xi(x+z)-\xi(x) = \nabla \xi(x)\,z + O(|z|^2)$, 
where the constant in the $O(|z|^2)$-term is depending linearly on 
$\|\nabla^2\xi\|_{\infty}$.
Then the error on this single time interval splits into two terms.\\
The one coming from the first-order term in the expansion of $\xi$ is
\begin{align*}
 \left| \kmean \frac1\h \int  \nabla \xi \colon 
K_h \ast \left(\chi_i^k-\chi_i^{k-1}\right) \left(\chi_j^0+\chi_j^\kmax\right) dx\right|
\lesssim \|\nabla \xi\|_{{\infty}} \frac1\h \left(\kmean \int \left| K_h \ast
 \left(\chi^k-\chi^{k-1} \right)\right|^2 dx \right)^{\frac12}\!\!,
\end{align*}
where we used Jensen's inequality. 
Since $K_h = h \nabla^2 G_h +   G_h\,Id$,
$ \| h \nabla^2 G_h\ast u\|_{L^2} \lesssim \|G_{h/2}\ast u\|_{L^2}$
for any $u$ and since
the $L^2$-norm of $G_h\ast u$ is non-increasing in $h$, 
we have for any function $v$
\begin{align*}
 \int \left| K_h \ast v\right|^2 dx \leq
  \int \left| h\nabla^2 G_h \ast v\right|^2 dx
+ \int \left( G_h \ast v\right)^2 dx
\lesssim  \int \left( G_{h/2} \ast v\right)^2 dx.
\end{align*}
Plugging this into the inequality above with $v$ playing the role of $\chi_i^{k}-\chi_i^{k-1}$, 
multiplying by $\tau$, summing over the block index $l$ and 
using Jensen's inequality, we can control the contribution to the error coming 
from the first-order term by
\begin{align*}
   T \|\nabla \xi\|_{{\infty}}\frac1\h \left(\nmean
 \int \left| G_{h/2} \ast \left(\chi^n-\chi^{n-1} \right)\right|^2 dx \right)^{\frac12}
\leq \|\nabla \xi\|_{{\infty}} E_0^{\frac12}T^{\frac12} h^{\frac14} =o(1).
\end{align*}
By Lemma \ref{comp_lem_tau_shift}, the contribution coming from 
the second-order term in the expansion of $\xi$ is controlled by
\begin{align*}
 & \|\nabla^2\xi\|_{{\infty}} h \sum_{n=1}^N
\int \left( \frac{|z|}{\h}\right)^3 G_h(z) 
\int \left| \chi^n - \chi^{n-1}\right| dx\,dz\\
&\lesssim \|\nabla^2\xi\|_{{\infty}}  \int_0^T 
\int \left| \chi^h(t) - \chi^h(t-h)\right| dx\,dt
 \lesssim \|\nabla^2\xi\|_{{\infty}} E_0 (1+T)\h=o(1).
\end{align*}
Finally, we note that by the time freezing in Step 4, we constructed a telescope sum:
Rewriting the summation over the microscopic time step index $n=1,\dots,N$ 
as the double sum over the microscopic time step index $k=1,\dots,\kmax$
in the respective mesoscopic time intervals and 
the mesoscopic block index $l=1,\dots,\lmax$, we have for each $l$,
\begin{align*}
&\sum_{k=1}^\kmax\left( \chi_i^{\kmax (l-1) + k}-\chi_i^{\kmax (l-1) + k -1}\right)
 \xi(l \tau)\cdot\nabla G_{h}\ast \left( \chi_j^{\kmax(l-1)}+
\chi_j^{\kmax l}\right) \\
&=   \left( \chi_i^{\kmax l}-\chi_i^{\kmax(l-1)}\right)
\xi(l \tau)\cdot\nabla G_{h}\ast \left( \chi_j^{\kmax(l-1)}+
\chi_j^{\kmax l}\right).
\end{align*}
Thus we obtain
\begin{align*}
 &h \sum_{n=1}^N \frac1\h \int \left( \chi_i^n-\chi_i^{n-1}\right)
\nabla G_{h}\ast\left( \xi(l_n\tau) \left( \chi_j^h((l_n-1)\tau)+
\chi_j^h(l_{n}\tau)\right)\right) dx \\
&=
\tau \sum_{l=1}^{\lmax} \frac1\tau \int \left( \chi_i^{\kmax l}-\chi_i^{\kmax(l-1)}\right)
\xi(l \tau)\cdot\left( \h \nabla G_{h}\right)\ast \left( \chi_j^{\kmax(l-1)}+
\chi_j^{\kmax l}\right) dx + o(1),
\end{align*}
which concludes the proof.
\end{proof}

\begin{proof}[Proof of Lemma \ref{lem_1st_var_of_-diss_K_h}]
\step{Step 1: Rough estimate for minority phases.}
We first argue that if $\{i,j\} \neq \{1,2\}$, that is if the product involves at least one minority phase,
then we can estimate this term.
Let us first assume that $j\notin \{1,2\}$.
By a manipulation as in the proof of Lemma \ref{lem_1st_var_of_-diss} and
the Cauchy-Schwarz inequality, we have
\begin{align*}
&\left| \tau \sum_{l=1}^\lmax 
\int  \frac{\chi_i^{\kmax l}
- \chi_i^{\kmax (l-1)}}\tau\, 
\xi(l \tau)\cdot \left( \h\,\nabla G_h\right)
\ast\left(\chi_j^{\kmax(l-1)}+\chi_j^{\kmax l}\right)dx\right| \\
&\lesssim \| \xi\|_\infty\! \left( \sum_{l=1}^\lmax 
\int \eta \left| G_{h/2}\ast \left( \chi_i^{\kmax l}-\chi_i^{\kmax (l-1)} \right) \right|^2 dx
\right)^{\frac12}
\left( \sum_{l=0}^\lmax 
\int \eta \left| \h \nabla G_{h/2}\ast  \chi_j^{\kmax l}\right|^2 dx
\right)^{\frac12} + o(1),
\end{align*}
as $h\to 0$.
Note that for any characteristic function $\chi\in\{0,1\}$, since $\int \nabla G(z)\,dz=0$,
\begin{align*}
\frac1\h \int \eta \left| \h \nabla G_{h/2} \ast \chi \right|^2 dx
&\lesssim \frac1\h \int \eta \left| \h \nabla G_{h/2} \ast \chi \right| dx\\
&\lesssim\frac1\h \int \left| \h \nabla G_{h/2}(z)\right| \int\eta(x) 
\left|\chi(x+z) -\chi(x)\right| dx \,dz\\
&\lesssim \frac1\h \int G_{h}(z) \int\eta(x) 
\left|\chi(x+z) -\chi(x)\right| dx \,dz\\
&=\frac1\h \int \eta \left[ \left(1-\chi\right) G_h\ast \chi 
+ \chi \,G_h\ast \left(1-\chi\right)  \right]dx.
\end{align*}
Treating the metric term as in the proof of Lemma \ref{lem_1st_var_of_-diss} 
with the triangle inequality and Jensen's inequality afterwards, we obtain the bound
\begin{align*}
 \| \xi\|_\infty
 \left( \iint \eta \,d\mu_h \right)^{\frac12}
\left( \tau \sum_{l=1}^\lmax \ggap(\chi^{Kl})\right)^{\frac12} + o(1) 
\leq \| \xi\|_\infty
\left( \frac1\alpha  \tau \sum_{l=1}^\lmax \ggap(\chi^{Kl}) + \alpha \iint \eta \,d\mu_h  \right) \!+ o(1).
\end{align*}
If instead $i\notin\{1,2\}$, using a discrete integration by parts, the antisymmetry of $\nabla G$
and a manipulation as in the proof of Lemma \ref{lem_1st_var_of_-diss}
for $\xi$, we can exchange the roles of Phase $i$ and Phase $j$:
\begin{align*}
& \tau \sum_{l=1}^\lmax 
\int  \frac{\chi_i^{\kmax l}
- \chi_i^{\kmax (l-1)}}\tau\;
\xi(l \tau)\cdot \left( \h\,\nabla G_h\right)
\ast\left(\chi_j^{\kmax(l-1)}+\chi_j^{\kmax l}\right) dx\\
&= -\tau \sum_{l=1}^\lmax 
\int  \frac{\chi_j^{\kmax l}
- \chi_j^{\kmax (l-1)}}\tau\;
\xi(l \tau)\cdot \left( \h\,\nabla G_h\right)
\ast\left(\chi_i^{\kmax(l-1)}+\chi_i^{\kmax l}\right) dx + o(1).
\end{align*}
Thus, we can use the above argument also in this case and the only terms contributing
to the sum as $h\to 0$ are the terms involving both majority phases.\\
In the following we assume that $i=1$ and $j=2$.
In the other case, we can just exchange the roles of $\chi_1$ and $\chi_2$ in the following steps
and use $-e_1$ as the approximate normal to $\chi_2$ instead and the proof is the same.
\step{Step 2: Substituting $\chi_2$ by $1-\chi_1$.}
We claim that
\begin{align*}
 &\tau \sum_{l=1}^\lmax 
\int  \frac{\chi_1^{\kmax l}
- \chi_1^{\kmax (l-1)}}\tau\;
\xi(l \tau)\cdot \left( \h\,\nabla G_h\right)
\ast\left(\chi_2^{\kmax(l-1)}+\chi_2^{\kmax l}\right) dx\\
&= -
\tau \sum_{l=1}^\lmax 
\int  \frac{\chi_1^{\kmax l}
- \chi_1^{\kmax (l-1)}}\tau\;
\xi(l \tau)\cdot \left( \h\,\nabla G_h\right)
\ast\left(\chi_1^{\kmax(l-1)}+\chi_1^{\kmax l}\right) dx +o(1).
\end{align*}
Since $\nabla G \ast 1 =0$, the claim is clearly equivalent to proving
that we can replace $\chi_2$ by $1-\chi_1$ in the second left-hand side term of the claim.
But by $\sum_i \chi_i =1$ and linearity the resulting error term is
\begin{align*}
 \left|\sum_{j=3}^\numphases \tau \sum_{l=1}^\lmax 
\int  \frac{\chi_1^{\kmax l}
- \chi_1^{\kmax (l-1)}}\tau\;
\xi(l \tau)\cdot \left( \h\,\nabla G_h\right)
\ast \left(\chi_j^{\kmax(l-1)}+\chi_j^{\kmax l}\right) dx \right|,
\end{align*}
which can be handled by Step 1.
\step{Step 3: Substitution of $\nabla G$.}
We want to replace the convolution with $\nabla G$ on the left-hand side of the claim
by a convolution with the anisotropic kernel
\begin{align*}
 K(z):=\sign(z_1) \,z\,G(z).
\end{align*} 
To that purpose, we claim that for any characteristic function $\chi\in \{0,1\}$,
\begin{align}\label{eq_periodic_kernel_estimate}
 \frac1\h \int \!\eta \Big|  \h \;\nabla G_h\ast \chi - 
\left( \chi \,K_{h}\ast(1-\chi) + (1-\chi)K_{h}\ast \chi \right) \Big| dx
 \lesssim  \varepsilon^2+ \frac\h {r^2} + \frac \h r E_h(\chi).
\end{align}
Here,
\begin{align*}
  \varepsilon^2 := \inf_{\chi^\ast} \Bigg \{
  &\frac1\h \int  \eta \left[\chi \,G_h\ast
 (1-\chi) + (1-\chi)\,G_h\ast \chi \right] dx\\
 -&\frac1\h \int \eta\left[\chi^\ast\, G_h\ast (1-\chi^\ast)+ 
(1-\chi^\ast)\,G_h\ast \chi^\ast \right]dx
 + \frac1r \int_{B_{2r}}  \left|\chi-\chi^\ast\right|dx
 \Bigg\},
\end{align*}
where the infimum is taken over all half spaces $\chi^\ast = \chara_{\{x_1>\lambda\}}$ in direction $e_1$.
Using this inequality for $\chi_1^{\kmax (l-1)}$ and $\chi_1^{\kmax l}$,
we can substitute those two summands and the error is estimated as desired:
\begin{align*}
% &\Bigg|
% \tau \sum_{l=1}^\lmax 
% \int \frac{\chi_2^{\kmax l}
% - \chi_i^{\kmax (l-1)}}\tau\, 
% \xi(l \tau)\cdot \Big[\left( \h\,\nabla G_h\right)
% \ast\left(\chi_2^{\kmax(l-1)}+\chi_2^{\kmax l}\right)\\
% &-\left( \chi_2^{\kmax l} \,K_h\ast\left(1-\chi_2^{\kmax l}\right) + 
% \left(1-\chi_2^{\kmax l}\right)K_h \ast \chi_2^{\kmax l} 
% +
% \chi_2^{\kmax(l-1)} \,K_h\ast\left(1-\chi_2^{\kmax(l-1)}\right) + 
% \left(1-\chi_2^{\kmax(l-1)}\right)K_h \ast \chi_2^{\kmax(l-1)}
% \right)
% \Big]dx
% \Bigg|\\
% \lesssim 
&
\frac1\alpha \|\xi\|_{\infty} \tau \sum_{l=0}^\lmax \frac1\h \int\!\eta \left| \h\, 
\nabla G_h\ast \chi_1^{\kmax l}
- \left[ \chi_1^{\kmax l} \,K_h\ast\left(1-\chi_1^{\kmax l}\right) + 
\left(1-\chi_1^{\kmax l}\right)K_h \ast \chi_1^{\kmax l} \right] \right| dx\\
&\lesssim\frac1\alpha  \|\xi\|_{\infty} 
  \tau \sum_{l=0}^\lmax  \ggap(\chi^{\kmax l}) +  o(1), \quad \text{as } h\to 0.
\end{align*}
Now we give the argument for (\ref{eq_periodic_kernel_estimate}).
By measuring length in terms of $\h$, we may assume that $h=1$. 
Since $\int \nabla G\,dz=0$ and $\nabla G(z) = -z\, G(z)$, 
using the identities $u=|u|-2u_-$ and $u=-|u|+2u_+$ on the sets $\{z_1>0\}$ and $\{z_1<0\}$, respectively,
\begin{align*}
 \nabla G\ast \chi
= &\int  z\; G(z) \left( \chi(x+z)-\chi(x)\right)dz\\
=& \int_{\{z_1>0\}} K(z) \left| \chi(x+z)-\chi(x)\right|dz 
- 2  \int_{\{z_1>0\}}\! z \,G(z) \left(\chi(x+z)-\chi(x)\right)_-dz\\
+& \int_{\{z_1<0\}} K(z) \left| \chi(x+z)-\chi(x)\right|dz 
+ 2  \int_{\{z_1<0\}} \!z\, G(z) \left(\chi(x+z)-\chi(x)\right)_+dz.
\end{align*}
Using $|\chi_1-\chi_2|=(1-\chi_1)\chi_2 + \chi_1(1-\chi_2)$ for $\chi_1,\,\chi_2\in\{0,1\}$, 
this implies the pointwise identity
\begin{align*}
 \nabla G\ast \chi 
= \chi\,K\ast(1-\chi) + (1-\chi) K\ast \chi
 - 2 \int_{\{ z_1\lessgtr 0 \}}\!\sign(z_1)\, z\, G(z) \left(\chi(x+z)-\chi(x)\right)_\pm dz,
\end{align*}
where the last term stands for the sum of the two integrals.
Integration w.\ r.\ t.\ $\eta \,dx$ now yields:
\begin{align*}
 \int \eta \Big|  \nabla G\ast \chi - \left( \chi K\ast(1-\chi) + 
 (1-\chi)K\ast \chi \right) \Big| dx\lesssim \int_{\{z_1\lessgtr 0\}} |z| G(z) \int \eta(x)\left( \chi(x+z)-\chi(x) \right)_\pm dx\,dz.
\end{align*}
As in the argument for (\ref{eq_periodic_2}), we can follow the lines from 
(\ref{argumentforlocestimatesusedfor2ndlemma}) on so 
that (\ref{eq_periodic_1}) yields (\ref{eq_periodic_kernel_estimate}).
\step{Step 4: An identity for $K$.}
We claim that for any two characteristic functions $\chi,\,\tilde\chi\in\{0,1\} $,  
we have the pointwise identity
\begin{align*}
&\left( \chi-\tilde\chi\right)\big( \chi\, K_h\ast(1-\chi) + (1-\chi)K_h\ast \chi
+ \tilde \chi\, K_h\ast(1-\tilde \chi) + (1-\tilde \chi)K_h\ast \tilde \chi\big)\\
&\qquad\qquad= 2c_0\, e_1 \left( \chi-\tilde\chi\right) -
\left| \chi-\tilde\chi\right| K_h\ast \left( \chi-\tilde\chi\right).
\end{align*}
Indeed, by scaling, we may w.\ l.\ o.\ g.\ assume $h=1$ and start with
\begin{align*}
 \left(\chi-\tilde\chi\right)\tilde\chi\,K\ast \left(1-\tilde\chi\right)
 + \left(\chi-\tilde\chi\right)\left(1-\tilde\chi\right)K\ast \tilde\chi &=   \left(\chi-1\right)\tilde\chi\left(\int \!K-K\ast \tilde\chi\right)
 + \chi\left(1-\tilde\chi\right)K\ast \tilde\chi\\
&=   \left(\chi-1\right)\tilde\chi \left(\int\! K\right)
 + \big( \left(1-\chi\right)\tilde\chi + \chi\left(1-\tilde\chi\right) \big) K\ast\tilde\chi\\
&=   \left(\chi-1\right)\tilde\chi  \left(\int\! K\right)
 + \left|\chi-\tilde\chi\right| K\ast \tilde\chi.
\end{align*}
Exchanging the roles of $\chi$ and $\tilde \chi$, one obtains
for the second part
\begin{align*}
  \left(\chi-\tilde\chi\right)\chi\,K\ast \left(1-\chi\right)
 + \left(\chi-\tilde\chi\right)\left(1-\chi\right)K\ast \chi 
= -\left(\tilde\chi-1\right)\,\chi \left(\int\! K\right)
 - \left|\chi-\tilde\chi\right| K\ast \chi.
\end{align*}
Using the factorization property of $G$ and the symmetry $\int z'G^{d-1}(z')dz'=0$, one computes that
for any vector $\xi\in \R^d$
\begin{align*}
 \xi \cdot\int K 
=& \int \sign(z_1) \int \left(\xi_1\,z_1+ \xi'\cdot z'\right)
	  G^{d-1}(z')\,dz'\,G^1(z_1)\,dz_1
=  \xi_1 \int |z_1|\, G^{1}(z_1)\,dz_1\\
= & 2\,\xi_1\int_0^\infty z_1\,G^1(z_1)\,dz_1
= 2\,\xi_1 \int_0^\infty-\frac{d}{d z_1} G^1(z_1)\,dz_1
=  2 \,\xi_1 G ^1(0) = 2\, \xi_1 \frac1{\sqrt{2\pi}} = 2\,c_0 \xi_1.
\end{align*}
Hence the identity follows from $\left(\chi-1\right)\tilde\chi - \left(\tilde \chi-1\right)\chi
 = \chi-\tilde \chi$.
\step{Step 5: Conclusion.}
Applying Steps 1 and 2, using the identity in Step 3 for the remaining 
two terms involving Phases $1$ and $2$, we end up with the right-hand side of the claim.
The error is controlled by
\begin{align*}
  \|\xi\|_{\infty} \Bigg(\frac1\alpha& \tau \sum_{l=1}^\lmax \ggap(\chi^{\kmax l})
 + \alpha \iint \eta \, d\mu_h
 + \tau \sum_{l=1}^\lmax 
 \frac1\tau \int \eta^2 \,\big| \chi^{\kmax l} - \chi^{\kmax(l-1)}\big| \, \left| K_h\right|\ast
 \,\big| \chi^{\kmax l} - \chi^{\kmax(l-1)}\big|\, dx
 \Bigg)
 +o(1),
\end{align*}
as $h\to0$.
Note that $\left| K\right|= k$, where $k$ is the kernel defined in the statement of the lemma.
It remains to argue that $\eta$ can be equally distributed on both copies of
$\left|\chi^{\kmax l}-\chi^{\kmax (l-1)}\right|$.
For this, note that for $u=\left| \chi^{\kmax l} - \chi^{\kmax(l-1)}\right|\in [0,1]$,
\begin{align*}
 \frac1\h \left|  \int \eta^2 u
\,k_h\ast u\,dx -  \int \eta \,
u
\,k_h\ast (\eta\, u)\,dx\right| 
&\leq \frac1\h\int k_h(z) \int \eta(x) \,u(x) \,
u(x+z) \left|\eta(x+z) - \eta(x)\right| dx \,dz\\
& \leq \left\|\nabla \eta \right\|_\infty\int \frac{|z|}\h k_h(z)\, dz
\int \eta \, u\, dx
\lesssim \frac1r\int u\, dx.
\end{align*}
Thus, in our case, we can use Lemma \ref{comp_lem_hoelder} and bound the error by
\begin{align*}
\frac1\alpha \|\xi\|_\infty \frac1r \tau \sum_{l=1}^\lmax 
\int \left|\chi^{\kmax l}-\chi^{\kmax(l-1)}\right|dx
\lesssim \frac1{\alpha^{\frac12}} \|\xi\|_\infty \frac1r  E_0 T h^{\frac14 }=o(1).
\end{align*}
\end{proof}

\begin{proof}[Proof of Lemma \ref{lem_1st_var_of_-diss_remainder}]
First, we note that it is enough to prove the following similar statement for a fixed mesoscopic time interval:
\begin{align}\label{decayofrem_pf}
 &\frac1\tau \int \eta \left| \chi^{\kmax} 
- \chi^{0}\right|\, k_h\ast
 \left( \eta \left| \chi^{\kmax} - \chi^{0}\right|\right) dx
%  \\ &\qquad
\lesssim 
  \frac1{\alpha^2}\ggap(\chi^{-1}) + \alpha^{\frac19} r^{d-1}
 + \alpha^{\frac19} \frac1\tau \int_0^\tau\int  \eta\,d\mu_h.
%  \frac1r E_0 \alpha^{1/2} h^{1/4}.
\end{align}
Indeed, if we multiply (\ref{decayofrem_pf}) by $\tau$ and sum over the mesoscopic block index $l$
we obtain the statement.

In the proof of (\ref{decayofrem_pf}),
we will exploit the convolution in the normal direction $e_1$ in Step 1, which will
allow us in Step 2 to make use of the quadratic structure of this term.
\step{Step 1:}
We can estimate the kernel $k$ by a kernel that factorizes in
two kernels $k^1$, $k'$ in normal- and tangential
direction, respectively, which are of the form
\begin{align*}
 k^1(z_1) := &(1+z_1^2)^{\frac12}\,G^1(z_1),\\
 k'(z')  := & (1+|z'|^2)^{\frac12}\,G^{d-1}(z').
\end{align*}
Let us still denote the kernel by $k$.
We have
\begin{align*}
k_h\ast \left(\eta |\chi^\kmax-\chi^0|\right)
% \leq &  \sup_{x_1} \left\{ k_h\ast\left(\eta |\chi^\kmax-\chi^0|\right) \right\} \\
   \leq   \sup_{x_1} \left\{  k'_h\ast' 
k^1_h\ast_1\left(\eta |\chi^\kmax-\chi^0|\right) \right\} 
\leq & \,k'_h\ast' \sup_{x_1} 
\left\{k^1_h\ast_1\left(\eta |\chi^\kmax-\chi^0|\right) \right\} .
\end{align*}
The second factor in the right-hand side convolution can be estimated in two ways:
\begin{align*}
 \sup_{x_1} \left\{k^1_h\ast_1 \left(\eta |\chi^\kmax-\chi^0|\right) \right\} 
\leq & \min \left\{ \int k^1_h\, dz_1 \,  \sup_{x_1}\left(\eta |\chi^\kmax-\chi^0|\right)
  , \, \left(\sup_{x_1} k^1_h\right) \int  \eta\left|\chi^\kmax-\chi^0\right| dx_1\right\}\\
\lesssim & \min \left\{ 1,\, \frac1\h \int \eta \left|\chi^\kmax-\chi^0\right|dx_1 \right\}.
\end{align*}
% Therefore,
% \begin{align*}
%  \frac1\h \int \eta \left| \chi^{\kmax} 
% - \chi^{0}\right| k_h\ast
%  \left( \eta \left| \chi^{\kmax} - \chi^{0}\right|\right) dx
%  \lesssim 
%  \int \frac1\h \int \eta \left|\chi^\kmax-\chi^0\right|dx_1 \,
%  k_h' \ast' 
%  \left[ 1 \wedge \left( \frac1\h \int \eta \left|\chi^\kmax-\chi^0\right|dx_1\right)\right] dx'
% \end{align*}
Therefore, we obtain a quadratic term with two copies of
$\frac1\h \int \eta \left|\chi^\kmax-\chi^0\right|dx_1 $:
\begin{align}
&\frac1\tau \int \eta \left|\chi^\kmax-\chi^0\right|
\,k_h\ast \left(\eta\left|\chi^\kmax-\chi^0\right|\right)dx \notag\\
&\qquad\lesssim 
\frac1\alpha \int \left(\frac{1}{\sqrt{h}}\int \eta\left|\chi^\kmax-\chi^0\right| dx_1\right) 
k_h'\ast'\left(1\wedge \frac{1}{\sqrt{h}} \int \eta \left|\chi^\kmax-\chi^0\right|dx_1 \right)dx'.\label{pf quadratic term}
\end{align}
\step{Step 2:}
Now we use Lemma \ref{la_1d_multiphase} before integration in $x'$.
We write $\gap(x')$ for the first error term in (\ref{1dmulti_eq1}) and set
\begin{align*}
 \alpha^2(x'):=  \frac1\h \int \eta \left| G_h\ast \left( \chi^{\kmax-1}
 - \chi^{-1}\right)\right|^2 dx_1,
\end{align*}
so that the statement of Lemma \ref{la_1d_multiphase} turns into
\begin{align*}
 \frac{1}{\sqrt{h}}\int \eta\left|\chi^\kmax-\chi^0\right| dx_1 \lesssim \frac1s \gap(x') + s \chara_{\{|x'| < 2r\}} + \frac1{s^2} \alpha^2(x').
\end{align*}
We recall the link between the function $\alpha^2(x')$ and the fudge factor $\alpha$ as mentioned in (\ref{alpha and alpha}) but now before summation over the
mesoscopic block index $l$:
\begin{align}\label{pf quadratic term alpha and alpha}
 \int \alpha^2(x')\,dx' \leq \frac{\alpha^2}\tau \int_0^\tau\int\eta\,d\mu_h.
\end{align}
Then for any two parameters $s, \tilde s \ll1$ the right-hand side of (\ref{pf quadratic term}) is estimated by
\begin{align}\label{pf quadratic term last}
\frac1\alpha  \int \left( \frac1s \gap(x') + s + \frac1{s^2} \alpha^2(x') \right)
 k_h'\ast' \left( 1\wedge \left( \frac1{\tilde s} \gap(x') + \tilde s \chara_{\{|x'| < 2r\}}+ \frac1{\tilde s^2} \alpha^2(x')\right)
\right) dx'.
\end{align}
For the first and the last summand in the first factor, $\frac1s \gap(x')$ and $\frac1{s^2}\alpha^2(x')$,
we use the $1$ in the minimum on the right.
For the second summand on the left, $ s$, 
we use the second term in the minimum for the pairing.
Using the $L^1$-convolution estimate and (\ref{pf quadratic term alpha and alpha}) we can control (\ref{pf quadratic term last}) by
\begin{align*}
 \frac1\alpha \left( \frac 1s + \frac s{\tilde s}\right) \int \gap(x')\,dx'
 + \frac{s \tilde s}\alpha r^{d-1}+ \left( \frac{\alpha s}{ \tilde s^2} + \frac \alpha{ s^2}\right) \frac1\tau \int_0^\tau\int\eta\,d\mu_h.
\end{align*}
By Corollary \ref{lem_local_estimates_multiphase} we can estimate $\int \gap(x')\, dx'$ as desired
and thus obtain (\ref{decayofrem_pf}) by choosing $\tilde s =\alpha^{\frac23} \ll1$ and then $s = \alpha^{\frac49} \ll1$.
\end{proof}

\begin{proof}[Proof of Lemma \ref{lem_rough_error_in_velocity}]
Thanks to the convergence assumption
(\ref{conv_ass}), we can apply Proposition \ref{surf_prop_integrated_in_time}.
Using the Euler-Lagrange equation (\ref{ELG}) for $\chi^n$ and (\ref{surf_conv_ass3_spacetime}),
we can identify the first term on the left-hand side as the limit of the first variation 
of the dissipation functional as $h\to0$. Following Step 1 of the proof of
Lemma \ref{lem_1st_var_of_-diss} and then estimating directly as in Step 3, but for $\xi$
instead of $\eta$, we obtain
\begin{align*}
 &c_0\sum_{i,j} \sigma_{ij}\int_0^T \int 
\left( \nabla\cdot \xi - \nu_i\cdot \nabla \xi\,\nu_i\right) \frac12 
\left(\left|\nabla\chi_i\right| + \left|\nabla\chi_j\right| - \left|\nabla(\chi_i+\chi_j)\right|\right) dt\\
&= \lim_{h\to0} \sum_{i,j} \sigma_{ij}\sum_{n=1}^N \int 
\left[ G_{h/2}\ast (\chi_i^n-\chi_i^{n-1})\right] 
\overline \xi^n \cdot 
\left[
\left(
\h \nabla G_{h/2}
\right)
\ast\chi_j^n \right]
dx.
\end{align*}
Using the Cauchy-Schwarz inequality, for any pair $i,j$ we have
\begin{align*}
 &\left| 
 \sum_{n=1}^N \int 
\left[ G_{h/2}\ast (\chi_i^n-\chi_i^{n-1})\right]
\overline \xi^n \cdot \left[
\left(
\h \nabla G_{h/2}
\right)
\ast\chi_j^n \right] 
dx
\right|\\
&\lesssim 
\|\xi \|_{\infty}\! \left(\sum_{n=1}^N\frac1\h \int \eta 
\left[ G_{h/2}\ast (\chi_i^n-\chi_i^{n-1})\right]^2 dx\right)^{\frac12} 
\left(h\sum_{n=1}^N \frac1\h\int \eta 
\left[ \h \nabla G_{h/2} \ast\chi_j^n \right]^2 dx \right)^{\frac12}.
\end{align*}
The first right-hand side factor is bounded by $\iint \eta \,d\mu_h$.
As in Step 1 in the proof of Lemma \ref{lem_1st_var_of_-diss_K_h},
the second right-hand side factor can be controlled by
\begin{align*}
 h\sum_{n=1}^N \frac1\h\int \eta \left[ \left(1-\chi_j^n\right) G_h\ast \chi_j^n
	      + \chi_j^n\, G_h\ast\left(1-\chi_j^n \right) \right] dx 
\to 2c_0  \int_0^T \int \eta \left| \nabla \chi_j\right| dt,
\end{align*}
as $h\to 0$, where we used Lemma \ref{la_impl_conv_ass} to pass to the limit.
Thus, using Young's inequality, we have
\begin{align*}
&\left| 
\sum_{i,j} \sigma_{ij}
 \int_0^T \int \left(\nabla\cdot\xi - \nu_i \cdot \nabla \xi \,\nu_i \right) 
\left(\left|\nabla\chi_i\right| + \left|\nabla\chi_j\right| - \left|\nabla(\chi_i+\chi_j)\right|\right)  dt
\right|\\
&\lesssim  \|\xi\|_\infty\left(
\frac1\alpha \sum_{i=1}^\numphases \int_0^T \int \eta \left| \nabla \chi_i\right| dt
+\alpha \iint \eta \, d\mu\right).
\end{align*}
To estimate the second term in the lemma, note that by Young's inequality we have
$$|\xi\cdot \nu_i \, V_i| \leq \|\xi\|_\infty \eta \left( \frac1\alpha V_i^2 + \alpha\right).$$
Integrating w.\ r.\ t.\ $\left|\nabla\chi_i\right|dt$ yields
\begin{align*}
 \left|\int_0^T \int \xi \cdot \nu_i \, V_i 
\left| \nabla \chi_i \right| dt\right|
 \leq \|\xi\|_\infty \left( \int_0^T \int \eta\, V_i^2 
\left|  \nabla\chi_i\right|dt+
\int_0^T \int \eta \left| \nabla\chi_i \right|dt\right),
\end{align*}
which concludes the proof.
\end{proof}
%
%
%		ARGUMENTS FOR THE LIMIT
%
%
\section{Convergence}\label{sec:conv}
In Section \ref{sec:curvature}, we identified the limit of the first variation of the energy;
in Section \ref{sec:velocity}, we identified the limit of first variation of the metric term up to an error
that measures the local approximability by a half space.
In this section, we show by soft arguments from Geometric Measure Theory that this error can be made
arbitrarily small.
Before that, we will state the main ingredients of the proof here.
%-----------------------------------------------
%
%		DEFINITION GOOD BALLS
%
%-----------------------------------------------
\begin{defn}\label{def_covering}
Given $r>0$, we define the covering
\begin{align*}
 \B_r := \left\{ B_r(i) \colon i\in \L_r \right\}
\end{align*}
of $\torus$, where $\L_r = \torus \cap \frac r{\sqrt{d}} \Z^d $ is a regular grid of midpoints on $\torus$.
% (such that the points $i$ on the grid
% are within the square $\torus$, not periodically continued.)
By construction, for each $n\geq 1$ and each $r>0$, the covering
\begin{align}\label{finiteoverlap}
 \left\{ B_{nr}(i) \colon i \in \L_r\right\} \quad \text{is locally finite,}
\end{align}
in  the sense that for each point in $\torus$, the number of balls containing
this point is bounded by a constant $c(d,n)$ which is independent of $r$.
For given $\delta>0$ and $\chi: \torus \to \{0,1\}\in BV$, we define 
$\B_{r,\delta}$ to be the subset of $\B_r$ consisting of all balls $B$ such that
the following two conditions hold:
\begin{align}
  \inf_{\nu^\ast} 
\int& \eta_{2B} \left| \nu-\nu^\ast\right|^2\left| \nabla \chi \right|
 \leq \delta \, r^{d-1}\label{goodball_tilt}\quad \text{and}\\
\int_{2B}& \left| \nabla \chi \right|
 \geq \frac12 \omega_{d-1} (2r)^{d-1},\label{goodball_surf}
\end{align}
where $\eta_{2B}$ is a cut-off for $2B$.
\end{defn}
%-----------------------------------------------
%
%		LEMMA APPROXIMATE NORMAL L^2
%
%-----------------------------------------------
\begin{lem}%[Approximation of the normal]
\label{lem_approximate_normal_L2}
For every $\varepsilon>0$ and $\chi: \torus \to \{0,1\}$,
there exists an $r_0>0$ such that for all $r\leq r_0$ there exist unit vectors 
$\nu_B\in\sphere$ such that
\begin{align*}
 \sum_{B\in\B_r}
\frac12\int \eta_{2B}\left| \nu-\nu_B\right|^2\left| \nabla \chi \right|
\lesssim \varepsilon^2 \int\left| \nabla \chi\right|.
\end{align*}
\end{lem}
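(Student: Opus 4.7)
The strategy is to exploit the fact that the measure-theoretic normal $\nu$, viewed as a map from the reduced boundary $\partial^\ast\{\chi=1\}$ to $\sphere$, is $|\nabla\chi|$-measurable. Via Lusin's theorem, we replace this measurability by uniform continuity on a compact subset $K$ that carries almost all of the mass of $|\nabla\chi|$, and then we only have to deal with the small defect $|\nabla\chi|(\torus\setminus K)$ in a crude way. Concretely, given $\varepsilon>0$, I would first apply Lusin's theorem to produce a compact set $K\subset \partial^\ast\{\chi=1\}$ with
\begin{equation*}
|\nabla\chi|(\torus\setminus K) \,<\, \frac{\varepsilon^{2}}{C}\int |\nabla\chi|,
\end{equation*}
where $C=C(d)$ is a constant absorbing the overlap number $M=M(d)$ of the covering $\{B_{2r}(i)\}_{i\in\L_r}$ coming from (\ref{finiteoverlap}). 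On the compact set $K$ the restriction of $\nu$ is uniformly continuous, so there exists $r_0>0$ such that $|\nu(x)-\nu(y)|<\varepsilon$ whenever $x,y\in K$ and $|x-y|<4r_0$.

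Fix $r\le r_0$. For each $B=B_r(i)\in\B_r$, I pick $\nu_B\in\sphere$ as follows: if $B_{2r}(i)\cap K\neq\emptyset$, choose any $x_B\in B_{2r}(i)\cap K$ and set $\nu_B := \nu(x_B)$; otherwise let $\nu_B$ be arbitrary (say $e_1$). Since $\eta_{2B}$ vanishes outside $B_{2r}(i)$ and $|\nu-\nu_B|\le 2$, the pointwise estimate
\begin{equation*}
\int \eta_{2B}|\nu-\nu_B|^2 |\nabla\chi|
\,\le\, \int_{B_{2r}(i)\cap K} |\nu-\nu_B|^2 |\nabla\chi|
\,+\, 4 |\nabla\chi|\bigl(B_{2r}(i)\setminus K\bigr)
\end{equation*}
holds. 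On $B_{2r}(i)\cap K$ the uniform continuity on $K$ together with $\operatorname{diam}B_{2r}(i)=4r\le 4r_0$ gives $|\nu(y)-\nu_B|<\varepsilon$ pointwise, so the first right-hand side term is bounded by $\varepsilon^{2}|\nabla\chi|(B_{2r}(i))$ (and is vacuously zero for the ``bad'' balls where $B_{2r}(i)\cap K=\emptyset$).

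Summing over $B\in\B_r$ and using the finite overlap (\ref{finiteoverlap}) for the family $\{B_{2r}(i)\}_{i\in\L_r}$, both $\sum_B |\nabla\chi|(B_{2r}(i))$ and $\sum_B|\nabla\chi|(B_{2r}(i)\setminus K)$ are controlled by $M|\nabla\chi|(\torus)$ and $M|\nabla\chi|(\torus\setminus K)$ respectively. Combining with the choice of $K$ yields
\begin{equation*}
\sum_{B\in\B_r}\tfrac12\int \eta_{2B}|\nu-\nu_B|^{2}|\nabla\chi|
\,\lesssim\, \varepsilon^{2} M\int|\nabla\chi| \,+\, M|\nabla\chi|(\torus\setminus K)
\,\lesssim\, \varepsilon^{2}\int|\nabla\chi|,
\end{equation*}
which is exactly the claim. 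The only genuine issue is the step bridging pointwise a.e.\ behaviour of $\nu$ (as provided by De Giorgi's structure theorem) to a uniform statement that survives summation over the covering; Lusin's theorem does this job. The finite overlap in (\ref{finiteoverlap}) then handles the passage from local to global, with no regularity assumption beyond $\chi\in BV$.
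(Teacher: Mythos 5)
Your argument is correct. It differs from the paper's proof only in the mechanism used to upgrade the $|\nabla\chi|$-measurability of $\nu$ to a quantitative continuity statement: you invoke Lusin's theorem to get a compact set $K$ carrying all but an $\varepsilon^2$-fraction of $|\nabla\chi|$ on which $\nu$ is uniformly continuous, pick $\nu_B=\nu(x_B)$ at a point $x_B\in 2B\cap K$, and absorb the exceptional set via the crude bound $|\nu-\nu_B|\le 2$ together with the finite overlap (\ref{finiteoverlap}). The paper instead approximates $\nu$ globally by a continuous field $\tilde\nu\colon\torus\to\overline B$ in $L^2(|\nabla\chi|)$, chooses near-constant values $\tilde\nu_B$ by uniform continuity of $\tilde\nu$, and then needs an extra renormalization step (the case distinction $|\tilde\nu_B|\gtrless\frac12$) to replace $\tilde\nu_B$ by a genuine unit vector. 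Your route buys the unit vectors for free, since $\nu(x_B)\in\sphere$ automatically, at the price of the Lusin/exceptional-set bookkeeping; the paper's route avoids any exceptional set but pays with the projection argument. Both hinge on the same two ingredients (continuity as a proxy for measurability, plus the locally finite covering), so the difference is one of flavor rather than substance. Two cosmetic points to tidy up: since $\nu$ is only defined and of unit length $|\nabla\chi|$-a.e., you should choose $K$ inside the set where $|\nu|=1$ (inner regularity allows this), so that $\nu(x_B)\in\sphere$ is legitimate; and since $\eta_{2B}$ is a cut-off \emph{for} $2B$, its support may be a slightly larger ball such as $4B$, so the integration domain and the uniform-continuity radius should be adjusted accordingly (harmless, since (\ref{finiteoverlap}) holds for every fixed dilation factor $n$).
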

%-----------------------------------------------
%
%		LEMMA BAD BALLS
%
%-----------------------------------------------
The following lemma will be used to control the error terms obtained in
Section \ref{sec:velocity} on the ``bad'' balls $B\in\B_r-\B_{r,\delta}$.
\begin{lem}\label{lem_badballs}
For any $\delta >0$ and any
 $\chi\colon \torus \to \{0,1\}\in BV$, we have
\begin{align*}
 \lim_{r\to0 } \sum_{B\in\B_r-\B_{r,\delta}} \;\int_{2B} \left|\nabla\chi\right| =0.
\end{align*}
\end{lem}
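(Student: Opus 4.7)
The plan is to reduce the assertion to a decay statement for the $|\nabla\chi|$-measure of the union of bad doubled balls, and then to feed in the infinitesimal flatness of the reduced boundary via De Giorgi's structure theorem. First, I would use the finite-overlap property (\ref{finiteoverlap}) of the doubled-ball family $\{2B : B \in \B_r\}$ to bound
\[
  \sum_{B \in \B_r - \B_{r,\delta}} \int_{2B} |\nabla\chi|
  \;=\; \int \#\{B \in \B_r - \B_{r,\delta} : x \in 2B\}\, d|\nabla\chi|(x)
  \;\lesssim\; |\nabla\chi|(A_r),
\]
where $A_r := \bigcup_{B \in \B_r - \B_{r,\delta}} 2B$. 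Since $|\nabla\chi|$ is a finite Radon measure, by dominated convergence it then suffices to show that $\chara_{A_r}(x) \to 0$ for $|\nabla\chi|$-almost every $x$, i.e.\ that eventually no bad doubled ball contains $x$.

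For this, the main input is De Giorgi's structure theorem applied to $\{\chi = 1\}$: at $|\nabla\chi|$-almost every $x$ there is a measure-theoretic outer normal $\nu(x) \in \sphere$ satisfying, as $\rho \downarrow 0$,
\[
  \frac{|\nabla\chi|(B_\rho(x))}{\omega_{d-1}\,\rho^{d-1}} \longrightarrow 1
  \quad \text{and} \quad
  \rho^{-(d-1)} \int_{B_\rho(x)} |\nu - \nu(x)|^2 \,d|\nabla\chi| \longrightarrow 0.
\]
I fix such an $x$ and verify both (\ref{goodball_tilt}) and (\ref{goodball_surf}) for every $i \in \L_r$ with $x \in B_{2r}(i)$. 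For the tilt condition, the estimate $|x - i| < 2r$ gives the inclusion $B_{2r}(i) \subset B_{4r}(x)$, and choosing $\nu^\ast := \nu(x)$ the vanishing-tilt limit immediately yields
\[
  \int \eta_{2B}\,|\nu - \nu^\ast|^2\,d|\nabla\chi|
  \;\leq\; \int_{B_{4r}(x)} |\nu - \nu(x)|^2 \,d|\nabla\chi|
  \;=\; o(r^{d-1}) \;\leq\; \delta\, r^{d-1}
\]
for $r$ small enough.

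The harder step is the density condition (\ref{goodball_surf}), since the inclusion $B_{2r}(i) \subset B_{4r}(x)$ only gives an upper density bound. The plan is to exploit that for small $r$ the centers $(i-x)/r$ range over a bounded set of $O(1)$ points, so $\{B_{2r}(i) : x \in 2B\}$ is a bounded-eccentricity family with respect to $x$; a Besicovitch-type strong differentiation theorem for the finite Radon measure $|\nabla\chi|$ then transfers the asymptotic density at $x$ to these off-centered balls, yielding $|\nabla\chi|(B_{2r}(i))/((2r)^{d-1})$ converging to a strictly positive limit uniformly over such $i$. Equivalently, one may partition $\L_r$ into $O(1)$ sub-grids whose doubled balls are pairwise disjoint and apply the concentric-ball density theorem within each sub-grid, handling the off-center contributions via the tilt-controlled local flatness of $\Sigma$. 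The main obstacle is precisely this uniform density differentiation over grid-centered balls that are not asymptotically concentric with $x$; once it is in hand, combining the tilt and density verifications with the dominated convergence step completes the proof.
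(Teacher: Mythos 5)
Your reduction via the finite overlap property to $|\nabla\chi|(A_r)$ is fine, but the pointwise statement you then try to prove --- that $|\nabla\chi|$-a.e.\ $x$ is eventually contained in \emph{no} bad doubled ball --- is strictly stronger than the lemma and is not true, so the obstacle you flag at the density step is fatal rather than technical. The problem is exactly the off-centered lower bound: if $x$ lies near the boundary of some $2B=B_{2r}(i)$ and the approximate tangent plane at $x$ is nearly tangent to $\partial B_{2r}(i)$ there, then $2B$ clips only a thin cap of the interface, so $\int_{2B}|\nabla\chi|$ falls below $\tfrac12\omega_{d-1}(2r)^{d-1}$ even though $x$ is a perfectly good De Giorgi point. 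Already for $\chi$ a half space, at every scale $r$ there are grid balls in this ``clipping'' position containing reduced-boundary points, and since the relative position of a fixed $x$ with respect to the rescaled grid keeps changing as $r\downarrow0$, such scales recur; hence $\chara_{A_r}(x)\not\to0$ for a set of $x$ of positive $|\nabla\chi|$-measure. No Besicovitch-type differentiation theorem repairs this: those results control ratios along essentially concentric families, whereas you need a mass lower bound for balls whose center sits at distance comparable to $2r$ from $x$ in the normal direction, and that bound genuinely fails. (This is also why the condition (\ref{goodball_surf}) must be handled with care in the application, where only the perimeter in the smaller ball $B$ is summed, cf.\ (\ref{badballs_pointwisenocontribution}).)

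The paper never attempts your pointwise claim; it splits the bad balls according to which condition fails and controls the \emph{total mass} they carry. For the balls failing the tilt condition (\ref{goodball_tilt}), it combines Lemma \ref{lem_approximate_normal_L2} with Chebyshev to bound their \emph{number} by $\gamma r^{1-d}\int|\nabla\chi|$, and then bounds the perimeter inside each doubled ball by a constant (depending on Lipschitz constants) times $r^{d-1}$, after covering $\partial^\ast\Omega$ up to a small $\H^{d-1}$-remainder by finitely many Lipschitz graphs --- this counting-plus-rectifiability step is where De Giorgi's structure theorem enters, and the product is made small by choosing first the number of graphs and then $\gamma$. For the balls failing the mass condition it picks a reduced-boundary point $x$ in the bad ball and works only with the \emph{concentric} ball $B_{2r}(x)$, which is contained in a fixed dilate of $B$; the controlling event is then ``$x$ is a low-concentric-density point of $\partial^\ast\Omega$'', a set whose $|\nabla\chi|$-measure vanishes as $r\to0$ by the density lower bound at reduced-boundary points and dominated convergence. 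Membership of $x$ in a bad ball is never used as the event to be killed. To salvage your route you would have to reformulate both badness conditions through concentric balls centered at points of $\partial^\ast\Omega$ (which is precisely the paper's Step 2) and treat the tilt-bad balls by a counting argument rather than pointwise, at which point you have reproduced the paper's proof.
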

%-----------------------------------------------
%
%		LEMMA GOOD BALLS
%
%-----------------------------------------------
In a rescaled version, the following lemma can be used to control the error terms 
on the ``good'' balls $B\in \B_{r,\delta}.$
\begin{lem}\label{lem_goodballs}
Let $\eta$ be a radially symmetric cut-off for the unit ball $B$.
Then for any  $\varepsilon>0$ there exists $\delta = \delta(d,\varepsilon)>0$ such 
that for any $\chi\colon \torus \to \{0,1\}$
with
\begin{align}\label{goodballs_premiss}
 \int\eta \left|\nu-e_1 \right|^2 \left|\nabla\chi \right| \leq \delta^2
\end{align}
there exists a half space $\chi^\ast$ in direction $e_1$ such that
\begin{align}\label{goodballs_claim}
 \left| \int_B \left( \left|\nabla\chi \right|-\left|\nabla\chi^\ast \right|\right)\right|
\leq \varepsilon^2, \quad
\int_B \left| \chi-\chi^\ast\right|dx\leq \varepsilon^2.
\end{align}
\end{lem}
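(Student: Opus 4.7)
The plan is a soft compactness argument by contradiction. Suppose the lemma fails for some $\varepsilon_0>0$: for every $n$ there is a characteristic function $\chi_n\colon\torus\to\{0,1\}$ with $\delta_n^2:=\int\eta\,|\nu_n-e_1|^2\,|\nabla\chi_n|\le \frac1{n^2}$ for which no half space $\chi^\ast$ in direction $e_1$ satisfies both inequalities in (\ref{goodballs_claim}) with $\varepsilon_0$ in place of $\varepsilon$. The goal is to exhibit, via a subsequential limit, a half space that does work, contradicting the assumption.

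The first task is a uniform perimeter bound on $B$ (in fact on a slightly enlarged ball where $\eta$ is still bounded below). The algebraic identity $1=\nu_n\cdot e_1+\tfrac12|\nu_n-e_1|^2$ (valid since $|\nu_n|=|e_1|=1$), integrated against $|\nabla\chi_n|$, gives
\begin{equation*}
 |\nabla\chi_n|(B)\;=\;(\partial_1\chi_n)(B)\;+\;\tfrac12\!\int_B |\nu_n-e_1|^2\,|\nabla\chi_n|.
\end{equation*}
The second term is at most $\delta_n^2/2\to 0$ (using $\eta\equiv 1$ on $B$); the first is controlled by $\mathcal{H}^{d-1}(\partial B)$ via the divergence theorem since $\chi_n\in\{0,1\}$. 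The same estimate on a slightly larger ball plus standard $BV$-compactness then yields, after passing to a subsequence, a limit $\chi_\infty\in BV_{\mathrm{loc}}$ with $\chi_n\to\chi_\infty$ in $L^1$ near $\bar B$ and $|\nabla\chi_n|\rightharpoonup\mu$ weakly-$*$ as Radon measures.

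Next I would identify $\chi_\infty$ on $B$ as a half space. Reshetnyak's lower-semicontinuity theorem applied to the convex, one-homogeneous integrand $|\nu-e_1|^2$ yields
\begin{equation*}
 \int\eta\,|\nu_\infty-e_1|^2\,|\nabla\chi_\infty|\;\le\;\liminf_n \delta_n^2\;=\;0,
\end{equation*}
so $\nu_\infty=e_1$ for $|\nabla\chi_\infty|$-a.e.\ point of $\{\eta>0\}\supset B$. Equivalently, $\partial_i\chi_\infty=0$ in $B$ for $i\ge 2$ and $\partial_1\chi_\infty\ge 0$, forcing $\chi_\infty$ to agree on $B$ with a half space $\chi^\ast=\chara_{\{x_1>\lambda\}}$ for some $\lambda\in[-1,1]$. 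The $L^1$-convergence $\chi_n\to\chi^\ast$ on $B$ is exactly the second inequality of (\ref{goodballs_claim}) for this $\chi^\ast$.

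The main obstacle, and the remaining step, is to promote the weak limit information to convergence of the perimeters $|\nabla\chi_n|(B)\to|\nabla\chi^\ast|(B)$. Lower semicontinuity gives $\liminf|\nabla\chi_n|(B)\ge|\nabla\chi^\ast|(B)$ for free; for the reverse I would reuse the identity from Step~1. The strong $L^1$-convergence upgrades to weak convergence of the signed measures $\partial_1\chi_n\rightharpoonup\partial_1\chi^\ast$, and the bound $\int_B(\nu_n\cdot e_1)_-\,|\nabla\chi_n|\lesssim \delta_n\to 0$ (by Cauchy--Schwarz against the tilt) shows that the negative part of $\partial_1\chi_n$ is asymptotically negligible, so the weak-$*$ upper bound $\limsup(\partial_1\chi_n)(\bar B)\le\partial_1\chi^\ast(\bar B)=|\nabla\chi^\ast|(\bar B)$ applies. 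Because $|\nabla\chi^\ast|$ is concentrated on the hyperplane $\{x_1=\lambda\}$, which meets $\partial B$ in an $\mathcal{H}^{d-1}$-null set when $|\lambda|<1$ and has $|\nabla\chi^\ast|(\bar B)=0$ when $|\lambda|\ge 1$, we have $|\nabla\chi^\ast|(\bar B)=|\nabla\chi^\ast|(B)$ in all cases. Combined with the vanishing of the $\delta_n^2$-term this gives $|\nabla\chi_n|(B)\to|\nabla\chi^\ast|(B)$, which is the first inequality of (\ref{goodballs_claim}) and contradicts the standing assumption for large $n$.
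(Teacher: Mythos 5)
Your proposal is correct and takes essentially the same route as the paper: a compactness/contradiction argument in which the algebraic identity $|\nu-e_1|^2\,|\nabla\chi| = 2\left(|\nabla\chi| - e_1\cdot\nabla\chi\right)$ gives the uniform perimeter bound, lower semicontinuity of this tilt functional identifies the limit as a half space in direction $e_1$, the $L^1$-part of the claim follows from the strong convergence, and the perimeter convergence is recovered from the flux term together with the fact that the half space carries no mass on $\partial B$. The only difference is cosmetic: you obtain the $\limsup$ of the perimeters via weak-$*$ upper semicontinuity of $\partial_1\chi_n$ on the compact set $\bar B$ after showing its negative part is negligible, whereas the paper reaches the same conclusion through its quantitative inequality applied with a slightly enlarged cut-off $\tilde\eta$ of $B$ in $(1+\delta)B$ and then letting $\delta\to0$.
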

%-----------------------------------------------
%
%		LEMMA X_i -> X
%
%-----------------------------------------------
\begin{lem}\label{lem_onehalfspace}
Let $\eta$ be a cut-off for the unit ball $B$.
 Then for any $\varepsilon>0$ there exists $\delta=\delta(d,\numphases,\varepsilon)>0$
 such that for any $\chi\colon \torus \to \{0,1\}^\numphases$ with $\sum_{i}\chi_i=1$, the following
 statement holds:
Whenever we can approximate each normal separately, i.\ e.\
 \begin{align*}
  \sum_{i=1}^\numphases 
  \inf_{\nu_i^\ast} \frac12 \int \eta \left| \nu_i - \nu_i^\ast \right|^2 \left| \nabla \chi_i\right|
    \leq \delta^2,
 \end{align*}
then we can do so with one normal $\nu^\ast\in \sphere$ and its inverse $-\nu^\ast$:
\begin{align*}
 \min_{i\neq j}
  \inf_{\nu^\ast} \Bigg\{ 
  \sum_{k\notin \{i,j\}}\int_B \left| \nabla \chi_k\right|
  +\frac12 \int_B\left| \nu_{i} - \nu^\ast \right|^2 \left| \nabla \chi_{i}\right|
  +\frac12 \int_B \left| \nu_{j} + \nu^\ast \right|^2 \left| \nabla \chi_{j}\right|
 \Bigg\}
 \leq \varepsilon^2.
\end{align*}
\end{lem}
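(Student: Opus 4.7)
My plan is to argue by contradiction and compactness. Assuming the lemma fails at some $\varepsilon>0$, we obtain sequences $\delta_n\downarrow 0$ and admissible partitions $\chi^n\colon \torus \to \{0,1\}^\numphases$ together with unit vectors $\nu_i^{\ast,n}\in \sphere$ realizing the infima in the hypothesis, such that
\begin{align*}
 \sum_{i=1}^{\numphases}\frac12 \int \eta \, |\nu_i^n-\nu_i^{\ast,n}|^2 \, |\nabla\chi_i^n| \leq \delta_n^2,
\end{align*}
while the conclusion fails with this fixed $\varepsilon$ for every $n$.

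First I would establish a uniform $BV$ bound on $\supp\eta$. From $|\nu_i^{\ast,n}|=1$ and the identity
\begin{align*}
 \nu_i^{\ast,n}\int\eta\, d|\nabla\chi_i^n|=\int\eta\, d\nabla\chi_i^n+\int\eta\,(\nu_i^{\ast,n}-\nu_i^n)\,d|\nabla\chi_i^n|,
\end{align*}
together with Cauchy--Schwarz and the integration-by-parts bound $|\int\eta\, d\nabla\chi_i^n|\lesssim_\eta 1$, one obtains $\int\eta\, d|\nabla\chi_i^n|\lesssim_\eta 1+\delta_n^2$. Extract a subsequence so that $\chi^n\to \chi$ in $L^1(\supp\eta)$ and $\nu_i^{\ast,n}\to \nu_i^\ast\in \sphere$ for each $i$.

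The central step is to verify that in the limit each phase has a spatially constant normal: $\nu_i=\nu_i^\ast$ holds $|\nabla\chi_i|$-a.e.\ on $\{\eta>0\}$. Since $\nu_i^n\to \nu_i^\ast$ in $L^2(\eta\, d|\nabla\chi_i^n|)$ by the hypothesis and the convergence of $\nu_i^{\ast,n}$, we obtain
\begin{align*}
 \int \eta\, d|\nabla\chi_i^n|=\nu_i^\ast\cdot\int \eta\, d\nabla\chi_i^n+o(1)\longrightarrow \nu_i^\ast\cdot\int \eta\, d\nabla\chi_i\leq \int \eta\, d|\nabla\chi_i|,
\end{align*}
where the distributional derivative passes to the limit via integration by parts against the fixed smooth $\eta$. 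Lower semicontinuity provides the reverse inequality, hence equality, which forces both perimeter convergence of $|\nabla\chi_i^n|$ on $\{\eta>0\}$ and $\nu_i=\nu_i^\ast$ $|\nabla\chi_i|$-a.e.

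Rigidity now closes the argument. A $BV$ characteristic function whose reduced boundary in $B$ has constant normal $\nu_i^\ast$ equals, up to a null set, the intersection of $B$ with a half-space perpendicular to $\nu_i^\ast$ (allowing the degenerate cases $\emptyset$ or $B$): writing $\chi_i=f_i(x\cdot\nu_i^\ast)$ with $f_i\in BV(\R;\{0,1\})$, any jump of $f_i$ from $1$ to $0$ would contribute $-\nu_i^\ast$ to the reduced boundary, so $f_i$ must be monotone. Two nontrivial half-spaces partition $B$ only if they are complementary, so at most two limiting phases $i_0,j_0$ are nontrivial and they satisfy $\nu_{j_0}^\ast=-\nu_{i_0}^\ast$. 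With the choice $\nu^\ast:=\nu_{i_0}^\ast$, $i=i_0$, $j=j_0$, perimeter convergence sends the left-hand side of the conclusion evaluated along $\chi^n$ to $0$, contradicting its lower bound $\varepsilon^2$. The main obstacle is cleanly executing the rigidity step, where one must combine the constant-normal constraint with the partition constraint to rule out slabs or multiple parallel half-spaces.
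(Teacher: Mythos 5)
Your proposal is correct and takes essentially the same route as the paper's proof: a contradiction/compactness argument in which the hypothesis gives a uniform $BV$ bound near $\supp\eta$, the chain of (in)equalities combining lower semicontinuity of $\int\eta\left|\nabla\chi_i\right|$ with the continuous bulk term $\int\chi_i\,\nabla\eta\,dx$ forces the limiting normals to be constant, rigidity then identifies the limits as complementary half spaces with the remaining phases trivial in $B$, and perimeter convergence lets one pass to the limit in the negated conclusion. The only presentational difference is in localizing the perimeter convergence from the $\eta$-weighted quantity down to $B$: the paper does this with a cut-off between $B$ and $(1+\delta)B$ plus the fact that half spaces carry no mass on $\partial B$, while you argue via weak-$\ast$ convergence of the total variation measures; both are sound, and your flagged rigidity step is treated at the same level of detail as in the paper.
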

\subsection{Proof of Theorem \ref{thm1}}
%-----------------------------------------------
%
%		PROOF OF THEOREM
%
%-----------------------------------------------
Using Proposition \ref{prop_velocity_good_balls} and the lemmas from above, we can give the proof
of the main result.
The proof consists of three steps:
\begin{enumerate}
 \item Post-processing Propositions  \ref{surf_prop_integrated_in_time} and \ref{prop_velocity_good_balls},
       using the Euler-Lagrange equation (\ref{ELG}) and by making the half space time-dependent,
 \item Estimates for fixed time and
 \item Integration in time.
\end{enumerate}
\begin{proof}[Proof of Theorem \ref{thm1}]
\step{Step 1: Post-processing Propositions  \ref{surf_prop_integrated_in_time} 
and \ref{prop_velocity_good_balls}.} 
Let us first link the results we obtained in Sections \ref{sec:curvature} and \ref{sec:velocity}.
For any fixed vector $\nu^\ast\in \sphere$ and any test function $\xi_B\in C_0^\infty((0,T)\times B, \R^d)$,
supported in a space-time cylinder $(0,T)\times B$, we claim
\begin{align}\label{error_in_velocity_postprocessed}
 &\left| \sum_{i,j} \sigma_{ij} \int_0^T \int \left(\nabla\cdot\xi_B - \nu_i \cdot \nabla \xi_B \,\nu_i
- 2\,\xi_B \cdot \nu_i \,V_i \right) 
\left(\left|\nabla\chi_i\right| + \left|\nabla\chi_j\right| - \left|\nabla(\chi_i+\chi_j)\right|\right) 
dt\right|\\
&\lesssim \|\xi_B\|_{\infty}\left[ 
  \left(\min_{i\neq j}\int_0^T
 \left( \frac1{\alpha^2}\E^2_{ij}(\nu^\ast,t) + \alpha^{\frac19} r^{d-1} \right)dt\right) \wedge
  \left(\frac1\alpha \sum_{i=1}^\numphases \int_0^T \int_B 
\left| \nabla\chi_i\right|dt\right) \right.\notag\\
 &\qquad \qquad \qquad\qquad \qquad \qquad\qquad \qquad + \left.\alpha^{\frac19}\iint \eta_B\, d\mu + \alpha \sum_{i=1}^\numphases
 \int_0^T \int \eta_B  \,V_i^2
\left| \nabla\chi_i\right|dt \right],\notag
\end{align}
where $\E^2_{ij}$ is defined via
\begin{align*}
\E^2_{ij}(\nu^\ast,t)
 := 
  \sum_{k\notin \{i,j\}}\int \eta_{2B} \left| \nabla \chi_{k}(t)\right|
 +&\int \eta_{2B}  \left| \nu_i(t)-\nu^\ast\right|^2 \left|\nabla \chi_i(t) \right| 
 + \int \eta_{2B}  \left| \nu_j(t)+\nu^\ast\right|^2 \left|\nabla \chi_j(t) \right|\\
 +\inf_{\chi^\ast}
\Bigg\{
&\left|\int \eta_B \left(\left| \nabla \chi_{i}(t) \right|
 -\left| \nabla \chi^\ast\right| \right) \right|
  + \frac1r \int_{2B} \left|\chi_{i}(t)-\chi^\ast\right|dx\\
+&\left|\int \eta_B \left(\left| \nabla \chi_{j}(t) \right|
 -\left| \nabla \chi^\ast\right| \right)\right|
+ \frac1r \int_{2B} \left|\chi_{j}(t)-\left(1-\chi^\ast\right)\right|dx \Bigg\}.
\end{align*}
The infimum is taken over all half spaces
$\chi^\ast = \chara_{\{ x\cdot \nu^\ast > \lambda \}}$ 
in direction $\nu^\ast$.\\
% For $i=2,3$, $\E^2_i$ is a similar expression with cyclically exchanged roles of the phases.\\
Argument for (\ref{error_in_velocity_postprocessed}):
By symmetry, we may assume w.\ l.\ o.\ g.\ that the minimum on the right-hand side of 
(\ref{error_in_velocity_postprocessed}) is realized for $i=1$ and $j=2$.
The Euler-Lagrange equation (\ref{ELG}) of the minimizing movements interpretation (\ref{MMinterpretation})
links Proposition \ref{surf_prop_integrated_in_time} with the metric term:
\begin{align*}
\lefteqn{\lim_{h\to0} \int_0^T -\delta E_h(\,\cdot\,-\chi^h(t-h)) 
(\chi^h(t),\xi_B(t)) \,dt} \\
&=-c_0\sum_{i,j} \sigma_{ij} \int_0^T \int 
\left( \nabla\cdot \xi_B - \nu_i\cdot \nabla \xi_B\,\nu_i\right) 
\frac12\left(\left|\nabla\chi_i\right| + \left|\nabla\chi_j\right| - \left|\nabla(\chi_i+\chi_j)\right|\right)
dt.
\end{align*}
Before applying the results of Section \ref{sec:velocity}
we  symmetrize the second term on the left-hand side of (\ref{error_in_velocity_local_in_time}):
We claim that we can replace
\begin{align}\label{V1V2}
 \sigma_{12}\left( \int \xi_{B} \cdot \nu^\ast\, V_1 \left|\nabla \chi_1 \right|
 +\int \xi_{B}\cdot (-\nu^\ast)\, V_2 \left|\nabla \chi_2 \right|\right)
\end{align}
which appears on the left-hand side of (\ref{error_in_velocity_local_in_time}) by the symmetrized term
\begin{align}\label{Vi}
 \sum_{i,j} \sigma_{ij} \int \xi_B \cdot \nu_i \, V_i 
\frac12
\left(\left|\nabla\chi_i\right| + \left|\nabla\chi_j\right| - \left|\nabla(\chi_i+\chi_j)\right|\right)
\end{align}
which appears in the weak formulation (\ref{H=v}).
Then using Proposition \ref{prop_velocity_good_balls} and this symmetrization
or the rough estimate Lemma \ref{lem_rough_error_in_velocity} yields (\ref{error_in_velocity_postprocessed}).
Now we show how to replace (\ref{V1V2}) by (\ref{Vi}).\\
We start by noting that the sum in (\ref{Vi}) contains two terms involving only Phases 1 and 2.
The contribution to the sum is
\begin{align*}
 \sigma_{12} \int \xi_{B} \cdot \left( \nu_1\, V_1 +  \nu_2\, V_2\right) 
 \frac12
\left(\left|\nabla\chi_1\right| + \left|\nabla\chi_2\right| - \left|\nabla(\chi_1+\chi_2)\right|\right),
\end{align*}
which can be brought into the form of (\ref{V1V2}) at the expense of an error which is controlled by 
$\|\xi_B\|_\infty$ times
\begin{align*}
    \int \eta_B
   \left| \nu_1 - \nu^\ast \right| \left| V_1 \right| \left| \nabla \chi_1\right| +
    \int \eta_B \left| \nu_2 + \nu^\ast \right| \left| V_2 \right| \left| \nabla \chi_2\right|.
\end{align*}
Note that by Young's inequality we have 
\begin{align*}
\left| \nu_1 - \nu^\ast \right| \left| V_1 \right| 
\leq \frac 1\alpha \left| \nu_1 - \nu^\ast \right|^2
+ \alpha V_1^2,
\end{align*}
so that we can estimate both terms after integration in time by
\begin{align*}
 \frac1\alpha\int_0^T \left(\int \eta_{B}  \left| \nu_1-\nu^\ast\right|^2 \left|\nabla \chi_1 \right| 
 + \int \eta_{B}  \left| \nu_2+\nu^\ast\right|^2 \left|\nabla \chi_2 \right|\right) dt
 + \alpha \sum_{i=1}^\numphases
 \int_0^T \int \eta_B  \,V_i^2
\left| \nabla\chi_i\right|dt.
\end{align*}
We are left with estimating the summands in (\ref{Vi}) with $\{i,j\} \neq \{1,2\}$.
For those terms we can use Young's inequality in the following form
\begin{align*}
 \left| \nu_i\right| \left| V_i\right| \leq \frac1\alpha + \alpha V_i^2
\end{align*}
so that after integration in time these terms are controlled by $\|\xi_B\|_\infty$ times
\begin{align*}
 \frac1\alpha \sum_{i=3}^\numphases \int \eta_{B} \left| \nabla \chi_{i}\right|
+ \alpha \sum_{i=1}^\numphases
 \int_0^T \int \eta_B  \,V_i^2
\left| \nabla\chi_i\right|dt,
\end{align*}
which concludes the argument for the symmetrization and thus for (\ref{error_in_velocity_postprocessed}).\\
Here, we see, why we needed to introduce extra terms in $\E_1$ compared to the terms that were already
present in the definition of $\E_1$ in Section \ref{sec:velocity}.
These different terms are sometimes called \emph{tilt-excess} and \emph{excess energy},
respectively.

\medskip

Now let $\xi\in C_0^\infty((0,T)\times\torus, \R^d)$ be given.
First, we localize $\xi$ in space according to the covering $\B_r$ from Definition \ref{def_covering}.
To do so, we introduce a subordinate partition of unity $\{\varphi_B\}_{B\in \B_r}$ and set
$\xi_B:= \varphi_B \xi$. Then $\xi = \sum_{B\in \B_r} \xi_B$,
$\xi_B\in C_0^\infty(B)$ and $\|\xi_B\|_\infty \leq\|\xi\|_\infty$.
Given a radially symmetric and radially non-increasing cut-off $\eta$ of $B_1(0)$ in $B_2(0)$, for
each ball $B$ in the covering, we can construct a cut-off $\eta_B$ of $B$ in $2B$ by shifting and rescaling.
Given any measurable function $\nu^\ast\colon (0,T)\to \sphere$ and any $\alpha\in(0,1)$ we claim
\begin{align}\label{error_in_velocity}
&\left| \sum_{i,j} \sigma_{ij} \int_0^T \int \left(\nabla\cdot\xi_B - \nu_i \cdot \nabla \xi_B \,\nu_i
- 2\,\xi_B \cdot \nu_i \,V_i \right) 
\left(\left|\nabla\chi_i\right| + \left|\nabla\chi_j\right| - \left|\nabla(\chi_i+\chi_j)\right|\right) 
dt\right|\\
&\lesssim \|\xi\|_{\infty}\left[ 
  \int_0^T
 \left( \frac1{\alpha^2}\E^2_B(\nu^\ast(t),t) + \alpha^{\frac19} r^{d-1} \right)
 \wedge \left(\frac1\alpha\sum_{i=1}^\numphases \int_{B}  \left| \nabla\chi_i\right| \right)dt
 + \alpha^{\frac19}\iint \eta \, d\mu\right.\notag\\
 &\qquad\qquad\qquad \qquad\qquad\qquad\qquad\qquad\qquad\qquad\qquad\qquad\left. 
 + \alpha \sum_{i=1}^\numphases\int_0^T \int \eta  \,V_i^2
\left| \nabla\chi_i\right|dt\right],\notag
\end{align}
where $\E^2_B(\nu^\ast,t):= \min_{i\neq j} \E^2_{ij}(\nu^\ast,t)$ for $\nu^\ast\in\sphere$.\\
Now we give the argument that (\ref{error_in_velocity_postprocessed}) implies (\ref{error_in_velocity}).
We approximate the measurable function $\nu^\ast$ in time by a piecewise constant function.
Let $0 =T_0< \dots < T_M=T$ denote a partition of $(0,T)$ such that the approximation $\nu^\ast_M$
of $\nu^\ast$ is constant on each interval $[T_{m-1},T_m)$.
Since the measures on the left-hand side are absolutely continuous in time, we can approximate
$\xi_B$ by vector fields which vanish at the points $T_m$ and both, the 
curvature and the velocity term converge. 
Therefore, we can apply (\ref{error_in_velocity_postprocessed}) on each time interval $(T_{m-1},T_m)$.
Lebesgue's dominated convergence gives us the convergence of the integral on the right-hand side and thus
(\ref{error_in_velocity}) holds. 
\step{Step 2: Estimates for fixed time.}
% By Lemma \ref{lem_approximate_normal_L2}, for every ball $B\in \B_r$ and
% every $i\in \{1,2,3\}$, we can find $\nu_{B,i}\colon (0,T)\to\in \sphere$, such that
% \begin{align*}
%  \sum_{B\in \B_r} \int_{B} \left|  \nu_i(t)-\nu_{B,i}(t)\right|\left| \nabla \chi_i(t)\right|
% \to 0,\quad \text{as }r\to0.
% \end{align*}
% By Lebesgue's dominated convergence, the convergence holds in $L^1(0,T)$.
Let $t\in(0,T)$ be fixed. We will omit the argument $t$ in the following.
Let $\varepsilon>0$ and let $\delta=\delta(\varepsilon)$ (to be determined later).
Let $\B_{r,\delta}$ be defined as the set of good balls in the lattice:
% the intersection of the three subsets of $\B_{r}$ that one constructs with
% Definition \ref{def_covering} using the parameters $r$, $\delta$ and $\chi_i$, $i=1,2,3$, i.\ e.\
\begin{align*}
 \B_{r,\delta} := \left\{ B \in \B_r \colon  
 \sum_{i=1}^\numphases\inf_{\nu^\ast} \int \eta_{2B} \left| \nu_i-\nu^\ast\right|^2\left| \nabla \chi_i \right|
 \leq \delta r^{d-1}\text{ and }
\sum_{i=1}^\numphases \,\int_{2B} \left| \nabla \chi_i \right|
 \geq \frac12 \omega_{d-1} (2r)^{d-1} \right\}.
\end{align*}
For $B\in\B_{r,\delta}$, and $i=1,\dots,\numphases$, we denote by $\nu_{B,i}$ 
the vector $\nu^\ast$ for which the 
infimum is attained, so that
\begin{align*}
 \sum_{i=1}^\numphases \frac12 \int  \eta_{2B} \left| \nu_i-\nu_{B,i}\right|^2\left| \nabla \chi_i \right|
 \leq \delta\, r^{d-1}.
\end{align*}
By a rescaling and since $\eta$ is radially symmetric, we can upgrade Lemma \ref{lem_onehalfspace}, so that
for given $\gamma>0$, we can find $\delta=\delta(d,\gamma)>0$ (independent of $\chi$)
and $\nu_B\in\sphere$, such that
\begin{align*}
 \min_{i\neq j}\left\{
  \sum_{k\notin\{i,j\}}\int \eta_B \left| \nabla \chi_k\right|
  +\frac12 \int \eta_B\left| \nu_{i} - \nu_B \right|^2 \left| \nabla \chi_{i}\right|
  +\frac12 \int \eta_B\left| \nu_{j} + \nu_B \right|^2 \left| \nabla \chi_{j}\right|
 \right\}
 \leq \gamma\, r^{d-1}.
\end{align*}
Rescaling Lemma \ref{lem_goodballs}, we can define $\gamma=\gamma(\varepsilon)>0$ and a half space
$\chi^\ast$ in direction $\nu_B$, such that 
\begin{align*}
 \E^2_B(\nu_B,t) \leq \varepsilon^2 r^{d-1}.
\end{align*}
These two steps give us the dependence of $\delta$ on $\varepsilon$.
Using the lower bound on the perimeters on $B\in \B_{r,\delta}(t)$, we obtain
\begin{align*}
  \sum_{B\in \B_{r,\delta}} \left(\frac1{\alpha^2}\E^2_B(\nu_B,t) + \alpha^{\frac19} r^{d-1}\right)
  \lesssim  \sum_{B\in \B_{r,\delta}} \left(\frac1{\alpha^2}\varepsilon^2 +\alpha^{\frac19}\right)r^{d-1}
  \lesssim \left(\frac1{\alpha^2}\varepsilon^2 +\alpha^{\frac19}\right) 
  \sum_{i=1}^\numphases\int \left| \nabla \chi_i\right|.
\end{align*}
Note that for the balls $B\in \B_r-\B_{r,\delta}$, we have by Lemma \ref{lem_badballs}:
\begin{align}\label{badballs_pointwisenocontribution}
 \sum_{B\in \B_r-\B_{r,\delta}} \sum_{i=1}^\numphases \int_{B}\left| \nabla \chi_i\right| 
 \to 0,\quad \text{as } r\to 0.
\end{align}
The speed of convergence depends on $\chi$ and $\varepsilon$ (through $\delta$).
\step{Step 3: Integration in time.}
Using Lebesgue's dominated convergence theorem, we can integrate the pointwise-in-time estimates 
of Step 2.
Recalling the decomposition $\xi=\sum_B \xi_B$ and using the finite overlap 
(\ref{finiteoverlap}), we have
\begin{align*}
&\left| \sum_{i,j} \sigma_{ij} \int_0^T \int \left(\nabla\cdot\xi - \nu_i \cdot \nabla \xi \,\nu_i
- 2\,\xi \cdot \nu_i \,V_i \right) 
\left(\left|\nabla\chi_i\right| + \left|\nabla\chi_j\right| - \left|\nabla(\chi_i+\chi_j)\right|\right)  dt
\right|\\
&\lesssim \sum_{B\in\B_r }\left| 
\sum_{i,j} \sigma_{ij} \int_0^T \int \left(\nabla\cdot\xi_B - \nu_i \cdot \nabla \xi_B \,\nu_i
- 2\,\xi_B \cdot \nu_i \,V_i \right) 
\left(\left|\nabla\chi_i\right| + \left|\nabla\chi_j\right| - \left|\nabla(\chi_i+\chi_j)\right|\right)  dt
\right|\\
&\lesssim \|\xi\|_{\infty}\left[   
  \left(\frac1{\alpha^2}\varepsilon^2 +\alpha^{\frac19}\right) \int_0^T \sum_{i=1}^\numphases
  \int \left| \nabla \chi_i\right|dt
 + \int_0^T \sum_{B\in \B_r-\B_{r,\delta}(t)} \frac1\alpha
 \sum_{i=1}^\numphases \int_{B}  \left| \nabla\chi_i\right| dt\right.\\
 &\qquad\qquad\;\qquad\qquad\qquad\qquad\qquad\qquad\qquad+ \left.\alpha^{\frac19} \iint 
 d\mu + \alpha \sum_{i=1}^\numphases \int_0^T \int V_i^2 \left| \nabla \chi_i\right|dt  \right].
\end{align*}
Since by  the energy-dissipation estimate (\ref{energy_dissipation_estimate})
we have $E(\chi(t)) \leq E_0$ and can control the first term.
By Lebesgue's dominated convergence and (\ref{badballs_pointwisenocontribution}),
the second term vanishes as $r\to0$.
By (\ref{mu finite}) and Proposition \ref{lem_dtX<<DX}, we can handle the last two terms.
Thus we obtain
\begin{align*}
 & \left| \sum_{i,j} \sigma_{ij} \int_0^T \int \left(\nabla\cdot\xi - \nu_i \cdot \nabla \xi \,\nu_i
- 2\,\xi \cdot \nu_i \,V_i \right) 
\left(\left|\nabla\chi_i\right| + \left|\nabla\chi_j\right| - \left|\nabla(\chi_i+\chi_j)\right|\right)  dt
\right|\\
&\qquad\qquad\lesssim \|\xi\|_{\infty}\left(\frac1{\alpha^2}\varepsilon^2 E_0 T + \alpha^{\frac19}(1+T)E_0\right).
\end{align*}
Taking first the limit $\varepsilon$ to zero and then $\alpha$ to zero yields (\ref{H=v}), which concludes the proof of Theorem \ref{thm1}.
\end{proof}
\subsection{Proofs of the lemmas}
%-----------------------------------------------
%
%		LEMMA APPROXIMATE NORMAL L^2
%
%-----------------------------------------------
\begin{proof}[Proof of Lemma \ref{lem_approximate_normal_L2}]
Let $\varepsilon>0$ be given and w.\ l.\ o.\ g.\ $\int \left| \nabla \chi\right|>0$.
Since the normal $\nu$ is measurable, we can approximate it by a continuous vector field
$\tilde \nu \colon \torus \to \overline B$ in the sense that
\begin{align*}
 \sum_{B\in\B_r}
\frac12\int_{B} \left| \nu-\tilde \nu\right|^2\left| \nabla \chi \right|
\lesssim 
\int \left| \nu-\tilde \nu\right|^2\left| \nabla \chi \right|
\leq \varepsilon^2 \int \left| \nabla \chi\right|,
\end{align*}
where we have used the finite overlap property (\ref{finiteoverlap}).
Since $\tilde\nu$ is continuous,
we can find $r_0>0$ such that for any $r\leq r_0$ we can find vectors $\tilde \nu_B$ with $\left| \tilde\nu_B\right|\leq 1$ with
\begin{align*}
 \sum_{B\in\B_r}
\frac12\int_{B} \left| \tilde\nu-\tilde \nu_B\right|^2\left| \nabla \chi \right|
\leq \varepsilon^2 \int\left| \nabla \chi\right|.
\end{align*}
The only missing step is to argue that we can also choose $\nu_B\in \sphere$.
If $\left| \tilde \nu_B\right|\geq 1/2$, this is clear
because then 
$\left| \nu - \tilde \nu_B/|\tilde \nu_B| \right|
\leq 2 \left| \nu - \tilde \nu_B \right|$. 
If $\left| \tilde \nu_B\right|\leq 1/2$,
we have the easy estimate
\begin{align*}
 \left|\nu - \tilde\nu_B\right| \geq \frac12 
\geq \frac14 \left( \left|  \nu\right|+\left| \nu_B\right|\right)
\geq \frac14 \left| \nu - \nu_B\right|
\end{align*}
for any $\nu_B\in \sphere$.
\end{proof}
%-----------------------------------------------
%
%		LEMMA BAD BALLS
%
%-----------------------------------------------
\begin{proof}[Proof of Lemma \ref{lem_badballs}]
 Let $\varepsilon,\delta>0$ be arbitrary.
Note that a ball in $\B_r-\B_{r,\delta}$ satisfies
\begin{align}
  \inf_{\nu^\ast} 
\int_{2B}& \left| \nu-\nu^\ast\right|^2\left| \nabla \chi \right|
 \geq \delta r^{d-1}\label{badball_tilt}\quad \text{or}\\
\int_{2B}& \left| \nabla \chi \right|
 \leq \frac12 \omega_{d-1} r^{d-1}.\label{badball_surf}
\end{align}
\step{Step 1: Balls satisfying (\ref{badball_tilt}).}
By Lemma \ref{lem_approximate_normal_L2}, for any $\gamma>0$, to be chosen later, there exists
 $r_0 = r_0(\gamma,\delta,\chi)>0$, such that for every $r\leq r_0$
we can find vectors $\nu_B\in \sphere$ such that 
\begin{align}\label{appr_normal_in_L2}
  \sum_{B\in\B_r} \;\int_{2B}\left|  \nu - \nu_B\right|^2
 \left|\nabla\chi\right| \lesssim \gamma\delta   \int \left|\nabla \chi\right|.
\end{align}
Thus we have
\begin{align}\label{number_of_bad_balls_tilt}
 \# \bigg\{B \colon \int_{2B}\left|  \nu - \nu_B\right|^2
 \left|\nabla\chi\right| \geq \delta r^{d-1} \bigg\}
\leq \sum_B \frac{1}{\delta r^{d-1}}\int_{2B}\left|  \nu - \nu_B\right|^2
 \left|\nabla\chi\right|
\overset{(\ref{appr_normal_in_L2})}{\lesssim} \frac{\gamma }{r^{d-1}}
 \int \left|\nabla \chi\right|.
\end{align}
Using that the covering is locally finite and De Giorgi's structure result, we have
\begin{align*}
 \sum_{B: (\ref{badball_tilt})}\, \int_{2B} \left|\nabla \chi\right|
\lesssim& \int_{\bigcup_{(\ref{badball_tilt})} 2B }  \left|\nabla \chi\right|
=\H^{d-1}\bigg( \partial^\ast \Omega \cap \bigcup_{(\ref{badball_tilt})} 2B\bigg).
 \end{align*}
Since $\partial^\ast \Omega$ is rectifiable, we can find Lipschitz graphs
$\Gamma_n$ such that $ \partial^\ast \Omega \subset \bigcup_{n=1}^\infty \Gamma_n.$
Therefore,
\begin{align*}
\H^{d-1}\bigg( \partial^\ast \Omega \cap \bigcup_{(\ref{badball_tilt})} 2B\bigg)
\leq& \sum_{n=1}^N 
\H^{d-1}\bigg( \Gamma_n \cap \bigcup_{(\ref{badball_tilt})} 2B\bigg)
+ \H^{d-1}\bigg( \partial^\ast \Omega -\bigcup_{n\leq N} \Gamma_n\bigg).
\end{align*}
Note that for any ball $B$
\begin{align*}
 \H^{d-1}\left( \Gamma_n \cap 2B\right) 
\lesssim \left( 1+ \lip\; \Gamma_n\right)r^{d-1}
\end{align*}
and thus
\begin{align*}
 \H^{d-1}\bigg( \Gamma_n \cap \bigcup_{(\ref{badball_tilt})} 2B\bigg)
\leq \sum_{B: (\ref{badball_tilt})}  \H^{d-1}\left( \Gamma_n \cap 2B\right) 
\lesssim \left(1+\max_{n\leq N}\lip\; \Gamma_n \right) r^{d-1} 
\# \left\{ B \colon (\ref{badball_tilt})  \right\}.
\end{align*}
Using (\ref{number_of_bad_balls_tilt}), we have
\begin{align*}
 \sum_{B: (\ref{badball_tilt})} \int_{2B} \left|\nabla \chi\right|
\lesssim N\left(1+\max_{n\leq N}\lip\; \Gamma_n \right)
\gamma  \int \left|\nabla \chi\right|
+ \H^{d-1}\bigg( \partial^\ast \Omega -\bigcup_{n\leq N} \Gamma_n\bigg).
\end{align*}
Now, choose $N$ large enough such that
\begin{align*}
 \H^{d-1}\bigg( \partial^\ast \Omega -\bigcup_{n\leq N} \Gamma_n\bigg) \leq 
\varepsilon^2.
\end{align*}
Then, choose $\gamma>0$ small enough, such that
\begin{align*}
 N\left(1+\max_{n\leq N}\lip\; \Gamma_n \right)
\gamma  \int \left|\nabla \chi\right| \leq \varepsilon^2.
\end{align*}
\step{Step 2: Balls satisfying (\ref{badball_surf}).}
By De Giorgi's structure theorem (Theorem 4.4 in \cite{giusti1984minimal}), we may restrict to balls $B$ which in addition satisfy
$\partial^\ast \Omega \cap 2B \neq \emptyset$
and pick $x\in \partial^\ast \Omega \cap 2B$.
Note that since $B$ has radius $r$ we have
\begin{align*}
 B_{2r}(x) \subset 4B \subset B_{6r}(x).
\end{align*}
Therefore, if (\ref{badball_surf}) holds,
\begin{align*}
\int_{B_{2r}(x)} \left| \nabla \chi \right|
\leq \int_{4B} \left| \nabla \chi \right|
 \leq \frac12 \omega_{d-1} (2r)^{d-1}.
\end{align*}
For $x\in \partial^\ast \Omega$ we have
\begin{align*}
 \liminf_{r\to 0} \frac1{r^{d-1}} \int_{B_r(x)} \left| \nabla \chi \right|
\geq \omega_{d-1}
\end{align*}
and thus in particular
\begin{align*}
 \chara\bigg(\bigg\{ x\in \partial^\ast \Omega \colon 
\int_{B_r(x)} \left| \nabla \chi \right|\leq \frac12 \omega_{d-1} r^{d-1}
\bigg\}\bigg) \to 0
\end{align*}
pointwise as $r\to 0$. By De Giorgi's structure theorem (Theorem 4.4 in \cite{giusti1984minimal}), the finite overlap and 
Lebesgue's dominated convergence theorem, we thus have
\begin{align*}
 \sum_{B:  (\ref{badball_surf})}\,
\int_{2B} \left| \nabla\chi\right|
\lesssim \H^{d-1}\Big( \partial^\ast \Omega \cap 
\bigcup_{B:(\ref{badball_surf})}2 B\Big)
\to 0
\end{align*}
as $r\to0$.
\end{proof}

%-----------------------------------------------
%
%		LEMMA GOOD BALLS
%
%-----------------------------------------------
\begin{proof}[Proof of Lemma \ref{lem_goodballs}]
Let us first prove that for any $\chi$ satisfying (\ref{goodballs_premiss}), we have
\begin{align}\label{goodballs_2}
 \left( 1-\delta\right) \int \eta \left| \nabla \chi\right|
\leq \left|\int  \chi \,\nabla \eta\, dx\right| + \delta.
\end{align}
Indeed, we have
\begin{align*}
 \left| \int \eta\, \nu \left| \nabla \chi\right|\right|
\geq \left| \int \eta\, e_1\left| \nabla \chi\right|\right|
 - \left| \int \eta \left(\nu-e_1\right) \left| \nabla \chi\right|\right|
 = \int \eta \left| \nabla\chi\right|
 - \left| \int \eta \left(\nu-e_1\right) \left| \nabla \chi\right|\right|.
\end{align*}
By Young's inequality we have $\left| \nu-e_1\right| \leq \frac1\delta\left| \nu-e_1\right|^2 + \delta$, so that by (\ref{goodballs_premiss}) we can estimate
the last right-hand side term
\begin{align*}
 \left| \int \eta \left(\nu-e_1\right) \left| \nabla \chi\right|\right|
\leq  \int \eta \left|\nu-e_1\right| \left| \nabla \chi\right|
% \leq  \frac1\delta \int \eta \left|\nu-e_1\right|^2 \left| \nabla \chi\right| + \delta \int \eta \left| \nabla \chi\right|
\overset{(\ref{goodballs_premiss})}{\leq} \delta + \delta \int \eta \left| \nabla \chi\right|.
\end{align*}
Therefore
\begin{align*}
 \left| \int \eta\, \nu \left| \nabla \chi\right|\right|
\geq \left( 1-\delta\right) \int \eta \left| \nabla \chi\right| - \delta,
\end{align*}
which is (\ref{goodballs_2}).\\
Now we give an indirect argument for the lemma.
Suppose there exists an $\varepsilon>0$ and a sequence $\{\chi_n\}_n$ such that
\begin{align}\label{goodballs_1}
 \int\eta \left|\nu_n-e_1 \right|^2 \left|\nabla\chi_n \right| \leq \frac1{n^2}
\end{align}
while for all half spaces $\chi^\ast$ in direction $e_1$,
\begin{align}\label{goodballs_contra}
 \int_B  \left|\nabla\chi_n \right|
\geq \varepsilon^2 + \int_B\left|\nabla\chi^\ast \right|, \quad
 \int_B  \left|\nabla\chi^\ast \right|
\geq \varepsilon^2 + \int_B\left|\nabla\chi_n \right|,
 \quad \text{or}\quad
 \int_B \left| \chi_n-\chi^\ast\right|dx\geq \varepsilon^2.
\end{align}
By (\ref{goodballs_1}), we can use (\ref{goodballs_2}) for $\chi_n$ and obtain:
\begin{align*}
 \int \eta \left| \nabla \chi_n\right| \leq \frac{1}{1-1/n} 
\left( \int \left| \nabla \eta\right|dx + \frac1n \right)\quad 
\text{stays bounded as } n\to \infty.
\end{align*}
Therefore, after passage to a subsequence and a diagonal argument to exhaust the open ball
$\{\eta>0\}$, we find $\chi$ such that
\begin{align}\label{goodballs_3}
 \chi_n \to \chi \quad \text{pointwise a.\ e.\ on } \{\eta >0\}.
\end{align}
By (\ref{goodballs_1}) we have
\begin{align*}
2\int \eta \left| \nabla \chi_n \right| - 2 \int \nabla \eta \cdot e_1\; \chi_n\, dx
= \int \eta \left| \nu_n - e_1\right|^2 \left|\nabla \chi_n\right|
\leq \frac1{n^2} \to 0.
\end{align*}
Since the first term on the left-hand side
 is lower semi-continuous and the second one is continuous,
we can pass to the limit in the above inequality and obtain
\begin{align*}
 \int \eta \left| \nu-e_1\right|^2\left|\nabla \chi\right|
= 2\int \eta \left| \nabla \chi \right| - 2 \int \nabla \eta \cdot e_1 \,\chi \,dx
\leq0.
\end{align*}
Hence
\begin{align*}
 \nu = e_1 \quad \left| \nabla\chi\right| \text{-a.\ e. in }\{\eta>0\}.
\end{align*}
A mollification argument shows that there exists a half space $\chi^\ast$ in
direction $e_1$ such that
\begin{align*}
 \chi=\chi^\ast \quad\text{a.\ e. in } \{\eta>0\}.
\end{align*}
Because of (\ref{goodballs_3}), this rules out
\begin{align*}
 \int_B \left| \chi_n - \chi^\ast\right| \geq \varepsilon^2
\end{align*}
on the one hand. 
On the other hand, by lower semi-continuity of the perimeter, also
\begin{align*}
  \int_B  \left|\nabla\chi^\ast \right|
\geq \varepsilon^2 + \int_B\left|\nabla\chi_n \right|
\end{align*}
is ruled out.
To obtain a contradiction also w.\ r.\ t.\ the first statement in (\ref{goodballs_contra}),
let $\tilde \eta \leq  \eta$ be a cut-off for $B$
in $(1+\delta)B$. Since (\ref{goodballs_2}) holds also for $\tilde \eta$ instead of $\eta$,
we have
\begin{align*}
 \varepsilon^2 + \int_B \left| \nabla \chi^\ast \right|
\overset{(\ref{goodballs_contra})}{\leq} &\int_B \left| \nabla \chi_n \right|
\leq \int \tilde \eta\left| \nabla \chi_n \right|
\overset{(\ref{goodballs_2})}{\leq} \frac{1}{1-1/n}
\left(  \left|\int \chi_n\, \nabla \tilde \eta  \,dx \right|+ 
\frac1n \right)\\
\overset{(\ref{goodballs_3})}{\to} & \left|\int \chi^\ast \,\nabla \tilde \eta \,dx \right|
= \left|\int \tilde \eta \,\nabla \chi^\ast \right|
\leq \int_{(1+\delta)B} \left|\nabla \chi^\ast \right|.
\end{align*}
Since $\chi^\ast$ is a half space and therefore has no mass on $\partial B$, we
have
\begin{align*}
 \int_{(1+\delta)B} \left|\nabla \chi^\ast \right|
 \to \int_B \left| \nabla \chi^\ast \right|,\quad \text{as } \delta\to 0,
\end{align*}
which is a contradiction.
\end{proof}
%-----------------------------------------------
%
%		LEMMA X_i -> X
%
%-----------------------------------------------
\begin{proof}[Proof of Lemma \ref{lem_onehalfspace}]
We give an indirect argument. 
Assume there exists a sequence of characteristic functions
 $\{\chi^n\}_n$ with $\sum_i \chi_i^n =1$ a.\ e., a number $\varepsilon>0$ such that we can
find approximate normals $\nu_i^{\ast n}\in \sphere$ with
 \begin{align*}
  \sum_{i=1}^\numphases 
  \frac12 \int \eta \left| \nu_i^n - \nu_i^{\ast n} \right|^2 \left| \nabla \chi_i^n\right|
    \leq \frac1{n^2}
 \end{align*}
while for all $\nu^\ast\in \sphere$, $n\in \N$ and any pair of indices $i\neq j$, we have
\begin{align}\label{Xi->X_contradiction}
 \sum_{k\notin \{i,j\}}\int_B \left| \nabla \chi^n_k\right|
  +\frac12 \int_B\left| \nu^n_{i} - \nu^\ast \right|^2 \left| \nabla \chi^n_{i}\right|
  +\frac12 \int_B \left| \nu^n_{j} + \nu^\ast \right|^2 \left| \nabla \chi^n_{j}\right|
  \geq \varepsilon^2.
\end{align}
Since $\sphere$ is compact, we can find vectors $\nu^\ast\in \sphere$, such that,
after passing to a subsequence if necessary, $\nu^{\ast n}_i \to \nu^\ast_i$ as $n\to\infty$.
Following the lines of the proof of Lemma \ref{lem_goodballs}, we find
\begin{align*}
 \int \eta \left| \nabla \chi^n_i\right| \leq \frac{1}{1-1/n} 
\left( \int \left| \nabla \eta\right|dx + \frac1n \right)\quad 
\text{stays bounded as } n\to \infty
\end{align*}
so that there exist $\chi_i\in\{0,1\}$ with
\begin{align}\label{onehalfspacepf1}
 \chi_i^n \to \chi_i \quad \text{pointwise a.\ e.\ on } \{\eta >0\}
\end{align}
and
\begin{align*}
 \frac12 \int \eta \left| \nu_i - \nu_i^{\ast} \right|^2 \left| \nabla \chi_i\right|
 \leq \liminf_{n\to \infty}
 \frac12 \int \eta \left| \nu_i^n - \nu_i^{\ast n} \right|^2 \left| \nabla \chi_i^n\right|=0.
\end{align*}
Therefore, $\nu_i=\nu_i^\ast$ $\left| \nabla \chi_i\right|$- a.\ e. and each $\chi_i=\chi_i^\ast$ is a
half space in direction $\nu_i^\ast$.
Continuing in our setting now, we note that the condition $\sum_i \chi_i^n=1$ carries
over to the limit: $\sum_i \chi_i^\ast=1$. 
Therefore there exists a pair of indices $i\neq j$ (w.\ l.\ o.\ g.\ $i=1$, $j=2$) such that for all $k\geq 3$
$\chi_k^\ast =0$ in $B$.
Then the other two half spaces are complementary, $\chi_2^\ast = \left( 1-\chi_1^\ast\right) $
and in particular $\nu_1^\ast=-\nu_2^\ast=: \nu^\ast$.
As in the proof of Lemma \ref{lem_goodballs}, we have
\begin{align*}
 \int_B \left| \nabla \chi_i^n\right| \to \int_B \left| \nabla \chi_i^\ast\right|.
\end{align*}
Together with (\ref{onehalfspacepf1}), we can take the limit $n\to \infty$ in
(\ref{Xi->X_contradiction}) and obtain
\begin{align*}
 \sum_{k\geq 3}\int_B \left| \nabla \chi^\ast_1\right|
  +\frac12 \int_B\left| \nu^\ast_{1} - \nu^\ast \right|^2 \left| \nabla \chi^\ast_{1}\right|
  +\frac12 \int_B \left| \nu^\ast_{2} + \nu^\ast \right|^2 \left| \nabla \chi^\ast_{2}\right|
  \geq \varepsilon^2,
\end{align*}
which is a contradiction since the left-hand side vanishes by construction.
\end{proof}

\printbibliography

\end{document}